\newtheorem{theorem}{Theorem}[section]
\newtheorem{lemma}[theorem]{Lemma}
\newtheorem{proposition}[theorem]{Proposition}
\newtheorem{corollary}[theorem]{Corollary}
\newenvironment{proof}{\trivlist
  \item[\hskip\labelsep{\itshape Proof.}]\upshape}{\nobreak\noindent
  $\square$\endtrivlist}
\newenvironment{other}[1]{\refstepcounter{theorem}\trivlist
  \item[\hskip\labelsep{\itshape #1~\thesection.\arabic{theorem}.}]
  \upshape}{\endtrivlist\bigbreak}
\renewenvironment{description}{\list{}
  {\setlength\leftmargin{2.5em}\setlength\labelwidth{0pt}
  }}
  {\endlist}
\DeclareMathOperator\Hom{Hom}
\DeclareMathOperator\End{End}
\DeclareMathOperator\Aut{Aut}
\DeclareMathOperator\wt{wt}
\DeclareMathOperator\Irr{Irr}
\DeclareMathOperator\Rep{Rep}
\DeclareMathOperator\Card{Card}
\DeclareMathOperator\rank{rank}
\DeclareMathOperator\supp{supp}
\DeclareMathOperator\linspan{span}
\DeclareMathOperator\res{res}
\DeclareMathOperator\IC{IC}
\newcommand\id{\mathord{\mathrm{id}}}
\newcommand\SL{\mathord{\mathrm{SL}}}
\newcommand\GL{\mathord{\mathrm{GL}}}
\newcommand\PGL{\mathord{\mathrm{PGL}}}
\newcommand\SO{\mathord{\mathrm{SO}}}
\newcommand\Gr{\mathord{\mathrm{Gr}}}
\newcommand\Perv{\mathord{\mathrm{Perv}}}
\newcommand\BM{{\mathrm{BM}}}
\newcommand\BDGr[1]{\mathord{\mathcal Gr^{\mathrm{BD}}_{#1}}}
\newcommand\BDConv{\mathord{\mathcal Gr}}
\newcommand\PACS{\Phi^{\vee,+}_a}
\newcommand\caltimes{\mathbin{\propto}}
\newcommand\dbllangle{\langle\!\langle}
\newcommand\dblrangle{\rangle\!\rangle}
\begin{document}
\title{Bases of tensor products and geometric Satake correspondence}
\author{Pierre Baumann, Stéphane Gaussent and Peter Littelmann}
\date{}
\maketitle

\begin{center}
\textit{To the memory of C.\ S.\ Seshadri}
\end{center}

\begin{abstract}
\noindent
The geometric Satake correspondence can be regarded as a geometric
construction of the rational representations of a complex connected
reductive group $G$. In their study of this correspondence,
Mirković and Vilonen introduced algebraic cycles that provide a linear
basis in each irreducible representation. Generalizing this construction,
Goncharov and Shen define a linear basis in each tensor product of
irreducible representations. We investigate these bases and show that
they share many properties with the dual canonical bases of Lusztig.
\end{abstract}

\section{Introduction}
\label{se:Intro}
Let $G$ be a connected reductive group over the field of complex
numbers, endowed with a Borel subgroup and a maximal torus. Let
$\Lambda^+$ be the set of dominant weights relative to these data.
Given $\lambda\in\Lambda^+$, denote by $V(\lambda)$ the irreducible
rational representation of $G$ with highest weight $\lambda$. Given
a finite sequence $\bm\lambda=(\lambda_1,\ldots,\lambda_n)$ in
$\Lambda^+$, define
$$V(\bm\lambda)=V(\lambda_1)\otimes\cdots\otimes V(\lambda_n).$$
A construction due to Mirković and Vilonen~\cite{MirkovicVilonen} in
the context of the geometric Satake correspondence endows the spaces
$V(\lambda)$ with linear bases. Specifically, $V(\lambda)$ is
identified with the intersection homology of a parabolic affine
Schubert variety, and Mirković and Vilonen's basis is the datum of
a family $\mathscr Z(\lambda)$ of algebraic cycles in this variety.
In~\cite{GoncharovShen}, Goncharov and Shen extend this construction
to the tensor products $V(\bm\lambda)$. In the present paper, we
investigate the properties of these bases, which we call MV bases.

We show that MV bases are compatible with the isotypical filtration
of the representations $V(\bm\lambda)$. They are also compatible with
the action of the Chevalley generators of the Lie algebra $\mathfrak g$
of $G$, in the sense that the leading terms of this action define on
$\mathscr Z(\bm\lambda)$ the structure of a crystal in the sense of
Kashiwara; this result is due to Braverman and Gaitsgory
\cite{BravermanGaitsgory01} in the case $n=1$. With the help of the
path model~\cite{Littelmann95}, we prove that there is a natural
crystal isomorphism
$$\mathscr Z(\bm\lambda)\cong\mathscr Z(\lambda_1)\otimes\cdots
\otimes\mathscr Z(\lambda_n)$$
(Braverman and Gaitsgory implicitly treated the case $n=2$).

We study the transition matrix between the MV basis of a tensor product
$V(\bm\lambda)$ and the tensor product of the MV bases of the different
factors $V(\lambda_1)$, \dots, $V(\lambda_n)$. Using the fusion product
in the sense of Beilinson and Drinfeld~\cite{BeilinsonDrinfeld}, we
explain that the entries of this transition matrix can be computed
as intersection multiplicities. As a consequence, the transition matrix
is unitriangular with nonnegative integral entries.

The properties stated above are analogues of results obtained by Lusztig
about the dual canonical bases. To be specific, Lusztig~\cite{Lusztig93}
defines a notion of based module (module endowed with a basis enjoying
certain specific properties) over the quantized enveloping algebra
$U_v(\mathfrak g)$ and shows the following facts:
\begin{itemize}
\item
A simple module over $U_v(\mathfrak g)$, endowed with its canonical
basis, is a based module.
\item
The tensor product of finitely many based modules can be endowed with
a basis that makes it a based module. This basis is constructed from
the tensor product of the bases of the factors by adding corrective
terms in an unitriangular fashion.
\item
The basis of a based module is compatible with the decreasing isotypical
filtration of the module. Each subquotient in this filtration, endowed
with the induced basis, is isomorphic as a based module to the direct
sum of copies of a simple module endowed with its canonical basis.
\end{itemize}
The dual canonical bases for the representations $V(\bm\lambda)$
(see~\cite{FrenkelKhovanov}) can then be defined by dualizing Lusztig's
construction and specializing the quantum parameter at~$v=1$.

Because of its compatibility with the isotypical filtration, the dual
canonical basis of a tensor product $V(\bm\lambda)$ yields a linear
basis of the invariant subspace $V(\bm\lambda)^G$. Just as well, the
MV basis provides a linear basis (sometimes called the Satake basis)
of $V(\bm\lambda)^G$. The Satake basis and the dual canonical basis
of $V(\bm\lambda)^G$ generally differ (the paper
\cite{FontaineKamnitzerKuperberg} provides a counterexample);
nonetheless we show that the Satake basis enjoys the first two items in
Khovanov and Kuperberg's list of properties for the dual canonical basis
\cite{KhovanovKuperberg}. In particular, after restriction to the
invariant subspaces, the signed permutation
$$V(\lambda_1)\otimes V(\lambda_2)\otimes\cdots\otimes V(\lambda_n)
\xrightarrow\simeq
V(\lambda_2)\otimes\cdots\otimes V(\lambda_n)\otimes V(\lambda_1)$$
maps the Satake basis of the domain to that of the codomain.

As explained in \cite{BaumannKamnitzerKnutson}, the MV bases of the
irreducible representations $V(\lambda)$ can be glued together to produce
a basis of the algebra $\mathbb C[N]$ of regular functions on the
unipotent radical $N$ of~$B$. Of particular interest would be any
relation with the cluster algebra structure on $\mathbb C[N]$
\cite{BerensteinFominZelevinsky,GeissLeclercSchroer07}. The methods
developed in the present paper allow for explicit computations;
for instance we show that the cluster monomials attached to certain seeds
belong to the MV basis. However $\mathbb C[N]$ is not of finite cluster
type in general; in other words cluster monomials do not span the whole
space. In type $D_4$ we compare the MV basis element with the dual
canonical and the dual semicanonical elements at the first position not
covered by cluster monomials.

\textit{Acknowledgements.}
P.B.\ and S.G.'s research is in part supported by the ANR project
GeoLie, ANR-15-CE40-0012. P.L.'s research is in part supported by the
SFB/TRR 191 ‘Symplectic Structures in Geometry, Algebra and Dynamics’.
P.B.\ thanks Joel Kamnitzer for several discussions about the fusion
product, which led to the main result of sect.~\ref{se:CompTPB}. The
authors warmly thank Simon Riche for help with several technicalities.
P.B.\ also gratefully acknowledges the hospitality of the Centre
de Recherches Mathématiques in Montreal, where research reported
in sect.~7.1 was carried out.

\section{Mirković-Vilonen cycles and bases}
\label{se:MVCycBas}
In the whole paper $G$ is a connected reductive group over $\mathbb C$,
endowed with a Borel subgroup $B$ and a maximal torus $T\subseteq B$.
We denote by $\Lambda$ the character lattice of $T$, by
$\Phi\subseteq\Lambda$ the root system of $(G,T)$, by $\Phi^\vee$ the
coroot system, and by $W$ the Weyl group. The datum of $B$ determines
a set of positive root in $\Phi$; we endow $\Lambda$ with the dominance
order $\leq$ relative to this and denote the cone of dominant weights
by $\Lambda^+$. We denote by $\rho:\Lambda\to\mathbb Q$ the half-sum
of the positive coroots; so $\langle\rho,\alpha\rangle=1$ for each
simple root $\alpha$. The Langlands dual of $G$ is the connected
reductive group $G^\vee$ over $\mathbb C$ built from the dual torus
$T^\vee=\Lambda\otimes_{\mathbb Z}\mathbb G_m$ and the root system
$\Phi^\vee$; the positive coroots define a Borel subgroup
$B^\vee\subseteq G^\vee$.

\subsection{Recollection on the geometric Satake correspondence}
\label{ss:RecGeomSat}
The geometric Satake correspondence was devised by Lusztig
\cite{Lusztig81} and given its definitive shape by Beilinson and
Drinfeld~\cite{BeilinsonDrinfeld} and Mirković and Vilonen
\cite{MirkovicVilonen}. Additional references for the material
exposed in this section are \cite{Zhu} and \cite{BaumannRiche}.

As recalled in the introduction of \cite{PappasRapoport}, loop
groups appear under several guises across mathematics: there is
the differential-geometric variant, the algebraic-geometric one, etc.
We adopt the framework of Lie theory and Kac-Moody groups~\cite{Kumar}.
For instance, though the affine Grassmannian is a (generally
non-reduced) ind-scheme, we will only look at its complex ind-variety
structure.

Let $\mathcal O=\mathbb C[\![z]\!]$ be the ring of formal
power series in $z$ with complex coefficients and let $\mathcal K=
\mathbb C(\!(z)\!)$ be the fraction field of $\mathcal O$.
The affine Grassmannian of the Langlands dual $G^\vee$ is the
homogeneous space $\Gr=G^\vee(\mathcal K)/G^\vee(\mathcal O)$.
This space, a partial flag variety for an affine Kac-Moody group,
is endowed with the structure of an ind-variety.

Each weight $\lambda\in\Lambda$ gives a point $z^\lambda$ in
$T^\vee(\mathcal K)$, whose image in $\Gr$ is denoted by $L_\lambda$.
The $G^\vee(\mathcal O)$-orbit through $L_\lambda$ in $\Gr$ is denoted
by $\Gr^\lambda$; this is a smooth connected simply-connected variety
of dimension $2\rho(\lambda)$. The Cartan decomposition implies that
$$\Gr=\bigsqcup_{\lambda\in\Lambda^+}\Gr^\lambda;\quad\text{moreover}\quad
\overline{\Gr^\lambda}=\bigsqcup_{\substack{\mu\in\Lambda^+\\[2pt]
\mu\leq\lambda}}\Gr^\mu.$$

Let $\Perv(\Gr)$ be the category of $G^\vee(\mathcal O)$-equivariant
perverse sheaves on $\Gr$ (for the middle perversity) supported on
finitely many $G^\vee(\mathcal O)$-orbits, with coefficients in
$\mathbb C$. This is an abelian semisimple category; the simple
objects in $\Perv(\Gr)$ are the intersection cohomology sheaves
$$\mathscr I_\lambda=\IC\Bigl(\overline{\Gr^\lambda},\,
\underline{\mathbb C}\Bigr).$$
(By convention, IC sheaves are shifted so as to be perverse.)

Let $\theta\in\Lambda$ be a dominant and regular weight. The embedding
$$\mathbb C^\times\xrightarrow\theta T^\vee(\mathbb C)\to
G^\vee(\mathcal K)$$
gives rise to an action of $\mathbb C^\times$ on $\Gr$. For each
$\mu\in\Lambda$, the point $L_\mu$ is fixed by this action; we denote its
stable and unstable sets by
$$S_\mu=\Bigl\{x\in\Gr\Bigm|\lim_{c\to0}\theta(c)\cdot x=L_\mu\Bigr\}
\quad\text{and}\quad
T_\mu=\Bigl\{x\in\Gr\Bigm|\lim_{c\to\infty}\theta(c)\cdot x=L_\mu\Bigr\}$$
and denote the inclusion maps by $s_\mu:S_\mu\to\Gr$ and $t_\mu:T_\mu\to\Gr$.

Given $\mu\in\Lambda$ and $\mathscr A\in\Perv(\Gr)$, Mirković and
Vilonen identify the homology groups
$$H_c\bigl(S_\mu,\,(s_\mu)^*\mathscr A\bigr)\quad\text{and}\quad
H\bigl(T_\mu,\,(t_\mu)^!\mathscr A\bigr)$$
via Braden's hyperbolic localization, show that these groups are
concentrated in degree $2\rho(\mu)$, and define
$$F_\mu(\mathscr A)=H^{2\rho(\mu)}\bigl(T_\mu,\,(t_\mu)^!\mathscr A\bigr)
\quad\text{and}\quad
F(\mathscr A)=\bigoplus_{\mu\in\Lambda}F_\mu(\mathscr A).$$

Then $F$ is an exact and faithful functor from $\Perv(\Gr)$ to the
category of finite dimensional $\Lambda$-graded $\mathbb C$-vector
spaces. Mirković and Vilonen prove that $F$ induces an equivalence
$\overline F$ from $\Perv(\Gr)$ to the category $\Rep(G)$ of finite
dimensional rational representations of $G$, the $\Lambda$-graduation
on $F(\mathscr A)$ giving rise to the decomposition of
$\overline F(\mathscr A)$ into weight subspaces. In the course of the
proof, it is shown that~$\overline F$ maps $\mathscr I_\lambda$ to the
irreducible highest weight representation~$V(\lambda)$.

The map $G^\vee(\mathcal K)\to\Gr$ is a principal
$G^\vee(\mathcal O)$-bundle. From the $G^\vee(\mathcal O)$-space $\Gr$,
we form the associated bundle
$$\Gr_2=G^\vee(\mathcal K)\,\times^{G^\vee(\mathcal O)}\;\Gr.$$
This space is called the $2$-fold convolution variety and has the
structure of an ind-variety. The action of $G^\vee(\mathcal K)$ on
$\Gr$ defines a map $m:\Gr_2\to\Gr$ of ind-varieties. Let
$p:G^\vee(\mathcal K)\to\Gr$ and $q:G^\vee(\mathcal K)\times\Gr\to\Gr_2$
be the quotient maps. Given two equivariant perverse sheaves
$\mathscr A_1$ and $\mathscr A_2$ on $\Gr$, there is a unique
equivariant perverse sheaf $\mathscr A_1\,\widetilde\boxtimes\,\mathscr A_2$
on $\Gr_2$ such that
$$p^*\mathscr A_1\boxtimes\mathscr A_2=
q^*\bigl(\mathscr A_1\,\widetilde\boxtimes\,\mathscr A_2\bigr)$$
in the equivariant derived category of constructible sheaves on
$G^\vee(\mathcal K)\times\Gr$. We then define the convolution product
of $\mathscr A_1$ and $\mathscr A_2$ to be
$$\mathscr A_1*\mathscr A_2=m_*\bigl(\mathscr A_1\,\widetilde\boxtimes\,
\mathscr A_2\bigr).$$
Using Beilinson and Drinfeld's fusion product, one defines
associativity and commutativity constraints and obtains a monoidal
structure on $\Perv(\Gr)$. Then $F$ is a tensor functor; in particular,
the fusion product imparts an explicit identification of $\Lambda$-graded
vector spaces
$$F(\mathscr A_1*\mathscr A_2)\cong F(\mathscr A_1)\otimes
F(\mathscr A_2)$$
for any $(\mathscr A_1,\mathscr A_2)\in\Perv(\Gr)^2$.

\subsection{Mirković-Vilonen cycles}
\label{ss:MVCyc}
In this paper we study tensor products
$V(\bm\lambda)=V(\lambda_1)\otimes\cdots\otimes V(\lambda_n)$, where
$\bm\lambda=(\lambda_1,\ldots,\lambda_n)$ is a sequence of dominant
weights. Accordingly, we want to consider convolution products
$$\mathscr I_{\bm\lambda}=\mathscr I_{\lambda_1}*\cdots*\mathscr
I_{\lambda_n}$$
and therefore need the $n$-fold convolution variety
$$\Gr_n=\underbrace{G^\vee(\mathcal K)\,\times^{G^\vee(\mathcal O)}\;\cdots\,
\times^{G^\vee(\mathcal O)}\;G^\vee(\mathcal K)}_{n\ \text{factors}\
G^\vee(\mathcal K)}\,/\,G^\vee(\mathcal O).$$
As customary, we denote elements in $\Gr_n$ as classes $[g_1,\ldots,g_n]$
of tuples of elements in $G^\vee(\mathcal K)$. Plainly $\Gr_1=\Gr$;
in consequence, we write the quotient map $G^\vee(\mathcal K)\to\Gr$
as $g\mapsto[g]$. We define a map $m_n:\Gr_n\to\Gr$ by setting
$m_n([g_1,\ldots,g_n])=[g_1\ldots g_n]$.

Given $G^\vee(\mathcal O)$-stable subsets $K_1$, \dots, $K_n$ of $\Gr$,
we define
$$K_1\,\widetilde\times\cdots\widetilde\times\,K_n=\bigl\{[g_1,\ldots,g_n]
\in\Gr_n\bigm|[g_1]\in K_1,\ \ldots,\ [g_n]\in K_n\bigr\}.$$
Alternatively, $K_1\,\widetilde\times\cdots\widetilde\times\,K_n$
can be defined as
$$\widehat K_1\,\times^{G^\vee(\mathcal O)}\;\cdots\,
\times^{G^\vee(\mathcal O)}\;\widehat K_n\,/\,G^\vee(\mathcal O)$$
where each $\widehat K_j$ is the preimage of $K_j$ under the quotient
map $G^\vee(\mathcal K)\to\Gr$.

For $\bm\lambda=(\lambda_1,\ldots,\lambda_n)$ in $(\Lambda^+)^n$, we set
$$\Gr_n^{\bm\lambda}=\Gr^{\lambda_1}\,\widetilde\times\cdots
\widetilde\times\,\Gr^{\lambda_n}\quad\text{and}\quad
|\bm\lambda|=\lambda_1+\cdots+\lambda_n.$$
Viewing $\Gr_n^{\bm\lambda}$ as an iterated fibration with base
$\Gr^{\lambda_1}$ and successive fibers $\Gr^{\lambda_2}$, \dots,
$\Gr^{\lambda_n}$, we infer that it is a smooth connected
simply-connected variety of dimension $2\rho(|\bm\lambda|)$; also
$$\Gr_n=\bigsqcup_{\bm\lambda\in(\Lambda^+)^n}\Gr_n^{\bm\lambda}
\quad\text{and}\quad\overline{\Gr_n^{\bm\lambda}}=
\overline{\Gr^{\lambda_1}}\,\widetilde\times\cdots\widetilde\times\,
\overline{\Gr^{\lambda_n}}=\bigsqcup_{\substack{\bm\mu\in(\Lambda^+)^n\\[2pt]
\mu_1\leq\lambda_1,\,\ldots,\,\mu_n\leq\lambda_n}}\Gr_n^{\bm\mu}.$$

\begin{proposition}
\label{pr:ICnFold}
Let $\bm\lambda\in(\Lambda^+)^n$. Then $\mathscr I_{\bm\lambda}$ is
the direct image by $m_n$ of the intersection cohomology sheaf
of $\overline{\Gr_n^{\bm\lambda}}$ with trivial local system, to wit
$$\mathscr I_{\bm\lambda}=(m_n)_*\IC\Bigl(
\overline{\Gr_n^{\bm\lambda}},\,\underline{\mathbb C}\Bigr),$$
and the cohomology sheaves $\mathscr H^k\IC\Bigl(
\overline{\Gr_n^{\bm\lambda}},\,\underline{\mathbb C}\Bigr)$
vanish unless $k$ and $2\rho(|\bm\lambda|)$ have the same parity.
\end{proposition}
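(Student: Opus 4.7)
The plan is to proceed by induction on $n$, using the fact that the $n$-fold convolution variety fibers smoothly over the $(n-1)$-fold one. For $n=1$ the statement reduces to the definition of $\mathscr I_\lambda$ together with the well-known parity vanishing for the IC sheaves of spherical affine Schubert varieties $\overline{\Gr^\lambda}$; this parity fact can be proved, for instance, by pulling back along a Bott--Samelson-type resolution whose cohomology is concentrated in even degrees, or by using that $\overline{\Gr^\lambda}$ admits an affine paving coming from the $T^\vee$-action associated with a regular cocharacter.

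For the inductive step, set $\bm\lambda'=(\lambda_1,\ldots,\lambda_{n-1})$ and consider the natural forgetful map $\pi\colon\Gr_n\to\Gr_{n-1}$ sending $[g_1,\ldots,g_n]$ to $[g_1,\ldots,g_{n-1}]$. It is a locally trivial fibration with fiber $\Gr$, and its restriction $\overline{\Gr_n^{\bm\lambda}}\to\overline{\Gr_{n-1}^{\bm\lambda'}}$ is locally trivial with smooth fiber $\overline{\Gr^{\lambda_n}}$. Because IC commutes with smooth pullback up to shift, this means precisely that
$$\IC\Bigl(\overline{\Gr_n^{\bm\lambda}},\,\underline{\mathbb C}\Bigr)
=\IC\Bigl(\overline{\Gr_{n-1}^{\bm\lambda'}},\,\underline{\mathbb C}\Bigr)\,\widetilde\boxtimes\,
\IC\Bigl(\overline{\Gr^{\lambda_n}},\,\underline{\mathbb C}\Bigr),$$
i.e.\ the twisted external product of the IC sheaves of the factors is the IC sheaf of $\overline{\Gr_n^{\bm\lambda}}$ (both sides are self-dual simple perverse sheaves with full support). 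Factoring $m_n$ as the composition of $\id\,\widetilde\times\,m$-type maps and applying $(m_{n-1})_*$ to the first factor via the inductive hypothesis, the associativity of convolution then yields
$$(m_n)_*\IC\Bigl(\overline{\Gr_n^{\bm\lambda}},\,\underline{\mathbb C}\Bigr)
=\mathscr I_{\bm\lambda'}*\mathscr I_{\lambda_n}
=\mathscr I_{\bm\lambda},$$
which is the first claim.

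For the parity assertion, I would use the displayed identification of $\IC$-sheaves and induct again: since twisted products of parity-concentrated complexes on the factors are parity-concentrated on the total space of the smooth fibration (local triviality reduces this to the Künneth formula, and the shifts add up correctly because $\dim\overline{\Gr^{\lambda_i}}=2\rho(\lambda_i)$ is even for each $i$), the cohomology sheaves of $\IC(\overline{\Gr_n^{\bm\lambda}},\underline{\mathbb C})$ vanish outside the parity of $2\rho(|\bm\lambda|)=\sum 2\rho(\lambda_i)$. The main obstacle in carrying this out rigorously is the first identification: one must verify carefully that the twisted external product of the IC sheaves really coincides with the IC sheaf of $\overline{\Gr_n^{\bm\lambda}}$, which requires checking that $\widetilde\boxtimes$ preserves perversity, simplicity and self-duality, using the local triviality of $\pi$ and the defining property of the twisted product relative to the quotient map $G^\vee(\mathcal K)\to\Gr$.
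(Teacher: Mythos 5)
Your approach is essentially the one in the paper: the core step is identifying $\IC(\overline{\Gr_n^{\bm\lambda}},\underline{\mathbb C})$ with the twisted external product $\IC(\overline{\Gr^{\lambda_1}},\underline{\mathbb C})\,\widetilde\boxtimes\cdots\widetilde\boxtimes\,\IC(\overline{\Gr^{\lambda_n}},\underline{\mathbb C})$, which the paper obtains by citing \cite{BaumannRiche}, \S1.16.4, after which applying $(m_n)_*$ gives the first assertion and the parity claim follows from Lusztig's parity vanishing together with the K\"unneth-type accounting you sketch.

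Two imprecisions are worth flagging. First, the fiber $\overline{\Gr^{\lambda_n}}$ of the projection $\overline{\Gr_n^{\bm\lambda}}\to\overline{\Gr_{n-1}^{\bm\lambda'}}$ is \emph{not} smooth in general (only minuscule Schubert varieties are), so that projection is not a smooth morphism and one cannot invoke ``IC commutes with smooth pullback'' for it directly; the decomposition must instead be deduced through the genuinely smooth quotient map $q$ defining $\widetilde\boxtimes$, as you acknowledge in your closing sentence and as the cited reference does. Second, $2\rho(\lambda_i)$ is not necessarily even --- already for a fundamental weight of $\SL_2$ it equals $1$ --- but this has no bearing on the parity argument, which only needs the shifts $2\rho(\lambda_i)$ to add up modulo $2$ to $2\rho(|\bm\lambda|)$.
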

\begin{proof}
We content ourselves with the case $n=2$; the proof is the same in
the general case but requires more notation. Working out the
technicalities explained in \cite{BaumannRiche}, \S1.16.4, we get
$$\IC\Bigl(\overline{\Gr_2^{(\lambda_1,\lambda_2)}},\,
\underline{\mathbb C}\Bigr)=\IC\Bigl(\overline{\Gr^{\lambda_1}},\,
\underline{\mathbb C}\Bigr)\,\widetilde\boxtimes\,
\IC\Bigl(\overline{\Gr^{\lambda_2}},\,\underline{\mathbb C}\Bigr).$$
Applying $(m_2)_*$ then gives the announced equality, while the
parity property follows from \cite{Lusztig81}, sect.~11.
\end{proof}

A key argument in Mirković and Vilonen's proof of the geometric
Satake correspondence is the fact that for any $\lambda\in\Lambda^+$
and $\mu\in\Lambda$, all the irreducible components of
$\overline{\Gr^\lambda}\cap S_\mu$ (respectively,
$\overline{\Gr^\lambda}\cap T_\mu$) have dimension
$\rho(\lambda+\mu)$ (respectively, $\rho(\lambda-\mu)$)
(\cite{MirkovicVilonen}, Theorem~3.2). We need a similar result
for the intersections $\overline{\Gr_n^{\bm\lambda}}
\cap(m_n)^{-1}(S_\mu)$ and $\overline{\Gr_n^{\bm\lambda}}\cap
(m_n)^{-1}(T_\mu)$ inside the $n$-fold convolution variety.

Let $N^\vee$ be the unipotent radical of $B^\vee$. Then $S_\mu$ is
the $N^\vee(\mathcal K)$-orbit through $L_\mu$; this well-known fact
follows from the easily proved inclusion
$N^\vee(\mathcal K)\,L_\mu\subseteq S_\mu$ and the Iwasawa decomposition
\begin{equation}
\label{eq:IwasDec}
G^\vee(\mathcal K)=\bigsqcup_{\mu\in\Lambda}N^\vee(\mathcal K)\,
z^\mu\,G^\vee(\mathcal O).
\end{equation}
We record that $S_\mu=(N^\vee(\mathcal K)\,z^\mu)\,/\,N^\vee(\mathcal O)$
and that for each $\lambda\in\Lambda^+$, the action of the connected
subgroup $N^\vee(\mathcal O)$ leaves stable the intersection
$\overline{\Gr^\lambda}\cap S_\mu$, hence leaves stable each irreducible
component of this intersection.

The construction of the $n$-fold convolution variety is functorial in
the group $G^\vee$; applied to the inclusion $B^\vee\to G^\vee$, this
remark provides a natural map
$$\Psi:\bigsqcup_{(\mu_1,\ldots,\mu_n)\in\Lambda^n}
\bigl(N^\vee(\mathcal K)\,z^{\mu_1}\bigr)\,\times^{N^\vee(\mathcal O)}\;
\cdots\,\times^{N^\vee(\mathcal O)}\;
\bigl(N^\vee(\mathcal K)\,z^{\mu_n}\bigr)\,/\,N^\vee(\mathcal O)
\to\Gr_n.$$
Using \eqref{eq:IwasDec}, we easily see that $\Psi$ is bijective.

Given weights $\mu_1$, \dots, $\mu_n$ and $N^\vee(\mathcal O)$-stable
subsets $Z_1\subseteq S_{\mu_1}$, \dots, $Z_n\subseteq S_{\mu_n}$, we
define
$$Z_1\ltimes\cdots\ltimes Z_n=\widetilde Z_1\,\times^{N^\vee(\mathcal O)}\;
\cdots\,\times^{N^\vee(\mathcal O)}\;\widetilde Z_n\,/\,N^\vee(\mathcal O)$$
where each $\widetilde Z_j$ is the preimage of $Z_j$ under
the quotient map $N^\vee(\mathcal K)\,z^{\mu_j}\to S_{\mu_j}$.
With this notation, the bijectivity of $\Psi$ implies that
$$\Gr_n=\bigsqcup_{(\mu_1,\ldots,\mu_n)\in\Lambda^n}
\Psi\bigl(S_{\mu_1}\ltimes\cdots\ltimes S_{\mu_n}\bigr).$$
If $Z_1$, \dots, $Z_n$ are varieties, then $Z_1\ltimes\cdots\ltimes Z_n$
is an iterated fibration with basis $Z_1$ and successive fibers $Z_2$,
\dots, $Z_n$ and $\Psi$ induces an homeomorphism from
$Z_1\ltimes\cdots\ltimes Z_n$ onto its image.

We use the symbol $\Irr(-)$ to designate the set of irreducible
components of its argument. For $\lambda\in\Lambda^+$,
$\bm\lambda\in(\Lambda^+)^n$, and $\mu\in\Lambda$, we define
$${}_*\mathscr Z(\lambda)_\mu=\Irr\Bigl(\overline{\Gr^\lambda}\cap
S_\mu\Bigr)\quad\text{and}\quad{}_*\mathscr Z(\bm\lambda)_\mu=
\Irr\Bigl(\overline{\Gr_n^{\bm\lambda}}\cap(m_n)^{-1}(S_\mu)\Bigr).$$

\begin{proposition}
\label{pr:MVCyc}
Let $\bm\lambda=(\lambda_1,\ldots,\lambda_n)$ in $(\Lambda^+)^n$
and let $\mu\in\Lambda$.
\begin{enumerate}
\item
\label{it:PrMVCa}
All the irreducible components of $\overline{\Gr_n^{\bm\lambda}}\cap
(m_n)^{-1}(S_\mu)$ have dimension $\rho(|\bm\lambda|+\mu)$.
\item
\label{it:PrMVCb}
The map $(Z_1,\ldots,Z_n)\mapsto\overline{\Psi(Z_1\ltimes\cdots\ltimes Z_n)}$
induces a bijection
$$\bigsqcup_{\substack{(\mu_1,\ldots,\mu_n)\in\Lambda^n\\[2pt]
\mu_1+\cdots+\mu_n=\mu}}{}_*\mathscr Z(\lambda_1)_{\mu_1}
\times\cdots\times\,{}_*\mathscr Z(\lambda_n)_{\mu_n}
\xrightarrow\simeq\,{}_*\mathscr Z(\bm\lambda)_\mu.$$
(The bar above $\Psi(Z_1\ltimes\cdots\ltimes Z_n)$ means closure
in $(m_n)^{-1}(S_\mu)$.)
\end{enumerate}
\end{proposition}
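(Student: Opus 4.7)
The plan is to reduce to Mirković and Vilonen's theorem (the case $n=1$) by exploiting the iterated fibration structure implicit in the $\ltimes$-construction. The first step is to refine the disjoint union $\Gr_n=\bigsqcup_{\bm\mu\in\Lambda^n}\Psi(S_{\mu_1}\ltimes\cdots\ltimes S_{\mu_n})$ coming from the bijectivity of $\Psi$ into a decomposition of $(m_n)^{-1}(S_\mu)$. A class in $\Psi(S_{\mu_1}\ltimes\cdots\ltimes S_{\mu_n})$ admits a representative $[n_1z^{\mu_1},\ldots,n_nz^{\mu_n}]$ with $n_j\in N^\vee(\mathcal K)$; the product $n_1z^{\mu_1}\cdots n_nz^{\mu_n}$ rewrites, after conjugating each $N^\vee$-factor past the preceding torus elements (using $z^\nu N^\vee(\mathcal K)z^{-\nu}=N^\vee(\mathcal K)$), as an element of $N^\vee(\mathcal K)\,z^{\mu_1+\cdots+\mu_n}$. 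Hence $m_n$ sends $\Psi(S_{\mu_1}\ltimes\cdots\ltimes S_{\mu_n})$ into $S_{\mu_1+\cdots+\mu_n}$, and together with the bijectivity of $\Psi$ this yields
$$(m_n)^{-1}(S_\mu)=\bigsqcup_{\mu_1+\cdots+\mu_n=\mu}\Psi(S_{\mu_1}\ltimes\cdots\ltimes S_{\mu_n}).$$

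To intersect with $\overline{\Gr_n^{\bm\lambda}}=\overline{\Gr^{\lambda_1}}\,\widetilde\times\cdots\widetilde\times\,\overline{\Gr^{\lambda_n}}$, I observe that $[g_1,\ldots,g_n]\in\overline{\Gr_n^{\bm\lambda}}$ is equivalent to the $n$ separate conditions $[g_j]\in\overline{\Gr^{\lambda_j}}$. Applied to the distinguished representatives $g_j=n_jz^{\mu_j}$ inside a given stratum, this produces the identity
$$\overline{\Gr_n^{\bm\lambda}}\cap\Psi(S_{\mu_1}\ltimes\cdots\ltimes S_{\mu_n})=\Psi\bigl((\overline{\Gr^{\lambda_1}}\cap S_{\mu_1})\ltimes\cdots\ltimes(\overline{\Gr^{\lambda_n}}\cap S_{\mu_n})\bigr).$$

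Mirković and Vilonen's Theorem~3.2 then supplies the purity of each factor: $\overline{\Gr^{\lambda_j}}\cap S_{\mu_j}$ is equidimensional of dimension $\rho(\lambda_j+\mu_j)$ with irreducible components exactly ${}_*\mathscr Z(\lambda_j)_{\mu_j}$, each $N^\vee(\mathcal O)$-stable by connectedness of $N^\vee(\mathcal O)$. Feeding these $Z_j$ into the $\ltimes$-construction, $\Psi(Z_1\ltimes\cdots\ltimes Z_n)$ is an iterated fibration with irreducible base $Z_1$ and successive irreducible fibers $Z_2,\ldots,Z_n$, hence irreducible of dimension $\sum_j\rho(\lambda_j+\mu_j)=\rho(|\bm\lambda|+\mu)$. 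Combined with the stratum decomposition, this proves (i) stratum by stratum.

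For (ii), injectivity is immediate: distinct $\Psi(Z_1\ltimes\cdots\ltimes Z_n)$ lie in distinct strata, so their closures in $(m_n)^{-1}(S_\mu)$ retain distinct generic points from which both $\bm\mu$ and the tuple $(Z_j)$ may be read off. For surjectivity, each irreducible component $Y$ of the intersection meets at least one stratum, and since every stratum piece is equidimensional of the common dimension $\rho(|\bm\lambda|+\mu)$, a dimension count forces $Y$ to coincide with the closure of a unique $\Psi(Z_1\ltimes\cdots\ltimes Z_n)$. I expect the main obstacle to be this final topological matching — ruling out, under closure in $(m_n)^{-1}(S_\mu)$, any merging or absorption between irreducible pieces coming from different strata — for which the key inputs are the disjointness of the strata and the uniform dimension computed above.
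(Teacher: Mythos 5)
Your proof is correct and follows the paper's strategy: both establish $(m_n\circ\Psi)(S_{\mu_1}\ltimes\cdots\ltimes S_{\mu_n})\subseteq S_{\mu_1+\cdots+\mu_n}$, decompose $\overline{\Gr_n^{\bm\lambda}}\cap(m_n)^{-1}(S_\mu)$ as a disjoint union of iterated fibrations over the tuples $(\mu_1,\ldots,\mu_n)$ summing to $\mu$, and feed in Mirković--Vilonen's dimension estimate factor by factor. The place where the treatments diverge is the justification of (ii). You flag the potential ``merging between irreducible pieces coming from different strata'' as the main obstacle, but that is already dealt with by equidimensionality: closures of top-dimensional components of distinct strata cannot be nested. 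The genuinely delicate point — and the one the paper isolates — is identifying the irreducible components \emph{within} a single stratum piece $\Psi\bigl((\overline{\Gr^{\lambda_1}}\cap S_{\mu_1})\ltimes\cdots\bigr)$ with the products $(Z_1,\ldots,Z_n)$; for a fibration with reducible fiber this requires that the structure group (equivalently, the monodromy) does not permute the components of the fiber. The paper secures this by noting that each $\overline{\Gr^{\lambda_j}}\cap S_{\mu_j}$ is contractible under the $\mathbb C^\times$-action. Your argument implicitly secures the same thing via the $N^\vee(\mathcal O)$-stability of the $Z_j$, which you do record (connectedness of $N^\vee(\mathcal O)$ forces each $Z_j$, hence each component of the fiber, to be stable): this makes every $\Psi(Z_1\ltimes\cdots\ltimes Z_n)$ a well-defined closed irreducible subset of the stratum piece, and since these cover the piece and all share the top dimension they are precisely its irreducible components, completing the surjectivity half. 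So the argument goes through; it simply locates the subtle step elsewhere than the paper does. One small slip: ``distinct $\Psi(Z_1\ltimes\cdots\ltimes Z_n)$ lie in distinct strata'' is false when the weight tuples agree but the cycles differ; your next clause, reading off the $Z_j$ from the generic point, repairs this.
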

\begin{proof}
One easily checks that $(m_n\circ\Psi)\bigl(S_{\mu_1}\ltimes\cdots\ltimes
S_{\mu_n}\bigr)\subseteq S_{\mu_1+\cdots+\mu_n}$, whence
$$(m_n)^{-1}(S_\mu)=\bigsqcup_{\substack{(\mu_1,\ldots,\mu_n)\in
\Lambda^n\\[2pt]
\mu_1+\cdots+\mu_n=\mu}}\Psi\bigl(S_{\mu_1}\ltimes\cdots\ltimes
S_{\mu_n}\bigr)$$
for any $\mu\in\Lambda$. Adding $\bm\lambda=(\lambda_1,\ldots,\lambda_n)$
to the mix, we see that
$$\overline{\Gr_n^{\bm\lambda}}\cap(m_n)^{-1}(S_\mu)=
\bigsqcup_{\substack{(\mu_1,\ldots,\mu_n)\in\Lambda^n\\[2pt]
\mu_1+\cdots+\mu_n=\mu}}
\Psi\Bigl(\Bigl(\overline{\Gr^{\lambda_1}}\cap S_{\mu_1}\Bigr)
\ltimes\cdots\ltimes
\Bigl(\overline{\Gr^{\lambda_n}}\cap S_{\mu_n}\Bigr)\Bigr)$$
is the disjoint union over $(\mu_1,\ldots,\mu_n)$ of an iterated
fibration with base $\overline{\Gr^{\lambda_1}}\cap S_{\mu_1}$
and successive fibers $\overline{\Gr^{\lambda_2}}\cap S_{\mu_2}$,
\dots, $\overline{\Gr^{\lambda_n}}\cap S_{\mu_n}$. Statement
\ref{it:PrMVCa} follows directly from this observation and Mirković
and Vilonen's dimension estimates. Statement~\ref{it:PrMVCb}
requires the additional argument that the spaces
$\overline{\Gr^{\lambda_j}}\cap S_{\mu_j}$ are contractible
(because of the $\mathbb C^\times$-action), so the monodromy is
necessarily trivial.
\end{proof}

For $\lambda\in\Lambda^+$, $\bm\lambda\in(\Lambda^+)^n$, and
$\mu\in\Lambda$, we similarly define
$$\mathscr Z(\lambda)_\mu=\Irr\Bigl(\overline{\Gr^\lambda}\cap
T_\mu\Bigr)\quad\text{and}\quad\mathscr Z(\bm\lambda)_\mu=
\Irr\Bigl(\overline{\Gr_n^{\bm\lambda}}\cap(m_n)^{-1}(T_\mu)\Bigr).$$
Then all cycles in $\mathscr Z(\bm\lambda)_\mu$ have dimension
$\rho(|\bm\lambda|-\mu)$ and there is a natural bijection
\begin{equation}
\label{eq:BijMVCyc}
\bigsqcup_{\substack{(\mu_1,\ldots,\mu_n)\in\Lambda^n\\[2pt]
\mu_1+\cdots+\mu_n=\mu}}\mathscr Z(\lambda_1)_{\mu_1}
\times\cdots\times\,\mathscr Z(\lambda_n)_{\mu_n}
\xrightarrow\simeq\,\mathscr Z(\bm\lambda)_\mu.
\end{equation}

Elements in ${}_*\mathscr Z(\lambda)_\mu$, $\mathscr Z(\lambda)_\mu$,
${}_*\mathscr Z(\bm\lambda)_\mu$ or $\mathscr Z(\bm\lambda)_\mu$ are
called Mirković-Vilonen (MV) cycles. For future use, we note that the
map $Z\mapsto Z\cap\Gr_n^{\bm\lambda}$ provides a bijection from
$\mathscr Z(\bm\lambda)_\mu$ onto the set of irreducible components
of $\Gr_n^{\bm\lambda}\cap(m_n)^{-1}(T_\mu)$. (Each
$Z\in\mathscr Z(\bm\lambda)_\mu$ meets the open subset
$\Gr_n^{\bm\lambda}$ of $\overline{\Gr_n^{\bm\lambda}}$, because
the dimension of $\bigl(\overline{\Gr_n^{\bm\lambda}}\setminus
\Gr_n^{\bm\lambda}\bigr)\cap(m_n)^{-1}(T_\mu)$ is smaller than the
dimension of~$Z$.)

\subsection{Mirković-Vilonen bases}
\label{ss:MVBas}
Following Goncharov and Shen (\cite{GoncharovShen}, sect.~2.4),
we now define the MV basis of the tensor product representations
$$V(\bm\lambda)=F(\mathscr I_{\bm\lambda})=\bigoplus_{\mu\in\Lambda}
F_\mu(\mathscr I_{\bm\lambda}).$$

Let $\bm\lambda\in(\Lambda^+)^n$ and let $\mu\in\Lambda$.
By base change in the Cartesian square
$$\xymatrix@R=26pt@C=36pt{\overline{\Gr_n^{\bm\lambda}}\cap
(m_n)^{-1}(T_\mu)\ar[r]^-f\ar[d]&\overline{\Gr_n^{\bm\lambda}}
\ar[d]^{m_n}\\T_\mu\ar[r]^{t_\mu}&\Gr}$$
we compute
$$F_\mu(\mathscr I_{\bm\lambda})
=H^{2\rho(\mu)}\Bigl(T_\mu,(t_\mu)^!(m_n)_*\IC\Bigl(
\overline{\Gr_n^{\bm\lambda}},\,\underline{\mathbb C}\Bigr)\Bigr)
=H^{2\rho(\mu)}\Bigl(\overline{\Gr_n^{\bm\lambda}}\cap
(m_n)^{-1}(T_\mu),\,f^!\IC\Bigl(\overline{\Gr_n^{\bm\lambda}},
\underline{\mathbb C}\Bigr)\Bigr).$$

Let $j:\Gr_n^{\bm\lambda}\to\overline{\Gr_n^{\bm\lambda}}$ and
$g:\Gr_n^{\bm\lambda}\cap(m_n)^{-1}(T_\mu)\to\Gr_n^{\bm\lambda}$
be the inclusion maps. We can then look at the sequence of maps
\begin{align*}
F_\mu(\mathscr I_{\bm\lambda})
&\to H^{2\rho(\mu)}\Bigl(\overline{\Gr_n^{\bm\lambda}}\cap
(m_n)^{-1}(T_\mu),\,f^!j_*j^*\IC\Bigl(\overline{\Gr_n^{\bm\lambda}},
\underline{\mathbb C}\Bigr)\Bigr)\\
&=H^{2\rho(\mu)}\Bigl(\Gr_n^{\bm\lambda}\cap(m_n)^{-1}(T_\mu),\,
g^!\,\underline{\mathbb C}_{\Gr_n^{\bm\lambda}}
\bigl[\dim\Gr_n^{\bm\lambda}\bigr]\Bigr)\\
&\xrightarrow{\cap[\Gr_n^{\bm\lambda}]}
H^\BM_{2\rho(|\bm\lambda|-\mu)}
\Bigl(\Gr_n^{\bm\lambda}\cap(m_n)^{-1}(T_\mu)\Bigr).
\end{align*}
Here the first two maps carry out the restriction to $\Gr_n^{\bm\lambda}$
(technically, an adjunction followed by a base change) and
the last map is the Alexander duality\footnote{Specifically, the
generalization presented in~\cite{Fulton}, sect.~19.1, equation~(3)
or in~\cite{Iversen}, Theorem~IX.4.7.}.

We claim that these maps are isomorphisms. For the Alexander
duality, this comes from the smoothness of $\Gr_n^{\bm\lambda}$.
For the restriction, consider a stratum
$\Gr_n^{\bm\eta}\subseteq\overline{\Gr_n^{\bm\lambda}}$
with $\bm\eta\neq\bm\lambda$; denoting the inclusion map by $i$
and using the perversity condition, the parity property in
Proposition~\ref{pr:ICnFold}, and the dimension estimate for
$\overline{\Gr_n^{\bm\eta}}\cap(m_n)^{-1}(T_\mu)$, one checks that
$$H^k\Bigl(\overline{\Gr_n^{\bm\lambda}}\cap
(m_n)^{-1}(T_\mu),\,f^!i_*i^!\IC\Bigl(\overline{\Gr_n^{\bm\lambda}},
\underline{\mathbb C}\Bigr)\Bigr)$$
vanishes if $k<2\rho(\mu)+2$; therefore the stratum $\Gr_n^{\bm\eta}$
does not contribute to $F_\mu(\mathscr I_{\bm\lambda})$.

To sum up there is a natural isomorphism
\begin{equation}
\label{eq:DefMVBasis}
F_\mu(\mathscr I_{\bm\lambda})\xrightarrow\simeq
H^\BM_{2\rho(|\bm\lambda|-\mu)}
\Bigl(\Gr_n^{\bm\lambda}\cap(m_n)^{-1}(T_\mu)\Bigr).
\end{equation}
The irreducible components $\Gr_n^{\bm\lambda}\cap(m_n)^{-1}(T_\mu)$
have all dimension $\rho(|\bm\lambda|-\mu)$ and their fundamental
classes provide a basis of the Borel-Moore homology group above.
Gathering these bases for all weights $\mu\in\Lambda$ produces what we
call the MV basis of $V(\bm\lambda)$.

\section{$L$-perfect bases}
\label{se:LPerf}
In this section we consider a general setup, which captures properties
shared by both the MV bases and the dual canonical bases. As before,
$G$ is a connected reductive group over $\mathbb C$ endowed with a Borel
subgroup $B$ and a maximal torus $T\subseteq B$, $\Lambda$ is the character
lattice of $T$, and $\Phi$ and $\Phi^\vee$ are the root and coroot
systems of $(G,T)$. We denote by $\{\alpha_i\mid i\in I\}$ the set of
simple roots defined by $B$ and by $\{\alpha_i^\vee\mid i\in I\}$ the
set of simple coroots. We endow $\Lambda$ with the dominance order $\leq$
and denote the cone of dominant weights by $\Lambda^+$. We regard the Weyl
group $W$ as a subgroup of $\Aut(\Lambda)$; for $i\in I$, we denote by
$s_i$ the simple reflection along the root $\alpha_i$. When needed, we
choose simple root vectors $e_i$ and $f_i$ of weights $\pm\alpha_i$ in
the Lie algebra of $G$ in such a way that $[e_i,f_i]=-\alpha_i^\vee$.

\subsection{Semi-normal crystals}
\label{ss:Crystals}
We start by recalling the following definitions due to Kashiwara
\cite{Kashiwara95}. A semi-normal crystal is a set $B$ endowed with
a map $\wt:B\to\Lambda$ and, for each ``color'' $i\in I$, with a partition
into a collection of finite oriented strings. This latter structure is
recorded by the datum of operators
$$\tilde e_i:B\to B\sqcup\{0\}\quad\text{and}\quad
\tilde f_i:B\to B\sqcup\{0\}$$
which move an element of $b\in B$ upwards and downwards, respectively,
along the string of color~$i$ to which $b$ belongs. The special value
$0$ is assigned to $\tilde e_i(b)$ or $\tilde f_i(b)$ when $b$ is at
the upper or lower end of a string of color $i$. For convenience one
usually further sets $\tilde e_i(0)=\tilde f_i(0)=0$. The position of
$b$ in its string of color $i$ is recorded by functions
$\varepsilon_i$ and $\varphi_i$ defined as follows:
$$\varepsilon_i(b)=\max\bigl\{n\geq0\bigm|\tilde e_i(b)\neq0\bigr\},
\qquad\varphi_i(b)=\max\bigl\{n\geq0\bigm|\tilde f_i(b)\neq0\bigr\}.$$

Two compatibility conditions between the weight map $\wt$ and the datum
of the partitions into oriented strings are required: first, $\wt(b)$
increases by $\alpha_i$ when $b$ moves upwards the string of color $i$ to
which it belongs; second,
$\varphi_i(b)-\varepsilon_i(b)=\langle\alpha_i^\vee,\wt(b)\rangle$
for any $b\in B$ and any $i\in I$. These conditions imply that the
image of a string of color $i$ by the map $\wt$ is stable under the
action of the simple reflection $s_i$. As a consequence, the set
$\{\wt(b)\mid b\in B\}$ is stable under the action of the Weyl group $W$.

The direct sum of two semi-normal crystals $B_1$ and $B_2$ is defined to
be just the disjoint union of the underlying sets. The tensor product
$B_1\otimes B_2$ is defined to be the Cartesian product of the sets
endowed with the maps given in \cite{Kashiwara95}, \S7.3. Notably, for
each color, the strings in $B_1\otimes B_2$ are created from the strings
contained in $B_1$ and in $B_2$ by the process illustrated by the
picture below.

\begin{center}
\begin{tikzpicture}
  [scale=0.8,
   bvertex/.style={circle,draw=black,thin,inner sep=0pt,minimum size=5pt},
   barrow/.style={->,draw=black,thick,>=latex},
   gvertex/.style={circle,draw=black!50,thin,inner sep=0pt,minimum size=5pt},
   garrow/.style={->,draw=black!50,thick,>=latex},
   gtext/.style={black!50}]
  \path (0,0) node (t00) [bvertex] {};
  \path (1,0) node (t10) [bvertex] {};
  \path (2,0) node (t20) [bvertex] {};
  \path (3,0) node (t30) [bvertex] {};
  \path (4,0) node (t40) [bvertex] {};
  \path (0,-1) node (t01) [bvertex] {};
  \path (1,-1) node (t11) [bvertex] {};
  \path (2,-1) node (t21) [bvertex] {};
  \path (3,-1) node (t31) [bvertex] {};
  \path (4,-1) node (t41) [bvertex] {};
  \path (0,-2) node (t02) [bvertex] {};
  \path (1,-2) node (t12) [bvertex] {};
  \path (2,-2) node (t22) [bvertex] {};
  \path (3,-2) node (t32) [bvertex] {};
  \path (4,-2) node (t42) [bvertex] {};
  \path (-1.4,-0) node (tb0) [gvertex] {};
  \path (-1.4,-1) node (tb1) [gvertex] {};
  \path (-1.4,-2) node (tb2) [gvertex] {};
  \path (0,1.2) node (t0b) [gvertex] {};
  \path (1,1.2) node (t1b) [gvertex] {};
  \path (2,1.2) node (t2b) [gvertex] {};
  \path (3,1.2) node (t3b) [gvertex] {};
  \path (4,1.2) node (t4b) [gvertex] {};
  \draw (t00) edge [barrow] (t01);
  \draw (t01) edge [barrow] (t02);
  \draw (t02) edge [barrow] (t12);
  \draw (t12) edge [barrow] (t22);
  \draw (t22) edge [barrow] (t32);
  \draw (t32) edge [barrow] (t42);
  \draw (t10) edge [barrow] (t11);
  \draw (t11) edge [barrow] (t21);
  \draw (t21) edge [barrow] (t31);
  \draw (t31) edge [barrow] (t41);
  \draw (t20) edge [barrow] (t30);
  \draw (t30) edge [barrow] (t40);
  \draw (tb0) edge [garrow] (tb1);
  \draw (tb1) edge [garrow] (tb2);
  \draw (t0b) edge [garrow] (t1b);
  \draw (t1b) edge [garrow] (t2b);
  \draw (t2b) edge [garrow] (t3b);
  \draw (t3b) edge [garrow] (t4b);
  \node at (-2.4,-1) [gtext] {$B_1\supseteq$};
  \node at (5.2,1.2) [gtext] {$\subseteq B_2$};
\end{tikzpicture}
\end{center}

\subsection{$L$-perfect bases}
\label{ss:LPerfBas}
To a subset $J\subseteq I$ we attach the standard Levi subgroup $M_J$
of $G$, the cone
$$\Lambda_J^+=\bigl\{\lambda\in\Lambda\bigm|\forall j\in J,\;
\langle\alpha_j^\vee,\lambda\rangle\geq0\bigr\}$$
of dominant weights for $M_J$, and the $J$-dominance order $\leq^{}_J$
on $\Lambda$ defined by
$$\mu\leq^{}_J\lambda\;\ \Longleftrightarrow\;\ \lambda-\mu\in
\linspan_{\mathbb N}\{\alpha_j\mid j\in J\}.$$
Given $\lambda\in\Lambda_J^+$ we denote by $V_J(\lambda)$ the irreducible
rational representation of $M_J$ with highest weight $\lambda$.
Given a finite sequence $\bm\lambda=(\lambda_1,\ldots,\lambda_n)$ in
$\Lambda_J^+$ we define
$$V_J(\bm\lambda)=V_J(\lambda_1)\otimes\cdots\otimes V_J(\lambda_n).$$

For $J=I$ we recover the conventions previously used by dropping the
decoration $J$ in the notation $\Lambda_J^+$, $\leq^{}_J$ or
$V_J(\bm\lambda)$.

Let $V$ be a rational representation of $G$. With respect to the action
of $M_J$ the space $V$ can be uniquely written as a direct sum of
isotypical components
$$V=\bigoplus_{\mu\in\Lambda_J^+}V_{J,\mu}$$
where $V_{J,\mu}$ is the sum of all subrepresentations of
$\res_{M_J}^G(V)$ isomorphic to $V_J(\mu)$. We define
$$V_{J,\leq^{}_J\mu}=\bigoplus_{\substack{\nu\in\Lambda_J^+\\[2pt]
\nu\leq^{}_J\mu}}V_{J,\nu}.$$

We say that a linear basis $B$ of $V$ is $L$-perfect\footnote{$L$
stands for Levi. This notion of $L$-perfect basis appears unnamed
(and in a dual form) in Braverman and Gaitsgory's paper
(\cite{BravermanGaitsgory01}, sect.~4.3).} if for each $J\subseteq I$
and each $\mu\in\Lambda_J^+$:
\begin{description}
\item[(P1)]
The subspace $V_{J,\leq^{}_J\mu}$ is spanned by a subset of $B$.
\item[(P2)]
The induced basis on the quotient $V_{J,\leq^{}_J\mu}/V_{J,<^{}_J\mu}
\cong V_{J,\mu}$ is compatible with a decomposition of the isotypical
component as a direct sum of irreducible representations.
\end{description}

Taking $J=\varnothing$, we see that an $L$-perfect basis of $V$
consists of weight vectors (note that $\leq_\varnothing$ is the
trivial order on $\Lambda$). Now let $i\in I$, and for each
nonnegative integer $\ell$, define
$$V_{\{i\},\leq\ell}=\bigoplus_{\substack{\mu\in\Lambda\\[2pt]
0\leq\langle\alpha_i^\vee,\mu\rangle\leq\ell}}V_{\{i\},\mu},$$
the sum of all irreducible subrepresentations of
$\res_{M_{\{i\}}}^G(V)$ of dimension at most $\ell+1$.
If $B$ satisfies the conditions (P1) and (P2) for $J=\{i\}$,
then $V_{\{i\},\leq\ell}$ is spanned by $B\cap V_{\{i\},\leq\ell}$
and the induced basis on the quotient
$V_{\{i\},\leq\ell}/V_{\{i\},\leq\ell-1}$ is compatible with a
decomposition as a direct sum of irreducible representations.
Therefore $(B\cap V_{\{i\},\leq\ell})
\setminus(B\cap V_{\{i\},\leq\ell-1})$ decomposes as the disjoint
union of oriented strings of length $\ell$, in such a way that the
simple root vector $e_i$ or $f_i$ acts on a basis vector of
$V_{\{i\},\leq\ell}/V_{\{i\},\leq\ell-1}$ by moving it upwards
or downwards along the string that contains it, up to a scalar.

We can sum up the discussion in the previous paragraph as follows:
if $B$ satisfies (P1) and (P2) for all $J$ of cardinality $\leq1$,
then $B$ is endowed with the structure of crystal and is perfect
in the sense of Berenstein and Kazhdan
(\cite{BerensteinKazhdan}, Definition~5.30).

\begin{lemma}
\label{le:HeredLPerf}
Let $B$ be an $L$-perfect basis of a rational representation $V$ of $G$
and let $B'\subseteq B$. Assume that the linear space $V'$ spanned by $B'$
is a subrepresentation of $V$. Then $B'$ is an $L$-perfect basis of $V'$
and (the image of) $B\setminus B'$ is an $L$-perfect basis of the quotient
$V/V'$.
\end{lemma}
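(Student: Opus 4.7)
The plan is to verify properties (P1) and (P2) directly for $B'$ as a basis of $V'$, and for (the image of) $B \setminus B'$ as a basis of $V/V'$, for each $J \subseteq I$ and each $\mu \in \Lambda_J^+$. Since $V'$ is $G$-stable, it is a fortiori $M_J$-stable, so the isotypical decomposition is compatible: $V'_{J,\nu} = V' \cap V_{J,\nu}$ for every $\nu$, and consequently $V'_{J,\leq_J\mu} = V' \cap V_{J,\leq_J\mu}$. Analogously $(V/V')_{J,\leq_J\mu} = V_{J,\leq_J\mu}/V'_{J,\leq_J\mu}$ on the quotient side. These identities reduce the lemma to playing the two conditions off against the decomposition of $B$ into pieces inside and outside $B'$.

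For (P1) applied to $V'$, I start from the observation that (P1) for $B$ makes $B \cap V_{J,\leq_J\mu}$ a basis of $V_{J,\leq_J\mu}$, so the image of $B \setminus V_{J,\leq_J\mu}$ is a basis of the quotient $V/V_{J,\leq_J\mu}$. Expanding any $v \in V' \cap V_{J,\leq_J\mu}$ along $B'$ and projecting to $V/V_{J,\leq_J\mu}$ then forces the coefficients on basis vectors outside $V_{J,\leq_J\mu}$ to vanish, so $v$ lies in the span of $B' \cap V_{J,\leq_J\mu}$. This is (P1) for $B'$, and the corresponding statement for $V/V'$ is immediate, since $V_{J,\leq_J\mu}/V'_{J,\leq_J\mu}$ is then spanned by the images of $(B \setminus B') \cap V_{J,\leq_J\mu}$.

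For (P2), I use (P2) for $B$ to pick a decomposition $V_{J,\mu} = W_1 \oplus \cdots \oplus W_m$ into irreducibles such that the induced basis of $V_{J,\leq_J\mu}/V_{J,<_J\mu} \cong V_{J,\mu}$ partitions as $\bigsqcup_k \bar B_\mu^{(k)}$ with $\bar B_\mu^{(k)}$ a basis of $W_k$. Thanks to the version of (P1) just established for $V'$, the induced basis of $V'_{J,\mu}$ is exactly the restriction $\bar B'_\mu \subseteq \bar B_\mu$. The crux is then that $V'_{J,\mu}$, being an $M_J$-subrepresentation of the isotypical component $V_{J,\mu}$, meets each irreducible $W_k$ either trivially or in $W_k$ entirely; hence as soon as one element of $\bar B_\mu^{(k)}$ appears in $\bar B'_\mu$, the whole $W_k$ sits in $V'_{J,\mu}$ and consequently $\bar B_\mu^{(k)} \subseteq \bar B'_\mu$. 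Setting $K' = \{k \mid \bar B_\mu^{(k)} \subseteq \bar B'_\mu\}$ gives $V'_{J,\mu} = \bigoplus_{k \in K'} W_k$ with a partition of $\bar B'_\mu$ compatible with this decomposition, and $(V/V')_{J,\mu} = \bigoplus_{k \notin K'} W_k$ inherits the complementary partition from the image of $B \setminus B'$. The main (modest) obstacle is precisely this dichotomy, namely that an $M_J$-submodule spanned by basis vectors of a decomposition-compatible basis must be a sum of complete summands $W_k$; everything else is bookkeeping with the filtration $V_{J,\leq_J\mu}$.
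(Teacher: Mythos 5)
Your proof is correct and follows essentially the same line of reasoning as the paper's: for (P1) you use the compatibility $V'_{J,\leq_J\mu}=V'\cap V_{J,\leq_J\mu}$ together with the fact that both subspaces are spanned by subsets of $B$, and for (P2) you exploit the dichotomy that the $M_J$-stable subspace $V'_{J,\mu}$ meets each irreducible summand $W_k$ either trivially or entirely, so that the induced basis of $V'_{J,\mu}$ is necessarily a union of complete blocks $\bar B_\mu^{(k)}$. The paper phrases (P1) as "intersection of two basis-spanned subspaces is basis-spanned" while you argue via projection to $V/V_{J,\leq_J\mu}$, but these are the same observation; both proofs leave the quotient case to the reader.
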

\begin{proof}
Let $J\subseteq I$ and $\mu\in\Lambda_J^+$. Then
$V'_{J,\mu}=V'\cap V_{J,\mu}$ and
$V'_{J,\leq^{}_J\mu}=V'\cap V_{J,\leq^{}_J\mu}$. Now both spaces $V'$
and $V_{J,\leq^{}_J\mu}$ are spanned by a subset of the basis $B$, so
their intersection $V'_{J,\leq^{}_J\mu}$ is spanned by a subset of $B$,
namely $B'\cap V_{J,\leq^{}_J\mu}$.

Let $C$ be the image of
$(B\cap V_{J,\leq^{}_J\mu})\setminus(B\cap V_{J,<^{}_J\mu})$
in the quotient $V_{J,\leq^{}_J\mu}/V_{J,<^{}_J\mu}\cong V_{J,\mu}$.
Then $C$ can be viewed as a basis of $V_{J,\mu}$ and it can be partitioned
into disjoint subsets $C_1$, \dots, $C_n$ so that each $C_k$ spans an
irreducible subrepresentation. By construction, the subspace~$V'_{J,\mu}$
is spanned by a subset $C'\subseteq C$. Each subset $C_k$ can be either
contained in $C'$ or disjoint from~$C'$, depending whether the
subrepresentation that it spans is contained in $V'_{J,\mu}$ or
meets trivially~$V'_{J,\mu}$. Therefore $C'$ is the disjoint union of
some $C_k$; in other words $C'$ is compatible with a decomposition of
$V'_{J,\mu}$ as a direct sum of irreducible subrepresentations.

Thus, $B'$ satisfies both conditions (P1) and (P2), and is therefore an
$L$-perfect basis of $V'$. The proof that $B\setminus B'$ yields an
$L$-perfect basis of the quotient $V/V'$ rests on similar arguments and
is left to the reader.
\end{proof}

Under the assumptions of Lemma~\ref{le:HeredLPerf}, the subset $B'$ is
a subcrystal of $B$; in other words, the crystal structure on $B$ is
the direct sum of the crystal structures on $B'$ and $B\setminus B'$.

\begin{proposition}
\label{pr:UniqCrys}
Let $V$ be a rational representation of $G$. Up to isomorphism, the
crystal of an $L$-perfect basis of $V$ depends only on $V$, and
not on the basis.
\end{proposition}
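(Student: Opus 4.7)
The plan is to reduce the statement to the case of a simple representation, then invoke the uniqueness of the crystal attached to a perfect basis of a simple module, which is due to Berenstein and Kazhdan.

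First, I would apply conditions (P1) and (P2) with $J = I$ and iterate Lemma~\ref{le:HeredLPerf}. For each $\lambda \in \Lambda^+$, the subspace $V_{I,\leq\lambda}$ is a $G$-subrepresentation of $V$ spanned by $B \cap V_{I,\leq\lambda}$, so $B$ induces an $L$-perfect basis on the subquotient $V_{I,\leq\lambda}/V_{I,<\lambda} \cong V_{I,\lambda}$, which is isomorphic to $V(\lambda)^{\oplus m_\lambda}$ for some multiplicity $m_\lambda$ depending only on $V$. By (P2), the induced basis of $V_{I,\lambda}$ is compatible with some decomposition $V_{I,\lambda} = \bigoplus_{k=1}^{m_\lambda} W_k^\lambda$ into irreducible submodules, and a further application of Lemma~\ref{le:HeredLPerf} endows each $W_k^\lambda \cong V(\lambda)$ with an induced $L$-perfect basis.

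Second, I would show that the crystal structure on $B$ is compatible with this refined decomposition. For each $b \in B$, let $\lambda(b) \in \Lambda^+$ be the smallest element such that $b \in V_{I,\leq\lambda(b)}$; then the image $\overline b \in V_{I,\lambda(b)}$ is nonzero and belongs to the subbasis of a unique summand $W^{\lambda(b)}_{k(b)}$. This defines a partition $B = \bigsqcup_{\lambda,k} B_k^\lambda$. Because $\{V_{I,\leq\lambda}\}$ is a filtration by $G$-subrepresentations, hence by $M_{\{i\}}$-subrepresentations, the crystal operators $\tilde e_i$ and $\tilde f_i$ preserve this filtration and descend to the crystal operators attached to the induced basis on each subquotient. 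Moreover, by the remark following Lemma~\ref{le:HeredLPerf}, each subbasis of $W_k^\lambda$ is a subcrystal of the basis of $V_{I,\lambda}$, so the crystal operators on $V_{I,\lambda}$ respect the partition into the summands. Combining these facts, $\tilde e_i$ and $\tilde f_i$ preserve the partition $\{B_k^\lambda\}$, and the crystal of $B$ decomposes as the disjoint union, over pairs $(\lambda,k)$, of the crystals of the $L$-perfect bases of the simple modules $W_k^\lambda \cong V(\lambda)$.

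Finally, an $L$-perfect basis is perfect in the sense of Berenstein and Kazhdan, as observed at the end of Subsection~\ref{ss:LPerfBas}, so by their uniqueness theorem in \cite{BerensteinKazhdan} the crystal of an $L$-perfect basis of a simple module $V(\lambda)$ is isomorphic to Kashiwara's standard crystal of $V(\lambda)$. Combining with the previous step, the crystal of $B$ is isomorphic to the disjoint union over $\lambda \in \Lambda^+$ of $m_\lambda$ copies of Kashiwara's crystal of $V(\lambda)$, which depends only on $V$. The main obstacle is the second paragraph: one must verify that the crystal operators on $B$ are literally compatible with the partition induced by the refined isotypical decomposition, even though the isotypical filtration does not split canonically. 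This reduces to combining the hereditary property of $L$-perfect bases with the compatibility of the $M_{\{i\}}$-isotypical filtration (used to define the crystal operators) with the $G$-isotypical filtration.
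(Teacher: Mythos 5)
Your proof is correct and follows essentially the same route as the paper's: reduce to the irreducible case via Lemma~\ref{le:HeredLPerf} and the remark following it (which gives that the subbasis spanning a subrepresentation is a subcrystal), then invoke Berenstein and Kazhdan's uniqueness theorem. The paper's version is more terse, phrasing the reduction in terms of a composition series compatible with $B$, but the substance is identical, and the potential obstacle you flag at the end is precisely what the remark after Lemma~\ref{le:HeredLPerf} settles.
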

\begin{proof}
Let $B$ be an $L$-perfect basis of $V$. The conditions imposed on
$B$ with the choice $J=I$ imply the existence in $V$ of a composition
series compatible with $B$. By Lemma~\ref{le:HeredLPerf}, the crystal
$B$ is the direct sum of the crystals of the $L$-perfect bases
induced by $B$ on the subquotients. It thus suffices to prove the
desired uniqueness property in the particular case where $V$ is an
irreducible representation, which in fact is just Theorem~5.37
in~\cite{BerensteinKazhdan}.
\end{proof}
In particular, the crystal of an $L$-perfect basis of an irreducible
representation $V(\lambda)$ is unique. We use henceforth the notation
$B(\lambda)$ for the associated crystal.

\begin{other}{Remark}
The crystal $B(\lambda)$ of an irreducible representation $V(\lambda)$
was introduced by Kashiwara in the context of representations
of quantum groups. The definition via crystallization at $q=0$ and
the definition via the combinatorics of $L$-perfect bases yield the
same crystal; this follows from \cite{Kashiwara93a}, sect.~5.
\end{other}

Fortunately this nice little theory is not empty. As mentioned in the
introduction, any tensor product of irreducible representations has
an $L$-perfect basis, namely its dual canonical basis.
Another example for a $L$-perfect basis: it can be shown, in the case
where $G$ is simply laced, that the dual semicanonical basis of an
irreducible representation is $L$-perfect.

\begin{theorem}
\label{th:MVBasLPerf}
The MV basis of a tensor product of irreducible representations is
$L$-perfect.
\end{theorem}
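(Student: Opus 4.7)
The plan is to redo the construction of the MV basis from Subsection~\ref{ss:MVBas} with the regular cocharacter $\theta$ replaced by a partial one $\theta_J$ that is central in the dual Levi~$M_J^\vee$: by Braden's theorem and parabolic geometric Satake (a standard variant of \cite{MirkovicVilonen}), the corresponding hyperbolic localization computes the Levi restriction functor $\res^G_{M_J}$, so that the $M_J$-isotypical decomposition of $V(\bm\lambda)$ becomes geometrically visible and can be matched against the MV basis.

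Fix $J\subseteq I$ and choose $\theta_J\in\Lambda$ with $\langle\alpha_j^\vee,\theta_J\rangle=0$ for $j\in J$ and $\langle\alpha_j^\vee,\theta_J\rangle>0$ for $j\notin J$. The $\theta_J$-fixed locus in $\Gr$ is the image of the closed embedding $\Gr_{M_J^\vee}\hookrightarrow\Gr$, stratified by $M_J^\vee(\mathcal O)$-orbits $\Gr_{M_J^\vee}^{M_J,\mu}$ indexed by $\mu\in\Lambda_J^+$; let $T_\mu^J$ denote its $\theta_J$-attracting set, which is a disjoint union of the $\theta$-attracting sets $T_\nu$ for those weights $\nu$ flowing to the given stratum. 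Parabolic Satake identifies the partial hyperbolic localization $F_\mu^J(\mathscr I_{\bm\lambda})$ with the $M_J$-isotypical component $V(\bm\lambda)_{J,\mu}$, and the closure relations $\overline{\Gr_{M_J^\vee}^{M_J,\mu}}=\bigsqcup_{\mu'\leq^{}_J\mu}\Gr_{M_J^\vee}^{M_J,\mu'}$, together with the usual support argument, supply the filtration $V(\bm\lambda)_{J,\leq^{}_J\mu}$.

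Re-running the Alexander duality/adjunction argument of Subsection~\ref{ss:MVBas} with $\theta_J$ in place of $\theta$, and combining Proposition~\ref{pr:ICnFold} (parity) with the dimension estimate analogous to Proposition~\ref{pr:MVCyc} (which reduces, via $T_\mu^J=\bigsqcup_\nu T_\nu$, to Proposition~\ref{pr:MVCyc} stratum by stratum), produces a natural isomorphism
$$F_\mu^J(\mathscr I_{\bm\lambda})\xrightarrow{\ \sim\ }H^{\BM}_{2\rho(|\bm\lambda|-\mu)}\!\Bigl(\Gr_n^{\bm\lambda}\cap(m_n)^{-1}(T_\mu^J)\Bigr).$$
The fundamental classes of the top-dimensional components on the right furnish a basis indexed by the subset $\mathscr Z(\bm\lambda)_\mu^J\subseteq\mathscr Z(\bm\lambda)$ of MV cycles whose image under $m_n$ meets $\Gr_{M_J^\vee}^{M_J,\mu}$. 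This partitions the MV basis by $J$-dominant weights, and the filtration by $\leq^{}_J\mu$-closures yields (P1). For (P2), pass to the subquotient $V(\bm\lambda)_{J,\leq^{}_J\mu}/V(\bm\lambda)_{J,<^{}_J\mu}\cong V_J(\mu)^{\oplus r}$, where $r$ is the number of top-dimensional irreducible components of $\overline{\Gr_n^{\bm\lambda}}\cap(m_n)^{-1}(\Gr_{M_J^\vee}^{M_J,\mu})$; each such component $Y$ contributes one copy of $V_J(\mu)$, and the cycles in $\mathscr Z(\bm\lambda)_\mu^J$ whose image lies in $\overline Y$ project onto the MV basis of that copy by applying the $n=1$ case of the theorem over $M_J$, which is Braverman and Gaitsgory's result~\cite{BravermanGaitsgory01}. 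Hence the induced basis is compatible with the decomposition into irreducibles.

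The main technical hurdle is the compatibility, at the level of Borel-Moore fundamental classes, between the full hyperbolic localization (via $\theta$) and the partial one (via $\theta_J$). Concretely, in verifying (P2) one must show that after passing to each subquotient $V(\bm\lambda)_{J,\mu}$ the block of MV cycles sitting over a top-dimensional component $Y$ is precisely the image of the MV basis that Braverman-Gaitsgory associate to the corresponding $V_J(\mu)$-summand on the Levi side. This amounts to iterating Braden's theorem along the decomposition $\theta=\theta_J+\theta'$ with $\theta'$ strictly $J$-dominant, and carefully tracking fundamental classes through the two resulting Alexander duality isomorphisms.
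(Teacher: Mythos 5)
Your overall strategy --- geometrize $\res_{M_J}^G$ via hyperbolic localization with respect to $\theta_J$ and compare with MV cycles --- is the same route the paper takes, but two of the intermediate steps fail as stated. The first is the claim that the $\theta_J$-attracting set $T_\mu^J$ of the orbit $\Gr_J^\mu$ is a disjoint union of the $\theta$-attracting sets $T_\nu$. By \eqref{eq:ParSemiinf}, $T_\nu=(q_{J,\zeta})^{-1}(T_{J,\nu})$, and $T_{J,\nu}\subseteq\Gr_{J,\zeta}$ meets every orbit $\Gr_J^{\mu'}$ whose closure contains $L_\nu$, not only the one corresponding to the $J$-dominant conjugate of $\nu$; symmetrically, $T_\mu^J=(q_{J,\zeta})^{-1}(\Gr_J^\mu)$ meets every $T_\nu$ for which $T_{J,\nu}\cap\Gr_J^\mu\neq\varnothing$. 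These are two transverse stratifications, not nested ones, so the dimension bound for $T_\mu^J$ does not follow from Proposition~\ref{pr:MVCyc} stratum by stratum; the paper instead obtains it via a locally trivial fibration $\overline{\Gr_n^{\bm\lambda}}\cap(q_{J,\zeta}\circ m_n)^{-1}(\Gr_J^\mu)\to\Gr_J^\mu$ whose fiber sits in $\overline{\Gr_n^{\bm\lambda}}\cap(m_n)^{-1}(T_\mu)$, yielding \eqref{eq:ParabDimEst}.

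The second and more serious gap is the claimed isomorphism $F_\mu^J(\mathscr I_{\bm\lambda})\cong H^\BM_{2\rho(|\bm\lambda|-\mu)}\bigl(\Gr_n^{\bm\lambda}\cap(m_n)^{-1}(T_\mu^J)\bigr)$; it cannot hold, already by counting. The top-dimensional irreducible components of the set on the right are indexed by $\mathscr Z^J(\bm\lambda)_\mu$, so the top Borel--Moore homology has rank $\Card\mathscr Z^J(\bm\lambda)_\mu$, the multiplicity of $V_J(\mu)$ in $\res^G_{M_J}V(\bm\lambda)$, whereas the isotypical component $V(\bm\lambda)_{J,\mu}$ has dimension $\Card\mathscr Z^J(\bm\lambda)_\mu\cdot\dim V_J(\mu)$. (The degree is also off: $\Gr_n^{\bm\lambda}\cap(q_{J,\zeta}\circ m_n)^{-1}(\Gr_J^\mu)$ has top dimension $2\rho_J(\mu)+\rho(|\bm\lambda|-\mu)$, not $\rho(|\bm\lambda|-\mu)$.) What the $\theta_J$-localization actually produces over $\Gr_J^\mu$ is the local system $\mathscr L_\mu$ in the decomposition \eqref{eq:DecResSheaf}, not the isotypical component itself. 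The step your argument is missing is the paper's point (A), which identifies the costalk $(\mathscr L_\mu)_x\cong H^\BM_{2\rho(|\bm\lambda|-\mu)}\bigl(\Gr_n^{\bm\lambda}\cap(q_{J,\zeta}\circ m_n)^{-1}(x)\bigr)$ at a single point $x\in\Gr_J^\mu$, with basis $\mathscr Z^J(\bm\lambda)_\mu$, and then recovers the full isotypical component through $\IC\bigl(\overline{\Gr_J^\mu},\mathscr L_\mu\bigr)$ together with the weight-by-weight intersections with $T_{J,\nu}$ as in \eqref{eq:MVBasLPerf3}; this in turn rests on the $J$-decomposition $\mathscr Z(\bm\lambda)_\nu\cong\bigsqcup_\mu\mathscr Z^J(\bm\lambda)_\mu\times\mathscr Z_J(\mu)_\nu$ of sect.~\ref{ss:JDecMVCyc}, which your sketch does not set up.
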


The end of sect.~\ref{se:LPerf} is devoted to the proof of this result.
The case of an irreducible representation is Proposition~4.1 in
\cite{BravermanGaitsgory01}. The proof for an arbitrary tensor product
follows the same lines; it is only sketched in \textit{loc.~cit.}, and
we add quite a few details to Braverman and Gaitsgory's exposition.

\subsection{Geometric Satake and restriction to a standard Levi subgroup}
\label{ss:SatResLevi}
Consider a subset $J\subseteq I$. In this section we recall Beilinson
and Drinfeld's geometric construction of the restriction functor
$\res_{M_J}^G$ (\cite{BeilinsonDrinfeld}, sect.~5.3).
Additional details can be found in \cite{Lysenko}, sect.~8.6 and
\cite{BaumannRiche}, sect.~1.15.

Define the root and coroot systems
$$\Phi_J=\Phi\cap\linspan_{\mathbb Z}\{\alpha_j\mid j\in J\}
\quad\text{and}\quad\Phi_J^\vee=\Phi^\vee\cap
\linspan_{\mathbb Z}\{\alpha_j^\vee\mid j\in J\}$$
and denote by $\rho_J:\Lambda\to\mathbb Q$ the half-sum of the positive
coroots in $\Phi_J^\vee$. Then $\rho-\rho_J$ vanishes
on $\mathbb Z\Phi_J$ so induces a linear form
$\rho_{I,J}:\Lambda/\mathbb Z\Phi_J\to\mathbb Q$.

To $J$ we also attach the standard Levi subgroup $M_J^\vee$ of $G^\vee$.
Choose a dominant $\theta_J\in\Lambda$ such that
$\langle\alpha_j^\vee,\theta_J\rangle=0$ for each $j\in J$ and
$\langle\alpha_i^\vee,\theta_J\rangle>0$ for each $i\in I\setminus J$.
The embedding
$$\mathbb C^\times\xrightarrow{\theta_J}T^\vee(\mathbb C)\to
G^\vee(\mathcal K)$$
gives rise to an action of $\mathbb C^\times$ on $\Gr$. Then the set
$\Gr_J$ of fixed points under this action can be identified with the
affine Grassmannian for $M_J^\vee$. We denote by $\Perv(\Gr_J)$ the
category of $M_J^\vee(\mathcal O)$-equivariant sheaves on $\Gr_J$
supported on finitely many $M_J^\vee(\mathcal O)$-orbits.

Let $\zeta\in\Lambda/\mathbb Z\Phi_J$ be a coset; then all the points
$L_\mu$ for $\mu\in\zeta$ belong to the same connected component
of $\Gr_J$, which we denote by $\Gr_{J,\zeta}$. The map
$\zeta\mapsto\Gr_{J,\zeta}$ is a bijection from $\Lambda/\mathbb Z\Phi_J$
onto $\pi_0(\Gr_J)$. We denote the stable and unstable sets of
$\Gr_{J,\zeta}$ with respect to the $\mathbb C^\times$-action by
$\Gr_{J,\zeta}^+$ and $\Gr_{J,\zeta}^-$ and form the diagram
\begin{equation}
\label{eq:ParabRetrac}
\begin{split}
\xymatrix@R=20pt@C=10pt{&\Gr&\\\Gr_{J,\zeta}^+\ar[ur]^{s_\zeta^J}
\ar[dr]_{p_{J,\zeta}}&&\Gr_{J,\zeta}^-\ar[ul]_{t_\zeta^J}
\ar[dl]^{q_{J,\zeta}}\\&\Gr_{J,\zeta}&}
\end{split}
\end{equation}
where $s_\zeta^J$ and $t_\zeta^J$ are the inclusion maps and the maps
$p_{J,\zeta}$ and $q_{J,\zeta}$ are defined by
$$p_{J,\zeta}(x)=\lim_{c\to0}\theta_J(c)\cdot x
\quad\text{and}\quad
q_{J,\zeta}(x)=\lim_{c\to\infty}\theta_J(c)\cdot x.$$

Given $\zeta\in\Lambda/\mathbb Z\Phi_J$ and $\mathscr A\in\Perv(\Gr)$,
Beilinson and Drinfeld identify the two sheaves
$$(q_{J,\zeta})_*\,(t_\zeta^J)^!\,\mathscr A\quad\text{and}
\quad(p_{J,\zeta})_!\,(s_\zeta^J)^*\,\mathscr A$$
on $\Gr_{J,\zeta}$ via Braden's hyperbolic localization and show that
they live in perverse degree $2\rho_{I,J}(\zeta)$. Then they define a
functor $r_J^I:\Perv(\Gr)\to\Perv(\Gr_J)$ by
$$r_J^I(\mathscr A)=\bigoplus_{\zeta\in\Lambda/\mathbb Z\Phi_J}
(q_{J,\zeta})_*\,(t_\zeta^J)^!\,\mathscr A[2\rho_{I,J}(\zeta)].$$

For $\mu\in\Lambda$, let $T_{J,\mu}$ be the analog of the unstable
subset $T_\mu$ for the affine Grassmannian $\Gr_J$. Let $\zeta$ be
the coset of $\mu$ modulo $\mathbb Z\Phi_J$ and let
$t_{J,\mu}:T_{J,\mu}\to\Gr_{J,\zeta}$ be the inclusion map.
Using the Iwasawa decomposition, one checks that
$T_\mu\subseteq\Gr_{J,\zeta}^-$ and
\begin{equation}
\label{eq:ParSemiinf}
T_\mu=(q_{J,\zeta})^{-1}(T_{J,\mu}).
\end{equation}
Performing base change in the Cartesian square
$$\xymatrix@R=26pt@C=26pt{T_\mu\ar[r]\ar[r]\ar@/^3ex/[rr]^{t_\mu}\ar[d]
&\Gr_{J,\zeta}^-\ar[r]_{t_\zeta^J}\ar[d]^{q_{J,\zeta}}&\Gr\\
T_{J,\mu}\ar[r]_{t_{J,\mu}}&\Gr_{J,\zeta}&}$$
we obtain, for any sheaf $\mathscr A\in\Perv(\Gr)$, a canonical isomorphism
\begin{equation}
\label{eq:TransitBD}
H^{2\rho(\mu)}\bigl(T_\mu,\,(t_\mu)^!\mathscr A\bigr)\cong
H^{2\rho_J(\mu)}\bigl(T_{J,\mu},\,(t_{J,\mu})^!\;r_J^I(\mathscr A)\bigr).
\end{equation}

For $\mathscr B\in\Perv(\Gr_J)$, define
$$F_{J,\mu}(\mathscr B)=H^{2\rho_J(\mu)}\bigl(T_{J,\mu},\,
(t_{J,\mu})^!\mathscr B\bigr)\quad\text{and}\quad
F_J(\mathscr B)=\bigoplus_{\mu\in\Lambda}F_{J,\mu}(\mathscr B).$$
Then \eqref{eq:TransitBD} can be rewritten as
$F_\mu=F_{J,\mu}\circ r_J^I$. This equality can be refined in the
following statement: the functor $F_J$ induces an equivalence
$\overline{F_J}$ from $\Perv(\Gr_J)$ to the category $\Rep(M_J)$
of finite dimensional rational representations of $M_J$ and the
following diagram commutes.
$$\xymatrix@R=30pt@C=30pt{\Perv(\Gr)\ar[r]^{\overline F}\ar[d]_{r_J^I}
&\Rep(G)\ar[d]^{\res_{M_J}^G}\\\Perv(\Gr_J)\ar[r]^{\overline{F_J}}
&\Rep(M_J)}$$

\subsection{The $J$-decomposition of an MV cycle}
\label{ss:JDecMVCyc}
We fix a subset $J\subseteq I$. We denote by $P_J^{-,\vee}$ the
parabolic subgroup of $G^\vee$ containing $M_J^\vee$ and the
negative root subgroups.

The group $P_J^{-,\vee}(\mathcal K)$ certainly acts on
$\Gr$; it also acts on $\Gr_J$ via the quotient morphism
$P_J^{-,\vee}(\mathcal K)\to M_J^\vee(\mathcal K)$. Given
$\mu\in\Lambda_J^+$, we denote by $\Gr_J^\mu$ the orbit of
$L_\mu$ under the action of $M_J^\vee(\mathcal O)$
(or $P_J^{-,\vee}(\mathcal O)$) on $\Gr_J$. Noting that
$$\lim_{a\to\infty}\theta_J(a)\,g\,\theta_J(a)^{-1}=1$$
for all $g$ in the unipotent radical of $P_J^{-,\vee}$,
we see that for any $\zeta\in\Lambda/\mathbb Z\Phi_J$, the
connected component $\Gr_{J,\zeta}$ of $\Gr_J$ and the unstable
subset $\Gr_{J,\zeta}^-$ in $\Gr$ are stable under the action
of $P_J^{-,\vee}(\mathcal O)$ and that the map $q_{J,\zeta}$ is
equivariant.

Let $\bm\lambda\in(\Lambda^+)^n$, let $\mu\in\Lambda_J^+$ and let
$\zeta$ be the coset of $\mu$ modulo $\mathbb Z\Phi_J$. We consider
the following diagram.
$$\xymatrix@R=26pt@!C=60pt{(m_n)^{-1}\bigl(\Gr_{J,\zeta}^-\bigr)
\ar[r]^-{m_n}\ar@{^(->}[d]&\Gr_{J,\zeta}^-\ar[r]^{q_{J,\zeta}}
\ar[d]^{t_\zeta^J}&\Gr_{J,\zeta}\\\Gr_n\ar[r]^{m_n}&\Gr&}$$
The group $G^\vee(\mathcal K)$ acts on $\Gr_n$ by left multiplication
on the first factor and the action of the subgroup $G^\vee(\mathcal O)$
leaves $\overline{\Gr_n^{\bm\lambda}}$ stable. Let $H$ be the stabilizer
of $L_\mu$ with respect to the action of $P_J^{-,\vee}(\mathcal O)$
on $\Gr_J$; it acts on
$E=\overline{\Gr_n^{\bm\lambda}}\cap(q_{J,\zeta}\circ m_n)^{-1}(L_\mu)$.
Since $q_{J,\zeta}\circ m_n$ is equivariant under the action of
$P_J^{-,\vee}(\mathcal O)$, we can make the identification
$$\xymatrix@C=36pt{P_J^{-,\vee}(\mathcal O)\times^HE\ar[r]^-\cong
\ar[d]&\overline{\Gr_n^{\bm\lambda}}\cap(q_{J,\zeta}\circ m_n)^{-1}
(\Gr_J^\mu)\ar[d]^{q_{J,\zeta}\circ m_n}\\P_J^{-,\vee}(\mathcal O)/H
\ar[r]^-\cong&\Gr_J^\mu}$$
where the left vertical arrow is the projection along the first
factor. We thereby see that the right vertical arrow is a locally
trivial fibration.

In particular, all the fibers $\overline{\Gr_n^{\bm\lambda}}\cap
(q_{J,\zeta}\circ m_n)^{-1}(x)$ with $x\in\Gr_J^\mu$ are isomorphic
varieties. Remembering that $(q_{J,\zeta})^{-1}(L_\mu)\subseteq T_\mu$,
we find the following bound for their dimension:
$$\dim\Bigl(\overline{\Gr_n^{\bm\lambda}}\cap(q_{J,\zeta}\circ
m_n)^{-1}(x)\Bigr)=\dim E\leq\dim\Bigl(\overline{\Gr_n^{\bm\lambda}}
\cap(m_n)^{-1}(T_\mu)\Bigr)=\rho(|\bm\lambda|-\mu).$$
Therefore
\begin{equation}
\label{eq:ParabDimEst}
\dim\Bigl(\overline{\Gr_n^{\bm\lambda}}\cap(q_{J,\zeta}\circ
m_n)^{-1}(\Gr_J^\mu)\Bigr)\leq\dim\Gr_J^\mu+\rho(|\bm\lambda|-\mu)
=2\rho_J(\mu)+\rho(|\bm\lambda|-\mu).
\end{equation}
Since $\Gr_J^\mu$ is connected and simply-connected, the fibration
induces a bijection between the set of irreducible components of
$\overline{\Gr_n^{\bm\lambda}}\cap(q_{J,\zeta}\circ m_n)^{-1}(\Gr_J^\mu)$
and the set of irreducible components of any fiber
$\overline{\Gr_n^{\bm\lambda}}\cap(q_{J,\zeta}\circ m_n)^{-1}(x)$.

We define
$$\mathscr Z^J(\bm\lambda)_\mu=\Bigl\{Z\in\Irr\Bigl(
\overline{\Gr_n^{\bm\lambda}}\cap(q_{J,\zeta}\circ
m_n)^{-1}(\Gr_J^\mu)\Bigr)\Bigm|\dim Z=2\rho_J(\mu)+
\rho(|\bm\lambda|-\mu)\Bigr\}.$$
For $\nu\in\Lambda$, we define
$$\mathscr Z_J(\mu)_\nu=\Irr\Bigl(\overline{\Gr_J^\mu}\cap
T_{J,\nu}\Bigr);$$
as we saw in sect.~\ref{ss:MVCyc}, the map $Z\mapsto Z\cap\Gr_J^\mu$
is a bijection from $\mathscr Z_J(\mu)_\nu$ onto the set of
irreducible components of $\Gr_J^\mu\cap T_{J,\nu}$.

Fix now $\bm\lambda\in(\Lambda^+)^n$ and $\nu\in\Lambda$. Following
Braverman and Gaitsgory's method, we define a bijection
$$\mathscr Z(\bm\lambda)_\nu\cong\bigsqcup_{\mu\in\Lambda_J^+}
\mathscr Z^J(\bm\lambda)_\mu\times\mathscr Z_J(\mu)_\nu.$$
The union above is in fact restricted to those weights $\mu$ such that
$\mu-\nu\in\mathbb Z\Phi_J$, for otherwise $\mathscr Z_J(\mu)_\nu$
is empty. Let $\zeta$ denote the coset of $\nu$ modulo $\mathbb Z\Phi_J$.

First choose $\mu\in\Lambda_J^+\cap\zeta$ and a pair
$(Z^J,Z_J)\in\mathscr Z^J(\bm\lambda)_\mu\times\mathscr Z_J(\mu)_\nu$.
Using \eqref{eq:ParSemiinf} and the fibration above, we see that
$Z^J\cap(q_{J,\zeta}\circ m_n)^{-1}(Z_J\cap\Gr_J^\mu)$ is an irreducible
subset of
$$\overline{\Gr_n^{\bm\lambda}}\cap(q_{J,\zeta}\circ m_n)^{-1}
(T_{J,\nu})=\overline{\Gr_n^{\bm\lambda}}\cap(m_n)^{-1}(T_\nu)$$
of dimension
$$\dim Z^J-\dim\Gr_J^\mu+\dim Z_J=\rho(|\bm\lambda|-\mu)+\rho_J(\mu-\nu)
=\rho(|\bm\lambda|-\nu).$$
Therefore there is a unique $Z\in\mathscr Z(\bm\lambda)_\nu$ that
contains $Z^J\cap(q_{J,\zeta}\circ m_n)^{-1}(Z_J\cap\Gr_J^\mu)$ as
a dense subset.

Conversely, start from $Z\in\mathscr Z(\bm\lambda)_\nu$.
Then $Z\subseteq T_\nu\subseteq\Gr_{J,\zeta}^-$. We can thus partition
$Z$ into locally closed subsets as follows.
$$Z=\bigsqcup_{\mu\in\Lambda_J^+\cap\zeta}\Bigl(Z\cap
(q_{J,\zeta}\circ m_n)^{-1}(\Gr_J^\mu)\Bigr)$$
Since $Z$ is irreducible, there is a unique $\mu\in\Lambda_J^+\cap\zeta$
such that $Z\cap(q_{J,\zeta}\circ m_n)^{-1}(\Gr_J^\mu)$ is open
dense in $Z$. That subset is certainly irreducible, hence
contained in an irreducible component $Z^J$ of
$\overline{\Gr_n^{\bm\lambda}}\cap(q_{J,\zeta}\circ
m_n)^{-1}(\Gr_J^\mu)$. Also, $q_{J,\zeta}\circ m_n$ maps
$Z\cap(q_{J,\zeta}\circ m_n)^{-1}(\Gr_J^\mu)$ to an irreducible
subset of $\Gr_J^\mu\cap T_{J,\nu}$, which in turn is contained
in an irreducible component $Z_J\in\mathscr Z_J(\mu)_\nu$. Then
$$Z\cap(q_{J,\zeta}\circ m_n)^{-1}(\Gr_J^\mu)\subseteq
Z^J\cap(q_{J,\zeta}\circ m_n)^{-1}(Z_J).$$
The left-hand side has dimension $\rho(|\bm\lambda|-\nu)$ and the
right-hand side has dimension
$$\dim Z^J-2\rho_J(\mu)+\dim Z_J=\dim Z^J-2\rho_J(\mu)+\rho(\mu-\nu);$$
combining with the bound \eqref{eq:ParabDimEst} we get
$$\dim Z^J=\rho(|\bm\lambda|-\mu)+2\rho_J(\mu)$$
and therefore $Z^J\in\mathscr Z^J(\bm\lambda)_\mu$.

These two constructions define mutually inverse bijections; in particular,
$$Z^J\cap(q_{J,\zeta}\circ m_n)^{-1}(Z_J\cap\Gr_J^\mu)
=Z\cap(q_{J,\zeta}\circ m_n)^{-1}(\Gr_J^\mu).$$

We record that to each MV cycle $Z\in\mathscr Z(\bm\lambda)_\nu$ is
assigned a weight $\mu\in\Lambda_J^+$ characterized by the conditions
$$(q_{J,\zeta}\circ m_n)(Z)\subseteq\overline{\Gr_J^\mu}\quad\text{and}
\quad(q_{J,\zeta}\circ m_n)(Z)\cap\Gr_J^\mu\neq\varnothing;$$
in the sequel this weight will be denoted by $\mu_J(Z)$.

\subsection{MV bases are $L$-perfect}
\label{ss:MVLPerf}
We now give the proof of Theorem~\ref{th:MVBasLPerf}, properly
speaking. We fix a positive integer $n$ and a tuple
$\bm\lambda\in(\Lambda^+)^n$. We need two ingredients besides the
constructions explained in sects.~\ref{ss:MVBas}
and~\ref{ss:SatResLevi}.

(A) Take $\mathscr A\in\Perv(\Gr)$ and write the sheaf
$$\mathscr B=r_J^I(\mathscr A)=
\bigoplus_{\zeta\in\Lambda/\mathbb Z\Phi_J}(q_{J,\zeta})_*\,
(t_\zeta^J)^!\,\mathscr A[2\rho_{I,J}(\zeta)]$$
in $\Perv(\Gr_J)$ as a direct sum of isotypical components
\begin{equation}
\label{eq:DecResSheaf}
\mathscr B=\bigoplus_{\mu\in\Lambda_J^+}\IC\Bigl(\overline{\Gr_J^\mu},\,
\mathscr L_\mu\Bigr).
\end{equation}
The local systems $\mathscr L_\mu$ on $\Gr_J^\mu$ that appear
in~\eqref{eq:DecResSheaf} can be expressed as
$\mathscr L_\mu=\mathscr H^kh^!\mathscr B$
where $h:\Gr_J^\mu\to\Gr_J$ is the inclusion map and
$k=-\dim\Gr_J^\mu=-2\rho_J(\mu)$. With $e:\{x\}\to\Gr_J^\mu$ the
inclusion of a point and $\zeta$ the coset of $\mu$ modulo
$\mathbb Z\Phi_J$, the fiber of $\mathscr L_\mu$ is
$$(\mathscr L_\mu)_x\cong e^!\mathscr L_\mu\bigl[2\dim\Gr_J^\mu\bigr]
\cong H^{2\rho_J(\mu)}\bigl(\{x\},\,e^!h^!\mathscr B\bigr).$$

For the specific case
$$\mathscr A=\mathscr I_{\bm\lambda}=(m_n)_*\IC\Bigl(
\overline{\Gr_n^{\bm\lambda}},\,\underline{\mathbb C}\Bigr),$$
noting the equality $\rho_J(\mu)+\rho_{I,J}(\zeta)=\rho(\mu)$, we get
$$(\mathscr L_\mu)_x\cong H^{2\rho(\mu)}
\Bigl((q_{J,\zeta}\circ m_n)^{-1}(x),\,f^!
\IC\Bigl(\overline{\Gr_n^{\bm\lambda}},\,
\underline{\mathbb C}\Bigr)\Bigr)$$
where $f$ is the injection map depicted in the Cartesian diagram below.
$$\xymatrix@R=22pt@!C=40pt{(q_{J,\zeta}\circ m_n)^{-1}(x)\ar[rrr]^f
\ar[d]&&&\Gr_n\ar[d]^{m_n}\\(q_{J,\zeta})^{-1}(x)\ar[rr]\ar[d]&&
\Gr_{J,\zeta}^-\ar[r]^{t_\zeta^J}\ar[d]^{q_{J,\zeta}}&\Gr\\\{x\}
\ar[r]^e&\Gr_J^\mu\ar[r]^h&\Gr_{J,\zeta}&}$$
The same reasoning as in sect.~\ref{ss:MVBas} proves that only the
stratum $\Gr_n^{\bm\lambda}$ contributes to this cohomology group.
Denoting by $g:\Gr_n^{\bm\lambda}\cap(q_{J,\zeta}\circ m_n)^{-1}(x)\to
\Gr_n^{\bm\lambda}$ the inclusion map, this observation leads to
an isomorphism
\begin{multline*}
(\mathscr L_\mu)_x
=H^{2\rho_J(\mu)+2\rho_{I,J}(\zeta)}
\Bigl(\Gr_n^{\bm\lambda}\cap(q_{J,\zeta}\circ m_n)^{-1}(x),\,
g^!\,\underline{\mathbb C}_{\Gr_n^{\bm\lambda}}
\bigl[\dim\Gr_n^{\bm\lambda}\bigr]\Bigr)\\
\xrightarrow{\cap[\Gr_n^{\bm\lambda}]}
H^\BM_{2\rho(|\bm\lambda|-\mu)}\Bigl(\Gr_n^{\bm\lambda}\cap
(q_{J,\zeta}\circ m_n)^{-1}(x)\Bigr).
\end{multline*}
Thus the local systems $\mathscr L_\mu$ appearing in
\eqref{eq:DecResSheaf} have a natural basis, namely the set
$\mathscr Z^J(\bm\lambda)_\mu$.

We record the following consequence of this discussion: given
$(\bm\lambda,\mu,\nu)\in(\Lambda^+)^n\times\Lambda_J^+\times\Lambda$
such that $\mu-\nu\in\mathbb Z\Phi_J$, we have
\begin{align}
\notag
\dim H^{2\rho_J(\nu)}\Bigl(T_{J,\nu},\,(t_{J,\nu})^!
\IC\Bigl(\overline{\Gr_J^\mu},\mathscr L_\mu\Bigr)\Bigr)
&=\rank\mathscr L_\mu\times
\dim H^{2\rho_J(\nu)}\Bigl(T_{J,\nu},\,(t_{J,\nu})^!
\IC\Bigl(\overline{\Gr_J^\mu},\underline{\mathbb C}\Bigr)\Bigr)\\
\notag
&=\Card\mathscr Z^J(\bm\lambda)_\mu\times\Card\mathscr Z_J(\mu)_\nu\\
\label{eq:CountMVCyc}
&=\Card\bigl\{Z\in\mathscr Z(\bm\lambda)_\nu\bigm|\mu_J(Z)=\mu\bigr\}.
\end{align}

(B) Now let us start with a sheaf $\mathscr B$ in $\Perv(\Gr_J)$
and a weight $\mu\in\Lambda_J^+$. Let us denote by
$i:\overline{\Gr_J^\mu}\to\Gr_J$ the inclusion map.
By \cite{BeilinsonBernsteinDeligneGabber}, Amplification~1.4.17.1,
the largest subobject of $\mathscr B$ in $\Perv(\Gr_J)$ supported
on $\overline{\Gr_J^\mu}$ is
$\mathscr B_{\leq^{}_J\mu}={}^p\tau_{\leq0}\;i_*i^!\,\mathscr B$,
where ${}^p\tau_{\leq0}$ is the truncation functor for the perverse
$t$-structure. From the distinguished triangle
$${}^p\tau_{\leq0}\;i_*i^!\,\mathscr B\to i_*i^!\,\mathscr B\to
{}^p\tau_{>0}\;i_*i^!\,\mathscr B\xrightarrow+$$
in the bounded derived category of constructible sheaves on $\Gr_J$,
we deduce the long exact sequence
\begin{multline*}
H^{2\rho_J(\nu)-1}\bigl(T_{J,\nu},\,
(t_{J,\nu})^!\,{}^p\tau_{>0}\;i_*i^!\,\mathscr B\bigr)\to
H^{2\rho_J(\nu)}\bigl(T_{J,\nu},\,
(t_{J,\nu})^!\,{}^p\tau_{\leq0}\;i_*i^!\,\mathscr B\bigr)\\[2pt]
\to H^{2\rho_J(\nu)}\bigl(T_{J,\nu},\,(t_{J,\nu})^!\,i_*i^!\,
\mathscr B\bigr)\to
H^{2\rho_J(\nu)}\bigl(T_{J,\nu},\,
(t_{J,\nu})^!\,{}^p\tau_{>0}\;i_*i^!\,\mathscr B\bigr).
\end{multline*}
Theorem~3.5 in \cite{MirkovicVilonen} implies that the two extrem
terms vanish, and therefore
$$F_{J,\nu}\bigl(\mathscr B_{\leq^{}_J\mu}\bigr)=H^{2\rho_J(\nu)}
\bigl(T_{J,\nu},\,(t_{J,\nu})^!\,i_*i^!\,\mathscr B\bigr).$$

Let us patch all these pieces together. We take
$(\bm\lambda,\mu,\nu)\in(\Lambda^+)^n\times\Lambda_J^+\times\Lambda$
such that $\mu$ and $\nu$ belong to the same coset
$\zeta\in Z/\mathbb Z\Phi_J$ and we consider
$$\mathscr I_{\bm\lambda}=(m_n)_*\IC\Bigl(
\overline{\Gr_n^{\bm\lambda}},\underline{\mathbb C}\Bigr)
\quad\text{and}\quad\mathscr B=r_J^I(\mathscr I_{\bm\lambda}).$$
Composing the isomorphisms given in \eqref{eq:TransitBD} and
\eqref{eq:DefMVBasis}, we get
\begin{equation}
\label{eq:MVBasLPerf1}
H^{2\rho_J(\nu)}\bigl(T_{J,\nu},\,(t_{J,\nu})^!\;\mathscr B\bigr)\cong
H^{2\rho(\nu)}\bigl(T_\nu,\,(t_\nu)^!\mathscr I_{\bm\lambda}\bigr)
\cong H^\BM_{2\rho(|\bm\lambda|-\nu)}
\Bigl(\Gr_n^{\bm\lambda}\cap(m_n)^{-1}(T_\nu)\Bigr).
\end{equation}

To save place we set
$S=(q_{J,\zeta})^{-1}\bigl(\overline{\Gr_J^\mu}\bigr)\cap T_\nu$
and denote by $s:S\to T_\nu$ the inclusion map. Chasing in the
three-dimensional figure
\begin{equation}
\label{eq:3DChase}
\begin{split}
\kern-8pt
\xymatrix@!C=44pt@R=14pt{&&&\Gr&\Gr_n\ar[l]_{m_n}&\Gr_n^{\bm\lambda}
\ar[l]\\&&&&&\\&\Gr_{J,\zeta}&&\Gr_{J,\zeta}^-\ar[ll]_{q_{J,\zeta}}
\ar[uu]^{t_\zeta^J}&&\Gr_n^{\bm\lambda}\cap(m_n)^{-1}
\bigl(\Gr_{J,\zeta}^-\bigr)\ar[ll]\ar[uu]\\\overline{\Gr_J^\mu}
\ar[ur]^i&&(q_{J,\zeta})^{-1}\bigl(\overline{\Gr_J^\mu}\bigr)\ar[ll]
\ar[ur]&&\Gr_n^{\bm\lambda}\cap(q_{J,\zeta}\circ m_n)^{-1}
\bigl(\overline{\Gr_J^\mu}\bigr)\ar[ll]\ar[ur]&\\&T_{J,\nu}\ar'[u][uu]
&&T_\nu\ar'[l][ll]\ar'[u][uu]&&\Gr_n^{\bm\lambda}\cap(m_n)^{-1}(T_\nu)
\ar'[l][ll]\ar[uu]\\\overline{\Gr_J^\mu}\cap T_{J,\nu}\ar[uu]\ar[ur]&&
S\ar[ll]\ar[uu]\ar[ur]^(.44)s&&\Gr_n^{\bm\lambda}\cap(m_n)^{-1}(S)
\ar[ll]\ar[uu]\ar[ur]&}
\end{split}
\end{equation}
we complete \eqref{eq:MVBasLPerf1} in the following commutative diagram.
$$\xymatrix@C=26pt{H^{2\rho_J(\nu)}\bigl(T_{J,\nu},\,
(t_{J,\nu})^!\,i_*i^!\,\mathscr B\bigr)\ar[r]^-\simeq\ar[d]
&H^{2\rho(\nu)}\bigl(S,\,(t_\nu s)^!\mathscr I_{\bm\lambda}\bigr)
\ar[r]^-\simeq\ar[d]
&H^\BM_{2\rho(|\bm\lambda|-\nu)}\Bigl(\Gr_n^{\bm\lambda}
\cap(m_n)^{-1}(S)\Bigr)\ar[d]\\
H^{2\rho_J(\nu)}\bigl(T_{J,\nu},\,
(t_{J,\nu})^!\,\mathscr B\bigr)\ar[r]^-\simeq
&H^{2\rho(\nu)}\bigl(T_\nu,\,(t_\nu)^!\mathscr I_{\bm\lambda}\bigr)
\ar[r]^-\simeq
&H^\BM_{2\rho(|\bm\lambda|-\nu)}\Bigl(\Gr_n^{\bm\lambda}
\cap(m_n)^{-1}(T_\nu)\Bigr)}$$
As explained in (B), the left vertical arrow of this diagram is the
inclusion map
$$F_{J,\nu}(\mathscr B_{\leq^{}_J\mu})\to F_{J,\nu}(\mathscr B).$$

If an MV cycle $Z\in\mathscr Z(\bm\lambda)_\nu$ satisfies
$\mu_J(Z)\leq^{}_J\mu$, then it is contained in $(m_n)^{-1}(S)$,
so the fundamental class of $Z\cap\Gr_n^{\bm\lambda}$ belongs to
$F_{J,\nu}(\mathscr B_{\leq^{}_J\mu})$.
Looking at equation \eqref{eq:CountMVCyc}, we see that there are
just enough such MV cycles to span this subspace. Going through the
geometric Satake correspondence, we conclude that the MV basis of
$V(\bm\lambda)$ satisfies the condition (P1) for being $L$-perfect.

Eying now to the condition (P2), we consider the diagram below,
consisting of inclusion maps.
$$\xymatrix{\overline{\Gr_J^\mu}\setminus\Gr_J^\mu
\ar[r]^-f&\overline{\Gr_J^\mu}\ar[d]_i&\Gr_J^\mu
\ar[l]_-g\ar[ld]^-h\\&\Gr_{J,\zeta}&}$$
For any sheaf $\mathscr F\in\Perv(\Gr_J)$ supported on
$\overline{\Gr_J^\mu}\setminus\Gr_J^\mu$, we have
$$\Hom\bigl(\mathscr F,\,{}^p\tau_{\leq0}\bigl(h_*h^!\,
\mathscr B\bigr)\bigr)
=\Hom\bigl(\mathscr F,\,h_*h^!\,\mathscr B\bigr)
=\Hom\bigl(h^*\,\mathscr F,\,h^!\,\mathscr B\bigr)=0$$
in the bounded derived category of constructible sheaves over $\Gr_J$
(the first two equalities by adjunction, the last one
because $h^*\mathscr F=0$). Since $h_*h^!\,\mathscr B$ is
concentrated in nonnegative perverse degrees
(\cite{BeilinsonBernsteinDeligneGabber}, Proposition~1.4.16),
the sheaf ${}^p\tau_{\leq0}\bigl(h_*h^!\mathscr B\bigr)$ is
perverse, and from the semisimplicity of $\Perv(\Gr_J)$ we
conclude that
$$\Hom\bigl(\,{}^p\tau_{\leq0}\bigl(h_*h^!\,
\mathscr B\bigr),\mathscr F\bigr)=0.$$
Again, in the distinguished triangle
$$i_*f_*f^!i^!\,\mathscr B\to i_*i^!\,\mathscr B\to i_*g_*g^!i^!\,
\mathscr B\xrightarrow+$$
all sheaves are concentrated in nonnegative perverse degrees.
Denoting by ${}^pH^1$ the first homology group for the perverse
$t$-structure, we obtain the exact sequence
$$0\to{}^p\tau_{\leq0}\bigl(i_*f_*f^!i^!\,\mathscr B\bigr)
\to{}^p\tau_{\leq0}\bigl(i_*i^!\,\mathscr B\bigr)
\to{}^p\tau_{\leq0}\bigl(h_*h^!\,\mathscr B\bigr)
\to{}^pH^1\bigl(i_*f_*f^!i^!\,\mathscr B\bigr).$$
The perverse sheaf on the right is supported on
$\overline{\Gr_J^\mu}\setminus\Gr_J^\mu$, so the right arrow is zero
by the previous step. The resulting short exact sequence can be
identified with
$$0\to\mathscr B_{<^{}_J\mu}\to\mathscr B_{\leq^{}_J\mu}
\to\mathscr B_{\leq^{}_J\mu}/\mathscr B_{<^{}_J\mu}\to0.$$
With the same arguments as in the point (B) above, we deduce that
$$F_{J,\nu}\bigl(\mathscr B_{\leq^{}_J\mu}/\mathscr B_{<^{}_J\mu}\bigr)
=F_{J,\nu}\bigl({}^p\tau_{\leq0}\;h_*h^!\,\mathscr B\bigr)
=H^{2\rho_J(\nu)}\bigl(T_{J,\nu},\,
(t_{J,\nu})^!\,h_*h^!\,\mathscr B\bigr).$$

In \eqref{eq:3DChase}, we replace $\overline{\Gr_J^\mu}$ by
$\Gr_J^\mu$; the same chasing as before now leads to the isomorphism
\begin{equation}
\label{eq:MVBasLPerf2}
H^{2\rho_J(\nu)}\bigl(T_{J,\nu},\,(t_{J,\nu})^!\,h_*h^!\,
\mathscr B\bigr)\xrightarrow\simeq H^\BM_{2\rho(|\bm\lambda|-\nu)}
\Bigl(\Gr_n^{\bm\lambda}\cap(q_{J,\zeta}\circ m_n)^{-1}(\Gr_J^\mu)
\cap(m_n)^{-1}(T_\nu)\Bigr).
\end{equation}
Now the point (A) at the beginning of this section explains that
$\mathscr H^kh^!\,\mathscr B$, where $k=-2\rho_J(\mu)$, is the local
system $\mathscr L_\mu$ and that it comes with a natural basis, namely
$\mathscr Z^J(\bm\lambda)_\mu$. This basis induces a decomposition of
$\mathscr B_{\leq^{}_J\mu}/\mathscr B_{<^{}_J\mu}$ into a sum of
simple objects in $\Perv(\Gr_J)$. On the one hand, this decomposition
can be followed through the geometric Satake correspondence, where
it gives a decomposition of the subquotient of the isotypical
filtration of $\res_{M_J}^GV(\bm\lambda)$ into a direct sum of
irreducible representations. On the other hand, it can also be
tracked through the isomorphism~\eqref{eq:MVBasLPerf2}:
\begin{equation}
\label{eq:MVBasLPerf3}
F_{J,\nu}\bigl(\mathscr B_{\leq^{}_J\mu}/\mathscr B_{<^{}_J\mu}\bigr)
\cong\bigoplus_{Y\in\mathscr Z^J(\bm\lambda)_\mu}
H^\BM_{2\rho(|\bm\lambda|-\nu)}
\Bigl(\Gr_n^{\bm\lambda}\cap Y\cap(m_n)^{-1}(T_\nu)\Bigr).
\end{equation}
From sect.~\ref{ss:JDecMVCyc}, we see that the irreducible components of
$$\overline{\Gr_n^{\bm\lambda}}\cap(q_{J,\zeta}\circ m_n)^{-1}(\Gr_J^\mu)
\cap(m_n)^{-1}(T_\nu)$$
of dimension $\rho(|\bm\lambda|-\nu)$ are the cycles
$Z^J\cap(q_{J,\zeta}\circ m_n)^{-1}(Z_J\cap\Gr_J^\mu)$, with
$(Z^J,Z_J)\in\mathscr Z^J(\bm\lambda)_\mu\times\mathscr Z_J(\mu)_\nu$.
The basis of the right-hand side of~\eqref{eq:MVBasLPerf2} afforded
by the fundamental classes of these irreducible components is thus
compatible with the decomposition~\eqref{eq:MVBasLPerf3}. Therefore,
the MV basis of $V(\bm\lambda)$ satisfies the condition (P2) for being
$L$-perfect.

The proof of Theorem~\ref{th:MVBasLPerf} is now complete.

\begin{other}{Remark}
\label{rk:StringP2}
The proof establishes that the MV basis of $V(\bm\lambda)$ satisfies a
stronger property than (P2): there exists an isomorphism of the isotypical
component $V(\bm\lambda)_{J,\mu}$ with a direct sum of copies of the
irreducible representation $V_J(\mu)$ such that the induced basis on
$V(\bm\lambda)_{J,\mu}$ matches the direct sum of the MV bases of the
summands.
\end{other}

\subsection{Crystal structure on MV cycles}
\label{ss:CrysMV}
Let $\bm\lambda\in(\Lambda^+)^n$. The MV basis of $V(\bm\lambda)$
defined in sect.~\ref{ss:MVBas} is indexed by
$$\mathscr Z(\bm\lambda)=\bigsqcup_{\nu\in\Lambda}\mathscr Z(\bm\lambda)_\nu$$
and is $L$-perfect. Thus, the set $\mathscr Z(\bm\lambda)$ is endowed
with the structure of a crystal, as explained in sect.~\ref{ss:LPerfBas}.
Obviously the weight of an MV cycle $Z\in\mathscr Z(\bm\lambda)_\nu$
is simply $\wt(Z)=\nu$. The aim of this section is to characterize the
action on $\mathscr Z(\bm\lambda)$ of the operators $\tilde e_i$ and
$\tilde f_i$.

In semisimple rank~$1$, one can give an explicit analytical description
of the MV cycles, as follows.
\begin{proposition}
\label{pr:MVRk1}
Assume that $G$ has semisimple rank $1$ and denote by $\alpha$ and
$\alpha^\vee$ the positive root and coroot. Let $y:\mathbb G_a\to G^\vee$
be the additive one-parameter subgroup for the root $-\alpha^\vee$.
Let $(\mu,\nu)\in\Lambda^+\times\Lambda$ and set
$r=\langle\alpha^\vee,\mu\rangle$. Then $\overline{\Gr^\mu}\cap T_\nu$
is nonempty if and only if there exists $p\in\{0,1,\ldots,r\}$ such
that $\nu=\mu-p\alpha$; in this case, the map
$a\mapsto y(az^{p-r})\,L_\nu$ induces an isomorphism of algebraic varieties
$$\mathcal O/z^p\mathcal O\xrightarrow\simeq\overline{\Gr^\mu}\cap T_\nu$$
so $\overline{\Gr^\mu}\cap T_\nu$ is an affine space of dimension $p$
and $\mathscr Z(\mu)_\nu$ is a singleton.
\end{proposition}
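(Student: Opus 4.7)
The plan is first to realize $T_\nu$ concretely via $y$. Since $G$ has semisimple rank one, the opposite unipotent radical of $G^\vee$ is exactly $y(\mathbb G_a)$, so the analogue of the Iwasawa decomposition~\eqref{eq:IwasDec} for the opposite Borel yields $T_\nu=y(\mathcal K)\,L_\nu$. Setting $q=\langle\alpha^\vee,\nu\rangle$, the commutation rule $z^{-\nu}\,y(h)\,z^\nu=y(z^{q}h)$ shows that the stabilizer of $L_\nu$ in $y(\mathcal K)$ is $y(z^{-q}\mathcal O)$; hence $h\mapsto y(h)\,L_\nu$ induces an isomorphism of ind-varieties $\mathcal K/z^{-q}\mathcal O\xrightarrow\simeq T_\nu$.

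\textbf{Step 2: membership in $\overline{\Gr^\mu}$.} Next we determine, for each $h\in\mathcal K$, which stratum contains $y(h)\,L_\nu$. Because $\Gr$ splits as a product over the central-torus factor of $G^\vee$, the question reduces to a computation in a rank-one semisimple group, and it is convenient to embed matters in $\GL_2$: write $\nu=(\nu_1,\nu_2)$ and $\mu=(\mu_1,\mu_2)$, so that $q=\nu_1-\nu_2$ and $r=\mu_1-\mu_2$. Representing $y(h)\,z^\nu$ by the matrix $\bigl(\begin{smallmatrix}z^{\nu_1}&0\\[2pt]hz^{\nu_1}&z^{\nu_2}\end{smallmatrix}\bigr)$, the Smith normal form delivers $y(h)\,L_\nu\in\Gr^{\mu'}$ with $\mu'_2=\min(\nu_1,\nu_2,\nu_1+v(h))$ and $\mu'_1=\nu_1+\nu_2-\mu'_2$. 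Within a fixed connected component of $\Gr$, the inequality $\mu'\leq\mu$ is equivalent to $\mu'_1\leq\mu_1$; after the substitution $\nu=\mu-p\alpha$, this collapses to the single inequality $v(h)\geq p-r$, i.e., $h\in z^{p-r}\mathcal O$.

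\textbf{Step 3: assembly and the parametrization.} For nonemptiness, if $\nu\not\equiv\mu\pmod{\mathbb Z\alpha}$ then $T_\nu$ and $\overline{\Gr^\mu}$ belong to different connected components of $\Gr$, and if $\nu=\mu-p\alpha$ with $p<0$ or $p>r$ then $\max(\nu_1,\nu_2)>\mu_1$, forcing $\mu'_1>\mu_1$ for every $h$; in either case the intersection is empty. Conversely, for $p\in\{0,1,\dots,r\}$ the stabilizer $z^{-q}\mathcal O=z^{2p-r}\mathcal O$ sits inside $z^{p-r}\mathcal O$, and multiplication by $z^{p-r}$ yields the isomorphism $\mathcal O/z^p\mathcal O\xrightarrow\simeq z^{p-r}\mathcal O/z^{2p-r}\mathcal O$. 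Composing with the parametrization of Step~1 produces the announced map $a\mapsto y(az^{p-r})\,L_\nu$; being a bijective morphism between irreducible affine spaces of the same dimension $p$, it is an isomorphism of varieties, and $\mathscr Z(\mu)_\nu$ is a singleton because the intersection is irreducible. The main technical obstacle is the Smith-form case analysis in Step~2: one must verify that $\mu'\leq\mu$ reduces uniformly to $v(h)\geq p-r$ irrespective of the sign of $q$ (equivalently, whether $\nu$ itself is dominant), and that the reduction to a matrix calculation in $\GL_2$ does not distort the intrinsic statement of the proposition.
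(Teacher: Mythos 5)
Your proof is correct; note however that the paper itself supplies no argument here, simply referring the reader to \cite{BaumannGaussent}, Proposition~3.10, so there is no in-paper proof to compare against. The route you take is the standard one: parametrize $T_\nu$ by the negative root cell $y(\mathcal K)L_\nu$ with stabilizer $y(z^{-q}\mathcal O)$, then use the Smith normal form to translate membership in $\overline{\Gr^\mu}$ into the valuation inequality $v(h)\geq p-r$. Two places deserve tightening. First, your closing justification that a bijective morphism between irreducible affine spaces of the same dimension is an isomorphism is false as a general principle (the normalization of a cuspidal cubic is a counterexample); you do not actually need it, since the map in question is the composite of the linear isomorphism $\mathcal O/z^p\mathcal O\to z^{p-r}\mathcal O/z^{2p-r}\mathcal O$ given by multiplication by $z^{p-r}$ with the restriction of the ind-variety isomorphism $\mathcal K/z^{-q}\mathcal O\xrightarrow\simeq T_\nu$ from your Step~1, and a restriction of an isomorphism to a locally closed piece is automatically an isomorphism onto its image. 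Second, the passage to a $\GL_2$ matrix computation is phrased a bit loosely: for a reductive $G^\vee$ of semisimple rank one the affine Grassmannian is not literally a product over a central torus (compare $\Gr_{\GL_2}$ with $\Gr_{\SL_2}\times\Gr_{\mathbb G_m}$), so one should instead observe that the valuation condition governing membership in $\overline{\Gr^\mu}$ depends only on the numbers $\langle\alpha^\vee,\mu\rangle$, $\langle\alpha^\vee,\nu\rangle$ and $v(h)$, and hence is insensitive to the isogeny type; working through the $\SL_2$/$\GL_2$ model then correctly exhibits the answer. With those clarifications your argument is a complete and correct proof of the proposition.
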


We skip the proof since this proposition is well-known; compare for
instance with~\cite{BaumannGaussent}, Proposition~3.10. We can now
describe the crystal structure on $\mathscr Z(\bm\lambda)$, which
extends \cite{BaumannGaussent}, Proposition~4.2.

\begin{proposition}
\label{pr:CharacCrysMV}
Let $(\bm\lambda,\nu)\in(\Lambda^+)^n\times\Lambda$, let $i\in I$ and
let $Z\in\mathscr Z(\bm\lambda)_\nu$.
\begin{enumerate}
\item
\label{it:PrCCMVa}
We have $\wt(Z)=\nu$, $\varepsilon_i(Z)=\frac12\bigl\langle
\alpha_i^\vee,\mu_{\{i\}}(Z)-\nu\bigr\rangle$ and\/
$\varphi_i(Z)=\frac12\bigl\langle\alpha_i^\vee,
\mu_{\{i\}}(Z)+\nu\bigr\rangle$.
\item
\label{it:PrCCMVb}
Let $Y\in\mathscr Z(\bm\lambda)_{\nu+\alpha_i}$.
Then $Y=\tilde e_iZ$ if and only if\/ $Y\subseteq\overline Z$
and $\mu_{\{i\}}(Y)=\mu_{\{i\}}(Z)$.
\end{enumerate}
\end{proposition}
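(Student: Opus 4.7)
The plan is to deduce both statements by combining the $L$-perfect structure already established (Theorem~\ref{th:MVBasLPerf} and the stronger form in Remark~\ref{rk:StringP2}) with the semisimple rank~$1$ description in Proposition~\ref{pr:MVRk1}, using the bijection of Section~\ref{ss:JDecMVCyc} for $J=\{i\}$ as the bridge between the geometry on $\Gr_n$ and the rank~$1$ picture.

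For part~\ref{it:PrCCMVa}, the weight assertion is immediate from the very construction of the MV basis. For $\varepsilon_i$ and $\varphi_i$, I would apply the $L$-perfect property with $J=\{i\}$: by Remark~\ref{rk:StringP2}, $Z$ lies in the isotypical component $V(\bm\lambda)_{\{i\},\mu}$ for $\mu=\mu_{\{i\}}(Z)$, and moreover matches an MV basis element in a single copy of $V_{\{i\}}(\mu)$. Proposition~\ref{pr:MVRk1} then tells me that this copy has an MV basis with one vector per weight space, forming a single string of length $r+1$ (where $r=\langle\alpha_i^\vee,\mu\rangle$), top weight $\mu$ and successive weights decreasing by $\alpha_i$. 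Writing $\nu=\mu-p\alpha_i$, so that $p=\tfrac12\langle\alpha_i^\vee,\mu-\nu\rangle$, we obtain $\varepsilon_i(Z)=p$ and $\varphi_i(Z)=r-p=\tfrac12\langle\alpha_i^\vee,\mu+\nu\rangle$.

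For part~\ref{it:PrCCMVb}, I would exploit the bijection of Section~\ref{ss:JDecMVCyc}: each $Z\in\mathscr Z(\bm\lambda)_\nu$ corresponds to a pair $(Z^{\{i\}},Z_{\{i\}})$, where by Proposition~\ref{pr:MVRk1} the second factor $Z_{\{i\}}\in\mathscr Z_{\{i\}}(\mu)_\nu$ is the unique rank~$1$ MV cycle of its weight, and the first factor $Z^{\{i\}}$ indexes the copy of $V_{\{i\}}(\mu)$ in the isotypical decomposition of $V(\bm\lambda)_{\{i\},\mu}$. Since $\tilde e_i$ acts within a single copy by stepping along the string, $\tilde e_iZ$ must correspond to the pair $(Z^{\{i\}},Z'_{\{i\}})$ where $Z'_{\{i\}}$ is the unique rank~$1$ MV cycle of weight $\nu+\alpha_i$ in $\overline{\Gr_{\{i\}}^\mu}$. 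Hence the characterization of $\tilde e_iZ$ reduces to showing that the geometric condition ``$Y\subseteq\overline Z$ together with $\mu_{\{i\}}(Y)=\mu_{\{i\}}(Z)$'' is equivalent to ``$Y$ and $Z$ share the same $Z^{\{i\}}$-component.''

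One direction, that sharing $Z^{\{i\}}$ implies $Y\subseteq\overline Z$, reduces via the fibration
$\overline{\Gr_n^{\bm\lambda}}\cap(q_{\{i\},\zeta}\circ m_n)^{-1}(\Gr_{\{i\}}^\mu)\to\Gr_{\{i\}}^\mu$
to the rank~$1$ closure statement $Z'_{\{i\}}\subseteq\overline{Z_{\{i\}}}$ inside $\overline{\Gr_{\{i\}}^\mu}$. This is the main obstacle and I would verify it directly using the explicit parametrization $a\mapsto y(az^{p-r})L_\nu$ of Proposition~\ref{pr:MVRk1}: letting coefficients of the polynomial $a$ degenerate produces the lower-dimensional affine space at weight $\nu+\alpha_i$ in the closure. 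The converse direction is combinatorial dimension counting: if $Y\subseteq\overline Z$ and both have the same $\mu_{\{i\}}$-label, then the dense opens of $Y$ and $Z$ inside $\overline{\Gr_n^{\bm\lambda}}\cap(q_{\{i\},\zeta}\circ m_n)^{-1}(\Gr_{\{i\}}^\mu)$ must lie in the same top-dimensional irreducible component, since cycles meeting distinct components of this locus cannot be nested without violating the dimension estimate~\eqref{eq:ParabDimEst}. Together with Remark~\ref{rk:StringP2} this forces the $Z^{\{i\}}$-components to coincide, finishing the proof.
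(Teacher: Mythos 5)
Your proposal takes essentially the same route as the paper: both use Remark~\ref{rk:StringP2} / equation~\eqref{eq:MVBasLPerf3} to see that $\tilde e_i$ acts inside a single copy of $V_{\{i\}}(\mu)$ (so that $\mu_{\{i\}}$ and the $Z^{\{i\}}$-component are preserved), combine this with the rank-$1$ uniqueness from Proposition~\ref{pr:MVRk1}, and then translate back via the bijection $(Z^J,Z_J)\mapsto Z$ of Section~\ref{ss:JDecMVCyc}. Part~\ref{it:PrCCMVa} and the forward implication of~\ref{it:PrCCMVb} are handled correctly and in the same spirit as the paper's (very terse) proof.

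The one place where your argument as written is imprecise is the converse in part~\ref{it:PrCCMVb}. The clause ``cycles meeting distinct components of this locus cannot be nested without violating the dimension estimate~\eqref{eq:ParabDimEst}'' does not quite do the job, because the dense open $Y\cap(q_{\{i\},\zeta}\circ m_n)^{-1}(\Gr_{\{i\}}^\mu)$ has dimension strictly smaller than that of $Y^{\{i\}}$, so in principle it could sit inside $Y^{\{i\}}\cap Z^{\{i\}}$ without contradicting the estimate. The correct way to finish is to restrict to a single fiber of the locally trivial fibration $\overline{\Gr_n^{\bm\lambda}}\cap(q_{\{i\},\zeta}\circ m_n)^{-1}(\Gr_{\{i\}}^\mu)\to\Gr_{\{i\}}^\mu$: one first observes that $Y\subseteq\overline Z$ forces $\overline Z\cap(q_{\{i\},\zeta}\circ m_n)^{-1}(\Gr_{\{i\}}^\mu)\subseteq Z^{\{i\}}$ and hence $Y\cap(q_{\{i\},\zeta}\circ m_n)^{-1}(\Gr_{\{i\}}^\mu)\subseteq Z^{\{i\}}$; then, taking any point $x\in Y_{\{i\}}\cap\Gr_{\{i\}}^\mu$, the fiber of $Y^{\{i\}}$ over $x$ is an entire irreducible component of $\overline{\Gr_n^{\bm\lambda}}\cap(q_{\{i\},\zeta}\circ m_n)^{-1}(x)$ and is contained in the fiber of $Z^{\{i\}}$ over $x$, whence $Y^{\{i\}}=Z^{\{i\}}$ by the component bijection of Section~\ref{ss:JDecMVCyc}. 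With that repair the proposal matches the intent and the substance of the paper's proof.
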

\begin{proof}
Let $\bm\lambda$, $\nu$, $i$, $Z$ as in the statement and set
$\mu=\mu_{\{i\}}(Z)$. By definition, the MV cycles $\tilde e_iZ$ and
$\tilde f_iZ$ (if nonzero) are obtained by letting the Chevalley
generators $e_i$ and $f_i$ act on (the basis element indexed by) $Z$
in the appropriate subquotient of the isotypical filtration of
$\res_{M_{\{i\}}}^GV(\bm\lambda)$. According to \eqref{eq:MVBasLPerf3},
this entails that
$$\mu_{\{i\}}(Z)=\mu_{\{i\}}(\tilde e_iZ)=\mu_{\{i\}}(\tilde f_iZ)
\quad\text{and}\quad
Z^{\{i\}}=(\tilde e_iZ)^{\{i\}}=(\tilde f_iZ)^{\{i\}}.$$
In addition, $\mathscr Z_{\{i\}}(\mu)_{\nu+\alpha_i}$ and
$\mathscr Z_{\{i\}}(\mu)_{\nu-\alpha_i}$ are empty or singletons,
and the MV cycles $(\tilde e_iZ)_{\{i\}}$ and $(\tilde f_iZ)_{\{i\}}$
in the affine Grassmannian $\Gr_{\{i\}}$ are uniquely determined by
weight considerations.

The statements can be deduced from these facts by using the explicit
description provided by Proposition~\ref{pr:MVRk1} and the construction
of the map $(Z^J,Z_J)\mapsto Z$ in sect.~\ref{ss:JDecMVCyc}.
\end{proof}

\section{The path model and MV cycles}
\label{se:PathsAndMV}
In the previous section, we defined the structure of a crystal
on the set $\mathscr Z(\bm\lambda)$. In this section, we turn to
Littelmann's path model \cite{Littelmann95} to study this structure.
This combinatorial device can be used to effectively assemble
MV cycles. Our construction is inspired by the results presented
in~\cite{GaussentLittelmann} but is more flexible, for it relaxes
the restriction to minimal galleries.

In this paper, piecewise linear means continuous piecewise linear.
We keep the notation set up in the header of sect.~\ref{se:LPerf}.

\subsection{Recollections on the path model}
\label{ss:RecPathMod}
Let $\Lambda_{\mathbb R}=\Lambda\otimes_{\mathbb Z}\mathbb R$
be the real space spanned by the weight lattice and let $\Lambda_{\mathbb R}^+$ 
be the dominant cone inside $\Lambda_{\mathbb R}$.

A path is a piecewise linear map $\pi:[0,1]\to\Lambda_{\mathbb R}$
such that $\pi(0)=0$ and $\pi(1)\in\Lambda$.  We denote by $\widetilde\Pi$ the set of all paths.
The concatenation $\pi*\eta$ of two paths is defined in the usual way:
$\pi*\eta(t)=\pi(2t)$ for $0\le t\le\frac{1}{2}$, and $\pi*\eta(t)=\eta(2t-1)+\pi(1)$ for $\frac{1}{2} \le t\le 1$.

In~\cite{Littelmann95}, the third author associates to each simple
root $\alpha$ of $\Phi$ a pair $(e_\alpha,f_\alpha)$ of ``root
operators'' from $\widetilde\Pi$ to $\widetilde\Pi\sqcup\{0\}$ and
shows that the construction yields a semi-normal crystal structure
on $\widetilde\Pi$. Here the weight map is given by $\wt(\pi)=\pi(1)$.
To agree with the notation in sect.~\ref{ss:Crystals}, we will write
$\tilde e_i$ and $\tilde f_i$ instead of $e_{\alpha_i}$ and
$f_{\alpha_i}$ for each $i\in I$.

Let $\ell:[0,1]\to\mathbb R$ be a piecewise linear function. We say that
$p\in\mathbb R$ is a local absolute minimum of $\ell$ if there exists 
a compact interval $[a,b]\subseteq[0,1]$ over which $\ell$ takes the value $p$,
and there exists an $\epsilon > 0$ such that $\ell(x)>p$ for all 
$x\in (a-\epsilon,a)\cap [0,1]$ and all $x\in (b,b+\epsilon)\cap [0,1]$.

Given $\pi\in\widetilde\Pi$, we denote by $\mathcal A\pi$ the set of
all paths $\eta\in\widetilde\Pi$ that can be obtained from $\pi$ by
applying a finite sequence of root operators $\tilde e_i$ or
$\tilde f_i$. We say that a path $\pi\in\widetilde\Pi$ is integral if
for each $\eta\in\mathcal A\pi$ and each $i\in I$, all local absolute minima of
the function $t\mapsto\langle\alpha_i^\vee,\eta(t)\rangle$ are integers.

We denote the set of all integral paths by $\Pi$. Obviously, $\Pi$
is a subcrystal of $\widetilde\Pi$. Moreover, the root operators
have a simpler form on $\Pi$, since integral paths need only to be cut
into three parts: the initial part is left invariant, the second part
is reflected, and the third part is translated. Specifically, given
$(\pi,\eta)\in\Pi^2$ and $i\in I$, we have $\eta=\tilde e_i\pi$ if
and only if there exist a negative integer $p\in\mathbb Z$ and two reals $a$
and $b$ with $0\leq a<b\leq1$, such that the function 
$t\mapsto \langle\alpha_i^\vee,\pi(t)\rangle$ is weekly decreasing on $[a,b]$,
and for each $t\in[0,1]$:
\begin{itemize}
\item
if $t\leq a$, then
$\langle\alpha_i^\vee,\pi(t)\rangle\geq p+1$ and
$\eta(t)=\pi(t)$;
\item
if $t=a$, then
$\langle\alpha_i^\vee,\pi(t)\rangle=p+1$;
\item
if $a<t<b$, then
$p\leq\langle\alpha_i^\vee,\pi(t)\rangle<p+1$ and
$\eta(t)=\pi(t)-(\langle\alpha_i^\vee,\pi(t)\rangle-p-1)\,\alpha_i$;
\item
if $t=b$, then
$\langle\alpha_i^\vee,\pi(t)\rangle=p$;
\item
if $t\geq b$, then
$\langle\alpha_i^\vee,\pi(t)\rangle\geq p$ and
$\eta(t)=\pi(t)+\alpha_i$.
\end{itemize}
We say that an integral path $\pi\in\Pi$ is dominant if its image is
contained in $\Lambda_{\mathbb R}^+$.

\begin{other}{Remark}
Let $\Gamma$ be the group of all strictly increasing piecewise
linear maps from $[0,1]$ onto itself, the product being the
composition of functions. The group acts on the set of all paths by right composition:
$\pi \mapsto \pi\circ\gamma$ for a path $\pi$ and $\gamma\in \Gamma$.
We say that $\pi\circ\gamma$ is obtained from $\pi$ by a piecewise linear reparameterization. 
Visibly, the set $\Pi$ of integral paths is stable under this action, the weight map $\textrm{wt}$ is
invariant, and the root operators are equivariant. We can thus safely consider all our previous
constructions modulo this action. In the sequel we sometimes implicitly assume that this quotient has
been performed, i.e.\ among the possible parameterizations we choose one which is appropriate for the application in view. 
\end{other}
The first two items in the following proposition ensure that there is
an abundance of integral paths.
\begin{proposition}
\label{pr:PathModel}
\begin{enumerate}
\item
\label{it:PrPMa}
A dominant path $\pi$ is integral as soon as for each $i\in I$, the
function $t\mapsto\langle\alpha_i^\vee,\pi(t)\rangle$ is weakly increasing.
\item
\label{it:PrPMb}
The set $\Pi$ is stable under concatenation of paths and the map
$\pi\otimes\eta\mapsto\pi*\eta$ is a strict morphism of crystals
from $\Pi\otimes\Pi$ to $\Pi$.
\item
\label{it:PrPMc}
Let $\pi\in\Pi$. Then $\mathcal A\pi$ contains a unique dominant
path $\eta$ and is isomorphic as a crystal to $B(\wt(\eta))$.
\end{enumerate}
\end{proposition}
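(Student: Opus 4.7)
The plan is to treat the three items independently. Items (i) and (ii) are checked by direct analysis of the piecewise-linear definition of the root operators, while (iii) reduces via Proposition~\ref{pr:UniqCrys} to a character computation that forms the core of Littelmann's original work.

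For (i), I would first observe that a dominant path $\pi$ with weakly increasing pairings $t\mapsto\langle\alpha_i^\vee,\pi(t)\rangle$ satisfies the integrality condition trivially: each such function is non-negative with value $0$ at $t=0$, so its only local absolute minimum is $0\in\mathbb Z$. I then propagate this to every $\eta\in\mathcal A\pi$ by induction on the length of a sequence of root operators producing $\eta$ from $\pi$. The inductive step rests on the explicit description of $\tilde e_i$ and $\tilde f_i$ recalled just before the statement: passing from $\pi$ to $\tilde e_i\pi$ modifies each function $t\mapsto\langle\alpha_j^\vee,\pi(t)\rangle$ by a piecewise affine perturbation whose corner values shift by integer multiples of $\langle\alpha_j^\vee,\alpha_i\rangle$, so integrality of local absolute minima is preserved.

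For (ii), stability of $\Pi$ under concatenation is immediate, since the local absolute minima of $t\mapsto\langle\alpha_i^\vee,(\pi*\eta)(t)\rangle$ are either minima of $\langle\alpha_i^\vee,\pi\rangle$ or minima of $\langle\alpha_i^\vee,\eta\rangle$ shifted by the integer $\langle\alpha_i^\vee,\pi(1)\rangle$, and in both cases yield integers by hypothesis. To prove that concatenation is a strict morphism of crystals, I would locate the active window $[a,b]$ for $\tilde e_i(\pi*\eta)$. Depending on whether the relevant global minimum of $\langle\alpha_i^\vee,\pi*\eta\rangle$ is attained in the first or the second half of $[0,1]$, this window lies entirely in one half or the other, and the resulting formula matches Kashiwara's tensor product rule for $\tilde e_i$ acting on $\pi\otimes\eta$ once the comparison criterion is identified with the familiar inequality between $\varepsilon_i(\eta)$ and $\varphi_i(\pi)$. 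A symmetric analysis handles $\tilde f_i$.

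For (iii), existence of a dominant path $\eta$ in $\mathcal A\pi$ follows by iteratively applying operators $\tilde e_i$ as long as possible; termination holds because the weights traversed lie in a finite subset of $\Lambda$ bounded above by a dominant shift of $\wt(\pi)$. The crystal isomorphism $\mathcal A\pi\cong B(\wt\eta)$ is then obtained by appealing to Proposition~\ref{pr:UniqCrys}: it suffices to exhibit $\mathcal A\eta$ as the crystal of an $L$-perfect basis of $V(\wt\eta)$, which in turn reduces to the two facts that $\mathcal A\eta$ is connected with unique dominant element $\eta$, and that its weight multiplicities match those of $V(\wt\eta)$. The multiplicity equality is the main obstacle. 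I would treat it by induction on $\wt\eta$ in the dominance order, using part (ii): concatenating $\eta$ with a short straight-line test path $\pi'$ yields $\mathcal A(\eta*\pi')\cong\mathcal A\eta\otimes\mathcal A\pi'$, and extracting the dominant representatives in the concatenation reproduces a Pieri-type decomposition that determines the character recursively. This is essentially Littelmann's original argument; its technical heart is the control of which concatenations yield dominant paths, and this is where one ultimately invokes (or reproves) a Demazure-character-formula input.
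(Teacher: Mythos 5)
Your approach to part (i) contains a genuine gap, and it is a different (direct, inductive) strategy from the one the paper actually uses.

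The central claim of your inductive step---that passing from $\eta$ to $\tilde e_i\eta$ shifts the corner values of every function $t\mapsto\langle\alpha_j^\vee,\eta(t)\rangle$ by integer multiples of $\langle\alpha_j^\vee,\alpha_i\rangle$---is false. On the active window $(a,b)$ the perturbation of the $j$-coordinate is
$-(\langle\alpha_i^\vee,\eta(t)\rangle-p-1)\,\langle\alpha_j^\vee,\alpha_i\rangle$.
This piecewise affine function has corners at every breakpoint of $\langle\alpha_i^\vee,\eta\rangle$ inside $(a,b)$, and at such a breakpoint the quantity $\langle\alpha_i^\vee,\eta(t)\rangle$ lies strictly between $p$ and $p+1$, so the shift there is a \emph{non-integer} multiple of $\langle\alpha_j^\vee,\alpha_i\rangle$. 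The corners of the perturbed function $\langle\alpha_j^\vee,\tilde e_i\eta\rangle$ therefore include new corners whose values are not integer shifts of old values, and ``integrality of local absolute minima'' cannot be read off as you claim. (It happens to be true that no new non-integer local absolute minimum arises, but that is exactly what needs to be proved, and the argument you give does not prove it.) Note also that the simplified three-piece description of the root operators that you invoke is itself only available once integrality of the relevant coordinate function is established, so the induction is delicate at best. The paper sidesteps all of this: it approximates the given dominant path first by a rational dominant path, then by a locally integral concatenation in the sense of \cite{Littelmann97} (Lemma~\ref{locally integral concatenation} and Lemma~\ref{loc_int_directions}), and then invokes Littelmann's integrality results together with the continuity of the root operators. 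That approximation strategy is genuinely different from your direct induction and is what makes the argument go through without case analysis on how local minima move.

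On parts (ii) and (iii), your sketch is broadly aligned with what the paper does (both ultimately rest on Littelmann's path-model machinery), but it leaves the hard content implicit. In (ii), your ``immediate'' stability argument only verifies integer local absolute minima for the single path $\pi*\eta$, whereas integrality requires this for every element of $\mathcal A(\pi*\eta)$; you must first know that concatenation is a strict morphism on \emph{all} paths (the paper cites \cite{Littelmann97}, Lemma~6.12) and only then deduce stability. In (iii), you correctly reduce the question via Proposition~\ref{pr:UniqCrys} to a character identity, but you explicitly leave ``a Demazure-character-formula input'' to be reproved; the paper simply cites \cite{Littelmann97}, Lemma~6.11, which is that input. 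So for (ii) and (iii) you have the right shape of argument but not a self-contained proof, and for (i) the key step of your induction fails as stated.
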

\begin{proof}
For $\nu\in\Lambda_\mathbb R$ let $\pi_{\nu}$ be the map $[0,1]\rightarrow \Lambda_{\mathbb R}$, $t\mapsto t\nu$.
A path in Proposition~\ref{pr:PathModel}~\ref{it:PrPMa} is of the form $\pi=\pi_{\nu_1}*\ldots*\pi_{\nu_s}$, where 
$\nu_1,\ldots,\nu_s\in\Lambda_{\mathbb R}^+$ are dominant and $\nu_1+\cdots+\nu_s\in\Lambda^+$. 
Such a path can be approximated by a rational dominant path, i.e.\ a path $\eta=\pi_{\mu_1}*\ldots*\pi_{\mu_s}$
such that $\mu_1,\ldots,\mu_s\in\Lambda^+_{\mathbb Q}$ and $\mu_1+\cdots+\mu_s=\nu$. By Lemma~\ref{locally integral concatenation}
below, a rational dominant path can be approximated by a locally integral concatenation (\cite{Littelmann97}, Definition~5.3).
These paths are integral by \textit{loc.~cit.}, Lemma~5.6 and Proposition~5.9. The integrality property in~\ref{it:PrPMa} 
follows now by the continuity of the root operators \textit{loc.~cit.}, property (v)  \textsc{continuity}.

It remains to prove the other two statements. 
The endpoint of a path is by definition an element of the lattice, so the concatenation of integral paths
is an integral path. Moreover, by Lemma 6.12 in \textit{loc.~cit.},
concatenation defines a strict morphism of crystals $\Pi\otimes\Pi\to\Pi$. This shows~\ref{it:PrPMb}. Statement~\ref{it:PrPMc} follows  
by Lemma~6.11 in \textit{loc.~cit.}
\end{proof}

For a rational dominant path $\pi=\pi_{\mu_1}*\cdots*\pi_{\mu_s}$, the property of being a 
locally integral concatenation is equivalent to: 
\begin{description}
\item[($*$)]
For all $j=1,\ldots,s$,  the affine
line passing through $\mu_1+\cdots+\mu_{j-1}$ and $\mu_1+\cdots+\mu_{j}$
meets at least two lattice points.
\end{description}
An equivalent formulation: the affine line meets at least one rational point and one lattice point.

To prove the approximation property used in the proof above, we need the following 
elementary geometric construction. We fix a scalar  product $(\cdot,\cdot)$ on 
$\Lambda_{\mathbb R}$ and let $d(\cdot,\cdot)$ be the corresponding distance function. 
We fix a basis $\mathbb B$ of $\Lambda$ and let $\mathbb L_1\subset \Lambda_{\mathbb R}$ be the associated unit
cube, i.e.\ the set of points in  $\Lambda_{\mathbb R}$ which can be written
as a linear combination of $\mathbb B$ with coefficients in the interval $[0,1]$.
Let $M$ be the maximal distance between two points in $\mathbb L_1$.
Let $P\in \Lambda^+_{\mathbb Q}$ be a dominant rational point and let $S(P,1)$
be the sphere with center $P$ and radius $1$. Let $g$ be a ray starting in $P$
and let $g_1$ be the intersection point of this ray with the sphere $S(P,1)$. 
\begin{lemma}\label{loc_int_directions}
One can find for any $\epsilon>0$ 
a ray $f$ starting in $P$ such that $(f\setminus\{P\})\cap \Lambda\not=\varnothing$, and for 
$\{f_1\}=f\cap S(P,1)$ we have $d(g_1,f_1)<\epsilon$.
\end{lemma}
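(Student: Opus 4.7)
The plan is to use the following elementary idea: any direction from $P$ can be approximated by the direction to a faraway lattice point, because the lattice is "dense on large scales" in the sense that every point of $\Lambda_{\mathbb R}$ lies within distance $M$ of some element of $\Lambda$ (the unit cube $\mathbb L_1$ tiles $\Lambda_{\mathbb R}$ under translation by $\Lambda$).

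\textbf{Step 1 (set-up).} Write $v = g_1 - P$, a unit vector giving the direction of the ray $g$. For a positive real $N$, consider the point $R_N = P + Nv$. Since the translates $\mathbb L_1 + \mu$ for $\mu \in \Lambda$ cover $\Lambda_{\mathbb R}$, we can pick $Q_N \in \Lambda$ such that $R_N \in \mathbb L_1 + Q_N$; the vertex $Q_N$ of the translated cube then satisfies $d(R_N, Q_N) \leq M$.

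\textbf{Step 2 (the approximating ray).} Set $f$ to be the ray from $P$ through $Q_N$; for $N > M$ we have $Q_N \neq P$, so $f$ contains the lattice point $Q_N \in (f\setminus\{P\})\cap\Lambda$. Writing $Q_N - P = Nv + w_N$ with $\|w_N\| \leq M$, we get
\[
f_1 \;=\; P + \frac{Q_N-P}{\|Q_N-P\|} \;=\; P + \frac{Nv + w_N}{\|Nv+w_N\|},
\]
so
\[
d(g_1,f_1) \;=\; \Bigl\|\,v - \frac{Nv+w_N}{\|Nv+w_N\|}\,\Bigr\|.
\]

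\textbf{Step 3 (estimate and conclude).} Since $\|Nv+w_N\| \geq N - M$ and $\|Nv+w_N\| \leq N + M$, the vector inside the norm has magnitude at most a constant times $M/N$; explicitly one can bound
\[
\Bigl\|\,v - \frac{Nv+w_N}{\|Nv+w_N\|}\,\Bigr\|
\;\leq\; \frac{\bigl|\,\|Nv+w_N\| - N\,\bigr|}{\|Nv+w_N\|} + \frac{\|w_N\|}{\|Nv+w_N\|}
\;\leq\; \frac{2M}{N-M}.
\]
Choosing $N$ large enough so that $2M/(N-M) < \epsilon$ produces the required ray $f$.

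\textbf{Main obstacle.} There is no serious obstacle; the only thing requiring a little care is the estimate in Step 3, which is entirely elementary. Note that the argument does not use that $P$ is rational or dominant; these hypotheses presumably matter only when the lemma is applied later to build locally integral concatenations.
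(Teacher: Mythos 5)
Your proof is correct and takes essentially the same approach as the paper: choose a far point on the ray $g$, approximate it by a nearby lattice point within distance $M$, and bound the angular deviation. The only cosmetic difference is that the paper computes the bound $2M/t_2$ via the intercept theorem applied to the auxiliary point $f_2$ on $f$ at distance $t_2$ from $P$, whereas you carry out a direct vector-norm estimate giving $2M/(N-M)$; both tend to zero and serve equally well.
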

\begin{proof}
Parametrize $g$ by $g(t) = P+t(g_1-P)$ for $t \ge 0$.
Choose $t_2\gg 0$ and pick $\lambda\in \Lambda$ such that 
$g(t_2)\in \lambda+\mathbb L_1$.
Let $f$ be the ray starting in $P$ passing through $\lambda$. Let $f_1$ be the
intersection point of this ray with $S(P,1)$; then $f(t)=P+t(f_1-P)$ for $t \ge 0$ is a parameterization of $f$.
Set $g_2=g(t_2)$ and  $f_2=f(t_2)$.
Noting that $d(P,f_2)=t_2=d(P,g_2)$ and using the triangular inequality,
we get
$$
d(f_2,\lambda)=|d(P,f_2)-d(P,\lambda)|=|d(P,g_2)-d(P,\lambda)|
\leq d(g_2,\lambda)\leq M
$$
whence $d(g_2,f_2)\leq2M$. By the intercept theorem 
$d(f_1,g_1)/d(g_2,f_2) = 1/ t_2$, and hence  
$d(f_1,g_1) \le  (2M) / t_2$. For $t_2$ large enough we obtain $d(g_1,f_1)<\epsilon$.
\end{proof}
\begin{lemma}\label{locally integral concatenation}
A dominant rational path can be approximated by a locally integral concatenation.
\end{lemma}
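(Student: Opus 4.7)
The plan is, given a rational dominant path $\pi = \pi_{\mu_1}*\cdots*\pi_{\mu_s}$ and a prescribed tolerance $\epsilon > 0$, to construct rational dominant weights $\mu'_1, \ldots, \mu'_s$ with $\|\mu_j - \mu'_j\| < \epsilon$ such that $\pi' = \pi_{\mu'_1}*\cdots*\pi_{\mu'_s}$ is a locally integral concatenation. Uniform closeness of the piecewise linear paths $\pi$ and $\pi'$ will then follow automatically from closeness of the direction vectors.

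The main obstacle throughout is preserving dominance: some $\mu_j$ may lie on walls of $\Lambda^+_{\mathbb R}$, so an arbitrarily small perturbation risks leaving the dominant cone. I would dispose of this at the outset by fixing a strictly dominant $\theta \in \Lambda^+$ and replacing each $\mu_j$ by $\mu_j + \delta\theta$ for some small $\delta > 0$. This produces a rational dominant path uniformly close to the original, all of whose segment directions lie in the open interior of $\Lambda^+_{\mathbb R}$; any subsequent perturbation of controlled size then stays dominant. This reduction isolates the only delicate point of the argument.

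I would then construct $\mu'_1, \ldots, \mu'_s$ by induction on $j$. Write $P'_j = \mu'_1 + \cdots + \mu'_j$ with $P'_0 = 0 \in \Lambda$, so each cumulative point $P'_{j-1}$ is automatically rational. At the $j$-th step I apply Lemma~\ref{loc_int_directions} with $P = P'_{j-1}$ and with $g$ the ray from $P$ in the unit direction of $\mu_j$; it returns a ray $f$ from $P$ passing through some lattice point $\lambda \neq P$ and whose unit direction approximates that of $\mu_j$ to any prescribed accuracy. Setting $\mu'_j$ to be the scalar multiple of $\lambda - P$ of norm $\|\mu_j\|$ makes it a positive rational multiple of the rational vector $\lambda - P$, hence itself rational; by the preliminary reduction it remains dominant; and the affine line through $P'_{j-1}$ and $P'_j = P'_{j-1} + \mu'_j$ contains both the rational point $P'_{j-1}$ and the lattice point $\lambda$, hence meets condition $(*)$ by the equivalent formulation recorded just after its statement.

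Choosing the per-step tolerance small enough that $\sum_j \|\mu'_j - \mu_j\| < \epsilon$ delivers the desired locally integral concatenation uniformly close to $\pi$. Beyond the initial dominance reduction, the argument amounts to a repeated application of Lemma~\ref{loc_int_directions}, with the rationality of each successive basepoint $P'_{j-1}$ supplied for free by the rationality of the previously chosen $\mu'_k$.
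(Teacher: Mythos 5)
Your proposal uses the right key tool (Lemma~\ref{loc_int_directions}) and the right general plan (perturb to the interior of the dominant cone, then replace each segment direction by a nearby direction pointing at a lattice point), so it parallels the paper in spirit. However, there are two genuine gaps, the first of which is fatal as written.

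First, and most importantly, you never ensure that the constructed $\pi'$ ends at a lattice point. A locally integral concatenation is in particular a path, and the paper's definition requires $\pi'(1)\in\Lambda$; in the intended application it must moreover end at the \emph{same} dominant lattice point $\lambda=\mu_1+\cdots+\mu_s$ as the path being approximated, since for a small perturbation the only available lattice endpoint near $\lambda$ is $\lambda$ itself. Your construction perturbs all $s$ directions independently: first by the additive shift $\mu_j\mapsto\mu_j+\delta\theta$ (which already moves the endpoint to $\lambda+s\delta\theta\notin\Lambda$ for generic small $\delta$), and then by $s$ independent applications of Lemma~\ref{loc_int_directions}. The resulting sum $\mu'_1+\cdots+\mu'_s$ is just some nearby rational point and there is no reason for it to land in $\Lambda$. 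The paper avoids this by design: the preliminary perturbation is the \emph{convex combination} $\lambda_i\mapsto(1-\epsilon)\lambda_i+\epsilon\lambda/s$, whose total is still $\lambda$, and only $\lambda'_1,\ldots,\lambda'_{s-1}$ are chosen freely via Lemma~\ref{loc_int_directions}, with $\lambda'_s:=\lambda-(\lambda'_1+\cdots+\lambda'_{s-1})$ forced. The last segment then automatically satisfies $(*)$ because it ends at the lattice point $\lambda$ and starts at a rational point; the endpoint constraint is preserved throughout.

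Second, a smaller slip: taking $\mu'_j$ to be ``the scalar multiple of $\lambda-P$ of norm $\|\mu_j\|$'' does \emph{not} produce a rational vector, because the scalar $\|\mu_j\|/\|\lambda-P\|$ is a ratio of square roots and is generically irrational. This breaks your induction, since you need $P'_{j-1}\in\Lambda^+_{\mathbb Q}$ to apply Lemma~\ref{loc_int_directions} at the next step. This is easily repaired (replace the exact norm-matching scalar by a nearby positive rational scalar), but it needs to be said. Even after this repair, the endpoint problem from the previous paragraph remains and requires restructuring the argument along the lines of the paper's proof.
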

\begin{proof}
Let $\pi=\pi_{\lambda_1}*\cdots*\pi_{\lambda_s}$ be a dominant rational path ending in $\lambda\in\Lambda^+$. 
We define the support of an element $\mu\in\Lambda_\mathbb R$ as the subset of simple roots such that $\langle\alpha^\vee,\mu\rangle\not=0$.
We can assume that the support of each $\lambda_j$ is the same as the support of $\lambda$; otherwise
we approximate $\pi$ by a path we get by slightly perturbing $\lambda_1,\ldots, \lambda_s$, for instance by replacing $\lambda_i$ by
$\lambda_i+\epsilon(\lambda/s-\lambda_i)$ for some rational $0<\epsilon\ll 1$.
We can also assume the support of $\lambda$ is $I$, 
otherwise we work within the subspace $\bigcap_{i\in I\setminus\supp(\lambda)}(\ker\alpha_i^\vee)$.

Under these assumptions, small perturbations $\lambda'_1,\ldots, \lambda'_{s-1}$ of the directions $\lambda_1,\ldots, \lambda_{s-1}$ remain dominant, and so does  $\lambda'_s=\lambda-(\lambda'_1+\cdots+ \lambda'_{s-1})$. By Lemma~\ref{loc_int_directions}, one can perturb in such a way that the new path
$\eta:=\pi_{\lambda'_1}*\cdots*\pi_{\lambda'_s}$ is a rational 
dominant path and the first $s-1$ line segments of $\eta$ satisfy the affine line condition $(*)$. 
The last line segment of $\eta$ meets the lattice point $\lambda$ and a rational point, and thus 
satisfies the affine line condition $(*)$ too. Hence $\eta$ is a locally integral concatenation, 
approximating the dominant rational path $\pi$.
\end{proof}

\subsection{From paths to MV cycles}
\label{ss:PathsToMV}
We need additional terminology before we proceed to the main construction
of this section.

To each coroot $\alpha^\vee\in\Phi^\vee$ corresponds an additive
one-parameter subgroup $x_{\alpha^\vee}:\mathbb G_a\to G^\vee$.
Given additionally an integer $p\in\mathbb Z$, we define a map
$$x_{(\alpha^\vee,\,p)}:\mathbb C\to G^\vee(\mathcal K),\quad
a\mapsto x_{\alpha^\vee}(az^p).$$

An affine coroot is a pair $(\alpha^\vee,p)$ consisting of a coroot
$\alpha^\vee\in\Phi^\vee$ and an integer $p\in\mathbb Z$.
The direction of an affine coroot $(\alpha^\vee,p)$ is $\alpha^\vee$.
An affine coroot is said to be positive if its direction is so. We
denote the set of affine coroots by $\Phi_a^\vee$ and the set of
positive affine coroots by $\PACS$.

To an affine coroot $\beta$, besides the map $x_\beta$ defined above,
we attach an hyperplan $H_\beta$ and a negative closed half-space
$H_\beta^-$ in $\Lambda_{\mathbb R}$ as follows:
$$H_{(\alpha^\vee,\,p)}=\bigl\{x\in\Lambda_{\mathbb R}\bigm|
\langle\alpha^\vee,x\rangle=p\bigr\},
\qquad H_{(\alpha^\vee,\,p)}^-=\bigl\{x\in\Lambda_{\mathbb R}\bigm|
\langle\alpha^\vee,x\rangle\leq p\bigr\}.$$

Let $s_\beta$ be the reflection across the hyperplane $H_\beta$; concretely
$$s_{(\alpha^\vee,\,p)}(x)=x-(\langle\alpha^\vee,x\rangle-p)\,\alpha$$
for any $x\in\Lambda_{\mathbb R}$. In addition, we denote by $\tau_\lambda$
the translation $x\mapsto x+\lambda$ by the element $\lambda\in\Lambda$.
The subgroup of $\Aut(\Lambda_{\mathbb R})$ generated by all the reflections
$s_\beta$ is the affine Weyl group $W_a$; when we add the translations
$\tau_\lambda$, we obtain the extended affine Weyl group $\widetilde W_a$.
Then $\tau_\lambda\in W_a$ if and only if $\lambda\in\mathbb Z\Phi$.

The group $\widetilde W_a$ acts on the set $\Phi_a^\vee$ of affine
roots: one demands that $w(H^-_\beta)=H^-_{w\beta}$ for each element
$w\in\widetilde W_a$ and each affine coroot $\beta\in\Phi_a^\vee$.
Then for each $\beta\in\Phi_a^\vee$ and each $\lambda\in\Lambda$, we
have $x_{\tau_\lambda\beta}(a)=z^\lambda\,x_\beta(a)\,z^{-\lambda}$
for all $a\in\mathbb C$.

We denote by $\mathfrak H$ the arrangement formed by the hyperplanes
$H_\beta$, where $\beta\in\Phi_a^\vee$. It divides the vector space
$\Lambda_{\mathbb R}$ into faces. The closure of a face is the disjoint
union of faces of smaller dimension. Endowed with the set of all faces,
$\Lambda_{\mathbb R}$ becomes a polysimplicial complex, called the affine
Coxeter complex.

For each face $\mathfrak f$ of the affine Coxeter complex, we denote by
$N^\vee(\mathfrak f)$ the subgroup of $N^\vee(\mathcal K)$ generated
by the elements $x_{\alpha^\vee}(az^p)$, where $a\in\mathcal O$ and
$(\alpha^\vee,p)$ is a positive affine coroot such that
$\mathfrak f\subseteq H_{(\alpha^\vee,\,p)}^-$.
We note that $N^\vee(\tau_\lambda\mathfrak f)=z^\lambda\,
N^\vee(\mathfrak f)\,z^{-\lambda}$ for each face $\mathfrak f$ and
each $\lambda\in\Lambda$ and that $N^\vee(\mathfrak f)\subseteq
N^\vee(\mathcal O)$ if $\mathfrak f\subseteq\Lambda_{\mathbb R}^+$.

For $x\in\Lambda_{\mathbb R}$, we denote by $\mathfrak f_x$ the face in
the affine Coxeter complex that contains the point $x$. We use the
symbol $\prod'$ to denote the restricted product of groups, consisting
of families involving only finitely many nontrivial terms. Also,
remember the notation introduced in sect.~\ref{ss:MVCyc} about the
$n$-fold convolution variety $\Gr_n$.

With these conventions, given $(\pi_1,\ldots,\pi_n)\in\Pi^n$,
we define $\mathring{\mathbf Z}(\pi_1\otimes\cdots\otimes\pi_n)$
as the subset of $\Gr_n$ of all elements
$$\left[\left(\prod_{t_1\in[0,1]}v_{1,t_1}\right)z^{\wt(\pi_1)},\;
\ldots,\;
\left(\prod_{t_n\in[0,1]}v_{n,t_n}\right)z^{\wt(\pi_n)}\right]$$
with
$$((v_{1,t_1}),\ldots,(v_{n,t_n}))\in
\mathop{\prod\nolimits'}\limits_{t_1\in[0,1]}
N^\vee\bigl(\mathfrak f_{\pi_1(t_1)}\bigr)\times\cdots\times
\mathop{\prod\nolimits'}\limits_{t_n\in[0,1]}
N^\vee\bigl(\mathfrak f_{\pi_n(t_n)}\bigr).$$
The resort to the restricted infinite products is here merely
cosmetic; we could as well use finite products, since the paths
$\pi_j$ meet only finitely many faces. The point that
matters is to compute products of elements in the various groups
$N^\vee(\mathfrak f_{\pi_j(t)})$ in the order indicated by the paths.

\begin{proposition}
\label{pr:PropRingZ}
Let $(\pi_1,\ldots,\pi_n)\in\Pi^n$.
\begin{enumerate}
\item
\label{it:PrPRZa}
The set $\mathring{\mathbf Z}(\pi_1\otimes\cdots\otimes\pi_n)$
is stable under left multiplication by $N^\vee(\mathcal O)$.
\item
\label{it:PrPRZb}
Let $\mu=\wt(\pi_1)+\cdots+\wt(\pi_n)$; then the set
$\mathring{\mathbf Z}(\pi_1\otimes\cdots\otimes\pi_n)$ is an
irreducible constructible subset of $(m_n)^{-1}(S_\mu)$.
\item
\label{it:PrPRZc}
We have
$$\mathring{\mathbf Z}(\pi_1\otimes\cdots\otimes\pi_n)=
\Psi\Bigl(\mathring{\mathbf Z}(\pi_1)\ltimes\cdots\ltimes
\mathring{\mathbf Z}(\pi_n)\Bigr)$$
and
$$\mathring{\mathbf Z}(\pi_1*\cdots*\pi_n)=
m_n\Bigl(\mathring{\mathbf Z}(\pi_1\otimes\cdots\otimes\pi_n)\Bigr).$$
\item
\label{it:PrPRZd}
Let $i\in I$ and compute $\eta_1\otimes\cdots\otimes\eta_n=
\tilde e_i(\pi_1\otimes\cdots\otimes\pi_n)$ in the crystal
$\Pi^{\otimes n}$, assuming that this operation is doable.
Then\/ $\mathring{\mathbf Z}(\pi_1\otimes\cdots\otimes\pi_n)$
is contained in the closure of\/
$\mathring{\mathbf Z}(\eta_1\otimes\cdots\otimes\eta_n)$ in~$\Gr_n$.
\end{enumerate}
\end{proposition}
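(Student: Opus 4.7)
The plan is to handle the four assertions in turn, reducing \ref{it:PrPRZd} via \ref{it:PrPRZc} to a single-path statement that is then established by a rank-one degeneration.

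For \ref{it:PrPRZa}, every integral path satisfies $\pi_j(0)=0$, so $\mathfrak f_{\pi_j(0)}=\{0\}$ lies inside every half-space $H^-_{(\alpha^\vee,p)}$ with $\alpha^\vee$ a positive coroot and $p\geq 0$. Consequently $N^\vee(\mathcal O)\subseteq N^\vee(\mathfrak f_{\pi_1(0)})$, and left multiplication by $n\in N^\vee(\mathcal O)$ is absorbed as an extra leftmost factor at $t_1=0$ in the first restricted product. For \ref{it:PrPRZb}, iterated use of $z^\lambda g=(z^\lambda g z^{-\lambda})\,z^\lambda$ together with the fact that $N^\vee(\mathcal K)$ is normalized by the torus lets one push every $z^{\wt(\pi_j)}$ to the right and gather them into $z^\mu$; hence $m_n$ lands inside $N^\vee(\mathcal K)\,z^\mu\subseteq S_\mu$. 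Since each path crosses only finitely many faces and each $N^\vee(\mathfrak f)$ is a connected pro-unipotent subgroup generated by one-parameter subgroups, $\mathring{\mathbf Z}(\pi_1\otimes\cdots\otimes\pi_n)$ is the image in $\Gr_n$ of a morphism from a finite product of connected algebraic groups, giving irreducibility and constructibility.

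For \ref{it:PrPRZc}, the first equality unwinds the definitions of $\Psi$ and $\ltimes$: the preimage of each $\mathring{\mathbf Z}(\pi_j)$ in $N^\vee(\mathcal K)\,z^{\wt(\pi_j)}$ is the set of products $\bigl(\prod_t v_{j,t}\bigr)z^{\wt(\pi_j)}$, and their twisted arrangement is by construction $\mathring{\mathbf Z}(\pi_1\otimes\cdots\otimes\pi_n)$. For the second equality, one expands $m_n([g_1,\ldots,g_n])=g_1\cdots g_n$ and repeatedly applies $z^\lambda\,N^\vee(\mathfrak f)\,z^{-\lambda}=N^\vee(\tau_\lambda\mathfrak f)$ to slide each torus factor past the subsequent unipotent factors; the face $\mathfrak f_{\pi_j(t)}$ is then translated by $\wt(\pi_1)+\cdots+\wt(\pi_{j-1})$, which is precisely the face visited by $\pi_1*\cdots*\pi_n$ at the corresponding reparameterized time. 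The resulting product is then the defining element of $\mathring{\mathbf Z}(\pi_1*\cdots*\pi_n)$.

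For \ref{it:PrPRZd}, by \ref{it:PrPRZc} and Proposition~\ref{pr:PathModel}~\ref{it:PrPMb} (concatenation is a strict morphism of crystals), $\tilde e_i$ acts on $\pi_1\otimes\cdots\otimes\pi_n$ by modifying a single factor $\pi_{k_0}$ into $\eta_{k_0}=\tilde e_i\pi_{k_0}$ and leaving the others unchanged. The twisted-product factorization afforded by \ref{it:PrPRZc} reduces the desired inclusion to the single-path statement $\mathring{\mathbf Z}(\pi)\subseteq\overline{\mathring{\mathbf Z}(\tilde e_i\pi)}$ in $\Gr$. Writing $\eta=\tilde e_i\pi$ via the explicit recipe from section~\ref{ss:RecPathMod}, $\eta$ agrees with $\pi$ on $[0,a]$, is obtained from $\pi$ by the affine reflection $s_{(\alpha_i^\vee,p+1)}$ on $[a,b]$, and by translation by $\alpha_i$ on $[b,1]$. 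The main obstacle is to realize this path-level surgery as a geometric degeneration: one inserts into a generic element of $\mathring{\mathbf Z}(\eta)$ a scalar parameter $c$ along the one-parameter subgroup $x_{\alpha_i^\vee}(c\,z^{p+1})$ at the moment when the path visits $H_{(\alpha_i^\vee,p+1)}$, and invokes the classical rank-one limit in the corresponding $SL_2^\vee$ or $PGL_2^\vee$ subgroup realizing the affine reflection $s_{(\alpha_i^\vee,p+1)}$ as a degeneration of $N^\vee$-action in $\Gr$. In the limit $c\to\infty$ this simultaneously reflects the intermediate segment of the path and translates its tail by $\alpha_i$, producing $\pi$ from $\eta$. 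The delicate bookkeeping lies in verifying that each surviving factor $v_{j,t}$ lands in $N^\vee(\mathfrak f_{\pi(t)})$ for the appropriate face of the new path $\pi$, and that the conjugations by $\tau_{\wt(\pi_1)+\cdots+\wt(\pi_{k_0-1})}$ preserve this matching.
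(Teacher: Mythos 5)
Parts \ref{it:PrPRZa}--\ref{it:PrPRZc} of your proposal follow the paper's argument and are fine. For \ref{it:PrPRZd}, however, there is a genuine gap: the reduction to the single-path statement $\mathring{\mathbf Z}(\pi)\subseteq\overline{\mathring{\mathbf Z}(\tilde e_i\pi)}$ in $\Gr$ is not automatic from \ref{it:PrPRZc}, and the paper deliberately does not make this reduction. The problem is the choice of representatives. The set $\mathring{\mathbf Z}(\eta_1\otimes\cdots\otimes\eta_n)$ is the image under $\Psi$ of a twisted product built from \emph{specific} lifts $z_j\in N^\vee(\mathcal K)\,z^{\wt(\eta_j)}$, not from arbitrary representatives in $G^\vee(\mathcal K)$. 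If you produce a degeneration $[\zeta_k(h)]\to[z_k]$ in $\Gr$ for the $k$-th factor only, the lifts satisfy $\zeta_k(h)\to z_k\,u(h)$ for some a priori uncontrolled $u(h)\in G^\vee(\mathcal O)$; propagating this through the convolution coordinates replaces $z_{k+1}$ by $u(h)^{-1}z_{k+1}$ (up to right cosets), and because $\mathring{\mathbf Z}(\pi_{k+1})$ is only $N^\vee(\mathcal O)$-stable, not $G^\vee(\mathcal O)$-stable, there is no guarantee that $[u(h)^{-1}z_{k+1}]$ stays in $\mathring{\mathbf Z}(\pi_{k+1})$, nor that the limit point is $[z_1,\ldots,z_n]$ rather than some twist of it. This is precisely why the paper proves the stronger system of equations \eqref{eq:ProofPrPRZd2}: the degenerating family $(w_\ell)$ constructed in Lemma~\ref{le:PRZFold} is shown to satisfy the equality of \emph{all} partial products $v_1\cdots v_{m_j}L_{\nu_j}=w_1\cdots w_{m_j}L_{\nu_j+\alpha_i}$ (for $j\geq k$), which is exactly the compatibility needed to conclude in $\Gr_n$ rather than merely in $\Gr$.

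Even setting aside the reduction issue, the single-path degeneration itself is only sketched in your plan. You acknowledge that ``the delicate bookkeeping lies in verifying that each surviving factor $v_{j,t}$ lands in $N^\vee(\mathfrak f_{\pi(t)})$,'' but this verification is the entire substance of the proof; it is carried out in the paper via Lemma~\ref{le:StabFace}, the three-part inductive Lemma~\ref{le:PRZCtrl} (which controls the coefficients $\mathbf a_{i,p}$ of the leftmost factors while absorbing torus factors and negative root vectors), and the folding Lemma~\ref{le:PRZFold}. Your parameter $x_{\alpha_i^\vee}(c\,z^{p+1})$ with $c\to\infty$ is also not the same as the paper's $x_{(-\alpha_i^\vee,\,-p-1)}(h)=x_{-\alpha_i^\vee}(h\,z^{-p-1})$ with $h\to 0$, and while the two are presumably related via the $\SL_2$ relation \eqref{eq:TitsRela}, one would need to redo the bookkeeping to see whether your degeneration actually lands inside $\mathring{\mathbf Z}(\eta)$, since $x_{\alpha_i^\vee}(c\,z^{p+1})\in N^\vee(\mathcal K)$ at a point where $\pi$ touches $H_{(\alpha_i^\vee,p+1)}$ does not obviously modify the path's type in the required way.
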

\begin{proof}
Assertion~\ref{it:PrPRZa} is a direct consequence of the equality
$N^\vee\bigl(\mathfrak f_{\pi_1(0)}\bigr)=N^\vee(\mathfrak f_0)=
N^\vee(\mathcal O)$.

Assertion~\ref{it:PrPRZb} comes from general principles once we
have replaced the restricted infinite product by a finite one.

The first equation in~\ref{it:PrPRZc} is tautological. In the
second one, we view the concatenation $\pi=\pi_1*\cdots*\pi_n$ as a
map from $[0,n]$ to $\Lambda_{\mathbb R}$, each path $\pi_1$, \dots,
$\pi_n$ being travelled at nominal speed. For $j\in\{1,\ldots,n\}$,
we set $\nu_{j-1}=\wt(\pi_1)+\cdots+\wt(\pi_{j-1})$; then
$\pi(t+(j-1))=\nu_{j-1}+\pi_j(t)$ for all $t\in[0,1]$, and accordingly
$N^\vee\bigl(\mathfrak f_{\pi(t+(j-1))}\bigr)=z^{\nu_{j-1}}\,
N^\vee\bigl(\mathfrak f_{\pi_j(t)}\bigr)\,z^{-\nu_{j-1}}$.
A banal calculation then yields the desired result.

The proof of assertion~\ref{it:PrPRZd} is much more involved.
We defer its presentation to sect.~\ref{ss:ProofPrPRZd}.
\end{proof}

For $(\pi_1,\ldots,\pi_n)\in\Pi^n$, we denote by
$\mathbf Z(\pi_1\otimes\cdots\otimes\pi_n)$ the closure of
$\mathring{\mathbf Z}(\pi_1\otimes\cdots\otimes\pi_n)$ in
$(m_n)^{-1}(S_\mu)$, where $\mu=\wt(\pi_1)+\cdots+\wt(\pi_n)$.

\begin{theorem}
\label{th:ZIsMVCyc}
Let $(\pi_1,\ldots,\pi_n)\in\Pi^n$, set
$\mu=\wt(\pi_1)+\cdots+\wt(\pi_n)$ and for $j\in\{1,\ldots,n\}$,
let $\lambda_j$ be the weight of the unique dominant path in
$\mathcal A\pi_j$. Then $\mathbf Z(\pi_1\otimes\cdots\otimes\pi_n)$
is an MV cycle; specifically $\mathbf Z(\pi_1\otimes\cdots\otimes\pi_n)
\in\,{}_*\mathscr Z(\lambda_1,\ldots,\lambda_n)_\mu$.
\end{theorem}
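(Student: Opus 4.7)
The plan is to reduce first to the case $n=1$, then prove the single-path statement by induction along the crystal structure of $\mathcal A\pi$, starting from the unique dominant element.

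For the reduction, Proposition~\ref{pr:PropRingZ}\ref{it:PrPRZc} gives $\mathring{\mathbf Z}(\pi_1\otimes\cdots\otimes\pi_n)=\Psi\bigl(\mathring{\mathbf Z}(\pi_1)\ltimes\cdots\ltimes\mathring{\mathbf Z}(\pi_n)\bigr)$. Since $\Psi$ induces a homeomorphism onto its image and $\ltimes$ is an iterated fibration (sect.~\ref{ss:MVCyc}), these operations commute with taking closures inside $(m_n)^{-1}(S_\mu)$. Once each $\mathbf Z(\pi_j)\in{}_*\mathscr Z(\lambda_j)_{\wt(\pi_j)}$ is known, Proposition~\ref{pr:MVCyc}\ref{it:PrMVCb} then identifies $\mathbf Z(\pi_1\otimes\cdots\otimes\pi_n)$ with the MV cycle assigned to the tuple $(\mathbf Z(\pi_1),\ldots,\mathbf Z(\pi_n))$ under the displayed bijection, placing it in ${}_*\mathscr Z(\bm\lambda)_\mu$ as required.

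For the case $n=1$, let $\pi^{\mathrm{dom}}\in\mathcal A\pi$ be dominant of weight $\lambda$, and induct on the distance from $\pi$ to $\pi^{\mathrm{dom}}$ in the crystal. For the base case $\pi=\pi^{\mathrm{dom}}$, the image lies in $\Lambda_{\mathbb R}^+$, so each $N^\vee(\mathfrak f_{\pi^{\mathrm{dom}}(t)})$ is contained in $N^\vee(\mathcal O)$, and the factor at $t=0$ is all of $N^\vee(\mathfrak f_0)=N^\vee(\mathcal O)$. Hence $\mathring{\mathbf Z}(\pi^{\mathrm{dom}})=N^\vee(\mathcal O)\cdot L_\lambda$, whose closure in $S_\lambda$ is the irreducible variety $\overline{\Gr^\lambda}\cap S_\lambda$, the unique MV cycle in ${}_*\mathscr Z(\lambda)_\lambda$. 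For the inductive step, suppose the result is known for $\eta\in\mathcal A\pi$ and set $\pi=\tilde f_i\eta$. Applying Proposition~\ref{pr:PropRingZ}\ref{it:PrPRZd} with $\tilde e_i\pi=\eta$ yields $\mathbf Z(\pi)\subseteq\overline{\mathring{\mathbf Z}(\eta)}\subseteq\overline{\Gr^\lambda}$, which combined with Proposition~\ref{pr:PropRingZ}\ref{it:PrPRZb} gives the inclusion $\mathbf Z(\pi)\subseteq\overline{\Gr^\lambda}\cap S_{\wt(\pi)}$ of an irreducible closed subset.

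What remains, and what I expect to be the main obstacle, is the dimension estimate $\dim\mathbf Z(\pi)=\rho(\lambda+\wt(\pi))$, which by Proposition~\ref{pr:MVCyc}\ref{it:PrMVCa} would place $\mathbf Z(\pi)$ as an irreducible component of $\overline{\Gr^\lambda}\cap S_{\wt(\pi)}$. My preferred route is a direct combinatorial dimension count on $\mathring{\mathbf Z}(\pi)$: decompose $\pi$ into linear segments and, on each segment, tally the positive affine coroots $(\alpha^\vee,p)\in\PACS$ whose hyperplane $H_{(\alpha^\vee,p)}$ is abutted from below by the segment, each contributing a free parameter to the factor $N^\vee(\mathfrak f_{\pi(t)})$. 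The integrality of $\pi$ should be precisely what ensures that these contributions sum, via a telescoping argument along the path, to $\rho(\lambda+\wt(\pi))$, generalizing the folding-gallery computation in~\cite{GaussentLittelmann}. As a backup, one can instead establish injectivity of $\pi\mapsto\mathbf Z(\pi)$ on $\mathcal A\pi\cong B(\lambda)$ and combine this with the equality $|B(\lambda)_\mu|=\dim V(\lambda)_\mu=|\mathscr Z(\lambda)_\mu|$ from the MV basis theorem to force each $\mathbf Z(\pi)$ to exhaust an entire irreducible component.
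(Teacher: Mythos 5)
Your reduction to the case $n=1$ via Propositions~\ref{pr:PropRingZ}~\ref{it:PrPRZc} and~\ref{pr:MVCyc}~\ref{it:PrMVCb}, and your identification of the base case $\mathring{\mathbf Z}(\pi^{\mathrm{dom}})=N^\vee(\mathcal O)L_\lambda$, both match the paper's argument. You have also correctly located the crux: the dimension equality $\dim\mathbf Z(\pi)=\rho(\lambda+\wt(\pi))$.

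However, your inductive framework cannot by itself deliver this equality, and both of your proposed completions have problems. Inducting downward from $\pi^{\mathrm{dom}}$ via Proposition~\ref{pr:PropRingZ}~\ref{it:PrPRZd} gives at each step only $\mathbf Z(\pi)\subsetneq\overline{\mathbf Z(\tilde e_i\pi)}$ (strict because $S_{\wt(\pi)}$ and $S_{\wt(\tilde e_i\pi)}$ are disjoint), hence only an upper bound $\dim\mathbf Z(\pi)\leq\dim\mathbf Z(\tilde e_i\pi)-1$; nothing in a one-sided descent forces the codimension to be exactly one. Your ``direct combinatorial count'' route is also shakier than it looks: by Proposition~\ref{pr:AltDefRingZ} the cell $\mathring{\mathbf Z}(\pi)$ is the \emph{image} of a map from $\prod_{t}\mathscr N^\vee(\pi,t)\cong\mathbb C^{\Card\bigsqcup_t\PACS(\pi,t)}$, and this map need not be injective (it already fails to be so for a dominant path that wanders back and forth), so tallying wall crossings only gives an upper bound, not the exact dimension. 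And your backup (injectivity of $\pi\mapsto\mathbf Z(\pi)$ plus cardinality matching) would, even granting injectivity, only show the $\mathbf Z(\pi)$ are distinct irreducible closed subsets of $\overline{\Gr^\lambda}\cap S_\mu$ — not that each one is a top-dimensional component.

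The paper closes the gap with a two-sided squeeze that you are missing. Let $\xi$ be the lowest-weight element of $\mathcal A\pi\cong B(\lambda)$ and choose a saturated chain $\pi_0=\xi,\,\pi_1,\,\ldots,\,\pi_p=\eta$ of length $p=2\rho(\lambda)$ with $\pi_k=\pi$ where $k=\rho(\lambda+\mu)$, each step raising by some $\tilde e_i$. Proposition~\ref{pr:PropRingZ}~\ref{it:PrPRZd} yields the chain of closures $\overline{\mathbf Z(\pi_0)}\subseteq\cdots\subseteq\overline{\mathbf Z(\pi_p)}\subseteq\overline{\Gr^\lambda}$, and these inclusions are all strict because each $\mathbf Z(\pi_{j+1})$ sits in $S_{\wt(\pi_{j+1})}$ while $\overline{\mathbf Z(\pi_j)}$ is contained in the closure of $\overline{\Gr^\lambda}\cap S_{\wt(\pi_j)}$, which is disjoint from $S_{\wt(\pi_{j+1})}$ by \cite{MirkovicVilonen}, Proposition~3.1~(a). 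A strictly increasing chain of $p+1$ irreducible closed subsets inside the $p$-dimensional variety $\overline{\Gr^\lambda}$ forces $\dim\overline{\mathbf Z(\pi_j)}=j$ for every $j$; in particular $\dim\mathbf Z(\pi)=k=\rho(\lambda+\mu)$, which together with irreducibility and the inclusion $\mathbf Z(\pi)\subseteq\overline{\Gr^\lambda}\cap S_\mu$ pins it down as an MV cycle. Note that this sandwich needs the lowest-weight element $\xi$ as the anchor at the bottom, which is why the paper travels both directions in the crystal rather than only descending from the dominant vertex.
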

\begin{proof}
We start with the particular case $n=1$. Let $\pi\in\Pi$, let $\eta$
be the unique dominant path in $\mathcal A\pi$, set $\lambda=\wt(\eta)$
and $\mu=\wt(\pi)$, and set $p=2\rho(\lambda)$ and $k=\rho(\lambda+\mu)$.
By Proposition~\ref{pr:PathModel}~\ref{it:PrPMc}, the crystal
$\mathcal A\pi$ is isomorphic to $B(\lambda)$, so it contains
a unique lowest weight element $\xi$. Then $\pi$ can be reached
by applying a sequence of root operators $\tilde f_i$ to $\eta$
or by applying a sequence of root operators $\tilde e_i$ to $\xi$.
Thus, there exists a finite sequence $(\pi_0,\ldots,\pi_p)$ of elements
in $\mathcal A\pi$ such that $\pi_0=\xi$, $\pi_k=\pi$, $\pi_p=\eta$
and such that each $\pi_{j+1}$ is obtained from $\pi_j$ by applying
a root operator $\tilde e_i$.

Since $\eta$ is dominant, each face $\mathfrak f_{\eta(t)}$ is
contained in $\Lambda_{\mathbb R}^+$, so each group $N^\vee\bigl(
\mathfrak f_{\eta(t)}\bigr)$ is contained in $N^\vee(\mathcal O)$.
Then by construction
$\mathring{\mathbf Z}(\eta)\subseteq N^\vee(\mathcal O)\,L_\lambda$,
and therefore $\overline{\mathbf Z(\eta)}$ (the closure of
$\mathbf Z(\eta)$ in $\Gr$) is contained in $\overline{\Gr^\lambda}$.
Also, Proposition~\ref{pr:PropRingZ}~\ref{it:PrPRZd} implies that
\begin{equation}
\label{eq:ZIsMVCyc}
\overline{\mathbf Z(\pi_0)}\subseteq\overline{\mathbf Z(\pi_1)}
\subseteq\cdots\subseteq\overline{\mathbf Z(\pi_p)}.
\end{equation}
These inclusions are strict because $\overline{\mathbf Z(\pi_j)}$ is
contained in the closure of $\overline{\Gr^\lambda}\cap S_{\wt(\pi_j)}$,
which is disjoint from $S_{\wt(\pi_{j+1})}$ by \cite{MirkovicVilonen},
Proposition~3.1~(a), while $\mathbf Z(\pi_{j+1})$ is contained in
$S_{\wt(\pi_{j+1})}$. Thus \eqref{eq:ZIsMVCyc} is a strictly increasing
chain of closed irreducible subsets of $\overline{\Gr^\lambda}$.
As $\overline{\Gr^\lambda}$ has dimension $p$, we see that each
$\overline{\mathbf Z(\pi_j)}$ has dimension $j$.

In particular, $\overline{\mathbf Z(\pi)}$ has dimension $k$. But
$\mathbf Z(\pi)$ is locally closed, because it is a closed subset
of~$S_\mu$ which is locally closed. So $\mathbf Z(\pi)$ has
dimension $k$. At this point, we know that $\mathbf Z(\pi)$ is a
closed irreducible subset of $\overline{\Gr^\lambda}\cap S_\mu$ of
dimension $k=\rho(\lambda+\mu)$. Therefore $\mathbf Z(\pi)$ belongs
to~$\mathscr Z(\lambda)_\mu$.

The reasoning above establishes the case $n=1$ of the Theorem.
The general case then follows from
Propositions~\ref{pr:MVCyc}~\ref{it:PrMVCb}
and~\ref{pr:PropRingZ}~\ref{it:PrPRZc}.
\end{proof}

\subsection{A more economical definition}
For the proofs in the following sections, it will be convenient
to have a more economical presentation of the sets
$\mathring{\mathbf Z}(\pi_1\otimes\cdots\otimes\pi_n)$.
We need a few additional pieces of notation.

When $\mathfrak f$ and $\mathfrak f'$ are two faces of the affine
Coxeter complex such that $\mathfrak f$ is contained in the closure
$\overline{\mathfrak f'}$ of $\mathfrak f'$, we denote by
$\PACS(\mathfrak f,\mathfrak f')$ the set of all positive
affine coroots $\beta$ such that $\mathfrak f\subseteq H_\beta$
and $\mathfrak f'\not\subseteq H^-_\beta$. We denote by
$\mathscr N^\vee(\mathfrak f,\mathfrak f')$ the subgroup of
$N^\vee(\mathcal K)$ generated by the elements $x_\beta(a)$ with
$\beta\in\PACS(\mathfrak f,\mathfrak f')$ and $a\in\mathbb C$.

The following result is Proposition~19~(ii) in \cite{BaumannGaussent}.
\begin{lemma}
\label{le:PropBaGa}
Let $\mathfrak f$ and $\mathfrak f'$ be two faces of the affine Coxeter
complex such that $\mathfrak f\subseteq\overline{\mathfrak f'}$.
Then $N^\vee(\mathfrak f)$ is the bicrossed product
$\mathscr N^\vee(\mathfrak f,\mathfrak f')\bowtie N^\vee(\mathfrak f')$;
in other words, the product induces a bijection
$$\mathscr N^\vee(\mathfrak f,\mathfrak f')\times N^\vee(\mathfrak f')
\xrightarrow\simeq N^\vee(\mathfrak f).$$
In addition, the map
$$\mathbb C^{\PACS(\mathfrak f,\mathfrak f')}\to\mathscr N^\vee
(\mathfrak f,\mathfrak f'),\quad(a_\beta)\mapsto
\prod_{\beta\in\PACS(\mathfrak f,\mathfrak f')}x_\beta(a_\beta)$$
is bijective, whichever order on $\PACS(\mathfrak f,\mathfrak f')$
is used to compute the product.
\end{lemma}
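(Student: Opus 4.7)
The plan is to combine a combinatorial convexity property of the index set $\PACS(\mathfrak f,\mathfrak f')$ with the Chevalley commutator relations in $N^\vee(\mathcal K)$. First I would record the disjoint decomposition
$$\bigl\{\beta\in\PACS\bigm|\mathfrak f\subseteq H_\beta^-\bigr\}=\PACS(\mathfrak f,\mathfrak f')\sqcup\bigl\{\beta\in\PACS\bigm|\mathfrak f'\subseteq H_\beta^-\bigr\},$$
which follows from $\mathfrak f\subseteq\overline{\mathfrak f'}$: if $\mathfrak f\subseteq H_\beta^-$ but $\mathfrak f'\not\subseteq H_\beta^-$, then $\mathfrak f$ must lie on the boundary $H_\beta$ of the closed half-space. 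The first summand indexes the generators of $\mathscr N^\vee(\mathfrak f,\mathfrak f')$, the second summand indexes those of $N^\vee(\mathfrak f')$ (incidentally giving $N^\vee(\mathfrak f')\subseteq N^\vee(\mathfrak f)$), and together they index the generators of $N^\vee(\mathfrak f)$.

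The key technical input is a closure property: $\PACS(\mathfrak f,\mathfrak f')$ is stable under every positive integral combination that still produces an affine coroot. The condition $\mathfrak f\subseteq H_{\beta_i}$ is a linear equation on $\beta_i$ that survives positive combinations, and the condition $\mathfrak f'\not\subseteq H_{\beta_i}^-$ is witnessed by any interior point of $\mathfrak f'$ (which lies strictly on the positive side of every hyperplane not containing $\overline{\mathfrak f'}$), hence survives as well. A parallel closure property holds for $\{\beta\in\PACS\mid\mathfrak f'\subseteq H_\beta^-\}$. With these in hand, the Chevalley commutator formula
$$[x_{\beta_1}(a_1),x_{\beta_2}(a_2)]=\prod_{m_1,m_2\geq1}x_{m_1\beta_1+m_2\beta_2}\bigl(c_{m_1,m_2}\,a_1^{m_1}a_2^{m_2}\bigr)$$
shows that both $\mathscr N^\vee(\mathfrak f,\mathfrak f')$ and $N^\vee(\mathfrak f')$ are actually subgroups of $N^\vee(\mathfrak f)$, and that any product of generators can be rearranged, by systematically pushing factors from the second family to the right of factors from the first, into the required form $g\,g'$. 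This gives surjectivity of the multiplication map. Injectivity reduces to $\mathscr N^\vee(\mathfrak f,\mathfrak f')\cap N^\vee(\mathfrak f')=\{1\}$, which follows from the direct-sum decomposition of the underlying pro-nilpotent Lie algebra into one-dimensional affine root spaces together with the exponential parametrization of the corresponding pro-unipotent group.

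For the second assertion, the same closure property guarantees that any reordering of a product $\prod x_\beta(a_\beta)$ inside $\mathscr N^\vee(\mathfrak f,\mathfrak f')$ introduces correction factors still indexed by $\PACS(\mathfrak f,\mathfrak f')$, so the map is well defined and independent of the chosen order; its bijectivity is then the exponential parametrization of a (pro-)unipotent group whose Lie algebra is the direct sum $\bigoplus_{\beta\in\PACS(\mathfrak f,\mathfrak f')}\mathfrak n_\beta$ of one-dimensional affine root spaces. The main obstacle is to make the combinatorial closure property completely airtight for arbitrary (not only maximal) faces of the affine Coxeter complex and to handle the convergence of infinite products inside the pro-unipotent group $N^\vee(\mathcal K)$; since this is precisely the content of Proposition~19~(ii) of \cite{BaumannGaussent}, the cleanest route is to invoke that reference directly.
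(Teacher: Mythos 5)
The paper offers no proof of this lemma: it simply records that the statement is Proposition~19~(ii) of \cite{BaumannGaussent}, and your closing remark arrives at exactly the same conclusion, so your approach agrees with the paper's. Your sketch of a self-contained argument is a reasonable outline of what such a proof would have to contain, and the two key observations you isolate are correct: the partition
$$\bigl\{\beta\in\PACS\bigm|\mathfrak f\subseteq H_\beta^-\bigr\}=\PACS(\mathfrak f,\mathfrak f')\sqcup\bigl\{\beta\in\PACS\bigm|\mathfrak f'\subseteq H_\beta^-\bigr\}$$
does follow from $\mathfrak f\subseteq\overline{\mathfrak f'}$ together with the fact that a face lies entirely on one side of (or within) each hyperplane, and $\PACS(\mathfrak f,\mathfrak f')$ is indeed closed under forming affine coroots from positive integral combinations, precisely because the condition $\mathfrak f\subseteq H_\beta$ is linear and the condition $\mathfrak f'\not\subseteq H_\beta^-$ is equivalent to a strict inequality at all interior points of $\mathfrak f'$. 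The step you gloss over the most is the commutator bookkeeping: when pushing a factor of $N^\vee(\mathfrak f')$ past one of $\mathscr N^\vee(\mathfrak f,\mathfrak f')$, the correction terms produced by~\eqref{eq:ChevComm} can be of either type, so one needs an induction on a filtration (by height of the underlying coroot, say) to terminate the rearrangement; this works because one of the two subgroups, $\mathscr N^\vee(\mathfrak f,\mathfrak f')$, is generated by the finite set $\PACS(\mathfrak f,\mathfrak f')$, while $N^\vee(\mathfrak f')$ is pro-unipotent, so one works modulo congruence subgroups. Since all of this is carried out carefully in the cited reference, deferring to it, as you and the paper both do, is the right call.
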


Given $\pi\in\Pi$ and $t\in[0,1[$, we denote by
$\mathfrak f_{\pi(t+0)}$ the face in the affine Coxeter complex
that contains the points $\pi(t+h)$ for all small enough $h>0$.
Obviously, its closure meets, hence contains, the face
$\mathfrak f_{\pi(t)}$. We set
$\PACS(\pi,t)=\PACS\bigl(\mathfrak f_{\pi(t)},\mathfrak f_{\pi(t+0)}
\bigr)$ and $\mathscr N^\vee(\pi,t)=\mathscr N^\vee
\bigl(\mathfrak f_{\pi(t)},\mathfrak f_{\pi(t+0)}\bigr)$.

Concretely, $\PACS(\pi,t)$ is the set of all $\beta\in\PACS$ such
that $\pi$ quits the half-space $H_\beta^-$ at time $t$ and
$\mathscr N^\vee(\pi,t)$ is the subgroup of $N^\vee(\mathcal K)$
generated by the elements $x_\beta(a)$ with $\beta\in\PACS(\pi,t)$
and $a\in\mathbb C$. Note that $\PACS(\pi,t)$ is empty save for
finitely many $t$.

\begin{proposition}
\label{pr:AltDefRingZ}
Let $(\pi_1,\ldots,\pi_n)\in\Pi^n$. Then
$\mathring{\mathbf Z}(\pi_1\otimes\cdots\otimes\pi_n)$ is the set of
all elements
$$\left[\left(\prod_{t_1\in[0,1[}v_{1,t_1}\right)z^{\wt(\pi_1)},\;
\ldots,\;\left(\prod_{t_n\in[0,1[}v_{n,t_n}\right)z^{\wt(\pi_n)}
\right]$$
with
$$((v_{1,t_1}),\ldots,(v_{n,t_n}))\in
\prod_{t_1\in[0,1[}\mathscr N^\vee(\pi_1,t_1)\times\cdots\times
\prod_{t_n\in[0,1[}\mathscr N^\vee(\pi_n,t_n).$$
\end{proposition}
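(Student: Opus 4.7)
The plan is to reduce to the case $n=1$ and then to iterate the bicrossed decomposition of Lemma~\ref{le:PropBaGa} along a suitable finite subdivision of $[0,1]$.

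For $n=1$, the inclusion $\supseteq$ is tautological: $\mathscr N^\vee(\pi,t)\subseteq N^\vee(\mathfrak f_{\pi(t)})$ by definition, and the $t=1$ factor of the old form can be taken trivial. For the reverse inclusion, start from an old-form product $(\prod_{t\in[0,1]}v_t)z^{\wt(\pi)}$. A possibly nontrivial $v_1\in N^\vee(\mathfrak f_{\pi(1)})$ is disposed of via $v_1z^{\wt(\pi)}=z^{\wt(\pi)}\cdot z^{-\wt(\pi)}v_1z^{\wt(\pi)}$, the conjugate lying in $N^\vee(\tau_{-\wt(\pi)}\mathfrak f_{\pi(1)})=N^\vee(\mathfrak f_0)=N^\vee(\mathcal O)$ and therefore fixing $L_0$. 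Next, pick a finite partition $0=t_0<\cdots<t_N=1$ refining both the transition times of $\pi$ and the finite support of $\{t:v_t\neq 1\}$, so that $\mathfrak f_{\pi(s)}$ is a constant face $\mathfrak g_k$ on each open interval $(t_k,t_{k+1})$. At each breakpoint write $\mathfrak h_k=\mathfrak f_{\pi(t_k)}$; continuity forces $\mathfrak h_k\subseteq\overline{\mathfrak g_{k-1}}\cap\overline{\mathfrak g_k}$ (with $\mathfrak h_0=\{0\}$), while $\mathfrak f_{\pi(t_k+0)}=\mathfrak g_k$, and hence $\mathscr N^\vee(\pi,t_k)=\mathscr N^\vee(\mathfrak h_k,\mathfrak g_k)$.

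Collect the $v_s$ for $s\in(t_k,t_{k+1})$ into a single $V_k\in N^\vee(\mathfrak g_k)$ and process left to right: inductively carry forward a residue $w_{k-1}\in N^\vee(\mathfrak g_{k-1})$, starting with $w_{-1}=1$. The inclusions $\mathfrak h_k\subseteq\overline{\mathfrak g_{k-1}}\cap\overline{\mathfrak g_k}$ place both $N^\vee(\mathfrak g_{k-1})$ and $N^\vee(\mathfrak g_k)$ inside $N^\vee(\mathfrak h_k)$, so $w_{k-1}v_{t_k}V_k\in N^\vee(\mathfrak h_k)$, and Lemma~\ref{le:PropBaGa} gives the unique factorization $w_{k-1}v_{t_k}V_k=u_k w_k$ with $u_k\in\mathscr N^\vee(\pi,t_k)$ and $w_k\in N^\vee(\mathfrak g_k)$. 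After $N$ steps the whole product reads $u_0\cdots u_{N-1}w_{N-1}$; since $\mathfrak f_{\pi(1)}\subseteq\overline{\mathfrak g_{N-1}}$, the conjugation trick used for $v_1$ applies again to $w_{N-1}$, yielding a new-form expression.

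For general $n$, the $n=1$ algorithm applied to $g_j=(\prod v_{j,t})z^{\wt(\pi_j)}$ yields an \emph{actual} equality $g_j=\alpha_j\tilde w_j$ in $G^\vee(\mathcal K)$, with $\alpha_j$ of new-form shape and $\tilde w_j\in N^\vee(\mathcal O)$. The $\Gr_n$-equivalence $[\alpha_j\tilde w_j,g_{j+1},\ldots]=[\alpha_j,\tilde w_j g_{j+1},\ldots]$ moves $\tilde w_j$ into the next slot; since $\tilde w_j\in N^\vee(\mathcal O)=N^\vee(\mathfrak f_{\pi_{j+1}(0)})$, the element $\tilde w_j g_{j+1}$ remains an old-form product for $\pi_{j+1}$, to which the $n=1$ algorithm reapplies. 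Iterating, the final residue $\tilde w_n$ absorbs into the $G^\vee(\mathcal O)$-stabilizer of $L_0$. The main obstacle throughout is the geometric bookkeeping ensuring that each accumulated residue lies in the correct group $N^\vee(\mathfrak h_k)$ for the next bicrossed decomposition; this rests on the property that the face where $\pi$ sits at a breakpoint is simultaneously a face of the two adjacent open regions of the affine Coxeter complex traversed by $\pi$.
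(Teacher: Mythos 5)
Your argument is correct and essentially reproduces the paper's: for $n=1$ both proofs iterate the bicrossed decomposition of Lemma~\ref{le:PropBaGa} left to right along a finite subdivision of $[0,1]$, carrying an $N^\vee(\mathfrak f)$-residue forward and peeling off a $\mathscr N^\vee(\pi,t_k)$-factor at each breakpoint (your subdivision is slightly finer than the paper's, which uses only the times where $\PACS(\pi,t)\neq\varnothing$, but the extra breakpoints merely contribute trivial factors). For $n\geq 2$ the paper cites Proposition~\ref{pr:PropRingZ}~\ref{it:PrPRZc}, while you unpack it into an explicit slot-sliding manipulation of representatives in $\Gr_n$; the two are the same reduction expressed differently.
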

\begin{proof}
Let $\pi\in\Pi$. Let $(t_1,\ldots,t_m)$ be the ordered list of all
elements $t\in[0,1[$ such that $\PACS(\pi,t)\neq\varnothing$ and set
$t_{m+1}=1$.

Pick $\ell\in\{1,\ldots,m\}$; between the times $t_\ell$ and $t_{\ell+1}$,
the path $\pi$ never quits the half-space $H_\beta^-$ of a positive
affine coroot $\beta$; as a consequence, the map
$t\mapsto N^\vee\bigl(\mathfrak f_{\pi(t)}\bigr)$ is non-decreasing
on the interval $]t_\ell,t_{\ell+1}]$. This map is also non-decreasing
on the interval $[0,t_1]$ if $t_1>0$. However when $t$ goes past
a point $t_\ell$, the group $N^\vee\bigl(\mathfrak f_{\pi(t)}\bigr)$
slims down by dropping the subgroup $\mathscr N^\vee(\pi,t_\ell)$,
in the sense brought by the bicrossed product
$N^\vee\bigl(\mathfrak f_{\pi(t_\ell)}\bigr)=\mathscr N^\vee(\pi,t_\ell)
\bowtie N^\vee\bigl(\mathfrak f_{\pi(t_\ell+0)}\bigr)$.

It follows that for any family $(u_t)$ in
$\prod'_{t\in[t_\ell,t_{\ell+1}]}N^\vee\bigl(\mathfrak f_{\pi(t)}\bigr)$,
there exists $(v,u)\in\mathscr N^\vee(\pi,t_\ell)\times
N^\vee\bigl(\mathfrak f_{\pi(t_{\ell+1})}\bigr)$ such that
$\prod_{t\in[t_\ell,t_{\ell+1}]}u_t=vu$. To see this, one
decomposes $u_{t_\ell}$ as $vu'$ according to the bicrossed product
and one defines $u$ as the product of $u'$ and of the $u_t$ for
$t\in]t_\ell,t_{\ell+1}]$. Assembling these pieces (and an analogous
statement over the interval $[0,t_1]$ if $t_1>0$) from left to right,
and noting that $N^\vee\bigl(\mathfrak f_{\pi(1)}\bigr)$ stabilizes
$L_{\wt(\pi)}$, we deduce that $\mathring{\mathbf Z}(\pi)$ is the
image of the map
$$\prod_{t\in[0,1[}\mathscr N^\vee(\pi,t)\to\Gr,
\quad(v_t)\mapsto\left(\prod_{t\in[0,1[}v_t\right)\,L_{\wt(\pi)}.$$

This proves our statement in the case of just one path.
The general case then follows from Proposition~\ref{pr:PropRingZ}
\ref{it:PrPRZc}.
\end{proof}

\subsection{Isomorphisms of crystals}
\label{ss:IsomCrys}
In the previous section we explained how to build elements in
${}_*\mathscr Z(\bm\lambda)_\mu$, while in sect.~\ref{se:LPerf}
we were dealing with MV cycles in $\mathscr Z(\bm\lambda)$.
This clumsiness is due to a mismatch between the definition of
the path model and the conventions in \cite{MirkovicVilonen} and
\cite{BaumannKamnitzerKnutson}. To mitigate the disagreement, we
define a crystal structure on
$${}_*\mathscr Z(\bm\lambda)=\bigsqcup_{\mu\in\Lambda}
{}_*\mathscr Z(\bm\lambda)_\mu.$$

Remember the setup of sect.~\ref{ss:SatResLevi}: we consider a
subset $J\subseteq I$, define an action of $\mathbb C^\times$ on
$\Gr$ given by a special dominant weight $\theta_J$, and get
the diagram~\eqref{eq:ParabRetrac}. Now let
$(\bm\lambda,\nu)\in(\Lambda^+)^n\times\Lambda$, let $\zeta$ be the
coset of $\nu$ modulo $\mathbb Z\Phi_J$, and let
$Z\in\,{}_*\mathscr Z(\bm\lambda)_\nu$. Then
$m_n(Z)\subseteq S_\nu\subseteq\Gr_{J,\zeta}^+$ and there is a unique
weight $\mu\in\Lambda_J^+$ characterized by the conditions
$$(p_{J,\zeta}\circ m_n)(Z)\subseteq\overline{\Gr_J^\mu}\quad\text{and}
\quad(p_{J,\zeta}\circ m_n)(Z)\cap\Gr_J^\mu\neq\varnothing.$$
We denote this weight by $\mu_J(Z)$.

By analogy with Proposition~\ref{pr:CharacCrysMV}, we can
then claim the existence of a crystal structure on
${}_*\mathscr Z(\bm\lambda)$ such that for all $\nu\in\Lambda$,
$i\in I$ and $Z\in\mathscr Z(\bm\lambda)_\nu$:
\begin{itemize}
\item
We have $\wt(Z)=\nu$, $\varepsilon_i(Z)=\frac12\bigl\langle
\alpha_i^\vee,\mu_{\{i\}}(Z)-\nu\bigr\rangle$ and\/
$\varphi_i(Z)=\frac12\bigl\langle\alpha_i^\vee,
\mu_{\{i\}}(Z)+\nu\bigr\rangle$.
\item
Let $Y\in\,{}_*\mathscr Z(\bm\lambda)_{\nu+\alpha_i}$. Then $Y=\tilde e_iZ$
if and only if $\overline Y\supseteq Z$ and $\mu_{\{i\}}(Y)=\mu_{\{i\}}(Z)$.
\end{itemize}

\begin{theorem}
\label{th:ZMorCrys}
Let $(\lambda_1,\ldots,\lambda_n)\in(\Lambda^+)^n$, and for each
$j\in\{1,\ldots,n\}$ choose a subcrystal $\Pi_j$ of $\Pi$
isomorphic to $B(\lambda_j)$. Then the map
$(\pi_1,\ldots,\pi_n)\mapsto\mathbf Z(\pi_1\otimes\cdots\otimes\pi_n)$
is an isomorphism of crystals
$$\Pi_1\otimes\cdots\otimes\Pi_n\xrightarrow\simeq\,
{}_*\mathscr Z(\lambda_1,\ldots,\lambda_n).$$
\end{theorem}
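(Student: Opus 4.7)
The plan is to establish that $\mathbf Z$ is a weight-preserving morphism of crystals, and then to deduce bijectivity via a factorization argument that reduces it to the case $n=1$. Weight preservation is immediate from Theorem~\ref{th:ZIsMVCyc}, since $\wt(\mathbf Z(\pi_1\otimes\cdots\otimes\pi_n))=\wt(\pi_1)+\cdots+\wt(\pi_n)=\wt(\pi_1\otimes\cdots\otimes\pi_n)$ by the weight rule for tensor products of crystals.

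For the crystal morphism property, fix $i\in I$ and $\bm\pi=\pi_1\otimes\cdots\otimes\pi_n$ such that $\bm\eta:=\tilde e_i\bm\pi\neq0$. Proposition~\ref{pr:PropRingZ}~\ref{it:PrPRZd} provides the closure containment $\mathbf Z(\bm\pi)\subseteq\overline{\mathbf Z(\bm\eta)}$. By the characterization of $\tilde e_i$ on ${}_*\mathscr Z(\bm\lambda)$ recalled in sect.~\ref{ss:IsomCrys}, this already forces $\tilde e_i\mathbf Z(\bm\pi)=\mathbf Z(\bm\eta)$ as soon as one verifies the additional invariance
\[
\mu_{\{i\}}(\mathbf Z(\bm\pi))=\mu_{\{i\}}(\mathbf Z(\bm\eta)).
\]
The proof of this invariance is the main technical obstacle. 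I would carry it out by combining the explicit presentation of $\mathring{\mathbf Z}(\bm\pi)$ from Proposition~\ref{pr:AltDefRingZ} with the rank-$1$ analysis of Proposition~\ref{pr:MVRk1}. The crucial geometric observation is that the retraction $p_{\{i\},\zeta}$ defining $\mu_{\{i\}}$ annihilates every positive affine root subgroup whose direction is not a positive multiple of $\alpha_i^\vee$, so $(p_{\{i\},\zeta}\circ m_n)(\mathring{\mathbf Z}(\bm\pi))$ is determined by the $\alpha_i$-projections of the constituent paths. A direct computation in the Levi's affine Grassmannian then identifies $\mu_{\{i\}}(\mathbf Z(\bm\pi))$ with the weight of the top of the $i$-string containing $\bm\pi$, a quantity manifestly invariant under $\tilde e_i$.

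For bijectivity I would exploit Proposition~\ref{pr:PropRingZ}~\ref{it:PrPRZc} together with Proposition~\ref{pr:MVCyc}~\ref{it:PrMVCb}. The former gives $\mathring{\mathbf Z}(\bm\pi)=\Psi(\mathring{\mathbf Z}(\pi_1)\ltimes\cdots\ltimes\mathring{\mathbf Z}(\pi_n))$; passing to closures (and using continuity of $\Psi$ and $\ltimes$ on each stratum) one deduces that $\mathbf Z(\bm\pi)=\overline{\Psi(\mathbf Z(\pi_1)\ltimes\cdots\ltimes\mathbf Z(\pi_n))}$. By Proposition~\ref{pr:MVCyc}~\ref{it:PrMVCb} this is precisely the MV cycle associated with the tuple $(\mathbf Z(\pi_1),\ldots,\mathbf Z(\pi_n))$, so the map $\mathbf Z$ on $\Pi_1\otimes\cdots\otimes\Pi_n$ factors coordinate-wise through the bijection of Proposition~\ref{pr:MVCyc}~\ref{it:PrMVCb}. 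Bijectivity therefore reduces to the case $n=1$. In that case, $\Pi_j\cong B(\lambda_j)$ by Proposition~\ref{pr:PathModel}~\ref{it:PrPMc}, and ${}_*\mathscr Z(\lambda_j)$ is a crystal of cardinality $\dim V(\lambda_j)$ isomorphic to $B(\lambda_j)$ (cf.\ Proposition~\ref{pr:UniqCrys} and Theorem~\ref{th:MVBasLPerf}). Since the weight-preserving crystal morphism $\mathbf Z$ sends the dominant path to the singleton $\{L_{\lambda_j}\}$ — the unique element of weight $\lambda_j$ in the target — the rigidity of $B(\lambda_j)$ pins it down as an isomorphism, completing the proof.
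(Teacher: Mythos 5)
Your argument for the crystal-morphism property is essentially the paper's: you identify the same key computation, namely that the retraction $p_{\{i\},\zeta}$ kills all affine root directions other than $\alpha_i^\vee$, so $(p_{\{i\},\zeta}\circ m_n)\bigl(\mathring{\mathbf Z}(\bm\pi)\bigr)$ depends only on the $\alpha_i^\vee$-profile of the concatenated path, whence $\mu_{\{i\}}(\mathbf Z(\bm\pi))=\wt(\bm\pi)+\varepsilon_i(\bm\pi)\alpha_i$ is the weight of the top of the $i$-string. The paper phrases this slightly differently, computing $\mu_{\{i\}}$, $\varepsilon_i$, $\varphi_i$ explicitly in terms of $p=\min_t\langle\alpha_i^\vee,\pi(t)\rangle$ and then running the same computation for $\bm\eta=\tilde e_i\bm\pi$; your ``top of the string is $\tilde e_i$-invariant'' framing is a mathematically equivalent and arguably cleaner way to say the same thing. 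Your bijectivity argument, via Proposition~\ref{pr:PropRingZ}~\ref{it:PrPRZc}, closures, and the bijection of Proposition~\ref{pr:MVCyc}~\ref{it:PrMVCb}, is a genuine addition: the paper's own proof stops at establishing compatibility with the operators and leaves bijectivity to the reader, presumably via the cardinality match furnished by Proposition~\ref{pr:MVCyc}~\ref{it:PrMVCb} and Proposition~\ref{pr:UniqCrys}. Your factorization makes this completely explicit and in effect derives Corollary~\ref{co:ProdCrysMV} in the course of the proof. One small but real slip: in the stable-set (${}_*\mathscr Z$) convention used here, the unique cycle of weight $\lambda_j$ in ${}_*\mathscr Z(\lambda_j)$ is the closure of $\overline{\Gr^{\lambda_j}}\cap S_{\lambda_j}$, which is all of $\overline{\Gr^{\lambda_j}}$, not the point $\{L_{\lambda_j}\}$; the point is the lowest-weight cycle. (You have conflated this with the unstable-set $T_\mu$ picture, where $\{L_{\lambda_j}\}$ is indeed the highest-weight cycle.) This does not damage your argument, which only needs that there is a \emph{unique} element of weight $\lambda_j$ in the target, but the identification should be corrected.
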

\begin{proof}
Let $i\in I$ and let $(\pi_1,\ldots,\pi_n)\in\Pi_1\times\cdots\times\Pi_n$.
Set $\nu=\wt(\pi_1)+\cdots+\wt(\pi_n)$, let $\zeta$ be the coset of
$\nu$ modulo $\mathbb Z\alpha_i$, and set $\pi=\pi_1*\cdots*\pi_n$,
$$p=\min\bigl\{\langle\alpha_i^\vee,\pi(t)\rangle\bigm|
t\in[0,1]\bigr\}\quad\text{and}\quad
q=\langle\alpha_i^\vee,\nu\rangle=\langle\alpha_i^\vee,\pi(1)\rangle.$$
For any $a\in\mathbb C[z,z^{-1}]$ and any positive coroot $\alpha^\vee$,
we have
$$\lim_{c\to0}\;\theta_{\{i\}}(c)\;x_{\alpha^\vee}(a)\;
\theta_{\{i\}}(c)^{-1}=
\begin{cases}
x_{\alpha^\vee}(a)&\text{if $\alpha^\vee=\alpha_i^\vee$,}\\
1&\text{otherwise.}
\end{cases}$$

Using Proposition~\ref{pr:AltDefRingZ}, we see that
$p_{\{i\},\zeta}\Bigl(\mathring{\mathbf Z}(\pi)\Bigr)$ is the set of
all elements of the form
$$\lim_{c\to0}\prod_{t\in[0,1[}\left(\prod_{\beta\in\PACS(\pi,t)}
\theta_{\{i\}}(c)\;x_\beta(a_{t,\beta})\;\theta_{\{i\}}(c)^{-1}\right)\,L_\nu$$
where $a_{t,\beta}$ are complex numbers. All factors in the product
disappear in the limit $c\to0$, except those for the affine roots
$\beta$ of direction $\alpha_i^\vee$. Let $(\alpha_i^\vee,p_1)$,
\dots, $(\alpha_i^\vee,p_s)$ be these affine roots. Since
the function $t\mapsto\langle\alpha_i^\vee,\pi(t)\rangle$ assumes
the value $p$ and thereafter reaches the value $q$, the path $\pi$
must, at some point, quit each half-space
$H_{(\alpha_i^\vee,\,p)}^-$, $H_{(\alpha_i^\vee,\,p+1)}^-$, \dots,
$H_{(\alpha_i^\vee,\,q-1)}^-$, so
$$\{p,p+1,\ldots,q-1\}\subseteq\{p_1,\ldots,p_s\}\subseteq\{p,p+1,\ldots\}.$$
We conclude that
$$p_{\{i\},\zeta}\Bigl(\mathring{\mathbf Z}(\pi)\Bigr)=
\bigl\{x_{\alpha_i^\vee}(az^p)\,L_\nu\bigm|a\in\mathcal O/
z^{q-p}\mathcal O\bigr\}.$$
Proposition~\ref{pr:PropRingZ}~\ref{it:PrPRZc} and a variant of
Proposition~\ref{pr:MVRk1} then yield
$$(p_{\{i\},\zeta}\circ m_n)\Bigl(\mathring{\mathbf Z}
(\pi_1\otimes\cdots\otimes\pi_n)\Bigr)=
p_{\{i\},\zeta}\Bigl(\mathring{\mathbf Z}(\pi)\Bigr)=
\overline{\Gr_{\{i\}}^\mu}\cap S_{\{i\},\nu}^{}$$
where $\mu=\nu-p\alpha_i$. Thus,
\begin{equation}
\label{eq:ZMorCrys1}
\mu_{\{i\}}(\mathbf Z(\pi_1\otimes\cdots\otimes\pi_n))=\nu-p\alpha_i
\end{equation}
and
$$\varepsilon_i(\mathbf Z(\pi_1\otimes\cdots\otimes\pi_n))=-p
\quad\text{and}\quad
\varphi_i(\mathbf Z(\pi_1\otimes\cdots\otimes\pi_n))=q-p.$$

These latter equations show that the map
$(\pi_1,\ldots,\pi_n)\mapsto\mathbf Z(\pi_1\otimes\cdots\otimes\pi_n)$
is compatible with the functions $\varepsilon_i$ and $\varphi_i$.

Now compute
$$\eta_1\otimes\cdots\otimes\eta_n=
\tilde e_i(\pi_1\otimes\cdots\otimes\pi_n)$$
in the crystal $\Pi_1\otimes\cdots\otimes\Pi_n$,
assuming this operation to be doable.
By Proposition~\ref{pr:PropRingZ}~\ref{it:PrPRZd},
\begin{equation}
\label{eq:ZMorCrys2}
\overline{\mathbf Z(\eta_1\otimes\cdots\otimes\eta_n)}\supseteq
\mathbf Z(\pi_1\otimes\cdots\otimes\pi_n).
\end{equation}
Let $\eta=\eta_1*\cdots*\eta_n$. Then
$\eta=\tilde e_i\pi$ by Proposition~\ref{pr:PathModel}~\ref{it:PrPMb},
and therefore
$$\wt(\eta)=\nu+\alpha_i\quad\text{and}\quad
\min\bigl\{\langle\alpha_i^\vee,\eta(t)\rangle\bigm|
t\in[0,1]\bigr\}=p+1.$$
Repeating the arguments above, we get
$$\mu_{\{i\}}(\mathbf Z(\eta_1\otimes\cdots\otimes\eta_n))
=(\nu+\alpha_i)-(p+1)\alpha_i=\nu-p\alpha_i.$$
Together with \eqref{eq:ZMorCrys1} and \eqref{eq:ZMorCrys2}, this
gives
$$\mathbf Z(\eta_1\otimes\cdots\otimes\eta_n)=
\tilde e_i\,\mathbf Z(\pi_1\otimes\cdots\otimes\pi_n).$$
We conclude that the map
$(\pi_1,\ldots,\pi_n)\mapsto\mathbf Z(\pi_1\otimes\cdots\otimes\pi_n)$
has the required compatibility with the operations $\tilde e_i$.
\end{proof}

\begin{corollary}
\label{co:ProdCrysMV}
Let $(\lambda_1,\ldots,\lambda_n)\in(\Lambda^+)^n$. Then the map
$(Z_1,\ldots,Z_n)\mapsto\overline{\Psi(Z_1\ltimes\cdots\ltimes Z_n)}$
from Proposition~\ref{pr:MVCyc}~\ref{it:PrMVCb} is an isomorphism of
crystals
$${}_*\mathscr Z(\lambda_1)\otimes\cdots\otimes\,{}_*\mathscr Z(\lambda_n)
\xrightarrow\simeq\,{}_*\mathscr Z(\lambda_1,\ldots,\lambda_n).$$
\end{corollary}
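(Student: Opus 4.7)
The plan is to use the path model developed in sect.~\ref{ss:PathsToMV} as a bridge. For each $j\in\{1,\ldots,n\}$, choose a subcrystal $\Pi_j\subseteq\Pi$ isomorphic to $B(\lambda_j)$. Applying Theorem~\ref{th:ZMorCrys} with $n=1$ to each $\Pi_j$ yields a crystal isomorphism $\pi\mapsto\mathbf{Z}(\pi)$ from $\Pi_j$ onto ${}_*\mathscr Z(\lambda_j)$; tensoring these together produces a crystal isomorphism
\[
\Phi:\Pi_1\otimes\cdots\otimes\Pi_n\xrightarrow\simeq{}_*\mathscr Z(\lambda_1)\otimes\cdots\otimes{}_*\mathscr Z(\lambda_n),\qquad (\pi_1,\ldots,\pi_n)\mapsto(\mathbf{Z}(\pi_1),\ldots,\mathbf{Z}(\pi_n)).
\]
Simultaneously, applying Theorem~\ref{th:ZMorCrys} to the full tuple $\bm\lambda$ yields a crystal isomorphism
\[
\Phi':\Pi_1\otimes\cdots\otimes\Pi_n\xrightarrow\simeq{}_*\mathscr Z(\lambda_1,\ldots,\lambda_n),\qquad (\pi_1,\ldots,\pi_n)\mapsto\mathbf{Z}(\pi_1\otimes\cdots\otimes\pi_n).
\]

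To conclude, it suffices to verify that the map $(Z_1,\ldots,Z_n)\mapsto\overline{\Psi(Z_1\ltimes\cdots\ltimes Z_n)}$ of Proposition~\ref{pr:MVCyc}~\ref{it:PrMVCb} coincides with the composite $\Phi'\circ\Phi^{-1}$; being a composition of crystal isomorphisms, it will then automatically be a crystal isomorphism (and the bijectivity, already known from Proposition~\ref{pr:MVCyc}, provides a sanity check). At the level of underlying sets, this identification amounts to the equality
\[
\mathbf{Z}(\pi_1\otimes\cdots\otimes\pi_n)=\overline{\Psi\bigl(\mathbf{Z}(\pi_1)\ltimes\cdots\ltimes\mathbf{Z}(\pi_n)\bigr)}
\]
for every $(\pi_1,\ldots,\pi_n)\in\Pi_1\times\cdots\times\Pi_n$, the closures being taken inside $(m_n)^{-1}(S_\mu)$, with $\mu=\sum_j\wt(\pi_j)$.

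To establish this equality, I would invoke Proposition~\ref{pr:PropRingZ}~\ref{it:PrPRZc}, which gives $\mathring{\mathbf{Z}}(\pi_1\otimes\cdots\otimes\pi_n)=\Psi\bigl(\mathring{\mathbf{Z}}(\pi_1)\ltimes\cdots\ltimes\mathring{\mathbf{Z}}(\pi_n)\bigr)$. Each $\mathring{\mathbf{Z}}(\pi_j)$ is dense in $\mathbf{Z}(\pi_j)$ by construction, and the $\ltimes$ operation, being an iterated fibration with irreducible base and fibers, preserves this density; moreover $\Psi$ induces a homeomorphism onto its image. Taking closures in $(m_n)^{-1}(S_\mu)$ on both sides of the previous displayed identity therefore yields the desired equality. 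The main technical point—which is minor but must be spelled out—is this last density/closure bookkeeping: one must check that the iterated fibration $\mathbf{Z}(\pi_1)\ltimes\cdots\ltimes\mathbf{Z}(\pi_n)$ contains $\mathring{\mathbf{Z}}(\pi_1)\ltimes\cdots\ltimes\mathring{\mathbf{Z}}(\pi_n)$ as a dense subset, and that $\Psi$ respects the relevant closures. Both facts follow directly from the local-triviality properties of the $\times^{N^\vee(\mathcal O)}$ construction recalled in sect.~\ref{ss:MVCyc}.
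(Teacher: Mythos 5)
Your proposal is correct and follows exactly the route the paper intends: since the paper states this corollary without an explicit proof immediately after Theorem~\ref{th:ZMorCrys}, the expected argument is precisely to invoke that theorem for $n=1$ (factor by factor) and for general $n$, then identify the composite crystal isomorphism with the bijection from Proposition~\ref{pr:MVCyc}~\ref{it:PrMVCb} via Proposition~\ref{pr:PropRingZ}~\ref{it:PrPRZc} and the density/closure bookkeeping you spell out. The local-triviality of the $\ltimes$ construction (stated in sect.~\ref{ss:MVCyc}) indeed justifies the density step, so the argument is complete.
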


The crystals $\mathscr Z(\bm\lambda)$ enjoy a factorization property
analogous to Corollary~\ref{co:ProdCrysMV}; \textbf{one must however
use the opposite tensor product on crystals.}

\begin{other}{Remark}
The plactic algebra \cite{LascouxSchutzenberger} is an algebraic combinatorial tool invented by Lascoux and Schützenberger
long before the notion of a crystal basis of a representation was introduced. Loosely speaking, for a  complex reductive 
algebraic group, the plactic algebra is the algebra having as basis the union 
$\bigcup_{\lambda\in \Lambda^+} B(\lambda)$ of the crystal bases $B(\lambda)$ for all irreducible representations, the product 
being given by the tensor product of crystals. For $G=\SL_n(\mathbb C)$, Lascoux and Schützenberger give a description of such an 
algebra in terms of the word algebra modulo the Knuth relations, and it was shown later
that this algebra is isomorphic to the one given by the crystal basis. A combinatorial Lascoux-Schützenberger type description
for the other types was given in \cite{Littelmann96}; this description uses the path model.

It is natural to ask whether it is possible to do the same with MV cycles: endow the set of all MV cycles for all dominant weights $\lambda\in\Lambda^+$
with the structure of a crystal and define (in a geometric way) a multiplication on the cycles which mimics the plactic algebra. For $G=\SL_n(\mathbb C)$,
a positive answer was given in \cite{GaussentLittelmannNguyen}. This approach was adapted to the symplectic case in \cite{Torres}.

The results in this section can be naturally viewed as a generalization of \cite{GaussentLittelmannNguyen} 
to arbitrary connected reductive groups. Using \cite{Littelmann96} and 
Proposition~\ref{pr:PathModel}, one can use the set $\Pi$ to construct the plactic algebra so that
it has as basis equivalence classes (generalized Knuth relations) of elements in $\Pi$. The sets $\mathring{\mathbf Z}(\pi_1\otimes\cdots\otimes\pi_n)$
(Proposition~\ref{pr:PropRingZ})
replace in the general setting the Białynicki-Birula cells in \cite{GaussentLittelmannNguyen}.
By combining Proposition~\ref{pr:PropRingZ}~\ref{it:PrPRZc} and Theorem~\ref{th:ZMorCrys}, we see that the closure of $m_n\Bigl(\mathring{\mathbf Z}(\pi_1\otimes\cdots\otimes\pi_n)\Bigr)$ is an MV cycle which depends only on the class of the path $\pi_1*\cdots*\pi_n$ modulo the generalized Knuth relations.
In particular, the main result of \cite{GaussentLittelmannNguyen} follows as a special case. 

A different approach to this problem was taken by Xiao and Zhu
\cite{XiaoZhu}. They define a set of `elementary Littelmann paths',
modeled over minuscule or quasi-minuscule representations,
use the methods from \cite{NgoPolo} to assign an MV cycle to each
concatenation of elementary Littelmann paths, and show that the
resulting map factorizes through the generalized Knuth relations.
\end{other}

\subsection{Proof of Proposition~\ref{pr:PropRingZ}~\ref{it:PrPRZd}}
\label{ss:ProofPrPRZd}
This section can be skipped without substantial loss for the appreciation
of our main storyline. We follow the same method as in
\cite{BaumannGaussent}, proof of Proposition~5.11.

The group $G^\vee(\mathbb C)$ is generated by elements
$x_{\alpha^\vee}(a)$ and $c^\lambda$,
where $(a,\alpha^\vee)\in\mathbb C\times\Phi^\vee$ and
$(c,\lambda)\in\mathbb C^\times\times\Lambda$, which obey the
following relations:
\begin{itemize}
\item
For any $(a,\alpha^\vee)\in\mathbb C\times\Phi^\vee$ and
any $(c,\lambda)\in\mathbb C^\times\times\Lambda$,
$$c^\lambda\;x_{\alpha^\vee}(a)\;c^{-\lambda}=
x_{\alpha^\vee}\bigl(c^{\langle\alpha^\vee,\lambda\rangle}a\bigr).$$
\item
Given two linearly independent elements $\alpha^\vee$ and $\beta^\vee$
in $\Phi^\vee$, there exist constants $C_{i,j}$ such that
\begin{equation}
\label{eq:ChevComm}
x_{\alpha^\vee}(a)\;x_{\beta^\vee}(b)\;x_{\alpha^\vee}(a)^{-1}\;
x_{\beta^\vee}(b)^{-1}=\prod_{(i,j)}\,x_{i\alpha^\vee+j\beta^\vee}
\bigl(C_{i,j}\,a^ib^j\bigr)
\end{equation}
for any $(a,b)\in\mathbb C^2$. The product in the right-hand side
is taken over all pairs of positive integers $(i,j)$ for which
$i\alpha^\vee+j\beta^\vee\in\Phi^\vee$, in order of increasing $i+j$.
\end{itemize}
Further, the one-parameter subgroups $x_{\alpha^\vee}$ can
be normalized so that for any root $\alpha\in\Phi$:
\begin{itemize}
\item
For any $(a,b)\in\mathbb C^2$ such that $1-ab\neq0$,
\begin{equation}
\label{eq:SL2Comm}
x_{\alpha^\vee}(a)\;x_{-\alpha^\vee}(b)=x_{-\alpha^\vee}(b/(1-ab))\;
(1-ab)^\alpha\;x_{\alpha^\vee}(a/(1-ab)).
\end{equation}
\item
There exists an element $\overline{s_\alpha}\in G^\vee(\mathbb C)$
such that for any $a\in\mathbb C^\times$,
\begin{equation}
\label{eq:TitsRela}
x_{\alpha^\vee}(a)\;x_{-\alpha^\vee}(a^{-1})\;x_{\alpha^\vee}(a)
=x_{-\alpha^\vee}(a^{-1})\;x_{\alpha^\vee}(a)\;x_{-\alpha^\vee}(a^{-1})
=a^\alpha\;\overline{s_\alpha}=\overline{s_\alpha}\;a^{-\alpha}.
\end{equation}
\end{itemize}
This element $\overline{s_\alpha}$ lifts in the normalizer of
$T^\vee(\mathbb C)$ the reflection $s_\alpha\in W$ along the root
$\alpha$. All the above relations also hold for scalars $b$,
$c$ in $\mathcal K$, provided of course that we regard them
in~$G^\vee(\mathcal K)$.

The Chevalley commutation relation~\eqref{eq:ChevComm} implies the
following easy lemma.
\begin{lemma}
\label{le:StabFace}
Let $\mathfrak f$ be a face of the affine Coxeter complex, let
$(\alpha^\vee,p)$ and $(\beta^\vee,q)$ be two positive affine
coroots, and let $(a,b)\in\mathcal O^2$. Assume that $\alpha^\vee$
is simple, that $\alpha^\vee\neq\beta^\vee$, and that
$$\mathfrak f\subseteq H_{(-\alpha^\vee,\,-p)}^-\cap
H_{(\beta^\vee,\,q)}^-.$$
Then
$$x_{-\alpha^\vee}(az^{-p})\;x_{\beta^\vee}(bz^q)\;
x_{-\alpha^\vee}(-az^{-p})\in N^\vee(\mathfrak f).$$
\end{lemma}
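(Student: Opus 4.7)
The plan is to expand the conjugate via the Chevalley commutator relation \eqref{eq:ChevComm} and then show that every factor in the resulting product lies in $N^\vee(\mathfrak f)$. Using the relation with $\gamma^\vee = -\alpha^\vee$, $\delta^\vee = \beta^\vee$, $u = az^{-p}$ and $v = bz^q$, we get
$$x_{-\alpha^\vee}(az^{-p})\;x_{\beta^\vee}(bz^q)\;x_{-\alpha^\vee}(-az^{-p})=x_{\beta^\vee}(bz^q)\prod_{(i,j)}x_{-i\alpha^\vee+j\beta^\vee}\bigl(C_{ij}\,a^ib^j\,z^{jq-ip}\bigr),$$
the product running over pairs $(i,j)$ of positive integers with $-i\alpha^\vee+j\beta^\vee\in\Phi^\vee$. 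The leading factor $x_{\beta^\vee}(bz^q)$ already sits in $N^\vee(\mathfrak f)$ by hypothesis, so the whole task reduces to controlling the commutator terms.

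For each commutator factor, I will prove two things. First (positivity of the direction), the coroot $\gamma^\vee := -i\alpha^\vee+j\beta^\vee$ is positive. This is exactly where simplicity of $\alpha^\vee$ enters: writing $\alpha^\vee=\alpha_{k_0}^\vee$ and expanding $\beta^\vee=\sum_k c_k\alpha_k^\vee$ with $c_k\in\mathbb N$, the coefficient of $\alpha_k^\vee$ in $\gamma^\vee$ equals $jc_k\geq0$ whenever $k\neq k_0$. Since $\beta^\vee$ is positive and $\beta^\vee\neq\alpha^\vee$, at least one $c_k$ with $k\neq k_0$ is strictly positive, so $\gamma^\vee$ has a strictly positive coefficient on some simple coroot other than $\alpha^\vee$; being a coroot, it must therefore be positive.

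Second (half-space containment), I claim $\mathfrak f\subseteq H_{(\gamma^\vee,\,jq-ip)}^-$. For $x\in\mathfrak f$, the inclusions $\mathfrak f\subseteq H_{(-\alpha^\vee,-p)}^-$ and $\mathfrak f\subseteq H_{(\beta^\vee,q)}^-$ give $\langle\alpha^\vee,x\rangle\geq p$ and $\langle\beta^\vee,x\rangle\leq q$, hence
$$\langle\gamma^\vee,x\rangle=-i\langle\alpha^\vee,x\rangle+j\langle\beta^\vee,x\rangle\leq -ip+jq.$$
Expanding $C_{ij}a^ib^j\in\mathcal O$ as a power series in $z$ and using that $x_{\gamma^\vee}$ is additive, the factor $x_{\gamma^\vee}(C_{ij}a^ib^jz^{jq-ip})$ becomes a product of generators $x_{\gamma^\vee}(c_kz^{jq-ip+k})$ for $k\geq0$, each of which sits in $N^\vee(\mathfrak f)$ because shifting $jq-ip$ by $k\geq0$ only strengthens the half-space condition. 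Combining the two steps proves that every commutator factor belongs to $N^\vee(\mathfrak f)$, and the lemma follows.

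The main obstacle is step one: without the simplicity of $\alpha^\vee$, one can cook up examples (e.g.\ in $A_2$, taking $\alpha^\vee=\alpha_1^\vee+\alpha_2^\vee$ and $\beta^\vee=\alpha_1^\vee$) where $-\alpha^\vee+\beta^\vee$ is a negative coroot, so the corresponding factor would not be in $N^\vee(\mathfrak f)$. Simplicity is precisely what is needed to rule this out and so is the genuine content of the hypothesis; everything else is bookkeeping on the Chevalley relation and the two linear inequalities defining the hypothesis on $\mathfrak f$.
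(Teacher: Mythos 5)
Your argument is correct and takes essentially the same route as the paper's: you expand the conjugate via the Chevalley commutator relation and verify that each factor $x_{-i\alpha^\vee+j\beta^\vee}(C_{ij}a^ib^jz^{jq-ip})$ is a generator of $N^\vee(\mathfrak f)$, by checking positivity of the coroot (where you spell out the role of simplicity of $\alpha^\vee$, which the paper states without elaboration) and the half-space inclusion $\mathfrak f\subseteq H^-_{(-i\alpha^\vee+j\beta^\vee,\,jq-ip)}$. The final expansion of $C_{ij}a^ib^j$ into powers of $z$ is superfluous, since the generators of $N^\vee(\mathfrak f)$ already allow coefficients in $\mathcal O$, but this does not affect correctness.
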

\begin{proof}
We consider the situation set forth in the statement of the lemma.
Using \eqref{eq:ChevComm}, we write
\begin{equation}
\label{eq:StabFace}
x_{-\alpha^\vee}(az^{-p})\;x_{\beta^\vee}(bz^q)\;
x_{-\alpha^\vee}(-az^{-p})\;x_{\beta^\vee}(-bz^q)=\prod_{(i,j)}\,
x_{-i\alpha^\vee+j\beta^\vee}\bigl(C_{i,j}\,a^ib^jz^{-ip+jq}\bigr)
\end{equation}
where the product in the right-hand side is taken over all pairs of
positive integers $(i,j)$ for which $-i\alpha^\vee+j\beta^\vee$ is a
coroot.

Consider such a pair $(i,j)$. In view of our assumptions, the coroot
$-i\alpha^\vee+j\beta^\vee$ is necessarily positive. Moreover for any
$x\in\mathfrak f$ we have
$$\langle-i\alpha^\vee+j\beta^\vee,x\rangle=
i\langle-\alpha^\vee,x\rangle+j\langle\beta^\vee,x\rangle\leq
i(-p)+jq,$$
so $\mathfrak f\subseteq H_{(-i\alpha^\vee+j\beta^\vee,\,-ip+jq)}^-$.
It follows that the right-hand side of \eqref{eq:StabFace} lies in
$\mathscr N^\vee(\mathfrak f)$, which readily implies the statement.
\end{proof}

Given $g\in N^\vee(\mathcal K)$, there is a
unique tuple $(a_i)\in\mathcal K^I$ such that
$$g\equiv\prod_{i\in I}x_{\alpha_i^\vee}(a_i)\ \;\bmod
(N^\vee(\mathcal K),N^\vee(\mathcal K));$$
looking at a specific $i\in I$, we denote by $\mathbf a_{i,p}(g)$
the coefficient of $z^p$ in the Laurent series~$a_i$. This procedure
defines a morphism of groups
$\mathbf a_{i,p}:N^\vee(\mathcal K)\to\mathbb C$ for each pair
$(i,p)\in I\times\mathbb Z$.

\begin{lemma}
\label{le:PRZCtrl}
Let $\pi\in\Pi$ and let $(t_1,\ldots,t_m)$ be the ordered list of
all elements $t\in[0,1[$ such that $\PACS(\pi,t)\neq\varnothing$.
Set $t_{m+1}=1$. Let $i\in I$ and set
$$p=\min\bigl\{\langle\alpha_i^\vee,\pi(t)\rangle
\bigm|t\in[0,1]\bigr\}.$$
Let $r\in\{1,\ldots,m+1\}$ and let
$(v_\ell)\in\prod_{\ell=r}^m\mathscr N^\vee(\pi,t_\ell)$.
\begin{enumerate}
\item
\label{it:LePRZCa}
Let $r^+$ be the smallest element in
$$\bigl\{\ell\in\{r,\ldots,m\}\bigm|
(\alpha_i^\vee,p)\in\PACS(\pi,t_\ell)\bigr\},$$
contingent on this set to be nonempty. Then for any
$u\in N^\vee\bigl(\mathfrak f_{\pi(t_r)}\bigr)$ there exists
$(v'_\ell)\in\prod_{\ell=r}^m\mathscr N^\vee(\pi,t_\ell)$ such that
$$v'_r\cdots v'_m\,L_{\wt(\pi)}=u\,v_r\cdots v_m\,L_{\wt(\pi)}$$
and
$$\mathbf a_{i,p}(v'_\ell)=
\begin{cases}
\mathbf a_{i,p}(u)+\mathbf a_{i,p}(v_\ell)&\text{if $\ell=r^+$,}\\
\mathbf a_{i,p}(v_\ell)&\text{for all other $\ell\in\{r,\ldots,m\}$.}
\end{cases}$$
\item
\label{it:LePRZCb}
For any $c\in1+z\mathcal O$ and any $\lambda\in\Lambda$, there exists
$(v'_\ell)\in\prod_{\ell=r}^m\mathscr N^\vee(\pi,t_\ell)$ such that
$$v'_r\cdots v'_m\,L_{\wt(\pi)}=c^\lambda\,v_r\cdots v_m\,L_{\wt(\pi)}$$
and $\mathbf a_{i,p}(v'_\ell)=\mathbf a_{i,p}(v_\ell)$ for all
$\ell\in\{r,\ldots,m\}$.
\item
\label{it:LePRZCc}
For any $b\in\mathbb C$ not in
$$\bigl\{0\bigr\}\cup\bigl\{\mathbf a_{i,p}(v_r)+\cdots+
\mathbf a_{i,p}(v_\ell)\bigm|\ell\in\{r,\ldots,m\}\bigr\},$$
there exists $(v'_\ell)\in\prod_{\ell=r}^m\mathscr N^\vee(\pi,t_\ell)$
such that
$$v'_r\cdots v'_m\,L_{\wt(\pi)}=x_{(-\alpha_i^\vee,\,-p)}(1/b)\;
v_r\cdots v_m\,L_{\wt(\pi)}.$$
\end{enumerate}
\end{lemma}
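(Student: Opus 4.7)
The plan is to prove all three parts simultaneously by downward induction on $r$, with the base case $r=m+1$. When $r=m+1$ the product $v_r\cdots v_m$ is empty, so one must verify that each left-multiplying element fixes $L_{\wt(\pi)}=L_{\pi(1)}$. For~\ref{it:LePRZCa}, every generator $x_{\alpha^\vee}(az^q)$ of $N^\vee(\mathfrak f_{\pi(1)})$ satisfies $\langle\alpha^\vee,\pi(1)\rangle\leq q$, so $z^{-\pi(1)}x_{\alpha^\vee}(az^q)z^{\pi(1)}\in G^\vee(\mathcal O)$. For~\ref{it:LePRZCb}, $c\in 1+z\mathcal O$ gives $c^\lambda\in T^\vee(\mathcal O)\subseteq G^\vee(\mathcal O)$. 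For~\ref{it:LePRZCc}, the forbidden set collapses to $\{0\}$, and the inequality $p\leq\langle\alpha_i^\vee,\pi(1)\rangle$ (as $p$ is a minimum over $[0,1]$) makes the same conjugation argument work on $x_{(-\alpha_i^\vee,-p)}(1/b)$.

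For the inductive step of~\ref{it:LePRZCa}, I would decompose $u=u_1u_2$ by the bicrossed product of Lemma~\ref{le:PropBaGa}, with $u_1\in\mathscr N^\vee(\pi,t_r)$ and $u_2\in N^\vee(\mathfrak f_{\pi(t_r+0)})$; because $(\alpha_i^\vee,p)$ is a simple affine coroot, this decomposition is coordinate-wise in that direction, so $\mathbf a_{i,p}(u_1)=\mathbf a_{i,p}(u)$ when $r=r^+$ and vanishes otherwise. The Chevalley relation~\eqref{eq:ChevComm}, combined with a half-space bookkeeping of the flavour of Lemma~\ref{le:StabFace}, shows that the commutator $[u_2,v_r]$ lies in $N^\vee(\mathfrak f_{\pi(t_r+0)})\subseteq N^\vee(\mathfrak f_{\pi(t_{r+1})})$ and that its terms, indexed by affine coroots $i\gamma^\vee+j\delta^\vee$ with $i,j\geq 1$, do not contribute to the simple coordinate $\mathbf a_{i,p}$. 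Setting $v'_r=u_1v_r\in\mathscr N^\vee(\pi,t_r)$ and gathering the residue as $u'\in N^\vee(\mathfrak f_{\pi(t_{r+1})})$, I obtain $uv_r\cdots v_m=v'_r\cdot u'\cdot v_{r+1}\cdots v_m$, and the inductive hypothesis finishes the step. Statement~\ref{it:LePRZCb} then reduces to~\ref{it:LePRZCa}: conjugating $c^\lambda$ past $v_\ell$ replaces each factor $x_\beta(a)$ by $x_\beta(c^{\langle\beta^\vee,\lambda\rangle}a)$, and since $c^{\langle\beta^\vee,\lambda\rangle}\in 1+z\mathcal O$, the additivity of $x_\beta$ splits this into its constant part $x_\beta(a)$ (preserving $\mathbf a_{i,p}$ since the correction is of strictly higher order in $z$) and a tail in $N^\vee(\mathfrak f_{\pi(t_\ell+0)})$ that~\ref{it:LePRZCa} absorbs.

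The main obstacle is the inductive step for~\ref{it:LePRZCc}. Set $g=x_{(-\alpha_i^\vee,-p)}(1/b)$ and push $g$ past $v_r$. If $(\alpha_i^\vee,p)\notin\PACS(\pi,t_r)$, the Chevalley commutator $[g,v_r]$ lies in $N^\vee(\mathfrak f_{\pi(t_r+0)})$ by a half-space analysis akin to Lemma~\ref{le:StabFace}, so I set $v'_r=v_r$, absorb the residue via~\ref{it:LePRZCa}, and apply the inductive hypothesis. If $(\alpha_i^\vee,p)\in\PACS(\pi,t_r)$, I factor $v_r=x_{(\alpha_i^\vee,p)}(a_r)\,\tilde v_r$ with $a_r=\mathbf a_{i,p}(v_r)$; since $b\neq a_r$ by hypothesis, the $SL_2$-relation~\eqref{eq:SL2Comm} yields
\[
g\cdot x_{(\alpha_i^\vee,p)}(a_r)=x_{(\alpha_i^\vee,p)}\!\bigl(a_rb/(b-a_r)\bigr)\cdot(1-a_r/b)^{-\alpha_i}\cdot x_{(-\alpha_i^\vee,-p)}(1/(b-a_r)).
\]
The leading factor recombines with $\tilde v_r$ to produce a new $v'_r\in\mathscr N^\vee(\pi,t_r)$; the torus factor lies in $T^\vee(\mathbb C)\subseteq G^\vee(\mathcal O)$, commutes through $v_{r+1},\ldots,v_m$ by scalar rescalings of root-element coefficients (so each rescaled factor still belongs to the appropriate $\mathscr N^\vee(\pi,t_\ell)$), and then fixes $L_{\wt(\pi)}$; the trailing factor is a new $g'=x_{(-\alpha_i^\vee,-p)}(1/(b-a_r))$ to feed into the inductive hypothesis. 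The assumption that $b$ avoids every partial sum $\mathbf a_{i,p}(v_r)+\cdots+\mathbf a_{i,p}(v_\ell)$ is precisely what keeps each successive denominator $b-a_r-a_{r^+}-\cdots$ nonzero, so the iteration runs to completion and the residual $g'$ is disposed of by the base case~\ref{it:LePRZCc}.
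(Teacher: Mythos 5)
Your overall plan (downward induction on $r$, trivial base case $r=m+1$, bicrossed-product decompositions at each step, $\mathrm{SL}_2$ relations for~\ref{it:LePRZCc}) is the right skeleton and matches the paper's strategy. But there are two genuine gaps in the inductive steps.

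\textbf{Gap in~\ref{it:LePRZCa}.} You decompose $u=u_1u_2$ with $u_1\in\mathscr N^\vee(\pi,t_r)$ and $u_2\in N^\vee(\mathfrak f_{\pi(t_r+0)})$, then set $v'_r=u_1v_r$ and claim that the residue $u'=v_r^{-1}u_2v_r$ (equivalently, the commutator $[u_2,v_r]$) lies in $N^\vee(\mathfrak f_{\pi(t_r+0)})$. This fails: $\mathscr N^\vee(\pi,t_r)$ does \emph{not} normalize $N^\vee(\mathfrak f_{\pi(t_r+0)})$. In the Chevalley commutator of $x_{(\beta^\vee,q)}(a)$ with $\beta\in\PACS(\pi,t_r)$ and $x_{(\gamma^\vee,s)}(b)$ with $\mathfrak f_{\pi(t_r+0)}\subseteq H^-_{(\gamma^\vee,s)}$, a term $x_{(i\beta^\vee+j\gamma^\vee,\,iq+js)}$ can fail the half-space condition for $\mathfrak f_{\pi(t_r+0)}$ precisely because $\langle\beta^\vee,\cdot\rangle>q$ there while $\langle\gamma^\vee,\cdot\rangle$ can equal $s$. (Already in rank $2$ with $\beta^\vee=\alpha_1^\vee$, $\gamma^\vee=\alpha_2^\vee$ and $\mathfrak f_{\pi(t_r+0)}\subseteq H_{(\gamma^\vee,s)}$ this gives a factor in $\mathscr N^\vee(\pi,t_r)$, not in $N^\vee(\mathfrak f_{\pi(t_r+0)})$.) The fix is to apply the bicrossed product of Lemma~\ref{le:PropBaGa} to the whole element $uv_r\in N^\vee(\mathfrak f_{\pi(t_r)})$, giving $uv_r=v'_ru'$ directly with $v'_r\in\mathscr N^\vee(\pi,t_r)$ and $u'\in N^\vee(\mathfrak f_{\pi(t_r+0)})$; the equality $\mathbf a_{i,p}(u)+\mathbf a_{i,p}(v_r)=\mathbf a_{i,p}(v'_r)+\mathbf a_{i,p}(u')$ then follows because $\mathbf a_{i,p}$ is a group homomorphism, and the two cases $(\alpha_i^\vee,p)\in\PACS(\pi,t_r)$ versus not determine how the contribution is apportioned.

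\textbf{Gap in~\ref{it:LePRZCc}.} Your two-case split (either $(\alpha_i^\vee,p)\in\PACS(\pi,t_r)$ or it is not) is too coarse. When $(\alpha_i^\vee,p)\notin\PACS(\pi,t_r)$ but $\PACS(\pi,t_r)$ contains $(\alpha_i^\vee,q)$ for some $q>p$, the move you invoke --- "a half-space analysis akin to Lemma~\ref{le:StabFace}" --- does not apply: Lemma~\ref{le:StabFace} has the hypothesis $\alpha^\vee\neq\beta^\vee$, and here the directions coincide. In this case $x_{(-\alpha_i^\vee,-p)}(1/b)$ and $x_{(\alpha_i^\vee,q)}(a)$ interact through the $\mathrm{SL}_2$-relation \eqref{eq:SL2Comm} with parameter $(a/b)z^{q-p}\in z\mathcal O$, producing a torus factor $c^{-\alpha_i}$ with $c$ a square root of $1-(a/b)z^{q-p}\in 1+z\mathcal O$; one then needs the inductive version of~\ref{it:LePRZCb} (not~\ref{it:LePRZCa}) to absorb that torus element, exactly as in the paper's intermediate case. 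Relatedly, in your second case, after the $\mathrm{SL}_2$ move you still need to push the new $g'=x_{(-\alpha_i^\vee,-p)}(1/(b-a_r))$ past $\tilde v_r$ before the inductive hypothesis becomes applicable; this is where Lemma~\ref{le:StabFace} legitimately enters (conjugation of $\tilde v_r$ by $g'$, which lands in $N^\vee(\mathfrak f_{\pi(t_r)})$ and is then split by the bicrossed product), and your phrase "recombines with $\tilde v_r$" glosses over it.
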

\begin{proof}
The lemma is trivial for $r=m+1$. Proceeding by decreasing induction,
we choose $r\in\{1,\ldots,m\}$, assume that statements~\ref{it:LePRZCa},
\ref{it:LePRZCb} and~\ref{it:LePRZCc} hold for $r+1$, and show that
they also hold for $r$. We recall (see the proof of
Proposition~\ref{pr:AltDefRingZ}) that
$$N^\vee\bigl(\mathfrak f_{\pi(t_r)}\bigr)=\mathscr N^\vee(\pi,t_r)
\bowtie N^\vee\bigl(\mathfrak f_{\pi(t_r+0)}\bigr)\quad\text{and}\quad
N^\vee\bigl(\mathfrak f_{\pi(t_r+0)}\bigr)\subseteq
N^\vee\bigl(\mathfrak f_{\pi(t_{r+1})}\bigr).$$
Let $(v_\ell)\in\prod_{\ell=r}^m\mathscr N^\vee(\pi,t_\ell)$.

We start with \ref{it:LePRZCa}. Let
$u\in N^\vee\bigl(\mathfrak f_{\pi(t_r)}\bigr)$. We can write
$uv_r\in N^\vee\bigl(\mathfrak f_{\pi(t_r)}\bigr)$ as a product
$v'_ru'$ with $(v'_r,u')\in\mathscr N^\vee(\pi,t_r)\times
N^\vee\bigl(\mathfrak f_{\pi(t_r+0)}\bigr)$. Then
$$\mathbf a_{i,p}(u)+\mathbf a_{i,p}(v_r)=
\mathbf a_{i,p}(v'_r)+\mathbf a_{i,p}(u').$$
Noting that $u'\in N^\vee\bigl(\mathfrak f_{\pi(t_{r+1})}\bigr)$,
we make use of the inductive assumption: there exists
$(v'_\ell)\in\prod_{\ell=r+1}^m\mathscr N^\vee(\pi,t_\ell)$ such that
$$v'_{r+1}\cdots v'_m\,L_{\wt(\pi)}=u'\,v_{r+1}\cdots v_m\,L_{\wt(\pi)}$$
and
$$\mathbf a_{i,p}(v'_\ell)=
\begin{cases}
\mathbf a_{i,p}(u')+\mathbf a_{i,p}(v_\ell)&\text{if $\ell=(r+1)^+$,}\\
\mathbf a_{i,p}(v_\ell)&\text{for all other $\ell\in\{r+1,\ldots,m\}$.}
\end{cases}$$

We distinguish two cases.
If $(\alpha_i^\vee,p)\in\PACS(\pi,t_r)$, then
$\mathfrak f_{\pi(t_r+0)}\not\subseteq H_{(\alpha_i^\vee,\,p)}^-$,
whence $\mathbf a_{i,p}(u')=0$; also $r^+=r$ in this case.
If $(\alpha_i^\vee,p)\notin\PACS(\pi,t_r)$, then
$\mathbf a_{i,p}(v_r)=\mathbf a_{i,p}(v'_r)=0$; here $r^+=(r+1)^+$.
In both cases, routine checks conclude the proof of~\ref{it:LePRZCa}.

We now turn to statement~\ref{it:LePRZCb}. Let $c\in1+z\mathcal O$
and let $\lambda\in\Lambda$. One easily checks that any subgroup of the
form $N^\vee(\mathfrak f)$, in particular
$N^\vee\bigl(\mathfrak f_{\pi(t_r)}\bigr)$, is stable under
conjugation by $c^\lambda$. Additionally, for any
$v\in N^\vee\bigl(\mathfrak f_{\pi(t_r)}\bigr)$, when we write
$$v\equiv\prod_{i\in I}x_{\alpha_i^\vee}(a_i)\ \;\bmod
(N^\vee(\mathcal K),N^\vee(\mathcal K)),$$
the Laurent series $a_i$ has valuation at least $p$; this series is
multiplied by $c^{\langle\alpha_i^\vee,\lambda\rangle}$ when one
conjugates $v$ by $c^\lambda$; looking at the coefficient of $z^p$ then
gives $\mathbf a_{i,p}(v)=\mathbf a_{i,p}(c^\lambda vc^{-\lambda})$.

Write $c^\lambda v_rc^{-\lambda}\in
N^\vee\bigl(\mathfrak f_{\pi(t_r)}\bigr)$ as a product $v'_ru$ with
$(v'_r,u)\in\mathscr N^\vee(\pi,t_r)\times
N^\vee\bigl(\mathfrak f_{\pi(t_r+0)}\bigr)$. Then
$$\mathbf a_{i,p}(v_r)=\mathbf a_{i,p}(c^\lambda v_rc^{-\lambda})=
\mathbf a_{i,p}(v'_r)+\mathbf a_{i,p}(u).$$
By induction, there exists
$(v'_\ell)\in\prod_{\ell=r+1}^m\mathscr N^\vee(\pi,t_\ell)$ such that
$$v'_{r+1}\cdots v'_m\,L_{\wt(\pi)}=uc^\lambda\,v_{r+1}\cdots
v_m\,L_{\wt(\pi)}$$
and
$$\mathbf a_{i,p}(v'_\ell)=
\begin{cases}
\mathbf a_{i,p}(u)+\mathbf a_{i,p}(v_\ell)&\text{if $\ell=(r+1)^+$,}\\
\mathbf a_{i,p}(v_\ell)&\text{for all other $\ell\in\{r+1,\ldots,m\}$.}
\end{cases}$$

Again we distinguish two cases.
If $(\alpha_i^\vee,p)\in\PACS(\pi,t_r)$,
then $\mathfrak f_{\pi(t_r+0)}\not\subseteq H_{(\alpha_i^\vee,\,p)}^-$
and therefore $\mathbf a_{i,p}(u)=0$.
If $(\alpha_i^\vee,p)\notin\PACS(\pi,t_r)$,
then $\mathbf a_{i,p}(v_r)=\mathbf a_{i,p}(v'_r)=0$
and anew $\mathbf a_{i,p}(u)=0$.
Thus, $\mathbf a_{i,p}(u)=0$ holds unconditionally, which concludes
the proof of~\ref{it:LePRZCb}.

Lastly, let us deal with statement~\ref{it:LePRZCc}.
We distinguish three cases.

Suppose first that $(\alpha_i^\vee,p)\in\PACS(\pi,t_r)$.
We write $v_r=x_{(\alpha_i^\vee,\,p)}(a)\,\widetilde v_r$
where $a=\mathbf a_{i,p}(v_r)$ and $\widetilde v_r$ is a
product of elements $x_{\beta}(a_\beta)$ with
$\beta\in\PACS(\pi,t_r)\setminus\{(\alpha_i^\vee,p)\}$
and $a_\beta\in\mathbb C$. From~\eqref{eq:SL2Comm} we get
$$x_{(-\alpha_i^\vee,\,-p)}(1/b)\;x_{(\alpha_i^\vee,\,p)}(a)
=(1-a/b)^{-\alpha_i}\;x_{(\alpha_i^\vee,\,p)}(a(1-a/b))\;
x_{(-\alpha_i^\vee,\,-p)}(1/(b-a)).$$
By Lemma~\ref{le:StabFace},
$$x_{(-\alpha_i^\vee,\,-p)}(1/(b-a))\;\widetilde v_r\;
x_{(-\alpha_i^\vee,\,-p)}(-1/(b-a))$$
belongs to $N^\vee\bigl(\mathfrak f_{\pi(t_r)}\bigr)$;
we write it as a product $\widetilde v'_ru$ with
$(\widetilde v'_r,u)\in\mathscr N^\vee(\pi,t_r)\times
N^\vee\bigl(\mathfrak f_{\pi(t_r+0)}\bigr)$. By induction, there exists
$(v'_\ell)\in\prod_{\ell=r+1}^m\mathscr N^\vee(\pi,t_\ell)$ such that
$$v'_{r+1}\cdots v'_m\,L_{\wt(\pi)}=u\;x_{(-\alpha_i^\vee,\,-p)}
(1/(b-a))\;v_{r+1}\cdots v_m\,L_{\wt(\pi)}.$$
Then
$$x_{(-\alpha_i^\vee,\,-p)}(1/b)\;v_r\cdots v_m\,L_{\wt(\pi)}
=(1-a/b)^{-\alpha_i}\,\bigl[x_{(\alpha_i^\vee,\,p)}(a(1-a/b))\;
\widetilde v'_r\bigr]\,v'_{r+1}\cdots v'_m\,L_{\wt(\pi)}.$$
Denoting the element between square brackets above by $v'_r$,
we get the desired expression, up to the inconsequential left
multiplication by $(1-a/b)^{-\alpha_i}$.

Suppose now that there exists $q>p$ such that
$(\alpha_i^\vee,q)\in\PACS(\pi,t_r)$; then $\mathbf a_{i,p}(v_r)=0$.
We write $v_r=x_{(\alpha_i^\vee,\,q)}(a)\,\widetilde v_r$
where $a\in\mathbb C$ and $\widetilde v_r$ is a
product of elements $x_{\beta}(a_\beta)$ with
$\beta\in\PACS(\pi,t_r)\setminus\{(\alpha_i^\vee,q)\}$
and $a_\beta\in\mathbb C$. Let $c$ be a square root in $1+t\mathcal O$
of $1-(a/b)t^{q-p}$. From~\eqref{eq:SL2Comm} we get
$$x_{(-\alpha_i^\vee,\,-p)}(1/b)\;x_{(\alpha_i^\vee,\,q)}(a)
=c^{-\alpha_i}\;x_{(\alpha_i^\vee,\,q)}(a)\;
x_{(-\alpha_i^\vee,\,-p)}(1/b)\;c^{-\alpha_i}.$$
By Lemma~\ref{le:StabFace},
$$x_{(-\alpha_i^\vee,\,-p)}(1/b)\,\bigl(c^{-\alpha_i}\,
\widetilde v_r\,c^{\alpha_i}\bigr)\,x_{(-\alpha_i^\vee,\,-p)}(-1/b)$$
belongs to $N^\vee\bigl(\mathfrak f_{\pi(t_r)}\bigr)$;
we write it as a product $\widetilde v'_ru$ with
$(\widetilde v'_r,u)\in\mathscr N^\vee(\pi,t_r)\times
N^\vee\bigl(\mathfrak f_{\pi(t_r+0)}\bigr)$. By induction, there exists
$(v'_\ell)\in\prod_{\ell=r+1}^m\mathscr N^\vee(\pi,t_\ell)$ such that
$$v'_{r+1}\cdots v'_m\,L_{\wt(\pi)}=u\;x_{(-\alpha_i^\vee,\,-p)}
(1/b)\;c^{-\alpha_i}\,v_{r+1}\cdots v_m\,L_{\wt(\pi)}.$$
Then
$$x_{(-\alpha_i^\vee,\,-p)}(1/b)\;v_r\cdots v_m\,L_{\wt(\pi)}
=c^{-\alpha_i}\,\bigl[x_{(\alpha_i^\vee,\,q)}(a)\;
\widetilde v'_r\bigr]\,v'_{r+1}\cdots v'_m\,L_{\wt(\pi)}.$$
Denoting the element between square brackets above by $v'_r$,
we get the desired expression, up to the inopportune left
multiplication by $c^{-\alpha_i}$; the latter can however be
wiped off by a further use of the inductive assumption.

Last, suppose that no affine coroot of direction $\alpha_i^\vee$
occurs in $\PACS(\pi,t_r)$; then $\mathbf a_{i,p}(v_r)=0$.
By Lemma~\ref{le:StabFace},
$$x_{(-\alpha_i^\vee,\,-p)}(1/b)\;v_r\;
x_{(-\alpha_i^\vee,\,-p)}(-1/b)$$
belongs to $N^\vee\bigl(\mathfrak f_{\pi(t_r)}\bigr)$;
we write it as a product $v'_ru$ with
$(v'_r,u)\in\mathscr N^\vee(\pi,t_r)\times
N^\vee\bigl(\mathfrak f_{\pi(t_r+0)}\bigr)$. By induction, there exists
$(v'_\ell)\in\prod_{\ell=r+1}^m\mathscr N^\vee(\pi,t_\ell)$ such that
$$v'_{r+1}\cdots v'_m\,L_{\wt(\pi)}=u\;x_{(-\alpha_i^\vee,\,-p)}
(1/b)\;v_{r+1}\cdots v_m\,L_{\wt(\pi)}.$$
Then
$$x_{(-\alpha_i^\vee,\,-p)}(1/b)\;v_r\cdots v_m\,L_{\wt(\pi)}
=v'_r\,v'_{r+1}\cdots v'_m\,L_{\wt(\pi)},$$
as desired, which concludes the proof of~\ref{it:LePRZCc}.
\end{proof}

Let us now consider $i\in I$ and two integral paths $\pi$ and $\eta$
related by the equation $\eta=\tilde e_i\pi$. We denote by~$p$ the
minimum of the function $t\mapsto\langle\alpha_i^\vee,\pi(t)\rangle$
over the interval $[0,1]$ and by $a$ and $b$ the two points in time
where $\pi$ is bent to produce $\eta$. Noting that the conditions
spelled out in sect.~\ref{ss:RecPathMod} do not uniquely determine
$b$, we choose it to be the largest possible: either $b=1$ or
$\langle\alpha_i^\vee,\pi(b+h)\rangle>p$ for all small enough $h>0$.

Let $(t_1,\ldots,t_m)$ be the ordered list of all elements in $[0,1[$
such that $\PACS(\pi,t)\neq\varnothing$. We set $t_{m+1}=1$.
The set $\PACS(\pi,a)$ may be empty; if this happens, we insert $a$ in
the list $(t_1,\ldots,t_m)$, for it will simplify the notation
hereafter. On the contrary, the above condition imposed on $b$ ensures
that either $b=1$ or $(\alpha_i^\vee,p)\in\PACS(\pi,b)$, so
$b$ automatically appears in the list $(t_1,\ldots,t_{m+1})$. We denote
by $r$ and $s$ the indices in $\{1,\ldots,m+1\}$ such that $a=t_r$ and
$b=t_s$; by design $t_r=a<t_{r+1}\leq t_s=b$.

\begin{lemma}
\label{le:PRZFold}
In the setting described in the two paragraphs above, let
$(v_\ell)\in\prod_{\ell=1}^m\mathscr N^\vee(\pi,t_\ell)$.
Assume that $\mathbf a_{i,p}(v_s)+\cdots+\mathbf a_{i,p}(v_\ell)\neq0$
for each $\ell\in\{s,\ldots,m\}$. Then for any $h\in\mathbb C^\times$,
one can construct $(w_\ell)\in\prod_{\ell=1}^m\mathscr N^\vee(\eta,t_\ell)$
such that
$$v_1\cdots v_{r-1}\;x_{(-\alpha_i^\vee,\,-p-1)}(h)\;v_r\cdots
v_m\,L_{\wt(\pi)}=w_1\ldots w_m\,L_{\wt(\eta)}.$$
\end{lemma}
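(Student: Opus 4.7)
The strategy is to migrate the inserted factor $x_{(-\alpha_i^\vee,-p-1)}(h)$ rightward through the product $v_rv_{r+1}\cdots v_m$, converting each $v_\ell\in\mathscr N^\vee(\pi,t_\ell)$ into a corresponding $w_\ell\in\mathscr N^\vee(\eta,t_\ell)$ as it passes by. The argument breaks into three phases that mirror the three-piece description of $\eta=\tilde e_i\pi$ recalled in sect.~\ref{ss:RecPathMod}: coincidence with $\pi$ on $[0,a]$, reflection across $H_{(\alpha_i^\vee,p+1)}$ on $[a,b]$, and translation by $\alpha_i$ on $[b,1]$. For $\ell<r$ the paths agree, so $\mathscr N^\vee(\pi,t_\ell)=\mathscr N^\vee(\eta,t_\ell)$ and we set $w_\ell=v_\ell$; the inserted factor sits inert to the left of $v_r$ at the start of the computation.

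On $(a,b)$ the function $\langle\alpha_i^\vee,\pi(\cdot)\rangle$ lies strictly between $p$ and $p+1$, so no affine coroot of direction $\alpha_i^\vee$ appears in $\PACS(\pi,t_\ell)$ for $r\le\ell<s$. To treat these indices, we apply the Chevalley relation~\eqref{eq:ChevComm} to commute $x_{(-\alpha_i^\vee,-p-1)}(h)$ past each generator $x_\beta(\cdot)$ of $v_\ell$; this produces the reflected generator $x_{s_{(\alpha_i^\vee,p+1)}\beta}(\cdot)\in\mathscr N^\vee(\eta,t_\ell)$, which furnishes the sought contribution to $w_\ell$, together with correction factors whose supporting affine coroots lie on the $(a,b)$-side of $H_{(\alpha_i^\vee,p+1)}$. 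Lemma~\ref{le:StabFace} places these corrections inside $N^\vee\bigl(\mathfrak f_{\pi(t_\ell+0)}\bigr)\subseteq N^\vee\bigl(\mathfrak f_{\pi(t_{\ell+1})}\bigr)$, whence they can be pushed further right by Lemma~\ref{le:PRZCtrl}~\ref{it:LePRZCa}.

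The decisive step occurs at $\ell=s$, where the travelling factor meets a nontrivial $x_{(\alpha_i^\vee,p)}$-component. Writing $v_s=x_{(\alpha_i^\vee,p)}\bigl(\mathbf a_{i,p}(v_s)\bigr)\,\widetilde v_s$ and applying the $\SL_2$-relation~\eqref{eq:SL2Comm} yields a torus factor of shape $\bigl(1-h\,\mathbf a_{i,p}(v_s)\,z^{-1}\bigr)^{\alpha_i}$, a reshuffled $x_{(\alpha_i^\vee,p)}$-term to be folded back into $\widetilde v_s$, and a residual $x_{(-\alpha_i^\vee,\,\ast)}$-factor that must be pushed through $v_{s+1},\ldots,v_m$. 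Every further encounter with an $x_{(\alpha_i^\vee,p)}$-component invokes~\eqref{eq:SL2Comm} anew; the hypothesis $\mathbf a_{i,p}(v_s)+\cdots+\mathbf a_{i,p}(v_\ell)\ne 0$ is exactly what keeps each denominator $1-ab$ invertible in $\mathcal K$ throughout the iteration. The accumulated torus and negative-coroot data finally combine, by the Tits relation~\eqref{eq:TitsRela}, into a lift of $s_{\alpha_i}$ followed by a translation $z^{\alpha_i}$; when absorbed into $L_{\wt(\pi)}$ on the far right this produces $L_{\wt(\pi)+\alpha_i}=L_{\wt(\eta)}$, and simultaneous conjugation by $z^{\alpha_i}$ sends each surviving factor into the required $w_\ell\in\mathscr N^\vee(\eta,t_\ell)$ for $\ell>s$, reflecting the identity $\mathfrak f_{\eta(t_\ell)}=\tau_{\alpha_i}\mathfrak f_{\pi(t_\ell)}$.

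The principal obstacle is the coefficient bookkeeping during this third phase: each iterated application of~\eqref{eq:SL2Comm} requires Lemma~\ref{le:PRZCtrl}~\ref{it:LePRZCb} to absorb its spurious torus contribution and Lemma~\ref{le:PRZCtrl}~\ref{it:LePRZCc} to recenter the residual $x_{(-\alpha_i^\vee,\,\ast)}$-factor, and one must check inductively that the partial-sum non-vanishing hypothesis is precisely what licenses each such invocation. The calculation proceeds along the pattern of \cite{BaumannGaussent}, proof of Proposition~5.11, to which we refer for the detailed formulas.
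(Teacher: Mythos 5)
Your overall strategy — carry the inserted factor $x_{(-\alpha_i^\vee,\,-p-1)}(h)$ through the product and let it transform each $v_\ell$ into the corresponding $w_\ell$ — is the correct broad shape, and your treatment of the indices $\ell<r$ (set $w_\ell=v_\ell$) is fine. However, there is a genuine error in your ``phase~2''.

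You claim that applying the Chevalley relation~\eqref{eq:ChevComm} to commute $x_{(-\alpha_i^\vee,\,-p-1)}(h)$ past a generator $x_\beta(c)$ of $v_\ell$ (for $r\leq\ell<s$) ``produces the reflected generator $x_{s_{(\alpha_i^\vee,\,p+1)}\beta}(\cdot)$''. This is false. Commuting $x_{(-\alpha_i^\vee,\,-p-1)}(h)$ past $x_\beta(c)$ leaves $x_\beta(c)$ in place and introduces correction factors $x_{j\beta - k(\alpha_i^\vee,\,p+1)}(\cdots)$ with $j,k>0$; the reflected affine coroot $s_{(\alpha_i^\vee,\,p+1)}\beta=\beta+\langle\text{-}\rangle(\alpha_i^\vee,\,p+1)$ lies on the \emph{opposite} side of $\alpha_i^\vee$ and cannot appear among these corrections. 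For a concrete check: if $\langle\beta^\vee,\alpha_i\rangle=-1$ then the reflected coroot has direction $\alpha_i^\vee+\beta^\vee$, whereas the Chevalley commutator only produces directions $-j\alpha_i^\vee+k\beta^\vee$. So the commutation mechanism cannot yield elements of $\mathscr N^\vee(\eta,t_\ell)$, and the reduction to $\prod_\ell\mathscr N^\vee(\eta,t_\ell)$ does not go through as you describe.

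What actually produces the reflected groups is \emph{conjugation} by a lift of the affine reflection, not commutation. The proof in the paper first rewrites $x_{(-\alpha_i^\vee,\,-p-1)}(h)$ (after inserting a benign factor $x_{(\alpha_i^\vee,\,p+1)}(1/h)$ and its inverse) via the Tits relation~\eqref{eq:TitsRela} as $h^{-\alpha_i}\,x_{(\alpha_i^\vee,\,p+1)}(-h)\,\bigl(z^{(p+1)\alpha_i}\overline{s_i}\bigr)$; the element $z^{(p+1)\alpha_i}\overline{s_i}$ lifts $s_{(\alpha_i^\vee,\,p+1)}$ and conjugates $\mathscr N^\vee(\pi,t_\ell)$ exactly onto $\mathscr N^\vee(\eta,t_\ell)$ for $r<\ell<s$, while $z^{\alpha_i}$ lifts the translation $\tau_{\alpha_i}$ and handles the indices $\ell\geq s$. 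This preliminary Tits decomposition is the decisive idea; it is entirely absent from your argument. Finally, a smaller point: the non-vanishing hypothesis on the partial sums $\mathbf a_{i,p}(v_s)+\cdots+\mathbf a_{i,p}(v_\ell)$ is consumed by a \emph{single} invocation of Lemma~\ref{le:PRZCtrl}~\ref{it:LePRZCc} at $\ell=s$ (with $b=-\mathbf a_{i,p}(v_s)$), not by an iterated cascade of~\eqref{eq:SL2Comm}; the iteration is already packaged inside that lemma, so re-deriving it piecemeal duplicates work and obscures where the hypothesis enters.
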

\begin{proof}
Let $(v_\ell)$ be as in the statement and let $h\in\mathbb C^\times$.
We set
$$A=v_1\cdots v_{r-1}\quad\text{and}\quad B=v_r\cdots v_m.$$
We note that $\mathfrak f_{\pi(t_r)}\subseteq H_{(\alpha_i^\vee,\,p+1)}$,
so $x_{(\alpha_i^\vee,\,p+1)}(-1/h)\in
N^\vee\bigl(\mathfrak f_{\pi(t_r)}\bigr)$.

Using Lemma~\ref{le:PRZCtrl}~\ref{it:LePRZCa}, we find
$(v'_{r+1},\ldots,v'_m)\in\prod_{\ell=r}^m\mathscr N^\vee(\pi,t_\ell)$
such that
$$x_{(\alpha_i^\vee,\,p+1)}(-1/h)\;B\,L_{\wt(\pi)}=v'_r\cdots
v'_m\,L_{\wt(\pi)}$$
and $\mathbf a_{i,p}(v'_\ell)=\mathbf a_{i,p}(v_\ell)$ for all
$\ell\in\{r,\ldots,m\}$. We set $c=\mathbf a_{i,p}(v_s)$ and write
$v'_s=x_{(\alpha_i^\vee,\,p)}(c)\,\widetilde v'_s$;
then $\widetilde v'_s\in\mathscr N^\vee(\pi,t_s)$
and $\mathbf a_{i,p}(\widetilde v'_s)=0$. We also set
$$C=v'_r\cdots v'_{s-1}\quad\text{and}\quad
D=\widetilde v'_s\,v'_{s+1}\cdots v'_m.$$
Using Lemma~\ref{le:PRZCtrl}~\ref{it:LePRZCc}, we find
$(\widetilde v''_s,v''_{s+1},\ldots,v''_m)\in
\prod_{\ell=s}^m\mathscr N^\vee(\pi,t_\ell)$ such that
$$x_{(-\alpha_i^\vee,\,-p)}(-1/c)\;D\,L_{\wt(\pi)}=
\widetilde v''_s\,v''_{s+1}\cdots v''_m\,L_{\wt(\pi)}.$$
Last, we set
\begin{align*}
E&=x_{(\alpha_i^\vee,\,p)}(c)\;x_{(-\alpha_i^\vee,\,-p)}(1/c)\;
x_{(\alpha_i^\vee,\,p)}(c),\\[2pt]
F&=x_{(\alpha_i^\vee,\,p)}(-c)\;\widetilde v''_s\,v''_{s+1}\cdots
v''_m,\\[2pt]
K&=x_{(-\alpha_i^\vee,\,-p-1)}(h)\;x_{(\alpha_i^\vee,\,p+1)}(1/h).
\end{align*}
Then
\begin{equation}
\label{eq:PRZFold1}
A\;x_{(-\alpha_i^\vee,\,-p-1)}(h)\;B\,L_{\wt(\pi)}=
AKCEF\,L_{\wt(\pi)}.
\end{equation}

Observing that
$$\PACS(\eta,t_\ell)=\begin{cases}
\PACS(\pi,t_\ell)&\text{if $1\leq\ell<r$,}\\[2pt]
\{(\alpha_i^\vee,p+1)\}\sqcup s_{(\alpha_i^\vee,\,p+1)}
\bigl(\PACS(\pi,t_r)\bigr)&\text{if $\ell=r$,}\\[2pt]
s_{(\alpha_i^\vee,\,p+1)}\bigl(\PACS(\pi,t_\ell)\bigr)&
\text{if $r<\ell<s$,}\\[2pt]
\tau_{\alpha_i}\bigl(\PACS(\pi,t_\ell)\bigr)&
\text{if $s\leq\ell\leq m$,}
\end{cases}$$
we check that the sequence
\begin{multline}
\label{eq:PRZFold2}
\Bigl(v_1,\;\ldots,\;v_{r-1},\;x_{(\alpha_i^\vee,\,p+1)}(-h)\;
\bigl(z^{(p+1)\alpha_i}\overline{s_i}\,\bigr)\;v'_r\;
\bigl(z^{(p+1)\alpha_i}\overline{s_i}\,\bigr)^{-1},\\[4pt]
\bigl(z^{(p+1)\alpha_i}\overline{s_i}\,\bigr)\;v'_{r+1}\;
\bigl(z^{(p+1)\alpha_i}\overline{s_i}\,\bigr)^{-1},\;
\ldots,\,\bigl(z^{(p+1)\alpha_i}\overline{s_i}\,\bigr)
\;v'_{s-1}\;\bigl(z^{(p+1)\alpha_i}\overline{s_i}\,\bigr)^{-1},\\[4pt]
z^{\alpha_i}\;x_{(\alpha_i^\vee,\,p)}(-c)\;\widetilde v''_s\;
z^{-\alpha_i},\;z^{\alpha_i}\;v''_{s+1}\;z^{-\alpha_i},\;\ldots,\;
z^{\alpha_i}\;v''_m\;z^{-\alpha_i}\Bigr)
\end{multline}
belongs to $\prod_{\ell=1}^m\mathscr N^\vee(\eta,t_\ell)$. In
addition, the product of the elements in this sequence is
$$A\;x_{(\alpha_i^\vee,\,p+1)}(-h)\;\bigl(z^{(p+1)\alpha_i}\overline{s_i}\,
\bigr)\;C\;\bigl(z^{(p+1)\alpha_i}\overline{s_i}\,\bigr)^{-1}\;
z^{\alpha_i}\,F\,z^{-\alpha_i}.$$

We now apply two transformations to the sequence \eqref{eq:PRZFold2}:
we conjugate the last $m-s+1$ terms by $(-c)^{-\alpha_i}$, and
we conjugate the last $m-r+1$ by $h^{-\alpha_i}$. The resulting
sequence, denoted by $(w_\ell)$, still belongs to
$\prod_{\ell=1}^m\mathscr N^\vee(\eta,t_\ell)$, because all our
constructions are $T^\vee(\mathbb C)$-equivariant.

Observing that
$$K=h^{-\alpha_i}\,x_{(\alpha_i^\vee,\,p+1)}(-h)\;\bigl(z^{(p+1)\alpha_i}
\overline{s_i}\,\bigr)\quad\text{and}\quad
E=\bigl(z^{(p+1)\alpha_i}\overline{s_i}\,\bigr)^{-1}\;
(-c)^{-\alpha_i}\;z^{\alpha_i}$$
(see equation~\eqref{eq:TitsRela}), we obtain
$$w_1\cdots w_m=AKCEF\,z^{-\alpha_i}\,(-ch)^{\alpha_i},$$
and a comparison with \eqref{eq:PRZFold1} yields
$$A\;x_{(-\alpha_i^\vee,\,-p-1)}(h)\;B\,L_{\wt(\pi)}
=AKCEF\,z^{-\alpha_i}\,L_{\wt(\eta)}
=w_1\cdots w_m\,L_{\wt(\eta)},$$
as desired.
\end{proof}

We can now prove Proposition~\ref{pr:PropRingZ}~\ref{it:PrPRZd}.
We consider the situation
$$\eta_1\otimes\cdots\otimes\eta_n=
\tilde e_i(\pi_1\otimes\cdots\otimes\pi_n)$$
in the crystal $\Pi^{\otimes n}$, and our aim is to show that
$\mathring{\mathbf Z}(\pi_1\otimes\cdots\otimes\pi_n)$ is
contained in the closure of
$\mathring{\mathbf Z}(\eta_1\otimes\cdots\otimes\eta_n)$ in $\Gr_n$.

As in the proof of Proposition~\ref{pr:PropRingZ}~\ref{it:PrPRZc},
we regard the concatenation $\pi=\pi_1*\cdots*\pi_n$ as a map from
$[0,n]$ to $\Lambda_{\mathbb R}$, each path $\pi_1$, \dots, $\pi_n$ being
travelled at nominal speed, and ditto for $\eta=\eta_1*\cdots*\eta_n$.
Thus, for each $j\in\{1,\ldots,n\}$ the restriction of $\pi$ to the
interval $[j-1,j]$ is $\pi_j$, up to the obvious shifts in time and space.

By Proposition~\ref{pr:PathModel}~\ref{it:PrPMb}, we have
$\eta=\tilde e_i\pi$. We denote by $a$ and $b$ the two points in time
where $\pi$ is bent to produce $\eta$. Let $(t_1,\ldots,t_m)$ be the
ordered list of all elements in $[0,n[$ such that
$\PACS(\pi,t)\neq\varnothing$. We insert $a$ in this
list if it does not already appear there. We set $t_0=0$ and $t_{m+1}=n$.
We denote by $r$ and $s$ the indices in $\{1,\ldots,m+1\}$ such that
$a=t_r$ and $b=t_s$.

There is a unique integer $k\in\{1,\ldots,n\}$ such that $a$ and
$b$ both belong to $[k-1,k]$; plainly, $\eta_k=\tilde e_i\pi_k$
and $\eta_j=\pi_j$ for all $j\in\{1,\ldots,n\}\setminus\{k\}$. We
record that $\eta_1*\cdots*\eta_j=\tilde e_i(\pi_1*\cdots*\pi_j)$
if $j\in\{k,\ldots,n\}$.

For $j\in\{1,\ldots,n\}$, we set $\nu_j=\wt(\pi_1)+\cdots+\wt(\pi_j)$
and denote by $m_j$ the largest element
$\ell\in\{0,\ldots,m\}$ such that $t_\ell\in[0,j[$.
Then $\mathring{\mathbf Z}(\pi_1\otimes\cdots\otimes\pi_n)$ is
the set of all elements
\begin{equation}
\label{eq:ProofPrPRZd1}
\left[\left(\prod_{\ell=1\vphantom{m_1}}^{m_1}v_\ell\right)z^{\nu_1},\;
z^{-\nu_1}\left(\prod_{\ell=m_1+1}^{m_2}v_\ell\right)z^{\nu_2},\;
\ldots,\;
z^{-\nu_{n-1}}\left(\prod_{\ell=m_{n-1}+1}^{m_n}v_\ell\right)z^{\nu_n}
\right]
\end{equation}
with $(v_\ell)\in\prod_{\ell=1}^m\mathscr N^\vee(\pi,t_\ell)$.

Now assume that $(v_\ell)$ is chosen so that
$\mathbf a_{i,p}(v_s)+\cdots+\mathbf a_{i,p}(v_\ell)\neq0$
for each $\ell\in\{s,\ldots,m\}$ and pick $h\in\mathbb C^\times$.
Lemma~\ref{le:PRZFold} provides us with a sequence
$(w_\ell)\in\prod_{\ell=1}^m\mathscr N^\vee(\eta,t_\ell)$ such that
$$v_1\cdots v_{r-1}\;x_{(-\alpha_i^\vee,\,-p-1)}(h)\;v_r\cdots
v_m\,L_{\wt(\pi)}=w_1\cdots w_m\,L_{\wt(\eta)}.$$
However $(w_\ell)$ satisfies more equations: for $j\in\{1,\ldots,n\}$,
we have
\begin{equation}
\label{eq:ProofPrPRZd2}
\left\{\begin{alignedat}2
v_1\cdots v_{m_j}\,L_{\nu_j}&=w_1\cdots w_{m_j}\,L_{\nu_j}
&&\text{if $j<k$,}\\[2pt]
v_1\cdots v_{r-1}\;x_{(-\alpha_i^\vee,\,-p-1)}(h)\;v_r\cdots
v_{m_j}\,L_{\nu_j}&=w_1\cdots w_{m_j}\,L_{\nu_j+\alpha_i}\quad\
&&\text{if $j\geq k$,}
\end{alignedat}\right.
\end{equation}
in the first case because $w_\ell=v_\ell$ for all
$\ell\in\{1,\ldots,m_{k-1}\}$, in the second case because
Lemma~\ref{le:PRZFold} would have returned the subsequence
$(w_\ell)_{1\leq\ell\leq m_j}$ if we had fed it with the paths
$\pi_1*\cdots*\pi_j$ and $\eta_1*\cdots*\eta_j$
and the datum $(v_\ell)_{1\leq\ell\leq m_j}$ and $h$.

The system \eqref{eq:ProofPrPRZd2} translates to a single equation
in $\Gr_n$, which manifests that the element obtained by inserting
$x_{(-\alpha_i^\vee,\,-p-1)}(h)$ just before $v_r$ in
\eqref{eq:ProofPrPRZd1} belongs to
$\mathring{\mathbf Z}(\eta_1\otimes\cdots\otimes\eta_n)$.
Letting $h$ tend to $0$, we conclude that \eqref{eq:ProofPrPRZd1}
lies in the closure of this set.
To be sure, this conclusion has been reached under the assumption
that $\mathbf a_{i,p}(v_s)+\cdots+\mathbf a_{i,p}(v_\ell)\neq0$
for each $\ell\in\{s,\ldots,m\}$, but this restriction can be
removed by a small perturbation of $\mathbf a_{i,p}(v_s)$.

Thus, Proposition~\ref{pr:PropRingZ}~\ref{it:PrPRZd} is, at last,
fully proven.

\section{Comparison with the tensor product basis}
\label{se:CompTPB}
We keep the notation from sect.~\ref{se:MVCycBas}. Let
$\bm\lambda=(\lambda_1,\ldots,\lambda_n)$ in $(\Lambda^+)^n$. The
tensor product $V(\bm\lambda)$ can be endowed on the one hand
with its MV basis (sect.~\ref{ss:MVBas}), on the other hand with the
tensor product of the MV bases of the factors $V(\lambda_1)$, \dots,
$V(\lambda_n)$. In this section, we compare these two bases through
the explicit identification
$$F(\mathscr I_{\lambda_1}*\cdots*\mathscr I_{\lambda_n})\cong
F(\mathscr I_{\lambda_1})\otimes\cdots\otimes
F(\mathscr I_{\lambda_n})$$
afforded by Beilinson and Drinfeld's fusion product. We show that
the transition matrix is upper unitriangular and that its entries are
intersection multiplicities. The order relation needed to convey
the triangularity involves the inclusion of cycles.

\subsection{Deformations}
\label{ss:Deform}
The Beilinson-Drinfeld Grassmannian $\BDGr{}$ is a relative version of
the affine Grassmannian where the base is the space of effective divisors
on a smooth curve. The choice of the affine line amply satisfies our
needs and offers three advantages: there is a natural global coordinate
on $\mathbb A^1$, every $G$-torsor on $\mathbb A^1$ is trivializable,
and the monodromy of any local system is trivial. Rather than looking for
more generality, we will pragmatically stick with this choice. Consistent
with sect.~\ref{se:MVCycBas}, the coordinate on $\mathbb A^1$ is denoted
by $z$.

Formally, the Beilinson-Drinfeld Grassmannian $\BDGr n$ is defined as
the functor on the category of commutative $\mathbb C$-algebras that
assigns to an algebra $R$ the set of isomorphism classes of triples
$(x_1,\ldots,x_n;\mathcal F,\beta)$, where
$(x_1,\ldots,x_n)\in\mathbb A^n(R)$, $\mathcal F$ is a $G^\vee$-torsor
over $\mathbb A^1_R$ and $\beta$ is a trivialization of $\mathcal F$
away from the points $x_1$, \dots, $x_n$ (\cite{BeilinsonDrinfeld},
sect.~5.3.10; \cite{Richarz}, Definition~3.3; \cite{Zhu},
Definition~3.1.1). We denote by $\pi:\BDGr n\to\mathbb A^n$ the morphism
to the base, which forgets $\mathcal F$ and $\beta$. It is known that
$\BDGr n$ is representable by an ind-scheme and that $\pi$ is ind-proper.

We are only interested in the set of $\mathbb C$-points, endowed with
its ind-variety structure. Using a trivialization of $\mathcal F$, we can
thus adopt the following simplified definition: $\BDGr n$ is the set of
pairs $(x_1,\ldots,x_n;[\beta])$, where $(x_1,\ldots,x_n)\in\mathbb C^n$
and $[\beta]$ belongs to the homogenenous space
$$G^\vee\bigl(\mathbb C\bigl[z,(z-x_1)^{-1},\ldots,(z-x_n)^{-1}\bigr]
\bigr)\,/\,G^\vee\bigl(\mathbb C[z]\bigr).$$
This set is endowed with the structure of an ind-variety.

\begin{other}{Example}
\label{ex:BDGrIndVar}
(\cite{BeilinsonDrinfeld}, Remark in sect.~5.3.10.)
We consider the case $G^\vee=\GL_N$. Here the datum of $[\beta]$ is
equivalent to the datum of the $\mathbb C[z]$-lattice $\beta(L_0)$ in
$\mathbb C(z)^N$, where $L_0=\mathbb C[z]^N$ is the standard lattice.
Let us write $\mathbf x$ for the point $(x_1,\ldots,x_n)$ and set
$f_{\mathbf x}=(z-x_1)\cdots(z-x_n)$; then a lattice $L$ is of this form
$\beta(L_0)$ if and only if there exists a positive integer $k$ such
that $f_{\mathbf x}^kL_0\subseteq L\subseteq f_{\mathbf x}^{-k}L_0$.
For each positive integer $k$, define $\bigl(\BDGr n\bigr){}^{}_k$ to be
the subset of $\BDGr n$ consisting of all pairs $(\mathbf x;L)$ with
$f_{\mathbf x}^kL_0\subseteq L\subseteq f_{\mathbf x}^{-k}L_0$.
We identify $\mathbb C[z]/(f_{\mathbf x}^{2k})$ with the vector space
$V$ of polynomials of degree strictly less than $2kn$, and
subsequently identify $L_0/f_{\mathbf x}^{2k}L_0$ with $V^N$. The space
$\bigl(\BDGr n\bigr){}^{}_k$ can then be realized as a Zariski-closed
subset~of
$$\mathbb C^n\times\bigcup_{d=0}^{2knN}\mathbb G_d\bigl(V^N\bigr)$$
where $\mathbb G_d\bigl(V^N\bigr)$ denotes the Grassmannian of $d$-planes
in $V^N$. In this way, $\BDGr n$ is the inductive limit of a system
of algebraic varieties and closed embeddings, in other words, an
ind-variety.
\end{other}

We also want to deform the $n$-fold convolution variety $\Gr_n$.
Accordingly, we define $\BDConv_n$ as the set of pairs
$(x_1,\ldots,x_n;[\beta_1,\ldots,\beta_n])$, where
$(x_1,\ldots,x_n)\in\mathbb C^n$ and $[\beta_1,\ldots,\beta_n]$ belongs to
$$G^\vee\bigl(\mathbb C\bigl[z,(z-x_1)^{-1}\bigr]\bigr)
\,\times^{G^\vee(\mathbb C[z])}\;\cdots\,\times^{G^\vee(\mathbb C[z])}\;
G^\vee\bigl(\mathbb C\bigl[z,(z-x_n)^{-1}\bigr]\bigr)
\,/\,G^\vee\bigl(\mathbb C[z]\bigr)$$
(see \cite{Richarz}, Definition~3.8, or \cite{Zhu}, (3.1.21)). This set
$\BDConv_n$ is endowed with the structure of an ind-variety; it comes
with a map $m_n:\BDConv_n\to\BDGr n$ defined by
$$m_n(x_1,\ldots,x_n;[\beta_1,\ldots,\beta_n])=
(x_1,\ldots,x_n;[\beta_1\cdots\beta_n]).$$

\begin{other}{Example}
\label{ex:BDConvIndVar}
We again consider the case $G^\vee=\GL_N$. Then an element in $\BDConv_n$
is the datum of a point $(x_1,\ldots,x_n)\in\mathbb C^n$
and a sequence $(L_1,\ldots,L_n)$ of $\mathbb C[z]$-lattices in
$\mathbb C(z)^N$ for which there exists a positive integer $k$ such that
$$(z-x_j)^kL_{j-1}\subseteq L_j\subseteq(z-x_j)^{-k}L_{j-1}$$
for all $j\in\{1,\ldots,n\}$; here again $L_0=\mathbb C[z]^N$ is
the standard lattice and $L_j=(\beta_1\cdots\beta_j)(L_0)$.
\end{other}

In the above example, we can partition $\BDConv_n$ into cells by specifying
the relative positions of the pairs of lattices $(L_{j-1},L_j)$ in terms
of invariant factors. This construction can be generalized to an arbitrary
group $G$ as follows: given $\bm\lambda=(\lambda_1,\ldots,\lambda_n)$ in
$(\Lambda^+)^n$, we define $\BDConv_n^{\bm\lambda}$ as the subset of
$\BDConv_n$ consisting of all pairs
$(x_1,\ldots,x_n;[\beta_1,\ldots,\beta_n])$ with
$$\beta_j\in G^\vee(\mathbb C[z])\;(z-x_j)^{\lambda_j}\;
G^\vee(\mathbb C[z])$$
for $j\in\{1,\ldots,n\}$. The Cartan decomposition
$$G^\vee\bigl(\mathbb C\bigl[z,(z-x_j)^{-1}\bigr]\bigr)=
\bigsqcup_{\lambda_j\in\Lambda^+}G^\vee(\mathbb C[z])\;
(z-x_j)^{\lambda_j}\;G^\vee(\mathbb C[z])$$
yields
$$\BDConv_n=\bigsqcup_{\bm\lambda\in(\Lambda^+)^n}\BDConv_n^{\bm\lambda}$$
and it can be checked that
\begin{equation}
\label{eq:ClosureStratBD}
\overline{\BDConv_n^{\bm\lambda}}=
\bigsqcup_{\substack{\bm\mu\in(\Lambda^+)^n\\[2pt]
\mu_1\leq\lambda_1,\,\ldots,\,\mu_n\leq\lambda_n}}\BDConv_n^{\bm\mu}.
\end{equation}

In addition, the maps $(x_1,\ldots,x_j;[\beta_1,\ldots,\beta_j])\mapsto
(x_1,\ldots,x_{j-1};[\beta_1,\ldots,\beta_{j-1}])$
exhibit $\BDConv_n^{\bm\lambda}$ as the total space of an iterated
fibration with base $\BDConv_1^{\lambda_1}$ and successive fibers
$\BDConv_1^{\lambda_2}$, \dots, $\BDConv_1^{\lambda_n}$; it follows
that $\BDConv_n^{\bm\lambda}$ is a smooth connected variety of
dimension $2\rho(|\bm\lambda|)+n$.

Let us now investigate the fibers of the map
$\pi\circ m_n:\BDConv_n\to\mathbb C^n$.
Given $x\in\mathbb C$, we set $\mathcal O_x=\mathbb C[\![z-x]\!]$ and
$\mathcal K_x=\mathbb C(\!(z-x)\!)$; thus $\mathcal K_x$ is the completion
of $\mathbb C(z)$ at the place defined by $x$. We identify $\mathcal O$
and $\mathcal K$ with $\mathcal O_x$ and $\mathcal K_x$ by means of the
map $z\mapsto z-x$.

We fix $\mathbf x=(x_1,\ldots,x_n)$ in $\mathbb C^n$. Let
$\supp(\mathbf x)$ be the set of values $y\in\mathbb C$ that appear
in the tuple~$\mathbf x$. For $y\in\supp(\mathbf x)$, denote by $m_y$
the number of indices $j\in\{1,\ldots,n\}$ such that $x_j=y$ and
choose an increasing sequence $(p_0=0,p_1,p_2,\ldots,p_{m_y}=n)$ in a
way that each interval $[p_{k-1}+1,p_k]$ contains exactly one index $j$
such that $x_j=y$. For $\bm\beta=[\beta_1,\ldots,\beta_n]$ in the fiber
$$(\BDConv_n)_{\mathbf x}=
G^\vee\bigl(\mathbb C\bigl[z,(z-x_1)^{-1}\bigr]\bigr)
\,\times^{G^\vee(\mathbb C[z])}\;\cdots\,\times^{G^\vee(\mathbb C[z])}\;
G^\vee\bigl(\mathbb C\bigl[z,(z-x_n)^{-1}\bigr]\bigr)
\,/\,G^\vee\bigl(\mathbb C[z]\bigr),$$
we define $\Theta(\bm\beta)_y$ as the point $[(\beta_1\cdots\beta_{p_1}),
(\beta_{p_1+1}\cdots\beta_{p_2}),\ldots,(\beta_{p_{m_y-1}+1}\cdots\beta_n)]$
in
$$\underbrace{G^\vee(\mathcal K_y)\,\times^{G^\vee(\mathcal O_y)}\;\cdots\,
\times^{G^\vee(\mathcal O_y)}\;G^\vee(\mathcal K_y)}_{m_y\ \text{factors}\
G^\vee(\mathcal K_y)}\,/\,G^\vee(\mathcal O_y)\cong\Gr_{m_y}$$
(note that $\Theta(\bm\beta)_y$ does not depend on this choice,
because $\beta_j\in G^\vee(\mathcal O_y)$ if $x_j\neq y$).

\begin{proposition}
\label{pr:FibBDConv}
The map $\bm\beta\mapsto(\Theta(\bm\beta)_y)$
is a bijection
$$(\BDConv_n)_{\mathbf x}\xrightarrow\simeq
\prod_{y\in\supp(\mathbf x)}\Gr_{m_y}.$$
\end{proposition}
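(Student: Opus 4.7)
The plan is to prove bijectivity by exhibiting an explicit inverse, the main input being the uniformization of the affine Grassmannian by rational functions with pole at a single point, together with a Beauville--Laszlo style gluing. As a preliminary, I would check that $\Theta$ is well defined. Independence from the auxiliary sequence $(p_0,\ldots,p_{m_y})$ reduces to the observation that sliding a boundary $p_k$ moves a product of factors $\beta_j$ with $x_j\neq y$ (hence in $G^\vee(\mathcal O_y)$) from one slot of $\Gr_{m_y}$ to an adjacent one, a move absorbed by the convolution equivalence in $\Gr_{m_y}$. The same remark shows compatibility of $\Theta$ with the equivalence relation defining $(\BDConv_n)_{\mathbf x}$, since $G^\vee(\mathbb C[z])\subseteq G^\vee(\mathcal O_y)$ for every $y$.

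The central ingredient is then the classical isomorphism
\begin{equation*}
G^\vee\bigl(\mathbb C[z,(z-y)^{-1}]\bigr)\big/G^\vee(\mathbb C[z])
\xrightarrow\simeq G^\vee(\mathcal K_y)\big/G^\vee(\mathcal O_y)=\Gr,
\end{equation*}
valid for each $y\in\mathbb A^1$. For $G^\vee=\GL_N$ this reduces to a partial-fraction / Beauville--Laszlo argument along the lines of Example~\ref{ex:BDConvIndVar}; in general it follows from the triviality of $G^\vee$-torsors on $\mathbb A^1$ combined with Beauville--Laszlo at the formal disk around $y$, as in \cite{Zhu} and \cite{Richarz}. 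Crucially, for $y'\neq y$ the inclusion $\mathbb C[z,(z-y)^{-1}]\hookrightarrow\mathcal K_{y'}$ lands in $\mathcal O_{y'}$, so the image of $G^\vee(\mathbb C[z,(z-y)^{-1}])$ in the local Grassmannian at $y'$ is the base point; this is what decouples the different $y\in\supp(\mathbf x)$.

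To invert $\Theta$, I would take a tuple $([\gamma_{y,1},\ldots,\gamma_{y,m_y}])_{y\in\supp(\mathbf x)}$, lift each $\gamma_{y,k}$ via the uniformization to an element $\beta'_{y,k}\in G^\vee(\mathbb C[z,(z-y)^{-1}])$, and assemble $\bm\beta=[\beta_1,\ldots,\beta_n]$ by placing $\beta'_{x_j,k(j)}$ in the $j$-th slot, where $k(j)$ is the rank of $j$ among the indices $j'$ with $x_{j'}=x_j$. A direct computation using the decoupling above then confirms $\Theta(\bm\beta)_y=[\gamma_{y,1},\ldots,\gamma_{y,m_y}]$ for every $y$, proving surjectivity. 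For injectivity, if $\Theta(\bm\beta)=\Theta(\bm\beta')$, the two tuples differ at each $y$ by a simultaneous local change of representatives in $G^\vee(\mathcal O_y)$ within the $m_y$-fold convolution; a further Beauville--Laszlo argument (equivalently, a strong-approximation statement for $G^\vee$ on $\mathbb A^1$) promotes this family of local modifications to a global modification by elements of $G^\vee(\mathbb C[z])$, which is exactly the equivalence imposed on $(\BDConv_n)_{\mathbf x}$.

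The main obstacle is the uniformization statement and its Beauville--Laszlo-style extension to simultaneous modifications at the points of $\supp(\mathbf x)$; once these are granted, the remainder is combinatorial bookkeeping, with the caveat that the left-to-right order of the $\beta_j$ within each block indexed by a given $y$ must be preserved, since convolution on $\Gr_{m_y}$ is non-commutative.
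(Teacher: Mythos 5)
Your proposal correctly identifies the key ingredient, the polynomial-loop uniformization $G^\vee(\mathbb C[z,(z-y)^{-1}])/G^\vee(\mathbb C[z])\simeq\Gr$ (which the paper packages as the equality $G^\vee(\mathcal K_x)=G^\vee(\mathbb C[z,(z-x)^{-1}])\,G^\vee(\mathcal O_x)$), and your well-definedness check for $\Theta$ is fine. But the inverse map as you build it does \emph{not} invert $\Theta$, and this is a genuine gap. Take $n=2$, $x_1\neq x_2$, $G^\vee=\SL_2$, with lifts
$$\beta'_{x_1}=\begin{pmatrix}1&(z-x_1)^{-1}\\0&1\end{pmatrix},\qquad \beta'_{x_2}=\begin{pmatrix}z-x_2&0\\0&(z-x_2)^{-1}\end{pmatrix}.$$
With $\bm\beta=[\beta'_{x_1},\beta'_{x_2}]$ one has $\Theta(\bm\beta)_{x_2}=[\beta'_{x_1}\beta'_{x_2}]$ in $\Gr_{x_2}$, and
$$(\beta'_{x_2})^{-1}\beta'_{x_1}\beta'_{x_2}=\begin{pmatrix}1&(z-x_1)^{-1}(z-x_2)^{-2}\\0&1\end{pmatrix}\notin G^\vee(\mathcal O_{x_2}),$$
so $\Theta(\bm\beta)_{x_2}\neq[\beta'_{x_2}]$. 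The decoupling you invoke only says that each $\beta'_{y,k}$ maps to the base point of $\Gr_{y'}$ for $y'\neq y$; since all the quotients involved are on the \emph{right}, this lets you absorb integral factors appearing to the right of the ``active'' factor, but it cannot neutralize the \emph{left} multiplication by $\beta'_{x_1}$ when evaluating at $x_2$. The ``direct computation'' you appeal to therefore fails in general.

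The paper sidesteps this by inducting on $n$: having uniquely built $[\beta_1,\ldots,\beta_{n-1}]$ in $(\BDConv_{n-1})_{\mathbf x'}$ realizing the truncated data, it chooses the last factor so that the \emph{cumulative} product is correct, namely $\beta_n\in G^\vee(\mathbb C[z,(z-x_n)^{-1}])$ with $(\beta_1\cdots\beta_{n-1})^{-1}(\gamma_1\cdots\gamma_m)\in\beta_n\,G^\vee(\mathcal O_{x_n})$. Existence and uniqueness of such a $\beta_n$ modulo $G^\vee(\mathbb C[z])$ is exactly the $n=1$ uniformization; this is where the correction for the running product happens. Your construction should be modified along these lines, lifting $\gamma_{y,k}$ only after conjugating (or rather, translating) by the product of the factors already placed. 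Note also that your injectivity sketch (``a further Beauville--Laszlo argument'') is precisely the nontrivial step that the paper's induction renders concrete; as stated it asserts that a priori unrelated local modifications $h_{y,k}\in G^\vee(\mathcal O_y)$ at the different points $y$ reconcile into global $g_j\in G^\vee(\mathbb C[z])$, and that reconciliation is not automatic.
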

\begin{proof}
Combining the Iwasawa decomposition~\eqref{eq:IwasDec} with the easily
proven equality
\begin{equation}
\label{eq:UniPolyLoop}
N^\vee(\mathcal K_x)=
N^\vee\bigl(\mathbb C\bigl[z,(z-x)^{-1}\bigr]\bigl)\,N^\vee(\mathcal O_x),
\end{equation}
we obtain the well-known equality
$$G^\vee(\mathcal K_x)=G^\vee\bigl(\mathbb C\bigl[z,(z-x)^{-1}\bigr]
\bigr)\,G^\vee(\mathcal O_x),$$
for each $x\in\mathbb C$.

The case $n=1$ of the proposition is banal. Assume that $n\geq2$,
and for $y\in\supp(\mathbf x)$, pick $\bm\gamma_y\in\Gr_{m_y}$.
Set $\mathbf x'=(x_1,\ldots,x_{n-1})$ and $m=m_{x_n}$, write
$\bm\gamma_{x_n}=[\gamma_1,\ldots,\gamma_m]$. Reasoning by
induction on $n$, we know that there is a unique
$\bm\beta'=[\beta_1,\ldots,\beta_{n-1}]$ in
$(\BDConv_{n-1})_{\mathbf x'}$ such that
$$\Theta({\bm\beta'})_y=
\begin{cases}
\bm\gamma_y&\text{if $x_n\neq y$},\\
[\gamma_1,\ldots,\gamma_{m-1}]&\text{if $x_n=y$}.
\end{cases}$$
The elements $\gamma_1$, \dots, $\gamma_m$ belong to $G^\vee(\mathcal K)$,
which we identify to $G^\vee(\mathcal K_{x_n})$. We choose
$\beta_n\in G^\vee\bigl(\mathbb C\bigl[z,(z-x_n)^{-1}\bigr]\bigr)$
such that
$$(\beta_1\ldots\beta_{n-1})^{-1}(\gamma_1\ldots\gamma_m)
\in\beta_n\,G^\vee(\mathcal O_{x_n}).$$
Then $[\beta_1,\ldots,\beta_{n-1},\beta_n]$ is the unique element
$\bm\beta$ in $(\BDConv_n)_{\mathbf x}$ such that
$\Theta({\bm\beta})_y=\bm\gamma_y$ for all $y$.
\end{proof}

Keep the notation above for $\mathbf x$ and the integers $m_y$ and
let $\bm\lambda=(\lambda_1,\dots,\lambda_n)$ in $(\Lambda^+)^n$. For
each $y\in\supp(\mathbf x)$, define $\bm\lambda_y\in(\Lambda^+)^{m_y}$
as the ordered tuple formed by the weights $\lambda_j$, for
$j\in\{1,\ldots,n\}$ such that $x_j=y$. Then, under the bijection
given in Proposition~\ref{pr:FibBDConv}, the fiber
$\bigl(\BDConv_n^{\bm\lambda}\bigr){}^{}_{\mathbf x}$ identifies with
$$\prod_{y\in\supp(\mathbf x)}\Gr_{m_y}^{\bm\lambda_y}.$$

\subsection{Global cycles}
\label{ss:GloCyc}
Recall our notation $N^\vee$ for the unipotent radical of $B^\vee$.
For $\mu\in\Lambda$ and $x\in\mathbb C$, we define
$$\widetilde S_{\mu|x}=
(z-x)^\mu\,N^\vee\bigl(\mathbb C\bigl[z,(z-x)^{-1}\bigr]\bigr)=
N^\vee\bigl(\mathbb C\bigl[z,(z-x)^{-1}\bigr]\bigr)\,(z-x)^\mu.$$
Equation \eqref{eq:UniPolyLoop} expresses that the natural map
$$N^\vee\bigl(\mathbb C\bigl[z,(z-x)^{-1}\bigr]\bigr)\,/\,
N^\vee(\mathbb C[z])\to N^\vee(\mathcal K_x)/N^\vee(\mathcal O_x)$$
is bijective; composing with the natural map
$N^\vee(\mathcal K)/N^\vee(\mathcal O)\to\Gr$, we obtain, after left
multiplication by $(z-x)^\mu$, a bijection
$$\widetilde S_{\mu|x}\,/\,N^\vee(\mathbb C[z])\xrightarrow\simeq S_\mu.$$
For $(\mu_1,\ldots,\mu_n)\in\Lambda^n$, let
$S_{\mu_1}\caltimes\cdots\caltimes S_{\mu_n}$ be the set
of all pairs $(x_1,\ldots,x_n;[\beta_1,\ldots,\beta_n])$ with
$(x_1,\ldots,x_n)$ in $\mathbb C^n$ and $[\beta_1,\ldots,\beta_n]$ in
$$\widetilde S_{\mu_1|x_1}\,\times^{N^\vee(\mathbb C[z])}\;\cdots\,
\times^{N^\vee(\mathbb C[z])}\;\widetilde S_{\mu_n|x_n}\ /\,
N^\vee(\mathbb C[z]).$$
Rewriting the Iwasawa decomposition as
$$G^\vee\bigl(\mathbb C\bigl[z,(z-x)^{-1}\bigr]\bigr)=
\bigsqcup_{\mu\in\Lambda}N^\vee\bigl(\mathbb C
\bigl[z,(z-x)^{-1}\bigr]\bigl)\;(z-x)^\mu\;G^\vee(\mathbb C[z]),$$
we then see that the natural map
$$\Uppsi:\bigsqcup_{(\mu_1,\ldots,\mu_n)\in\Lambda^n}
S_{\mu_1}\caltimes\cdots\caltimes S_{\mu_n}\to\BDConv_n$$
is bijective. Here $\Uppsi$ is regarded as the calligraphic variant of
the letter $\Psi$ used in sect.~\ref{ss:MVCyc}; these two glyphs may
be hard to distinguish, but hopefully this choice will not lead to any
confusion.

More generally, given $(\mu_1,\ldots,\mu_n)\in\Lambda^n$ and
$N^\vee(\mathcal O)$-stable subsets $Z_1\subseteq S_{\mu_1}$, \dots,
$Z_n\subseteq S_{\mu_n}$, we define $Z_1\caltimes\cdots\caltimes Z_n$
to be the subset of all pairs $(x_1,\ldots,x_n;[\beta_1,\ldots,\beta_n])$
with $(x_1,\ldots,x_n)\in\mathbb C^n$ and
$$[\beta_1,\ldots,\beta_n]\in
\widetilde Z_{1|x_1}\,\times^{N^\vee(\mathbb C[z])}\;\cdots\,
\times^{N^\vee(\mathbb C[z])}\;\widetilde Z_{n|x_n}\ /\,
N^\vee(\mathbb C[z])$$
where each $\widetilde Z_{j|x_j}$ is the preimage of $Z_j$ under
the map $\widetilde S_{\mu_j|x_j}\to S_{\mu_j}$.

For $\mu\in\Lambda$, we define
$$\dot S_\mu=\bigcup_{\substack{(\mu_1,\ldots,\mu_n)\in\Lambda^n\\[2pt]
\mu_1+\cdots+\mu_n=\mu}}\Uppsi\bigl(S_{\mu_1}\caltimes\cdots\caltimes
S_{\mu_n}\bigr).$$

\begin{proposition}
\label{pr:GlobCyc}
Let $\bm\lambda=(\lambda_1,\ldots,\lambda_n)$ in $(\Lambda^+)^n$
and let $\mu\in\Lambda$.
\begin{enumerate}
\item
\label{it:PrGCa}
All the irreducible components of $\overline{\BDConv_n^{\bm\lambda}}\cap
\dot S_\mu$ have dimension $\rho(|\bm\lambda|+\mu)+n$.
\item
\label{it:PrGCb}
The map $(Z_1,\ldots,Z_n)\mapsto\overline{\Uppsi
(Z_1\caltimes\cdots\caltimes Z_n)}$ induces a bijection
$$\bigsqcup_{\substack{(\mu_1,\ldots,\mu_n)\in\Lambda^n\\[2pt]
\mu_1+\cdots+\mu_n=\mu}}{}_*\mathscr Z(\lambda_1)_{\mu_1}
\times\cdots\times\,{}_*\mathscr Z(\lambda_n)_{\mu_n}
\xrightarrow\simeq
\Irr\Bigl(\overline{\BDConv_n^{\bm\lambda}}\cap\dot S_\mu\Bigr).$$
(The bar above $\Uppsi(Z_1\caltimes\cdots\caltimes Z_n)$ means closure
in $\dot S_\mu$.)
\end{enumerate}
\end{proposition}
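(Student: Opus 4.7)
The approach mirrors that of Proposition~\ref{pr:MVCyc}, using $\Uppsi$ in place of $\Psi$ and handling the base $\mathbb{A}^n$ as an additional variable. The main simplification is that over the open stratum $U\subseteq\mathbb{A}^n$ where $x_1,\ldots,x_n$ are pairwise distinct, Proposition~\ref{pr:FibBDConv} exhibits $\BDConv_n$ as a product of $n$ copies of $\Gr$, so Proposition~\ref{pr:MVCyc} applies directly over $U$; the remaining work is to argue that the closed complement contributes no additional irreducible component of top dimension.

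For part~\ref{it:PrGCa}, I would combine Proposition~\ref{pr:FibBDConv} with Proposition~\ref{pr:MVCyc}~\ref{it:PrMVCa} applied in each $\Gr_{m_y}$ to obtain, for every $\mathbf{x}\in\mathbb{A}^n$, the fiberwise identification
$$\bigl(\overline{\BDConv_n^{\bm\lambda}}\cap\dot S_\mu\bigr)_{\mathbf{x}}\;\cong\;\bigsqcup_{\mu_1+\cdots+\mu_n=\mu}\;\prod_{y\in\supp(\mathbf{x})}\Bigl(\overline{\Gr_{m_y}^{\bm\lambda_y}}\cap(m_{m_y})^{-1}(S_{\mu_y^+})\Bigr),$$
with $\mu_y^+=\sum_{j\,:\,x_j=y}\mu_j$; disjointness follows from that of the Iwasawa cells in each factor. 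Each piece has dimension $\sum_y\rho(|\bm\lambda_y|+\mu_y^+)=\rho(|\bm\lambda|+\mu)$, independent of $\mathbf{x}$, and adding the $n$ dimensions of $\mathbb{A}^n$ yields the announced dimension.

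For part~\ref{it:PrGCb}, over $U$ every multiplicity $m_y$ equals $1$, so each identification above collapses into a genuine product $U\times Z_1\times\cdots\times Z_n$ with $Z_j=\overline{\Gr^{\lambda_j}}\cap S_{\mu_j}$. The contractibility of each $Z_j$ (from the $\mathbb{C}^\times$-action, as invoked in Proposition~\ref{pr:MVCyc}~\ref{it:PrMVCb}) and the irreducibility of $U$ then deliver, over $U$, a bijection between $\bigsqcup_{\sum\mu_j=\mu}\prod_j{}_*\mathscr{Z}(\lambda_j)_{\mu_j}$ and the set of irreducible components of $\overline{\BDConv_n^{\bm\lambda}}\cap\dot S_\mu$ restricted to the preimage of $U$, sending $(Z_1,\ldots,Z_n)$ to the restriction of $\Uppsi(Z_1\caltimes\cdots\caltimes Z_n)$. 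Taking closures in $\dot S_\mu$ produces the cycles $\overline{\Uppsi(Z_1\caltimes\cdots\caltimes Z_n)}$ appearing in the statement.

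The main technical obstacle is to rule out irreducible components supported entirely in the preimage of $\mathbb{A}^n\setminus U$. This follows from the uniform fiber dimension established in the second paragraph: the preimage of a codimension-$k$ stratum of $\mathbb{A}^n$ (with $k\geq1$) has dimension $\rho(|\bm\lambda|+\mu)+(n-k)$, strictly below $\rho(|\bm\lambda|+\mu)+n$. Therefore every top-dimensional irreducible component of $\overline{\BDConv_n^{\bm\lambda}}\cap\dot S_\mu$ equals the closure of its intersection with the preimage of $U$, and the bijection constructed over $U$ extends to the full intersection.
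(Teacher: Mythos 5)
Your strategy is genuinely different from the paper's: you argue fiberwise over $\mathbb{A}^n$, identifying the fiber dimension as uniformly equal to $\rho(|\bm\lambda|+\mu)$ by Propositions~\ref{pr:FibBDConv} and~\ref{pr:MVCyc}~\ref{it:PrMVCa}, and then deduce dimensions of components from the dimensions of the base strata. The paper instead observes directly that each $\Uppsi(Z_1\caltimes\cdots\caltimes Z_n)$ is an irreducible constructible subset of the correct dimension $\rho(|\bm\lambda|+\mu)+n$ (by computing the uniform fiber dimension of \emph{that specific set}), and concludes by remarking that these irreducible sets cover $\overline{\BDConv_n^{\bm\lambda}}\cap\dot S_\mu$ without redundancy, so their closures are exactly the irreducible components.

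There is a real gap in your treatment of part~\ref{it:PrGCa}. Your dimension count shows only that an irreducible component whose image in $\mathbb{A}^n$ is contained in a codimension-$k$ stratum has dimension at most $\rho(|\bm\lambda|+\mu)+(n-k)$. That bounds the dimension of such a component; it does not show the component does not exist. Constant fiber dimension over a stratification is perfectly compatible with the total space having irreducible components of different dimensions supported over different strata (take any closed embedded subvariety and adjoin an isolated point of a fiber over a smaller stratum). So your last paragraph establishes only that every \emph{top-dimensional} component meets the preimage of $U$, which does not prove that all components are top-dimensional, and in particular does not establish~\ref{it:PrGCa}. To close the gap you need exactly the paper's extra input: that $\overline{\BDConv_n^{\bm\lambda}}\cap\dot S_\mu$ is covered by the sets $\Uppsi(Z_1\caltimes\cdots\caltimes Z_n)$, each of which is irreducible (not merely irreducible fiberwise) and each of dimension $\rho(|\bm\lambda|+\mu)+n$. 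The irreducibility of these specific sets is what you never invoke, and it is the key fact that rules out low-dimensional components. Once you have it, the covering yields~\ref{it:PrGCa} and non-redundancy yields the bijection in~\ref{it:PrGCb}, which is cleaner than your route via density of the preimage of~$U$.

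A minor imprecision in your second paragraph: the disjoint union you write over tuples $(\mu_1,\ldots,\mu_n)$ is redundant, since distinct tuples with the same $\mu_y^+$ produce identical summands. You should either index by the tuples $(\mu_y)_{y\in\supp(\mathbf{x})}$ summing to $\mu$, or keep the index set and observe that each summand $\prod_y\bigl(\overline{\Gr_{m_y}^{\bm\lambda_y}}\cap(m_{m_y})^{-1}(S_{\mu_y^+})\bigr)$ is in fact the union of the fibers of several strata $\Uppsi(S_{\mu_1}\caltimes\cdots\caltimes S_{\mu_n})$. This does not affect the dimension count but would confuse a careful reader.
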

\begin{proof}
Let $(\mu_1,\ldots,\mu_n)\in\Lambda^n$ be such that $\mu_1+\cdots+\mu_n=\mu$
and let $(Z_1,\ldots,Z_n)\in{}_*\mathscr Z(\lambda_1)_{\mu_1}
\times\cdots\times\,{}_*\mathscr Z(\lambda_n)_{\mu_n}$.
Then the set $\Uppsi(Z_1\caltimes\cdots\caltimes Z_n)$ is irreducible.
By Proposition~\ref{pr:FibBDConv} and its proof, the fiber of this set
over a point $\mathbf x\in\mathbb C^n$ is isomorphic to the product,
over all $y\in\supp(\mathbf x)$, of cycles
$$\Psi(Z_{j_1}\ltimes\cdots\ltimes Z_{j_m})\subseteq\Gr_m$$
where $j_1$, \dots, $j_m$ are the indices $j\in\{1,\ldots,n\}$ such
that $x_j=y$. We remark that if we set
$\bm\lambda_y=(\lambda_{j_1},\ldots,\lambda_{j_m})$
and $\mu_y=\mu_{j_1}+\cdots+\mu_{j_m}$,
then this cycle belongs to ${}_*\mathscr Z(\bm\lambda_y)_{\mu_y}$.
By Proposition~\ref{pr:MVCyc}~\ref{it:PrMVCa}, the dimension of the
fiber of $\Uppsi(Z_1\caltimes\cdots\caltimes Z_n)$ over $\mathbf x$
is therefore
$$\sum_{y\in\supp(\mathbf x)}\rho(|\bm\lambda_y|+\mu_y)=
\rho(|\bm\lambda|+\mu)$$
and we conclude that $\Uppsi(Z_1\caltimes\cdots\caltimes Z_n)$ has
dimension $\rho(|\bm\lambda|+\mu)+n$.

To finish the proof, we observe that these sets
$\Uppsi(Z_1\caltimes\cdots\caltimes Z_n)$
cover $\overline{\BDConv_n^{\bm\lambda}}\cap\dot S_\mu$
and are not redundant.
\end{proof}

Our MV bases are defined with the help of the unstable subsets $T_\mu$
instead of the stable subsets $S_\mu$. We can easily adapt the
constructions of this subsection to this case by replacing the Borel
subgroup $B^\vee$ with its opposite with respect to $T^\vee$, and
replacing similarly its unipotent radical $N^\vee$. We shall do this
while keeping the notation $\caltimes$ and $\Uppsi$. Note that when
we replace $\dot S_\mu$ by
$$\dot T_\mu=\bigcup_{\substack{(\mu_1,\ldots,\mu_n)\in\Lambda^n\\[2pt]
\mu_1+\cdots+\mu_n=\mu}}\Uppsi\bigl(T_{\mu_1}\caltimes\cdots\caltimes
T_{\mu_n}\bigr)$$
in Proposition~\ref{pr:GlobCyc}, $\rho(|\bm\lambda|+\mu)+n$ must
be replaced by $\rho(|\bm\lambda|-\mu)+n$ and the sets
${}_*\mathscr Z(\lambda_j)_{\mu_j}$ must be replaced by their unstarred
counterparts.

\subsection{The fusion product}
\label{ss:FusProd}
For any $x\in\mathbb C$, the fibers of $\BDGr n$ and $\BDConv_n$ over
$(x,\ldots,x)$ are isomorphic to $\Gr$ and $\Gr_n$, respectively. Thus,
$$\BDGr n\bigl|_\Delta\xrightarrow\simeq\Delta\times\Gr
\quad\text{and}\quad
\BDConv_n\bigl|_\Delta\xrightarrow\simeq\Delta\times\Gr_n,$$
where $\Delta$ is the small diagonal, defined as the image of the
map $x\mapsto(x,\ldots,x)$ from $\mathbb C$ to $\mathbb C^n$.
In the other extreme, the morphism $m_n:\BDConv_n\to\BDGr n$ is an
isomorphism after restriction to the open locus $U\subseteq\mathbb C^n$
of points with pairwise different coordinates (\cite{Zhu}, Lemma~3.1.23),
and by Proposition~\ref{pr:FibBDConv}, $\BDConv_n\bigl|_U$ is isomorphic
to $U\times(\Gr)^n$. We define maps $\tau$, $i$, $j$ and $\zeta$ according
to the diagram below.
$$\xymatrix@!C=40pt@R=20pt{\Gr_n&\Delta\times\Gr_n\ar[d]\ar[l]_-\tau
\ar[r]^-i&\BDConv_n\ar[d]^{m_n}&U\times(\Gr)^n\ar[d]^\simeq\ar[l]_-j
\ar[r]^-\zeta&(\Gr)^n\\
&\Delta\times\Gr\ar[d]&\BDGr n\ar[d]^\pi&\BDGr n\bigl|_U\ar[d]&\\
&\Delta\ar[r]&\mathbb C^n&U\ar[l]&}$$

Let $\bm\lambda\in(\Lambda^+)^n$ and $\mu\in\Lambda$, set
$$\mathscr B(\bm\lambda)=\IC\Bigl(\overline{\BDConv_n^{\bm\lambda}},
\,\underline{\mathbb C}\Bigr),\qquad
d=\dim\BDConv_n^{\bm\lambda}=2\rho(|\bm\lambda|)+n,
\qquad k=2\rho(\mu)-n$$
and denote the inclusion $\dot T_\mu\to\BDConv_n$ by $\dot t_\mu$.
The next statement is due to Mirković and Vilonen.

\begin{proposition}
\label{pr:FusProd}
\mbox{}
\vspace{-6pt}
\begin{enumerate}
\item
\label{it:PrFPa}
There are natural isomorphisms
$$i^!\mathscr B(\bm\lambda)[n]\cong\tau^!
\IC\Bigl(\overline{\Gr_n^{\bm\lambda}},\,\underline{\mathbb C}\Bigr)$$
and
$$j^!\mathscr B(\bm\lambda)[n]\cong\zeta^!\Bigl(
\IC\Bigl(\overline{\Gr^{\lambda_1}},\,\underline{\mathbb C}\Bigr)
\boxtimes\cdots\boxtimes
\IC\Bigl(\overline{\Gr^{\lambda_n}},\,\underline{\mathbb C}\Bigr)\Bigr).$$
\item
\label{it:PrFPb}
Each cohomology sheaf of\/ $(\pi\circ m_n)_*\,\mathscr B(\bm\lambda)$
is a local system on $\mathbb C^n$.
\item
\label{it:PrFPc}
The complex of sheaves $(\pi\circ m_n\circ\dot t_\mu)_*\,(\dot t_\mu)^!\,
\mathscr B(\bm\lambda)$ is concentrated in degree $k$ and its $k$-th
cohomology sheaf is a local system on $\mathbb C^n$.
\end{enumerate}
\end{proposition}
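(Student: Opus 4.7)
My plan is to treat (i) by direct stratum-by-stratum comparison of IC sheaves, then leverage (i) together with factorization and equivariance to deduce (ii) and (iii).

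For the first isomorphism in (i), note that $\BDConv_n^{\bm\lambda}$ is smooth of dimension $d = 2\rho(|\bm\lambda|) + n$, so $\mathscr B(\bm\lambda)$ restricts to $\underline{\mathbb C}[d]$ on this open stratum. The preimage of $\BDConv_n^{\bm\lambda}$ under $i$ is $\Delta\times\Gr_n^{\bm\lambda}$, a submanifold of codimension $n-1$; since $i^!$ shifts by $-2(n-1)$ while $\tau$ is smooth of relative dimension $1$ (so $\tau^! = \tau^*[2]$), the shifts match on the smooth stratum after accounting for the ambient $[n]$. To promote this equality to the closures, use the closure stratification \eqref{eq:ClosureStratBD} and verify the support/cosupport conditions of intermediate extension stratum by stratum, invoking the parity vanishing analogous to Proposition~\ref{pr:ICnFold} (which extends to the Beilinson-Drinfeld setting by the same argument). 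The second isomorphism is proved in the same spirit, substituting $U$ for $\Delta$ and using Proposition~\ref{pr:FibBDConv} to identify $\BDConv_n^{\bm\lambda}|_U$ with $U \times \prod_j \Gr^{\lambda_j}$, then applying the Künneth formula for IC sheaves of products together with the shift computation.

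For (ii), combine constructibility with equivariance. Ind-properness of $\pi\circ m_n$ yields that $(\pi\circ m_n)_*\mathscr B(\bm\lambda)$ is constructible on $\mathbb C^n$ with respect to the stratification by coincidence patterns of the coordinates $(x_1,\ldots,x_n)$. The additive group $\mathbb C$, acting by translation on $\mathbb A^1$ and hence on $\BDConv_n$ over the diagonal action on $\mathbb C^n$, preserves $\mathscr B(\bm\lambda)$; combined with the rescaling $\mathbb G_m$-equivariance and the $S_n$-equivariance coming from the definition of $\BDGr n$, this produces a group action that is transitive on each coincidence stratum, so the cohomology sheaves are constant along strata. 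The stalks on the two extremes can be computed via (i): at the deepest stratum $\Delta$ one recovers $F(\mathscr I_{\bm\lambda})$, while at $U$ Künneth gives a tensor product of the $F(\mathscr I_{\lambda_j})$. Their ranks coincide by geometric Satake, and since $\mathbb C^n$ is simply connected, the resulting local system on the open dense stratum extends trivially across the diagonal loci.

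For (iii), proceed analogously at the level of the semi-infinite subvariety $\dot T_\mu$. Compute the stalk at a generic $\mathbf x \in U$ using the second isomorphism in (i): up to the shift $[n]$, the stalk is a Künneth product, indexed by tuples $(\mu_1,\ldots,\mu_n)$ with $\sum\mu_j = \mu$, of pieces $H^*\bigl(\overline{\Gr^{\lambda_j}} \cap T_{\mu_j},\,(t_{\mu_j})^!\IC(\overline{\Gr^{\lambda_j}}, \underline{\mathbb C})\bigr)$, each concentrated in degree $2\rho(\mu_j)$ by \cite{MirkovicVilonen}, Theorem 3.5; summing, the stalk is concentrated in degree $2\rho(\mu) - n = k$. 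Propagate the concentration and local constancy to all of $\mathbb C^n$ by the same equivariance argument as in (ii), applied to the complex $(\pi\circ m_n\circ\dot t_\mu)_*(\dot t_\mu)^!\mathscr B(\bm\lambda)$ after verifying that $\dot T_\mu$ is itself stable under the translation, rescaling, and permutation actions. The main obstacle throughout is this final step: upgrading constructibility with respect to the coincidence stratification into outright local constancy on all of $\mathbb C^n$. This amounts to triviality of the specialization from $U$ to the diagonal loci and is the conceptual heart of the fusion construction, requiring careful control of how the IC sheaf degenerates along one-parameter families colliding points of $\mathbb A^1$.
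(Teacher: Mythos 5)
There is a genuine gap in your proposed proofs of (ii) and (iii), stemming from the equivariance argument. You claim that the diagonal action of translations, rescalings and $S_n$ on $\mathbb C^n$ is transitive on each coincidence stratum, but this fails as soon as $n\geq 3$: the affine group $\mathrm{Aff}(\mathbb C)$ acting diagonally has orbits of dimension at most $2$, while the open stratum $U$ has dimension $n$. So the group action does not force local constancy along strata, and the argument stalls. Worse, even if you somehow established constancy along each coincidence stratum separately, your concluding step ("since $\mathbb C^n$ is simply connected, the resulting local system ... extends trivially across the diagonal loci") is a non-sequitur: a constructible sheaf whose stalks have the same rank at every point need not be a local system — the specialization maps from the open stratum to the deeper strata might fail to be isomorphisms, and rank-matching alone does not detect this. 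Showing that these specialization maps are isomorphisms is precisely the content of statements (ii) and (iii), so your proof is circular at this point. You correctly identify this as the conceptual heart of the matter, but you do not supply the argument.

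The missing ingredient is the universal local acyclicity (ULA) of $\mathscr B(\bm\lambda)$ over $\mathbb C^n$, which is exactly what rules out degeneration of the direct image as the points $x_j$ collide; the paper itself appeals to this ULA property (via Reich and Richarz) in the proof of Lemma~\ref{le:ResLocSys}, and Mirković--Vilonen's original proof of their (6.4) and Proposition~6.4 — which the paper cites for (ii) and (iii) — likewise relies on this kind of acyclicity together with the ind-properness of $\pi$. One further subtlety for (iii): $\dot T_\mu$ is not ind-proper over $\mathbb C^n$, so constructibility of $(\pi\circ m_n\circ\dot t_\mu)_*(\dot t_\mu)^!\mathscr B(\bm\lambda)$ does not follow from the same general nonsense as for (ii); it requires the hyperbolic localization framework. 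As for (i), your stratum-by-stratum verification of the support/cosupport conditions, using the shift computations on the open stratum and the parity vanishing inherited from Proposition~\ref{pr:ICnFold}, is a reasonable alternative to the paper's route (which identifies both sides as twisted external products and invokes the treatment in \cite{BaumannRiche}, sect.~1.7.5); your shift bookkeeping is correct, though the verification that $i^!$ of the IC sheaf again satisfies the IC characterization on the smaller stratified space would need to be spelled out in full.
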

\begin{proof}
To prove statement~\ref{it:PrFPa}, one follows the reasoning in
\cite{BaumannRiche}, sect.~1.7.5, noting that $\mathscr B(\bm\lambda)$
and $\IC\Bigl(\overline{\Gr_n^{\bm\lambda}},\,
\underline{\mathbb C}\Bigr)$ are the sheaves denoted by
$\bigl(\tau^\circ\mathscr I_{\lambda_1}\bigr)\;
\widetilde\boxtimes\,\cdots\,\widetilde\boxtimes\;
\bigl(\tau^\circ\mathscr I_{\lambda_n}\bigr)$ and
$\mathscr I_{\lambda_1}\;\widetilde\boxtimes\,\cdots\,
\widetilde\boxtimes\;\mathscr I_{\lambda_n}$ in \textit{loc.~cit.}
Statement~\ref{it:PrFPb} is~\cite{MirkovicVilonen}, (6.4).
Statement~\ref{it:PrFPc} is contained in the proof of
\cite{MirkovicVilonen}, Proposition~6.4, up to a base change in
the Cartesian square
$$\xymatrix@C=30pt@R=20pt{\dot T_\mu
\ar[r]^{\dot t_\mu}\ar[d]&\BDConv_n
\ar[d]^{m_n}\\T_\mu(\mathbb A^n)\ar[r]^{k_\mu}&\BDGr n.}$$
\end{proof}

Combining Propositions~\ref{pr:ICnFold} and~\ref{pr:FusProd}~\ref{it:PrFPa},
we see that the total cohomology of the stalk of the complex
$(\pi\circ m_n)_*\,\mathscr B(\bm\lambda)$ identifies with
$F(\mathscr I_{\bm\lambda})$ at any point in $\Delta$, and with
$F(\mathscr I_{\lambda_1})\otimes\cdots\otimes F(\mathscr I_{\lambda_n})$
at any point in $U$. Statement~\ref{it:PrFPb} in
Proposition~\ref{pr:FusProd} thus provides the identification
$$F(\mathscr I_{\bm\lambda})\cong F(\mathscr I_{\lambda_1})
\otimes\cdots\otimes F(\mathscr I_{\lambda_n})$$
required to compare the two bases of $V(\bm\lambda)$.
Statement~\ref{it:PrFPc} further identifies the weight spaces
$$F_\mu(\mathscr I_{\bm\lambda})\cong
\bigoplus_{\substack{(\mu_1,\ldots,\mu_n)\in\Lambda^n\\[2pt]
\mu_1+\cdots+\mu_n=\mu}}F_{\mu_1}(\mathscr I_{\lambda_1})
\otimes\cdots\otimes F_{\mu_n}(\mathscr I_{\lambda_n}).$$

\subsection{Intersection multiplicities}
\label{ss:InterMult}
We keep the setup introduced in the previous section, in particular
$\bm\lambda\in(\Lambda^+)^n$, $\mu\in\Lambda$,
$$\dot t_\mu:\dot T_\mu\to\BDConv_n,\quad
\mathscr B(\bm\lambda)=\IC\Bigl(\overline{\BDConv_n^{\bm\lambda}},
\,\underline{\mathbb C}\Bigr),\quad
d=\dim\BDConv_n^{\bm\lambda}=2\rho(|\bm\lambda|)+n,
\quad k=2\rho(\mu)-n.$$
In addition, we denote by
$$\mathscr L_\mu(\bm\lambda)=\mathscr H^k\,(\pi\circ m_n\circ\dot t_\mu)_*\,
(\dot t_\mu)^!\,\mathscr B(\bm\lambda)$$
the local system appearing in Proposition~\ref{pr:FusProd}~\ref{it:PrFPc}.

For each point $\mathbf x\in\mathbb C^n$, we define maps as indicated
below
$$\xymatrix@!C=20pt@R=12pt{\bigl(\BDConv_n^{\bm\lambda}\cap
\dot T_\mu\bigr){}^{}_{\mathbf x}\ar[rr]\ar[dd]_-{g'}\ar[dr]_(.44){h'}&&
\bigl(\dot T_\mu\bigr){}^{}_{\mathbf x}\ar'[d]^(.6){\dot t'_\mu}[dd]
\ar[dr]^(.54){i'}&&&\\
&\BDConv_n^{\bm\lambda}\cap\dot T_\mu\ar[rr]\ar[dd]^(.3)g&&
\dot T_\mu\ar[dd]^(.3){\dot t_\mu}&&\\
\bigl(\BDConv_n^{\bm\lambda}\bigr){}^{}_{\mathbf x}
\ar'[r]^(.64){j'}[rr]\ar[dr]_(.46)h&&(\BDConv_n)_{\mathbf x}
\ar[dr]^(.54)i\ar'[r][rr]&&\{\mathbf x\}\ar[dr]^(.54){i_0}&\\
&\BDConv_n^{\bm\lambda}\ar[rr]_(.52)j&&\BDConv_n
\ar[rr]_(.52){\pi\circ m_n}&&\mathbb C^n}$$
where for instance $\bigl(\BDConv_n^{\bm\lambda}\bigr){}^{}_{\mathbf x}$
is the fiber of $\BDConv_n^{\bm\lambda}$ over $\mathbf x$. (The notation
$i$ and $j$ does not designate the same maps as in the previous subsection.)
We then construct the following diagram, referred to as ($\heartsuit$)
in the sequel.
$$\xymatrix@R=12pt@C-=10pt{
H^k\Bigl(\dot T_\mu,(\dot t_\mu)^!\,\mathscr B(\bm\lambda)\Bigr)
\ar[r]^-\simeq\ar[dd]_\simeq&
H^{k+d}\Bigl(\BDConv_n^{\bm\lambda}\cap\dot T_\mu,
g^!\,\underline{\mathbb C}_{\kern1pt\BDConv_n^{\bm\lambda}}\Bigr)
\ar[r]_-\simeq^-{\cap\bigl[\BDConv_n^{\bm\lambda}\bigr]}\ar[dd]&
H^\BM_{d-k}\Bigl(\BDConv_n^{\bm\lambda}\cap\dot T_\mu\Bigr)
\ar[dd]^{(g^*u_{\mathbf x})\cap}\\
&&\kern174pt\\
\kern-1ptH^k\Bigl((\dot T_\mu)_{\mathbf x},(\dot t'_\mu)^!i^*
\mathscr B(\bm\lambda)\Bigr)\ar[r]^-\simeq&
H^{k+d}\Bigl(\bigl(\BDConv_n^{\bm\lambda}\cap\dot T_\mu\bigr)
{}^{}_{\mathbf x},g^{\prime\,!}\,
\underline{\mathbb C}_{\kern1pt(\BDConv_n^{\bm\lambda})^{}_{\mathbf x}}
\Bigr)\ar[r]^-{\kern1pt\cap\bigl[(\BDConv_n^{\bm\lambda}
)^{}_{\mathbf x}\bigr]}&
H^\BM_{d-k-2n}\Bigl(\bigl(\BDConv_n^{\bm\lambda}\cap
\dot T_\mu\bigr){}^{}_{\mathbf x}\Bigr)}$$

The left vertical arrow in ($\heartsuit$) is the restriction of
the cohomology with support in $\dot T_\mu$ from $\BDConv_n$ to
$(\BDConv_n)_{\mathbf x}$; in other words, it is the image by the
functor $H^k\bigl(\dot T_\mu,(\dot t_\mu)^!\,-\bigr)$ of the adjunction
morphism $\mathscr B(\bm\lambda)\to\,i_*i^*\,\mathscr B(\bm\lambda)$.
Lemma~\ref{le:ResLocSys} below implies that it is an isomorphism.
Likewise, the middle vertical arrow is the restriction from
$\BDConv_n^{\bm\lambda}$ to
$\bigl(\BDConv_n^{\bm\lambda}\bigr){}^{}_{\mathbf x}$, afforded by
the adjunction morphism
$j^*\,\mathscr B(\bm\lambda)\to\,h_*h^*j^*\,\mathscr B(\bm\lambda)$.

On the top line, the left arrow is the restriction from $\BDConv_n$
to $\BDConv_n^{\bm\lambda}$, fulfilled by the adjunction morphism
$\mathscr B(\bm\lambda)\to\,j_*j^*\,\mathscr B(\bm\lambda)=\,j_*\,
\underline{\mathbb C}_{\kern1pt\BDConv_n}[d]$. On the bottom line,
it is the restriction from $(\BDConv_n)_{\mathbf x}$ to
$\bigl(\BDConv_n^{\bm\lambda}\bigr){}^{}_{\mathbf x}$, achieved by
$i^*\,\mathscr B(\bm\lambda)\to\,(j')_*(j')^*i^*\,\mathscr B(\bm\lambda)$.
Mirković and Vilonen's argument (reproduced in sect.~\ref{ss:MVBas})
shows that these two arrows are isomorphisms.

The two paths around the left square in ($\heartsuit$) are two
different expressions for the restriction from $\BDConv_n$ to
$\bigl(\BDConv_n^{\bm\lambda}\bigr){}^{}_{\mathbf x}$; therefore
this square commutes.

In both lines of ($\heartsuit$) the right arrow is Alexander duality;
we note that $H^\BM_{d-k}\bigl(\BDConv_n^{\bm\lambda}\cap\dot T_\mu\bigr)$
and $H^\BM_{d-k-2n}\bigl(\bigl(\BDConv_n^{\bm\lambda}\cap
\dot T_\mu\bigr){}^{}_{\mathbf x}\bigr)$ are the top-dimensional
Borel-Moore homology groups.

The map $h$ is a regular embedding of codimension $n$.
Its orientation class (generalized Thom class) is an element
$$u_{\mathbf x}\in H^{2n}\Bigl(\bigl(
\BDConv_n^{\bm\lambda}\bigr){}^{}_{\mathbf x},\,
h^!\,\underline{\mathbb C}_{\kern1pt\BDConv_n^{\bm\lambda}}\Bigr).$$
The right vertical arrow in ($\heartsuit$) is the cap product with
$$g^*u_{\mathbf x}\in H^{2n}\Bigl(\bigl(
\BDConv_n^{\bm\lambda}\cap\dot T_\mu\bigr){}^{}_{\mathbf x},\,
(h')^!\,\underline{\mathbb C}_{\kern1pt\BDConv_n^{\bm\lambda}\cap\dot
T_\mu}\Bigr),$$
the restriction of $u_{\mathbf x}$ to
$\BDConv_n^{\bm\lambda}\cap\dot T_\mu$.

\begin{lemma}
\label{le:HeartComm}
In the diagram (\/$\heartsuit$), the square on the right commutes.
\end{lemma}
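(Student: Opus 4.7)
The plan is to reduce the commutativity to a single standard identity about the orientation class of a regular embedding. Write $X=\BDConv_n^{\bm\lambda}$, $Y=(\BDConv_n^{\bm\lambda})_{\mathbf x}$, $Z=X\cap\dot T_\mu$ and $W=Z\cap Y$; the inclusion $h:Y\hookrightarrow X$ is a regular closed embedding of complex codimension~$n$. The central ingredient is that its orientation class $u_{\mathbf x}\in H^{2n}\bigl(Y,h^!\,\underline{\mathbb C}_{X}\bigr)$ is the Alexander dual of the fundamental class $[Y]$; concretely, $u_{\mathbf x}\cap[X]=[Y]$ in $H^\BM_{2d-2n}(Y)$. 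This characterization of $u_{\mathbf x}$ is a special case of the formalism of \cite{Fulton}, sect.~19.1 and~\cite{Iversen}, Theorem~IX.4.7.

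Given $\alpha\in H^{k+d}\bigl(Z,g^!\underline{\mathbb C}_X\bigr)$, I would chase it around the square. Going first right and then down produces $g^*u_{\mathbf x}\cap\bigl(\alpha\cap[X]\bigr)\in H^\BM_{d-k-2n}(W)$; going first down and then right produces $h^*\alpha\cap[Y]$ in the same group. Since $\alpha$ and $u_{\mathbf x}$ have supports in $Z$ and $Y$ respectively, and cap product is associative in the formalism of cohomology with supports, both expressions can be rewritten as the single quantity $(\alpha\cup u_{\mathbf x})\cap[X]$, interpreted with support in~$W$. The equality of the two routes then follows from $u_{\mathbf x}\cap[X]=[Y]$ together with the naturality of Alexander duality on $Y$ with respect to the restriction $\alpha\mapsto h^*\alpha$.

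The step requiring real care is the sheaf-theoretic bookkeeping that turns this outline into a verification. The class $u_{\mathbf x}$ does not lie in ordinary cohomology but in $H^{2n}\bigl(Y,h^!\,\underline{\mathbb C}_X\bigr)$, so every cap product has to be unfolded as a pairing in the constructible derived category of~$X$, and the identifications used above rely on base-change isomorphisms among $g^!\,\underline{\mathbb C}_X$, $(g')^!\,h^!\,\underline{\mathbb C}_X$ and $(h')^!\,g^!\,\underline{\mathbb C}_X$. Each of these isomorphisms is standard, but keeping track of shifts and signs simultaneously is where the main technical obstacle lies. Once the formalism is set up, the commutativity becomes a formal consequence of associativity of the cap product and the identity $u_{\mathbf x}\cap[X]=[Y]$.
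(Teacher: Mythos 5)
Your proposal matches the paper's argument: both reduce the commutativity to the identity $u_{\mathbf x}\cap[\BDConv_n^{\bm\lambda}]=[(\BDConv_n^{\bm\lambda})_{\mathbf x}]$ (Iversen IX.4.9) together with the associativity/commutativity formula $(b^*\alpha)\cap(\beta\cap C)=(\alpha\cup\beta)\cap C$ (Fulton 19.1 (8), Iversen IX.3.4), applied twice and combined with the fact that $u_{\mathbf x}$ has even degree. The ``sheaf-theoretic bookkeeping'' you flag as the remaining obstacle is addressed in the paper simply by observing that the formula of Iversen IX.3.4, stated for closed subsets, extends routinely to locally closed ones via the six-functor formalism; no further shift or sign issue arises.
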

\begin{proof}
Applying formula IX.4.9 in \cite{Iversen}, we get
$u_{\mathbf x}\cap\bigl[\BDConv_n^{\bm\lambda}\bigr]=
\bigl[(\BDConv_n^{\bm\lambda})^{}_{\mathbf x}\bigr]$.

Formula~(8) in \cite{Fulton}, sect.~19.1 (or
formula~IX.3.4 in \cite{Iversen}) asserts that given a topological
manifold $X$ and inclusions of closed subsets $a:A\to X$ and $b:B\to X$,
for any
$$\alpha\in H^\bullet(A,\,a^!\,\underline{\mathbb C}_X),\quad
\beta\in H^\bullet(B,\,b^!\,\underline{\mathbb C}_X)\quad\text{and}
\quad C\in H_\bullet^\BM(X)$$
one has
\begin{equation}
\label{eq:IversenIX.3.4}
(b^*\alpha)\cap(\beta\cap C)=(\alpha\cup\beta)\cap C.
\end{equation}
Using the six operations formalism, one checks without much trouble that
this result is also valid if $A$ and $B$ are only locally closed.

Now pick
$$\xi\in H^{k+d}\Bigl(\BDConv_n^{\bm\lambda}\cap\dot T_\mu,
g^!\,\underline{\mathbb C}_{\kern1pt\BDConv_n^{\bm\lambda}}\Bigr).$$
Applying \eqref{eq:IversenIX.3.4} twice and using that $u_{\mathbf x}$
has even degree, we compute
$$(h^*\xi)\cap\Bigl(u_{\mathbf x}\cap
\bigl[\BDConv_n^{\bm\lambda}\bigr]\Bigr)=
(\xi\cup u_{\mathbf x})\cap\bigl[\BDConv_n^{\bm\lambda}\bigr]=
(u_{\mathbf x}\cup\xi)\cap\bigl[\BDConv_n^{\bm\lambda}\bigr]=
(g^*u_{\mathbf x})\cap\Bigl(\xi\cap
\bigl[\BDConv_n^{\bm\lambda}\bigr]\Bigr).$$
This equality means precisely that $\xi$ has the same image under the
two paths in ($\heartsuit$) that circumscribe the square on the right.
\end{proof}

\begin{lemma}
\label{le:ResLocSys}
There are natural isomorphisms
$$H^k\Bigl(\dot T_\mu,\,(\dot t_\mu)^!\,\mathscr B(\bm\lambda)\Bigr)
\cong H^0\bigl(\mathbb C^n,\mathscr L_\mu(\bm\lambda)\bigr)
\quad\text{and}\quad H^k\Bigl((\dot T_\mu)_{\mathbf x},
\,(\dot t'_\mu)^!\,i^*\,\mathscr B(\bm\lambda)\Bigr)
\cong\bigl(\mathscr L_\mu(\bm\lambda)\bigr){}^{}_{\mathbf x}$$
and the left vertical arrow in (\/$\heartsuit$) is the stalk map
$H^0\bigl(\mathbb C^n,\mathscr L_\mu(\bm\lambda)\bigr)\to
\bigl(\mathscr L_\mu(\bm\lambda)\bigr){}^{}_{\mathbf x}$.
\end{lemma}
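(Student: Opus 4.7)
The plan is to push everything forward to $\mathbb C^n$ along $\tilde p=\pi\circ m_n\circ\dot t_\mu$ and let Proposition~\ref{pr:FusProd}~\ref{it:PrFPc} do the heavy lifting: that proposition says that $\tilde p_*(\dot t_\mu)^!\mathscr B(\bm\lambda)$ is concentrated in degree~$k$ with sole cohomology sheaf $\mathscr L_\mu(\bm\lambda)$, so it is quasi-isomorphic to $\mathscr L_\mu(\bm\lambda)[-k]$. For the first isomorphism one then simply computes hypercohomology:
$$H^k\bigl(\dot T_\mu,(\dot t_\mu)^!\mathscr B(\bm\lambda)\bigr)
\cong H^k\bigl(\mathbb C^n,\tilde p_*(\dot t_\mu)^!\mathscr B(\bm\lambda)\bigr)
\cong H^0\bigl(\mathbb C^n,\mathscr L_\mu(\bm\lambda)\bigr).$$

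For the second isomorphism I would use the Cartesian squares linking $\{\mathbf x\}$, $\mathbb C^n$, $(\BDConv_n)_{\mathbf x}$, $\BDConv_n$, $(\dot T_\mu)_{\mathbf x}$ and $\dot T_\mu$, with structural map $p:(\dot T_\mu)_{\mathbf x}\to\{\mathbf x\}$. The local-constancy property from Proposition~\ref{pr:FusProd}~\ref{it:PrFPc} forces the natural base-change morphism $i_0^*\tilde p_*(\dot t_\mu)^!\mathscr B(\bm\lambda)\to p_*(i')^*(\dot t_\mu)^!\mathscr B(\bm\lambda)$ to be an isomorphism, since for a complex with locally constant cohomology on a smooth base the stalk at $\mathbf x$ is computed by the cohomology of the fiber. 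I would then identify $(i')^*(\dot t_\mu)^!\mathscr B(\bm\lambda)$ with $(\dot t'_\mu)^!i^*\mathscr B(\bm\lambda)$ using that $i$ is a regular embedding of codimension~$n$ with trivial normal bundle (being a scheme-theoretic fiber over a smooth point of $\mathbb C^n$), so that $i^!\cong i^*[-2n]$ and $(i')^!\cong(i')^*[-2n]$, whence
$$(i')^*(\dot t_\mu)^!\cong(i')^!(\dot t_\mu)^![2n]\cong(\dot t_\mu\circ i')^![2n]\cong(i\circ\dot t'_\mu)^![2n]\cong(\dot t'_\mu)^!i^![2n]\cong(\dot t'_\mu)^!i^*.$$
Taking $H^k$ on both sides yields the wanted $H^k\bigl((\dot T_\mu)_{\mathbf x},(\dot t'_\mu)^!i^*\mathscr B(\bm\lambda)\bigr)\cong(\mathscr L_\mu(\bm\lambda))_{\mathbf x}$.

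The final assertion that the left vertical arrow in~$(\heartsuit)$ is the stalk map follows by naturality: it is induced from the adjunction unit $\mathscr B(\bm\lambda)\to i_*i^*\mathscr B(\bm\lambda)$, and the identifications in the previous two paragraphs are all natural, so applying $H^k(\dot T_\mu,(\dot t_\mu)^!-)$ turns it into the canonical evaluation $H^0(\mathbb C^n,\mathscr L_\mu(\bm\lambda))\to(\mathscr L_\mu(\bm\lambda))_{\mathbf x}$.

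The main obstacle I foresee is the first base-change step $i_0^*\tilde p_*\cong p_*(i')^*$ at the level of complexes: since $\tilde p$ is not proper, this is not standard proper base change, but it holds because the pushforward has locally constant cohomology on the smooth base~$\mathbb C^n$. A secondary technicality is checking that $i$ is a regular embedding with trivial normal bundle, which reduces to the fact that $\mathbb C^n$ is smooth and the forgetful map $\pi$ is flat over the relevant locus.
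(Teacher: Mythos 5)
Your derivation of the first isomorphism is exactly the paper's, and the final naturality remark is fine. The problem is the second isomorphism, which rests on a claim that fails in the stated generality. You assert $i^!\cong i^*[-2n]$ on $\mathscr B(\bm\lambda)$ because $i$ is a regular embedding of codimension $n$ with trivial normal bundle. That identity holds for the constant sheaf, giving $i^!\,\underline{\mathbb C}_{\BDConv_n}\cong\underline{\mathbb C}[-2n]$, but for a general constructible complex $\mathscr F$ the natural morphism $i^*\mathscr F\otimes i^!\,\underline{\mathbb C}_{\BDConv_n}\to i^!\mathscr F$ need not be an isomorphism. Here $\mathscr B(\bm\lambda)$ is an IC sheaf with genuine singularities, not a local system, so nothing about the geometry of $i$ alone produces $i^*\mathscr B(\bm\lambda)\cong i^!\mathscr B(\bm\lambda)[2n]$. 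Your preliminary base-change step has the same flavor of gap: identifying the stalk of $(\pi\circ m_n\circ\dot t_\mu)_*(\dot t_\mu)^!\mathscr B(\bm\lambda)$ at $\mathbf x$ with the cohomology of the fiber is a $*/*$-base-change across the non-proper map $\pi\circ m_n\circ\dot t_\mu$, and local constancy of the cohomology sheaves of the pushforward does not by itself force that base change.

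Both gaps are filled by one hypothesis, which is precisely what the paper cites: $\mathscr B(\bm\lambda)$ is $(\pi\circ m_n)$-ULA (universally locally acyclic; see \cite{Reich}, proof of Proposition~IV.3.4, or \cite{Richarz}, Lemma~3.20). This gives the isomorphism $i^*\mathscr B(\bm\lambda)\to i^!\mathscr B(\bm\lambda)[2n]$ directly. Once you have it, no $*/*$-base change is needed: switch to $!$-pullbacks, use the always-valid exchange $g^!f_*\cong f'_*g'^!$ to commute $(i_0)^!$ past the pushforward, and only at the very last step use $(i_0)^!\cong(i_0)^*[-2n]$ on $\mathscr L_\mu(\bm\lambda)$ --- there the trivial-normal-bundle reasoning is legitimate because $\mathscr L_\mu(\bm\lambda)$ is an honest local system on the smooth base $\mathbb C^n$. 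That is the paper's route, and your proposal becomes correct once the ULA input is substituted for the two heuristic justifications above.
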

\begin{proof}
The first isomorphism is
$$H^0\bigl(\mathbb C^n,\mathscr L_\mu(\bm\lambda)\bigr)=
H^k\Bigl(\mathbb C^n,\,(\pi\circ m_n)_*\,(\dot t_\mu)_*
\,(\dot t_\mu)^!\,\mathscr B(\bm\lambda)\Bigr)=
H^k\Bigl(\dot T_\mu,\,(\dot t_\mu)^!\,\mathscr B(\bm\lambda)\Bigr).$$
The second one requires the notion of a universally locally
acyclic complex (see \cite{BravermanGaitsgory02}, sect.~5.1).
Specifically, $\mathscr B(\bm\lambda)$ is $(\pi\circ m_n)$-ULA
(\cite{Reich}, proof of Proposition~IV.3.4, or \cite{Richarz},
Lemma~3.20), so there is an isomorphism
$$i^*\,\mathscr B(\bm\lambda)\to i^!\,\mathscr B(\bm\lambda)[2n].$$
Then
\begin{align*}
H^k\Bigl((\dot T_\mu)_{\mathbf x},
\,(\dot t'_\mu)^!\,i^*\,\mathscr B(\bm\lambda)\Bigr)
&=H^k\Bigl((\dot T_\mu)_{\mathbf x},
\,(\dot t'_\mu)^!\,i^!\,\mathscr B(\bm\lambda)[2n]\Bigr)\\[2pt]
&=H^k\Bigl(\{\mathbf x\},\,(\pi\circ m_n)_*\,(\dot t'_\mu)_*\,
(\dot t'_\mu)^!\,i^!\,\mathscr B(\bm\lambda)[2n]\Bigr)\\[2pt]
&=H^k\Bigl(\{\mathbf x\},\,(\pi\circ m_n)_*\,(\dot t'_\mu)_*\,
i'{}^!\,(\dot t_\mu)^!\,\mathscr B(\bm\lambda)[2n]\Bigr)\\[2pt]
&=H^k\Bigl(\{\mathbf x\},\,(i_0)^!\,(\pi\circ m_n)_*\,
(\dot t_\mu)_*\,(\dot t_\mu)^!\,\mathscr B(\bm\lambda)[2n]\Bigr),
\end{align*}
the last step being proper base change. Now $(\pi\circ m_n)_*\,
(\dot t_\mu)_*\,(\dot t_\mu)^!\,\mathscr B(\bm\lambda)$ is the local
system $\mathscr L_\mu(\bm\lambda)$ shifted by~$-k$, and therefore
$$H^k\Bigl((\dot T_\mu)_{\mathbf x},
\,(\dot t'_\mu)^!\,i^*\,\mathscr B(\bm\lambda)\Bigr)=
H^0\Bigl(\{\mathbf x\},(i_0)^!\,\mathscr L_\mu(\bm\lambda)[2n]\Bigr)=
H^0\Bigl(\{\mathbf x\},(i_0)^*\,\mathscr L_\mu(\bm\lambda)\Bigr)=
\bigl(\mathscr L_\mu(\bm\lambda)\bigr){}^{}_{\mathbf x}$$
as announced.
\end{proof}

By Proposition~\ref{pr:GlobCyc}, the irreducible components of
$\BDConv_n^{\bm\lambda}\cap\dot T_\mu$ are all top-dimensional
and can be indexed by
\begin{equation}
\label{eq:IndexSet}
\bigsqcup_{\substack{(\mu_1,\ldots,\mu_n)\in\Lambda^n\\[2pt]
\mu_1+\cdots+\mu_n=\mu}}\mathscr Z(\lambda_1)_{\mu_1}
\times\cdots\times\,\mathscr Z(\lambda_n)_{\mu_n};
\end{equation}
namely, to a tuple $\mathbf Z=(Z_1,\ldots,Z_n)$ is assigned the component
$$\mathcal X(\mathbf Z)=\overline{\Uppsi(Z_1\caltimes\cdots\caltimes Z_n)}
\cap\BDConv_n^{\bm\lambda},$$
the bar denoting closure in $\dot T_\mu$.
From now on, to lighten the writing, we will substitute
$\mathscr Z(\bm\lambda)_\mu$ for the cumbersome compound
\eqref{eq:IndexSet}, using implicitly the
bijection~\eqref{eq:BijMVCyc}.

The proof of Proposition~\ref{pr:GlobCyc} shows that for any
$\mathbf x\in\mathbb C^n$, the irreducible components of the fiber
$\bigl(\BDConv_n^{\bm\lambda}\cap\dot T_\mu\bigr){}^{}_{\mathbf x}$
have all the same dimension and can be indexed by
$\mathscr Z(\bm\lambda)_\mu$. Let us look more closely at two
particular cases.

If $\mathbf x\in\mathbb C^n$ lies in the open locus $U$ of
points with pairwise different coordinates, then, under the
bijection $(\BDConv_n)_{\mathbf x}\cong(\Gr)^n$ from
Proposition~\ref{pr:FibBDConv}, the irreducible components of
$\bigl(\BDConv_n^{\bm\lambda}\cap\dot T_\mu\bigr){}^{}_{\mathbf x}$
are identified with the sets
\begin{equation}
\label{eq:FiberX/U}
\mathcal X(\mathbf Z)_{\mathbf x}\cong
\bigl(Z_1\cap\Gr^{\lambda_1}\bigr)\times\cdots\times
\bigl(Z_n\cap\Gr^{\lambda_n}\bigr)
\end{equation}
with $\mathbf Z=(Z_1,\ldots,Z_n)$ in $\mathscr Z(\bm\lambda)_\mu$.

On the other hand, recalling that an element
$\mathbf Z\in\mathscr Z(\bm\lambda)_\mu$ is a subset of
$\overline{\Gr_n^{\bm\lambda}}$, we may consider the
preimage $\mathcal Y(\mathbf Z)$ of
$\Delta\times\bigl(\mathbf Z\cap\Gr_n^{\bm\lambda}\bigr)$ under the
isomorphism $\BDConv_n\bigl|_\Delta\xrightarrow\simeq\Delta\times\Gr_n$.
Then for any $\mathbf x\in\Delta$, the irreducible components of the
fiber $\bigl(\BDConv_n^{\bm\lambda}\cap\dot T_\mu\bigr){}^{}_{\mathbf x}$
are the sets $\mathcal Y(\mathbf Z)_{\mathbf x}$
for $\mathbf Z\in\mathscr Z(\bm\lambda)_\mu$.

Let us introduce a last piece of notation before stating the next theorem.
In sect.~\ref{ss:MVBas}, we explained the construction of the MV basis of
the $\mu$-weight space of $V(\bm\lambda)$; this basis is in bijection with
$\mathscr Z(\bm\lambda)_\mu$ and we denote by $\langle\mathbf Z\rangle$
the element indexed by $\mathbf Z$. On the other hand, given
$\mathbf Z=(Z_1,\ldots,Z_n)$ in $\mathscr Z(\lambda_1)\times\cdots\times
\mathscr Z(\lambda_n)$, we can look at~$\dbllangle\mathbf Z\dblrangle
=\langle Z_1\rangle\otimes\cdots\otimes\langle Z_n\rangle$, another
element in $V(\bm\lambda)$.

\begin{theorem}
\label{th:TransMat}
Let $(\mathbf Z',\mathbf Z'')\in(\mathscr Z(\bm\lambda)_\mu)^2$.
The coefficient $a_{\mathbf Z',\mathbf Z''}$ in the expansion
$$\bigl\langle\!\bigl\langle\mathbf Z''\bigr\rangle\!\bigr\rangle=
\sum_{\mathbf Z\in\mathscr Z(\bm\lambda)_\mu}
a_{\mathbf Z,\mathbf Z''}\;\bigl\langle\mathbf Z\bigr\rangle$$
is the multiplicity of $\mathcal Y(\mathbf Z')$ in the intersection
product $\mathcal X(\mathbf Z'')\,\cdot
\bigl(\BDConv_n^{\bm\lambda}\bigr)\bigl|_\Delta$
computed in the ambient space $\BDConv_n^{\bm\lambda}$.
\end{theorem}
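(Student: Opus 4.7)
The plan is to extract everything we need from the diagram $(\heartsuit)$ and the two lemmas surrounding it, then decorate each basis with its intersection-theoretic avatar. Since $\mathbb{A}^n$ is simply connected, the local system $\mathscr L_\mu(\bm\lambda)$ is constant, and the adjunction $H^0(\mathbb A^n,\mathscr L_\mu(\bm\lambda))\to(\mathscr L_\mu(\bm\lambda))^{}_{\mathbf x}$ is an isomorphism for every $\mathbf x$. Via the top row of $(\heartsuit)$ and Alexander duality, the space of global sections identifies with $H^\BM_{d-k}(\overline{\BDConv_n^{\bm\lambda}}\cap\dot T_\mu)$; by the variant of Proposition~\ref{pr:GlobCyc} for $\dot T_\mu$, this Borel--Moore homology group carries a distinguished basis, namely the fundamental classes $[\mathcal X(\mathbf Z)]$ indexed by $\mathbf Z\in\mathscr Z(\bm\lambda)_\mu$. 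Combining Lemmas~\ref{le:HeartComm} and~\ref{le:ResLocSys}, the stalk map at $\mathbf x$ becomes the cap product with $g^*u_{\mathbf x}$, and by the projection formula this is nothing but the refined intersection with the fiber $(\BDConv_n^{\bm\lambda})^{}_{\mathbf x}$.

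I then have to identify each of the two bases at an appropriate specialization point. Choose $\mathbf x_1\in U$, i.e.\ with pairwise distinct coordinates; by Proposition~\ref{pr:FibBDConv} the fiber $(\BDConv_n)^{}_{\mathbf x_1}$ splits as $\Gr^n$, and the irreducible components of its trace on $\BDConv_n^{\bm\lambda}\cap\dot T_\mu$ factorize according to~\eqref{eq:FiberX/U} as $(Z_1\cap\Gr^{\lambda_1})\times\cdots\times(Z_n\cap\Gr^{\lambda_n})$. Applying Proposition~\ref{pr:FusProd}~\ref{it:PrFPa} on the open locus together with the Künneth formula, the stalk identification $(\mathscr L_\mu(\bm\lambda))^{}_{\mathbf x_1}\cong F_\mu(\mathscr I_{\bm\lambda})$ is exactly the fusion-product isomorphism $F(\mathscr I_{\bm\lambda})\cong F(\mathscr I_{\lambda_1})\otimes\cdots\otimes F(\mathscr I_{\lambda_n})$, and the fundamental class of a product is the tensor of the fundamental classes. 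Consequently $[\mathcal X(\mathbf Z)]$ represents the element $\dbllangle\mathbf Z\dblrangle$ of $V(\bm\lambda)_\mu$. Similarly, at any $\mathbf x_0\in\Delta$ the isomorphism $\BDConv_n|_\Delta\cong\Delta\times\Gr_n$ together with Proposition~\ref{pr:FusProd}~\ref{it:PrFPa} for the small diagonal reduces the stalk identification to the very chain~\eqref{eq:DefMVBasis} used in sect.~\ref{ss:MVBas} to define the MV basis, and $\mathcal Y(\mathbf Z')^{}_{\mathbf x_0}$ is by construction $\mathbf Z'\cap\Gr_n^{\bm\lambda}$, so $[\mathcal Y(\mathbf Z')^{}_{\mathbf x_0}]$ represents $\langle\mathbf Z'\rangle$.

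With these two dictionaries in hand the proof is immediate: the transition matrix between the bases $\dbllangle\mathbf Z''\dblrangle$ and $\langle\mathbf Z'\rangle$ is computed by following one global class $[\mathcal X(\mathbf Z'')]$ through the trivialization of $\mathscr L_\mu(\bm\lambda)$ and then through the stalk map at $\mathbf x_0\in\Delta$. By the first paragraph this stalk map sends $[\mathcal X(\mathbf Z'')]$ to $(g^*u_{\mathbf x_0})\cap[\mathcal X(\mathbf Z'')]$, i.e.\ to the refined intersection $[\mathcal X(\mathbf Z'')]\cdot[(\BDConv_n^{\bm\lambda})^{}_{\mathbf x_0}]$ in $\BDConv_n^{\bm\lambda}$; since $\Delta$ is connected and the family is flat along it, this refined intersection is equivalent to the global intersection $\mathcal X(\mathbf Z'')\cdot(\BDConv_n^{\bm\lambda})|_\Delta$ followed by restriction to a fiber. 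Expanding in the basis $\{[\mathcal Y(\mathbf Z')^{}_{\mathbf x_0}]\}$ exactly reads off the intersection multiplicity along each $\mathcal Y(\mathbf Z')$, which is the claim.

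The step I expect to be the main obstacle is the precise compatibility between the Künneth decomposition of the stalk at an $U$-point and the fusion-product identification of $F$ as a tensor functor: one must verify that the two geometric constructions of $F(\mathscr I_{\bm\lambda})\cong\bigotimes F(\mathscr I_{\lambda_j})$ —one via factorization of the fibers of $\BDConv_n$ over $U$, the other via Beilinson--Drinfeld's commutativity constraint— produce the very same isomorphism on the nose, so that $[\mathcal X(\mathbf Z)_{\mathbf x_1}]$ actually goes to $\langle Z_1\rangle\otimes\cdots\otimes\langle Z_n\rangle$ rather than to some twist thereof. This reduces to Proposition~\ref{pr:FusProd}~\ref{it:PrFPa} combined with proper base change along $U\hookrightarrow\mathbb A^n$, but deserves to be spelled out carefully.
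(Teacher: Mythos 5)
Your argument follows the same path as the paper's: rewrite the diagram $(\heartsuit)$ via Lemmas~\ref{le:HeartComm} and~\ref{le:ResLocSys} so that the stalk map becomes the cap product with the orientation class $g^*u_{\mathbf x}$, observe by intersection theory (Fulton's specialization, Theorem~19.2) that this computes intersection multiplicities in bases of fundamental classes, note that over $U$ the identity~\eqref{eq:FiberX/U} makes the refined intersection clean so that the global class $[\mathcal X(\mathbf Z)]$ specializes to $\dbllangle\mathbf Z\dblrangle$ with no correction, and then read off the transition matrix at a point of $\Delta$. The paper packages the two specializations as explicit matrices $Q_{\mathbf x_U}$ and $Q_{\mathbf x_\Delta}$ and concludes with $a_{\mathbf Z',\mathbf Z''}=\bigl(Q_{\mathbf x_\Delta}\cdot Q_{\mathbf x_U}^{-1}\bigr)_{\mathbf Z',\mathbf Z''}$, but that is only a cosmetic difference from what you write.

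The worry you flag at the end — whether the K\"unneth splitting of the stalk over $U$ agrees with the fusion-product tensor identification — is benign here and you should not lose sleep over it: the identification of $(\mathscr L_\mu(\bm\lambda))_{\mathbf x_1}$ with $F(\mathscr I_{\lambda_1})_{\mu_1}\otimes\cdots\otimes F(\mathscr I_{\lambda_n})_{\mu_n}$ used in the theorem is by definition the one coming from the Beilinson--Drinfeld fusion construction (recalled at the end of sect.~\ref{ss:FusProd}), so there is only one isomorphism in play, not two to be compared. The genuine geometric input, which both you and the paper leave at the level of a one-line assertion, is that over $U$ the family $\mathcal X(\mathbf Z)\to\mathbb A^n$ is equidimensional with reduced fibers equal to~\eqref{eq:FiberX/U}, so that the refined Gysin pullback along $(\BDConv_n^{\bm\lambda})_{\mathbf x_1}\hookrightarrow\BDConv_n^{\bm\lambda}$ gives the fiber fundamental class with multiplicity one — and that a single fiber computation over a point of $\Delta$ recovers the intersection with the whole slice $\BDConv_n^{\bm\lambda}|_\Delta$, which follows from the triviality of the fibration $\BDConv_n|_\Delta\cong\Delta\times\Gr_n$.
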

\begin{proof}
Taking into account Lemma~\ref{le:ResLocSys}, the diagram
($\heartsuit$) can be rewritten as follows.
$$\xymatrix{H^0\bigl(\mathbb C^n,\mathscr L_\mu(\bm\lambda)\bigr)
\ar[r]^-\simeq\ar[d]_\simeq&
H^\BM_{\mathrm{top}}\Bigl(\BDConv_n^{\bm\lambda}\cap\dot T_\mu\Bigr)
\ar[d]^{(g^*u_{\mathbf x})\cap}\\
\bigl(\mathscr L_\mu(\bm\lambda)\bigr){}^{}_{\mathbf x}\ar[r]^-\simeq&
H^\BM_{\mathrm{top}}\Bigl(\bigl(\BDConv_n^{\bm\lambda}\cap
\dot T_\mu\bigr){}^{}_{\mathbf x}\Bigr)}$$
The fundamental classes of the irreducible components of
$\BDConv_n^{\bm\lambda}\cap\dot T_\mu$ and
$\bigl(\BDConv_n^{\bm\lambda}\cap\dot T_\mu\bigr){}^{}_{\mathbf x}$
provide bases of the two Borel-Moore homology groups, both indexed by
$\mathscr Z(\bm\lambda)_\mu$. In these bases, the right vertical arrow
can be regarded as a matrix, say $Q_{\mathbf x}$. This matrix can be
computed by intersection theory: applying Theorem~19.2 in \cite{Fulton},
we see that if $\mathbf x\in U$ (respectively, $\mathbf x\in\Delta)$, then
the entry in $Q_{\mathbf x}$ at position $(\mathbf Z',\mathbf Z'')$ is the
multiplicity of $\mathcal X(\mathbf Z')_{\mathbf x}$ (respectively,
$\mathcal Y(\mathbf Z')_{\mathbf x}$) in the intersection product
$$\mathcal X(\mathbf Z'')\,\cdot\bigl(\BDConv_n^{\bm\lambda}\bigr)
{}^{}_{\mathbf x}$$
computed in the ambient space $\BDConv_n^{\bm\lambda}$. Because
\eqref{eq:FiberX/U} holds steadily over the open set $U$, we see
that $Q_{\mathbf x_U}$ is just the identity matrix.

According to the discussion at the end of sect.~\ref{ss:FusProd},
the geometric Satake correspondence identifies $V(\bm\lambda)_\mu$
with each fiber of the local system $\mathscr L_\mu(\bm\lambda)$.
The basis element $\langle\mathbf Z\rangle$ is the fundamental class
of $\mathcal X(\mathbf Z)_{\mathbf x}$ when $\mathbf x\in\Delta$, and
the basis element $\dbllangle\mathbf Z\dblrangle$ is the fundamental
class of $\mathcal X(\mathbf Z)_{\mathbf x}$ when $\mathbf x\in U$.
Therefore, the coefficient $a_{\mathbf Z',\mathbf Z''}$ in the statement
of the theorem is the entry at position $(\mathbf Z',\mathbf Z'')$
in the product $Q_{\mathbf x_\Delta}\times(Q_{\mathbf x_U})^{-1}$,
for any choice of $(\mathbf x_\Delta,\mathbf x_U)\in\Delta\times U$.
\end{proof}

In particular, the entries $a_{\mathbf Z',\mathbf Z''}$ of the
transition matrix between our two bases are nonnegative integers.

\begin{proposition}
\label{pr:DiagOne}
In the setup of Theorem~\ref{th:TransMat}, the diagonal entry
$a_{\mathbf Z'',\mathbf Z''}$ is equal to one.
\end{proposition}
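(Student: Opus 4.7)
By Theorem~\ref{th:TransMat}, $a_{\mathbf Z'',\mathbf Z''}$ equals the multiplicity of $\mathcal Y(\mathbf Z'')$ in the intersection product $\mathcal X(\mathbf Z'')\cdot(\BDConv_n^{\bm\lambda})\bigl|_\Delta$ computed in the smooth variety $\BDConv_n^{\bm\lambda}$. The plan is to exhibit explicit local coordinates on $\mathcal X(\mathbf Z'')$ near a generic point of $\mathcal Y(\mathbf Z'')$ in which the divisor $(\BDConv_n^{\bm\lambda})\bigl|_\Delta$ is cut out by a regular system of parameters; transversality will then yield multiplicity~$1$. A dimension count confirms the intersection is proper: with $\dim\mathcal X(\mathbf Z'')=\rho(|\bm\lambda|-\mu)+n$ and $\codim(\BDConv_n^{\bm\lambda})\bigl|_\Delta=n-1$ in $\BDConv_n^{\bm\lambda}$, the expected intersection dimension is $\rho(|\bm\lambda|-\mu)+1=\dim\mathcal Y(\mathbf Z'')$.

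To build the coordinates, I would first apply Theorem~\ref{th:ZMorCrys} (in its $T$-variant, per the end of Section~\ref{ss:GloCyc}) to choose a tuple $(\pi_1,\ldots,\pi_n)$ of integral paths with $\mathbf Z(\pi_1\otimes\cdots\otimes\pi_n)=\mathbf Z''$. Proposition~\ref{pr:AltDefRingZ} and Lemma~\ref{le:PropBaGa} then supply an open immersion $\mathbb A^{k}\hookrightarrow\mathbf Z''\cap\Gr_n^{\bm\lambda}$ with $k=\rho(|\bm\lambda|-\mu)$, given by an ordered product of unipotent root subgroup elements $x_{\alpha^\vee}(a\,z^{p})$ indexed by positive affine coroots. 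The next move is to lift this to a Beilinson--Drinfeld parameterization
\[
\Phi\colon\mathbb C^n\times\mathbb A^{k}\dashrightarrow\BDConv_n,
\]
by substituting the uniformizer $z-x_j$ for $z$ in every factor originating from $\pi_j$. The image of $\Phi$ lies in $\Uppsi(Z_1\caltimes\cdots\caltimes Z_n)\cap\BDConv_n^{\bm\lambda}\subseteq\mathcal X(\mathbf Z'')$; restricted to $U\subseteq\mathbb C^n$ it recovers the product parameterization of Proposition~\ref{pr:FibBDConv}, and restricted to $\Delta$ it recovers the path-model parameterization of $\mathbf Z''\cap\Gr_n^{\bm\lambda}$.

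Granted that $\Phi$ restricts to an open immersion onto a dense open of $\mathcal X(\mathbf Z'')$ meeting $\mathcal Y(\mathbf Z'')$, the conclusion is immediate: in the coordinates $(x_1,\ldots,x_n,a_1,\ldots,a_k)$ pulled back along $\Phi$, the divisor $(\BDConv_n^{\bm\lambda})\bigl|_\Delta$ is cut out by the reduced prime ideal $(x_2-x_1,\ldots,x_n-x_1)$, so the scheme-theoretic intersection $\mathcal X(\mathbf Z'')\cap(\BDConv_n^{\bm\lambda})\bigl|_\Delta$ is smooth, reduced, and irreducible in a neighborhood of a generic point of $\mathcal Y(\mathbf Z'')$. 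Theorem~19.2 of~\cite{Fulton} then gives intersection multiplicity~$1$. The main obstacle is verifying that $\Phi$ is generically an open immersion onto $\mathcal X(\mathbf Z'')$, which amounts to showing that the BD-deformation of the path-model parameterization remains injective and has surjective differential along $\mathcal Y(\mathbf Z'')$; I expect this to follow by re-running the factorization computations of Section~\ref{ss:ProofPrPRZd} with $x_{\alpha^\vee}(a\,(z-x_j)^p)$ in place of $x_{\alpha^\vee}(a\,z^p)$, taking advantage of the fact that these elements are algebraic in the parameters $x_j$.
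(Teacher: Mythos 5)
Your overall strategy — building an explicit algebraic chart compatible with the projection to $\mathbb C^n$, in which the family is visibly trivial — is the same as the paper's, but the object you choose to parameterize is different and this creates a genuine gap that the paper avoids.

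The paper applies the slice theorem (equivalently, the charts from~\cite{GaussentLittelmann}) to the quotient map $G^\vee\bigl(\mathbb C\bigl[z,z^{-1}\bigr]\bigr)\to\Gr$ to produce, for each $j$, an affine variety $U_j$ and a map $\phi_j:U_j\to G^\vee\bigl(\mathbb C\bigl[z,z^{-1}\bigr]\bigr)$ such that $u\mapsto[\phi_j(u)]$ is an open immersion of $U_j$ into $\Gr^{\lambda_j}$ meeting $Z_j$; substituting $z-x_j$ for $z$ then yields an open embedding $\upphi:\mathbb C^n\times(U_1\times\cdots\times U_n)\to\BDConv_n^{\bm\lambda}$ over $\mathbb C^n$ of the \emph{ambient} space. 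The decisive point is that, once you have such a chart on the ambient space, the cycle $\mathcal X(\mathbf Z'')$ pulled back along $\upphi$ is \emph{automatically} a product $\mathbb C^n\times W$ with $W=\prod_j\phi_j^{-1}(Z_j)$, because the membership condition in $\Uppsi(Z_1\caltimes\cdots\caltimes Z_n)$ is independent of $(x_1,\ldots,x_n)$ after the substitution. Its intersection with $\Delta\times(U_1\times\cdots\times U_n)$ is then $\Delta\times W=\mathcal Y(\mathbf Z'')$ with multiplicity~$1$, with no further verification needed — the slice theorem certifies the open embedding, and triviality of the family does the rest.

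You instead parameterize (a dense open of) the \emph{cycle} $\mathcal X(\mathbf Z'')$ directly via the path-model map $\Phi:\mathbb C^n\times\mathbb A^{k}\dashrightarrow\BDConv_n$ obtained by inserting $z-x_j$ into Proposition~\ref{pr:AltDefRingZ}. For the final step you need $\Phi$ to be generically an open immersion onto $\mathcal X(\mathbf Z'')$ near a generic point of $\mathcal Y(\mathbf Z'')$, and this is precisely the gap: the maps $\prod_{t}\mathscr N^\vee(\pi,t)\to\Gr$ from Proposition~\ref{pr:AltDefRingZ} are not proven injective in the paper (density of $\mathring{\mathbf Z}(\pi)$ in $\mathbf Z(\pi)$ is established in Theorem~\ref{th:ZIsMVCyc} by a dimension argument, not by showing the parameterization is an isomorphism onto its image), and the surjectivity of the differential of $\Phi$ along $\mathcal Y(\mathbf Z'')$ does not follow formally from anything in Section~\ref{ss:ProofPrPRZd}. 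Supplying a BD-version of Lemmas~\ref{le:PRZCtrl} and~\ref{le:PRZFold} with $x_{\alpha^\vee}\bigl(a(z-x_j)^p\bigr)$ in place of $x_{\alpha^\vee}(az^p)$, as you suggest, would be a substantial new undertaking; switching to a slice-theorem chart on the ambient space, as the paper does, sidesteps the problem entirely.
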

\begin{proof}
Write $\mathbf Z''=(Z_1,\ldots,Z_n)$ in
$\mathscr Z(\lambda_1)\times\cdots\times\mathscr Z(\lambda_n)$.
By the slice theorem applied to the quotient map
$G^\vee\bigl(\mathbb C\bigl[z,z^{-1}\bigr]\bigr)\to\Gr$
(or, in this concrete situation, using Remark~15 and Corollary~5
in~\cite{GaussentLittelmann}), we can find, for each
$j\in\{1,\ldots,n\}$, an affine variety $U_j$ and a map
$\phi_j:U_j\to G^\vee\bigl(\mathbb C\bigl[z,z^{-1}\bigr]\bigr)$ such
that $u\mapsto[\phi_j(u)]$ sends $U_j$ isomorphically to an open subset
of $\Gr^{\lambda_j}$ which meets $Z_j$.

For $x\in\mathbb C$ and $u\in U_j$, let $\phi_j(u)_{|x}$ denote the
result of substituting $z-x$ for $z$ in $\phi_j(u)$. We can then
define an open embedding $\upphi$ as on the diagram
$$\xymatrix@C=40pt{\mathbb C^n\times(U_1\times\cdots\times U_n)
\ar[r]^-\upphi\ar[d]&\BDConv_n^{\bm\lambda}\ar[d]^{\pi\circ m_n}\\
\mathbb C^n\ar@{=}[r]&\mathbb C^n}$$
by setting
$$\upphi(x_1,\ldots,x_n;u_1,\ldots,u_n)=\Bigl(x_1,\ldots,x_n;\Bigl[
\phi_1(u_1)_{|x_1},\;\ldots,\;\phi_n(u_n)_{|x_n}\Bigr]
\Bigr).$$
Since intersection multiplicities are of local nature,
$a_{\mathbf Z'',\mathbf Z''}$ can be computed after restriction to
the image of $\upphi$, where the situation is that of a trivial bundle.
\end{proof}

\subsection{An example}
\label{ss:Example}
It is possible to put coordinates on $\BDConv_n^{\bm\lambda}$ and
to effectively compute the intersection multiplicities mentioned
in Theorem~\ref{th:TransMat}. In this section, we look at the case
of the group $G=\SL_3$. We adopt the usual description
$\Lambda=(\mathbb Z\varepsilon_1\oplus\mathbb Z
\varepsilon_2\oplus\mathbb Z\varepsilon_3)/\mathbb Z
(\varepsilon_1+\varepsilon_2+\varepsilon_3)$ of the weight lattice,
so that $V(\varepsilon_1)$ is the defining representation of $G$ and
$V(-\varepsilon_3)$ is its dual.

We consider the sequence of dominant weights
$\bm\lambda=(\varepsilon_1,-\varepsilon_3)$. The basic MV cycles
are $Z_i=\overline{\Gr^{\varepsilon_1}}\cap T_{\varepsilon_i}$ and
$Z_{-i}=\overline{\Gr^{-\varepsilon_3}}\cap T_{-\varepsilon_i}$
for $i\in\{1,2,3\}$, and with this notation
$$\mathscr Z(\bm\lambda)=\bigl\{(Z_i,Z_{-j})\bigm|(i,j)\in\{1,2,3\}^2\bigr\}.$$
To abbreviate, we set $\mathbf Z_{i,-j}=(Z_i,Z_{-j})$. For weight reasons,
$\dbllangle\mathbf Z_{i,-j}\dblrangle=\langle\mathbf Z_{i,-j}\rangle$
if $i\neq j$. The rest of the transition matrix between the two bases
is given as follows.
\begin{align*}
\dbllangle\mathbf Z_{1,-1}\dblrangle&=\langle\mathbf Z_{1,-1}\rangle\\
\dbllangle\mathbf Z_{2,-2}\dblrangle&=\langle\mathbf Z_{2,-2}\rangle
+\langle\mathbf Z_{1,-1}\rangle\\
\dbllangle\mathbf Z_{3,-3}\dblrangle&=\langle\mathbf Z_{3,-3}\rangle
+\langle\mathbf Z_{2,-2}\rangle
\end{align*}
From these relations, we get $\langle\mathbf Z_{3,-3}\rangle=
\dbllangle\mathbf Z_{3,-3}\dblrangle-\dbllangle\mathbf Z_{2,-2}\dblrangle+
\dbllangle\mathbf Z_{1,-1}\dblrangle$. This allows to check that
$\langle\mathbf Z_{3,-3}\rangle$ is $G$-invariant, which in truth is a
consequence of the compatibility of the MV basis of $V(\bm\lambda)$
with the isotypical filtration (Theorem~\ref{th:MVBasLPerf}).

As an example, let us sketch out a computation which justifies that
$\langle\mathbf Z_{1,-1}\rangle$ appears with coefficient one
in~$\dbllangle\mathbf Z_{2,-2}\dblrangle$.
We consider two charts on $\BDConv_2^{\bm\lambda}$, both with
$\mathbb C^6$ as domain:
\begin{align*}
\upphi_1:(x_1,x_2,a,b,c,d)&\mapsto\left(x_1,x_2;
\left[\begin{pmatrix}z-x_1&a&b\\0&1&0\\0&0&1\end{pmatrix},
\begin{pmatrix}1&0&0\\c&z-x_2&0\\d&0&z-x_2\end{pmatrix}\right]\right),\\[8pt]
\upphi_2:(x_1,x_2,a',b',c',d')&\mapsto\left(x_1,x_2;
\left[\begin{pmatrix}1&0&0\\a'&z-x_1&b'\\0&0&1\end{pmatrix},
\begin{pmatrix}z-x_2&c'&0\\0&1&0\\0&d'&z-x_2\end{pmatrix}\right]\right).
\end{align*}
(The matrices here belong to the group $\PGL_3(\mathbb C[z-x_1,z-x_2]$.)
One easily computes the transition map between these two charts:
$$a'=1/a,\quad b'=-b/a,\quad c'=-a(ac+bd+x_2-x_1),\quad d'=-ad.$$

In the chart $\upphi_1$, the cycle $\mathcal Y(\mathbf Z_{1,-1})$
is defined by the equations $a=b=x_2-x_1=0$.
In the chart $\upphi_2$, the cycle $\mathcal X(\mathbf Z_{2,-2})$
is defined by the equations $b'=c'=0$. Thus, the ideals in
$R=\mathbb C[x_1,x_2,a,b,c,d]$ of the subvarieties
$$V=\upphi_1^{-1}\bigl(\mathcal Y(\mathbf Z_{1,-1})\bigr)
\quad\text{and}\quad
X=\upphi_1^{-1}\bigl(\mathcal X(\mathbf Z_{2,-2})\bigr)$$
are respectively
$$\mathfrak p=(a,b,x_2-x_1)\quad\text{and}\quad
\mathfrak q=(b,ac+x_2-x_1).$$
Since $\mathfrak q\subseteq\mathfrak p$, we have $V\subseteq X$;
in fact, $V$ is a subvariety of $X$ of codimension one. The local
ring~$A=\mathscr O_{V,X}$ of $X$ along $V$ is the localization of
$R/\mathfrak q$ at the ideal $\mathfrak p/\mathfrak q$. Observing that
$c$ is not in $\mathfrak p$, we see that its image in $A$ is invertible,
and then that $x_2-x_1$ generates the maximal ideal of $A$. As a
consequence, the order of vanishing of $x_2-x_1$ along $V$ (see
\cite{Fulton}, sect.~1.2) is equal to one. By definition, this is
the multiplicity of $\mathcal Y(\mathbf Z_{1,-1})$ in the
intersection product~$\mathcal X(\mathbf Z_{2,-2})\cdot
\BDConv_2^{\bm\lambda}\bigl|_\Delta$.

\subsection{Factorizations}
\label{ss:Factor}
A nice feature of the Beilinson-Drinfeld Grassmannian is its so-called
factorizable structure (see for instance~\cite{Reich}, Proposition~II.1.13).
On the other side of the geometric Satake equivalence, this corresponds
to associativity properties of partial tensor products.

Let $\mathbf n=(n_1,\ldots,n_r)$ be a composition of $n$ in $r$ parts.
We define the partial diagonal
$$\Delta_{\mathbf n}=\{(\underbrace{x_1,\ldots,x_1}_{n_1\text{ times}},
\ldots,\underbrace{x_r,\ldots,x_r}_{n_r\text{ times}})\mid
(x_1,\ldots,x_r)\in\mathbb C^r\}.$$
We write $\bm\lambda$ as a concatenation $\bigl(\bm\lambda_{(1)},
\ldots,\bm\lambda_{(r)}\bigr)$, where each $\bm\lambda_{(j)}$ belongs
to $(\Lambda^+)^{n_j}$, and similarly we write each
$\mathbf Z\in\mathscr Z(\bm\lambda)_\mu$ as
$\strut\bigl(\mathbf Z_{(1)},\ldots,\mathbf Z_{(r)}\bigr)$ with
$\mathbf Z_{(j)}\in\mathscr Z(\bm\lambda_{(j)})$. Then
$$V(\bm\lambda)=V\bigl(\bm\lambda_{(1)}\bigr)\otimes\cdots\otimes
V\bigl(\bm\lambda_{(r)}\bigr)\quad\text{and}\quad
\bigl\langle\mathbf Z_{(j)}\bigr\rangle\in V\bigl(\bm\lambda_{(j)}\bigr).$$
Further, define
$$\mathcal X(\mathbf Z,\mathbf n)=
\overline{\Uppsi(Z_1\caltimes\cdots\caltimes Z_n)
\bigl|_{\Delta_{\mathbf n}}}\cap\BDConv_n^{\bm\lambda}$$
where the bar means closure in
$\bigl(\dot T_\mu\bigr)\bigl|_{\Delta_{\mathbf n}}$. These
$\mathcal X(\mathbf Z,\mathbf n)$ generalize the set
$\mathcal X(\mathbf Z)$ defined in sect.~\ref{ss:InterMult},
as the latter corresponds to the composition $(1,\ldots,1)$.

Theorem~\ref{th:TransMat} can then be extended to this context in a
straightforward fashion, as demonstrated by the following statement.

\begin{proposition}
\label{pr:Factor}
Let $(\mathbf Z',\mathbf Z'')\in(\mathscr Z(\bm\lambda)_\mu)^2$.
The coefficient $b_{\mathbf Z',\mathbf Z''}$ in the expansion
$$\bigl\langle\mathbf Z''_{(1)}\bigr\rangle\otimes\cdots\otimes
\bigl\langle\mathbf Z''_{(r)}\bigr\rangle=\sum_{\mathbf Z\in\mathscr Z
(\bm\lambda)_\mu}b_{\mathbf Z,\mathbf Z''}\;\langle\mathbf Z\rangle$$
is the multiplicity of $\mathcal Y(\mathbf Z')$
in the intersection product
$\mathcal X(\mathbf Z'',\mathbf n)\,\cdot
\bigl(\BDConv_n^{\bm\lambda}\bigr)\bigl|_\Delta$
computed in the ambient space
$\BDConv_n^{\bm\lambda}\bigl|_{\Delta_{\mathbf n}}$.
\end{proposition}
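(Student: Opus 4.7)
The plan is to replicate the proof of Theorem~\ref{th:TransMat} with the ambient variety $\BDConv_n^{\bm\lambda}$ (and base $\mathbb C^n$) replaced by the restriction $\BDConv_n^{\bm\lambda}\bigl|_{\Delta_{\mathbf n}}$ (and base $\Delta_{\mathbf n}\cong\mathbb C^r$). Because $\mathscr B(\bm\lambda)$ is universally locally acyclic with respect to $\pi\circ m_n$, its behaviour under restriction to closed subschemes of $\mathbb C^n$ is controlled, so the analogues of Proposition~\ref{pr:FusProd} remain valid after restriction to $\Delta_{\mathbf n}$: in particular, $\mathscr L_\mu(\bm\lambda)\bigl|_{\Delta_{\mathbf n}}$ is still a local system on $\Delta_{\mathbf n}$, and one can assemble a diagram $(\heartsuit')$ analogous to $(\heartsuit)$ by picking a point $\mathbf x\in\Delta_{\mathbf n}$. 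The reasoning of Lemmas~\ref{le:HeartComm} and~\ref{le:ResLocSys} transcribes verbatim, so the left column of $(\heartsuit')$ is the stalk map $H^0(\Delta_{\mathbf n},\mathscr L_\mu(\bm\lambda)\bigl|_{\Delta_{\mathbf n}})\to(\mathscr L_\mu(\bm\lambda))_{\mathbf x}$, while the right column is the cap product with the orientation class of the regular embedding $(\BDConv_n^{\bm\lambda})_{\mathbf x}\hookrightarrow\BDConv_n^{\bm\lambda}\bigl|_{\Delta_{\mathbf n}}$.

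Next I would identify the fundamental classes at the two extremes. At a point $\mathbf x_\Delta$ of the small diagonal $\Delta$, the fiber is again $\Gr_n$ and the irreducible components $\mathcal Y(\mathbf Z')_{\mathbf x_\Delta}$ of $(\BDConv_n^{\bm\lambda}\cap\dot T_\mu)_{\mathbf x_\Delta}$ yield the MV basis $\{\langle\mathbf Z'\rangle\}$ of $V(\bm\lambda)_\mu$, exactly as in Theorem~\ref{th:TransMat}. At a generic point $\mathbf x_U\in\Delta_{\mathbf n}$ lying in no smaller partial diagonal, Proposition~\ref{pr:FibBDConv} gives $(\BDConv_n)_{\mathbf x_U}\cong\Gr_{n_1}\times\cdots\times\Gr_{n_r}$, and from the definitions of $\Uppsi$ and $\caltimes$ one reads off that under this identification
$$
\mathcal X(\mathbf Z'',\mathbf n)_{\mathbf x_U}\;\cong\;
\mathcal Y(\mathbf Z''_{(1)})_{\mathbf x_1}\times\cdots\times\mathcal Y(\mathbf Z''_{(r)})_{\mathbf x_r}.
$$
The factorization property of the Beilinson-Drinfeld Grassmannian matches the corresponding fiber of $\mathscr L_\mu(\bm\lambda)\bigl|_{\Delta_{\mathbf n}}$ with $\bigoplus_{\mu_1+\cdots+\mu_r=\mu}F_{\mu_1}(\mathscr I_{\bm\lambda_{(1)}})\otimes\cdots\otimes F_{\mu_r}(\mathscr I_{\bm\lambda_{(r)}})$ compatibly with these product decompositions; in particular, the fundamental class of $\mathcal X(\mathbf Z'',\mathbf n)_{\mathbf x_U}$ is sent to $\langle\mathbf Z''_{(1)}\rangle\otimes\cdots\otimes\langle\mathbf Z''_{(r)}\rangle$.

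Finally, as in the proof of Theorem~\ref{th:TransMat}, Fulton's Theorem~19.2 in~\cite{Fulton} interprets the right column of $(\heartsuit')$ specialized at $\mathbf x_\Delta$ as the matrix of multiplicities of $\mathcal Y(\mathbf Z')_{\mathbf x_\Delta}$ in the intersection product $\mathcal X(\mathbf Z'',\mathbf n)\cdot\BDConv_n^{\bm\lambda}\bigl|_\Delta$ taken inside $\BDConv_n^{\bm\lambda}\bigl|_{\Delta_{\mathbf n}}$, while its specialization at $\mathbf x_U$ produces the identity matrix, since $\mathcal X(\mathbf Z'',\mathbf n)$ is already product-shaped in a neighbourhood of $\mathbf x_U$. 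The composition of these two matrices, viewed through the identifications in the previous paragraph, equals the transition matrix $(b_{\mathbf Z',\mathbf Z''})$, concluding the proof.

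The main obstacle will be the precise check in the second paragraph that the fundamental class of $\mathcal X(\mathbf Z'',\mathbf n)_{\mathbf x_U}$ corresponds to the tensor product $\langle\mathbf Z''_{(1)}\rangle\otimes\cdots\otimes\langle\mathbf Z''_{(r)}\rangle$ under the fusion identification of stalks. Although morally this is the very heart of Beilinson-Drinfeld's construction, tracking the iterated factorization of the BD Grassmannian against the inductive construction of $\mathcal X$ and $\Uppsi$, and verifying that the factor orderings and the partial-convolution-versus-external-product identifications are consistent, requires careful bookkeeping with the restricted products underlying the definition of $\caltimes$ and $\Uppsi$.
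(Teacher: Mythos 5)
Your proposal is correct and follows exactly the route the paper intends: the paper itself only says that ``the proof does not require any new ingredient and is left to the reader,'' and your plan of re-running the proof of Theorem~\ref{th:TransMat} with the base $\mathbb C^n$ replaced by $\Delta_{\mathbf n}\cong\mathbb C^r$ (so that the orientation class has degree $2r$ rather than $2n$, and the two reference fibers are taken over the small diagonal and over the locus of $\Delta_{\mathbf n}$ with $r$ distinct coordinates) is precisely what makes that remark honest. The one point you flag as an ``obstacle'' — matching the fundamental class of $\mathcal X(\mathbf Z'',\mathbf n)_{\mathbf x_U}$ with $\langle\mathbf Z''_{(1)}\rangle\otimes\cdots\otimes\langle\mathbf Z''_{(r)}\rangle$ via the factorization structure — is indeed the only bookkeeping check, and it reduces, via Proposition~\ref{pr:FibBDConv} and equation~\eqref{eq:FiberX/U} applied to each $\BDConv_{n_j}$, to the case already handled in sect.~\ref{ss:FusProd}.
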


The proof does not require any new ingredient and is left to the reader.

\subsection{Triangularity}
\label{ss:Triang}
In this section, we show that the transition matrix described in
Theorem~\ref{th:TransMat} is unitriangular with respect to an adequate
order on $\mathscr Z(\bm\lambda)_\mu$.

\begin{proposition}
\label{pr:HoroClosure}
Let $(\mu_1,\ldots,\mu_n)$ and\/ $(\nu_1,\ldots,\nu_n)$ in $\Lambda^n$
and let $\mathcal S$ be a stratum for the ind-structure of\/ $\BDConv_n$.
If\/ $\Uppsi\bigl(T_{\nu_1}\caltimes\cdots\caltimes T_{\nu_n}\bigr)$
meets the closure of\/ $\mathcal S\cap\Uppsi\bigl(T_{\mu_1}\caltimes
\cdots\caltimes T_{\mu_n}\bigr)$, then
$$\nu_1\geq\mu_1,\quad\nu_1+\nu_2\geq\mu_1+\mu_2,\quad\ldots,\quad
\nu_1+\cdots+\nu_n\geq\mu_1+\cdots+\mu_n.$$
\end{proposition}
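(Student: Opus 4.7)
The plan is to prove each inequality $\nu_1+\cdots+\nu_k\geq\mu_1+\cdots+\mu_k$ separately by using a partial multiplication map to reduce to the closure relation for semi-infinite orbits in a single affine Grassmannian (\cite{MirkovicVilonen}, Proposition~3.1).

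Fix $k\in\{1,\ldots,n\}$. I would first introduce the partial forgetting morphism
$$\pi_k:\BDConv_n\to\BDConv_k,\quad (x_1,\ldots,x_n;[\beta_1,\ldots,\beta_n])\mapsto(x_1,\ldots,x_k;[\beta_1,\ldots,\beta_k]),$$
and then compose with $m_k:\BDConv_k\to\BDGr k$ to obtain $\sigma_k=m_k\circ\pi_k$. A routine check shows that $\sigma_k$ is continuous, takes $\BDConv_n^{\bm\lambda}$ into the closed bounded subvariety $m_k(\BDConv_k^{(\lambda_1,\ldots,\lambda_k)})$ of $\BDGr k$, and sends $\Uppsi(T_{\mu_1}\caltimes\cdots\caltimes T_{\mu_n})$ into the image in $\BDGr k$ of the shortened $\Uppsi(T_{\mu_1}\caltimes\cdots\caltimes T_{\mu_k})$, and likewise for $\nu$. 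Since $m_k$ is ind-proper, $\sigma_k$ commutes with closures on bounded loci, so $\sigma_k(p)$ lies in the closure inside $\BDGr k$ of $\sigma_k(\mathcal S\cap\Uppsi(T_{\mu_1}\caltimes\cdots\caltimes T_{\mu_n}))$.

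Next, I would restrict to the fiber over $\mathbf x^{(k)}=(x_1,\ldots,x_k)$, where $(x_1,\ldots,x_n)$ is the base point of $p$. By the analog of Proposition~\ref{pr:FibBDConv} for $\BDGr k$, this fiber factorizes as $\prod_{y\in\supp(\mathbf x^{(k)})}\Gr$. Under $\sigma_k$, the point $p$ maps to a tuple $(q_y)_y$ with $q_y\in T_{\nu_y}$, where $\nu_y=\sum_{j\leq k,\,x_j=y}\nu_j$, and any $q\in\mathcal S\cap\Uppsi(T_{\mu_1}\caltimes\cdots\caltimes T_{\mu_n})$ in the same fiber of $\pi\circ\sigma_k$ maps to $(q'_y)_y$ with $q'_y\in\overline{\Gr^{\lambda_y}}\cap T_{\mu_y}$, where $\mu_y=\sum_{j\leq k,\,x_j=y}\mu_j$ and $\lambda_y=\sum_{j\leq k,\,x_j=y}\lambda_j$. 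Projecting the closure onto each $y$-factor places $q_y$ in $T_{\nu_y}\cap\overline{\overline{\Gr^{\lambda_y}}\cap T_{\mu_y}}$; by \cite{MirkovicVilonen}, Proposition~3.1, this forces $\nu_y\geq\mu_y$ for each $y$, and summing over $y$ yields $\nu_1+\cdots+\nu_k\geq\mu_1+\cdots+\mu_k$.

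The main technical hurdle will be justifying the fiberwise reduction in the last paragraph: namely, that the closure of $\sigma_k(\mathcal S\cap\Uppsi(\cdots))$ in the total space $\BDGr k$, when intersected with the fiber over $\mathbf x^{(k)}$ and projected to each factor $\Gr$, still lies in the classical closure $\overline{\overline{\Gr^{\lambda_y}}\cap T_{\mu_y}}$. This rests on the ind-properness of $\pi\circ m_n$ (ensuring the total closure restricts sensibly to the fiber on bounded strata) and on the factorization structure of the Beilinson-Drinfeld Grassmannian, which governs how closures specialize between generic and degenerate fibers via the fusion procedure.
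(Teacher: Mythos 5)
Your high-level reduction agrees with the paper's: both pass to the first $k$ coordinates (the paper calls this ``the obvious truncation map $\BDConv_n\to\BDConv_j$'') and reduce to proving a single inequality $\nu_1+\cdots+\nu_k\geq\mu_1+\cdots+\mu_k$. The divergence is in how this one inequality is proved, and there your argument has a genuine gap that you yourself flag but do not close.

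The fiberwise reduction is not a corollary of ind-properness plus ``the factorization structure.'' The closure of $\sigma_k\bigl(\mathcal S\cap\Uppsi(T_{\mu_1}\caltimes\cdots\caltimes T_{\mu_n})\bigr)$ is taken in the total space $\BDGr k$, so the sequence approximating your limit point $p$ typically lives over base points $\mathbf x'$ whose collision pattern differs from that of $\mathbf x^{(k)}$. As $\mathbf x'$ degenerates to $\mathbf x^{(k)}$, distinct copies of $\Gr$ in the fiber fuse, and the semi-infinite strata in the various fibers do not match up in a naive product fashion: the factor decomposition $\prod_y\Gr$, and hence the meaning of ``$\mu_y$'' and ``$T_{\mu_y}$,'' changes discontinuously. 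What you actually need is a Beilinson--Drinfeld version of Mirković--Vilonen's Proposition~3.1 (roughly, that the global semi-infinite stratum $\dot T_\mu$ has closure contained in $\bigcup_{\nu\geq\mu}\dot T_\nu$ after restriction to bounded strata). That statement is not in the paper, is not a formal consequence of \cite{MirkovicVilonen}~Proposition~3.1, and is precisely what would have to be proven here.

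The paper sidesteps this entirely. Instead of comparing strata, it fixes a dominant integral $\lambda^\vee$ for $G^\vee$, acts on $V\otimes\mathbb C(z)$, and composes with the highest-weight linear form $p:V\to\mathbb C$ to extract from each lattice $\beta_1\cdots\beta_n(L_0)$ a fractional ideal in $\mathbb C(z)$. Over a point of $\Uppsi(T_{\mu_1}\caltimes\cdots\caltimes T_{\mu_n})$ this ideal is explicitly $(z-x_1)^{\langle\lambda^\vee,\mu_1\rangle}\cdots(z-x_n)^{\langle\lambda^\vee,\mu_n\rangle}\mathbb C[z]$, and the codimension of this ideal inside a fixed truncation is upper semicontinuous in the family. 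This gives $\langle\lambda^\vee,\nu_1+\cdots+\nu_n\rangle\geq\langle\lambda^\vee,\mu_1+\cdots+\mu_n\rangle$ for every dominant $\lambda^\vee$, hence the dominance inequality, with no need to control how semi-infinite strata degenerate in the Beilinson--Drinfeld Grassmannian. If you want to salvage your geometric route, you would need to first establish the closure relation for $\dot T_\mu$; as it stands, the key step of the proof is asserted rather than shown.
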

\begin{proof}
Given a tuple $\bm\zeta=(\zeta_1,\ldots,\zeta_n)$ in
$(\Lambda/\mathbb Z\Phi)^n$, we set
$$\BDConv_{n,\bm\zeta}=\bigsqcup_{\substack{\bm\lambda\in(\Lambda^+)^n\\[2pt]
\lambda_1\in\zeta_1,\,\ldots,\,\lambda_n\in\zeta_n}}\BDConv_n^{\bm\lambda}.$$
From equation~\eqref{eq:ClosureStratBD}, we deduce that each
$\BDConv_{n,\bm\zeta}$ is closed and connected in the ind-topology;
as these subsets form a finite partition of the space $\BDConv_n$,
they are its connected components. We easily verify that a subset of
the form $\Uppsi\bigl(T_{\mu_1}\caltimes\cdots\caltimes T_{\mu_n}\bigr)$
is contained in $\BDConv_{n,\bm\zeta}$ if each $\zeta_j$ is the coset
of $\mu_j$ modulo $\mathbb Z\Phi$. Therefore, a necessary condition for
$\Uppsi\bigl(T_{\nu_1}\caltimes\cdots\caltimes T_{\nu_n}\bigr)$
to meet the closure of\/ $\mathcal S\cap\Uppsi\bigl(T_{\mu_1}\caltimes
\cdots\caltimes T_{\mu_n}\bigr)$ is that $\mu_j-\nu_j\in\mathbb Z\Phi$
for each $j\in\{1,\ldots,n\}$.

Let $\lambda^\vee\in\Hom_{\mathbb Z}(\Lambda,\mathbb Z)$ be a dominant
integral weight for the group $G^\vee$ and let $V$ be the finite
dimensional irreducible representation of $G^\vee$ of highest weight
$\lambda^\vee$. Then $G^\vee(\mathbb C(z))$ acts on
$V\otimes\mathbb C(z)$. The standard lattice $L_0=V\otimes\mathbb C[z]$
is left stable by $G^\vee(\mathbb C[z])$.

We choose a nonzero linear form $p:V\to\mathbb C$ that vanishes
on all weight subspaces of $V$ but the highest weight subspace.
Extending the scalars, we regard $p$ as a linear form
$V\otimes\mathbb C(z)\to\mathbb C(z)$.

For $\mathbf x=(x_1,\ldots,x_n)$ in $\mathbb C^n$, we set
$f_{\mathbf x}=(z-x_1)\cdots(z-x_n)$. Let $\mathcal S$ be
a stratum for the ind-structure of $\BDConv_n$.
There exists a positive integer $k$ such that
$f_{\mathbf x}^kL_0\subseteq\beta_1\ldots\beta_n(L_0)\subseteq
f_{\mathbf x}^{-k}L_0$ for each
$(x_1,\ldots,x_n;[\beta_1,\ldots,\beta_n])\in\mathcal S$.

Now we take $(\mu_1,\ldots,\mu_n)\in\Lambda^n$ and
$(x_1,\ldots,x_n;[\beta_1,\ldots,\beta_n])$ in $\mathcal S\cap
\Uppsi\bigl(T_{\mu_1}\caltimes\cdots\caltimes T_{\mu_n}\bigr)$.
Then $p(\beta_1\ldots\beta_n(L_0))$ is the fractional ideal
$$(z-x_1)^{\langle\lambda^\vee,\mu_1\rangle}\cdots
(z-x_n)^{\langle\lambda^\vee,\mu_n\rangle}\,\mathbb C[z],$$
and therefore
$$\dim\bigl(p(\beta_1\ldots\beta_n(L_0))\kern1pt/\kern1pt
f_{\mathbf x}^k\,\mathbb C[z]\bigr)=
kn-\bigl\langle\lambda^\vee,\mu_1+\cdots+\mu_n\bigr\rangle.$$

If the point $(x_1,\ldots,x_n;[\beta_1,\ldots,\beta_n])$ degenerates to
$$(y_1,\ldots,y_n;[\gamma_1,\ldots,\gamma_n])\in
\Uppsi\bigl(T_{\nu_1}\caltimes\cdots\caltimes T_{\nu_n}\bigr),$$
then
$$\dim\bigl(p(\gamma_1\ldots\gamma_n(L_0))\kern1pt/\kern1pt
f_{\mathbf y}^k\,\mathbb C[z]\bigr)\leq
\dim\bigl(p(\beta_1\ldots\beta_n(L_0))\kern1pt/\kern1pt
f_{\mathbf x}^k\,\mathbb C[z]\bigr)$$
which translates to
$$\bigl\langle\lambda^\vee,\nu_1+\cdots+\nu_n\bigr\rangle\geq
\bigl\langle\lambda^\vee,\mu_1+\cdots+\mu_n\bigr\rangle.$$
This inequation holds for any dominant coweight $\lambda^\vee$,
hence $\nu_1+\cdots+\nu_n\geq\mu_1+\cdots+\mu_n$.

This proves the last inequality among those announced in the statement.
The other ones can be shown in a similar way, by taking the image
under the obvious truncation map $\BDConv_n\to\BDConv_j$ for each
$j\in\{1,\ldots,n\}$.
\end{proof}

\begin{corollary}
\label{co:TriangWeight}
Adopt the setup of Theorem~\ref{th:TransMat}. Let
$(\mu_1,\ldots,\mu_n)$ and $(\nu_1,\ldots,\nu_n)$ in $\Lambda^n$
be such that $\mathbf Z'\in\mathscr Z(\lambda_1)_{\nu_1}
\times\cdots\times\,\mathscr Z(\lambda_n)_{\nu_n}$ and
$\mathbf Z''\in\mathscr Z(\lambda_1)_{\mu_1}
\times\cdots\times\,\mathscr Z(\lambda_n)_{\mu_n}$.
A necessary condition for $a_{\mathbf Z',\mathbf Z''}\neq0$ is that
$$\nu_1\geq\mu_1,\quad\nu_1+\nu_2\geq\mu_1+\mu_2,\quad\ldots,\quad
\nu_1+\cdots+\nu_{n-1}\geq\mu_1+\cdots+\mu_{n-1}.$$
\end{corollary}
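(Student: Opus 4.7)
The plan is to reduce the corollary directly to Proposition~\ref{pr:HoroClosure}, using Theorem~\ref{th:TransMat} as the bridge between the intersection multiplicity $a_{\mathbf Z',\mathbf Z''}$ and a set-theoretic containment. By that theorem, $a_{\mathbf Z',\mathbf Z''}$ is the multiplicity of the irreducible subvariety $\mathcal Y(\mathbf Z')$ inside the intersection product $\mathcal X(\mathbf Z'') \cdot (\BDConv_n^{\bm\lambda})|_\Delta$ computed in the smooth ambient space $\BDConv_n^{\bm\lambda}$. A nonzero multiplicity forces $\mathcal Y(\mathbf Z')$ to be a top-dimensional irreducible component of the set-theoretic intersection $\mathcal X(\mathbf Z'') \cap (\BDConv_n^{\bm\lambda})|_\Delta$; in particular $\mathcal Y(\mathbf Z') \subseteq \mathcal X(\mathbf Z'')$.

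Next I would locate $\mathcal Y(\mathbf Z')$ and $\mathcal X(\mathbf Z'')$ inside the appropriate $\Uppsi$-strata. By definition, $\mathcal X(\mathbf Z'')$ is the closure in $\dot T_\mu$ of $\Uppsi(Z''_1 \caltimes \cdots \caltimes Z''_n)$ intersected with the open stratum $\BDConv_n^{\bm\lambda}$; since $\Uppsi(Z''_1 \caltimes \cdots \caltimes Z''_n) \cap \BDConv_n^{\bm\lambda}$ is a dense subset of $\mathcal X(\mathbf Z'')$ lying inside $\BDConv_n^{\bm\lambda} \cap \Uppsi(T_{\mu_1} \caltimes \cdots \caltimes T_{\mu_n})$, we deduce
$$\mathcal X(\mathbf Z'') \subseteq \overline{\BDConv_n^{\bm\lambda} \cap \Uppsi(T_{\mu_1} \caltimes \cdots \caltimes T_{\mu_n})}.$$
On the other hand, unwinding the isomorphism $\BDConv_n|_\Delta \cong \Delta \times \Gr_n$, one checks via Proposition~\ref{pr:FibBDConv} that the fiber of $\Uppsi(T_{\nu_1} \caltimes \cdots \caltimes T_{\nu_n})$ over each diagonal point $(x,\ldots,x)$ matches $\Psi(T_{\nu_1} \ltimes \cdots \ltimes T_{\nu_n})$ under the identification $z\mapsto z-x$. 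Since $\mathcal Y(\mathbf Z')$ was defined as the preimage of $\Delta \times (\mathbf Z' \cap \Gr_n^{\bm\lambda})$ and $\mathbf Z' \cap \Gr_n^{\bm\lambda} \subseteq \Psi(T_{\nu_1} \ltimes \cdots \ltimes T_{\nu_n})$ by the bijection~\eqref{eq:BijMVCyc}, this gives $\mathcal Y(\mathbf Z') \subseteq \Uppsi(T_{\nu_1} \caltimes \cdots \caltimes T_{\nu_n})$.

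Combining the two locations, the assumption $a_{\mathbf Z',\mathbf Z''} \neq 0$ forces $\Uppsi(T_{\nu_1} \caltimes \cdots \caltimes T_{\nu_n})$ to meet the closure of $\BDConv_n^{\bm\lambda} \cap \Uppsi(T_{\mu_1} \caltimes \cdots \caltimes T_{\mu_n})$. Proposition~\ref{pr:HoroClosure}, invoked with $\mathcal S = \BDConv_n^{\bm\lambda}$, then delivers the partial-sum inequalities $\nu_1 + \cdots + \nu_j \geq \mu_1 + \cdots + \mu_j$ for every $j \in \{1,\ldots,n\}$. The case $j = n$ is automatically an equality because both $\mathbf Z'$ and $\mathbf Z''$ lie in the same weight space $\mathscr Z(\bm\lambda)_\mu$, leaving exactly the $n-1$ non-trivial inequalities claimed. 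The chief bookkeeping hurdle I anticipate is the fiber-over-diagonal identification in the second step, which ultimately reduces to the bijection~\eqref{eq:UniPolyLoop} between polynomial and formal-loop presentations of $N^\vee$ orbits (applied to the opposite unipotent radical, since we work with $T$ rather than $S$); once that is in hand, the remainder of the argument is a mechanical assembly of the preceding results.
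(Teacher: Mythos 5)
Your argument is correct and fills in exactly the deduction the paper leaves implicit (the corollary is stated right after Proposition~\ref{pr:HoroClosure} with no separate proof). Two small imprecisions worth noting: the $\mathcal S$ in Proposition~\ref{pr:HoroClosure} must be a stratum (a closed subvariety) of the ind-structure, so you should take $\mathcal S$ to be a stratum containing $\overline{\BDConv_n^{\bm\lambda}}$ rather than the locally closed set $\BDConv_n^{\bm\lambda}$ itself; and the closure process defining $\mathbf Z'$ inside $(m_n)^{-1}(T_\mu)$ can leak out of $\Psi(T_{\nu_1}\ltimes\cdots\ltimes T_{\nu_n})$, so the containment $\mathcal Y(\mathbf Z')\subseteq\Uppsi(T_{\nu_1}\caltimes\cdots\caltimes T_{\nu_n})$ is only valid on the open dense part of $\mathcal Y(\mathbf Z')$ coming from $\Psi(Z'_1\ltimes\cdots\ltimes Z'_n)\cap\Gr_n^{\bm\lambda}$ — which is all you actually need, since Proposition~\ref{pr:HoroClosure} only requires the intersection to be nonempty.
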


We can obtain more stringent conditions regarding the transition matrix
by looking at the associativity properties from sect.~\ref{ss:Factor}.
The sharpest result is obtained with a composition $(n_1,n_2)$ of $n$ in
two parts. Accordingly, we write $\bm\lambda$ as a concatenation
$(\bm\lambda_{(1)},\bm\lambda_{(2)})$ and similarly write each
$\mathbf Z\in\mathscr Z(\bm\lambda)$ as
$(\mathbf Z_{(1)},\mathbf Z_{(2)})$. Here $\mathbf Z_{(1)}$ is an element
in $\mathscr Z(\lambda_1)\times\cdots\times\mathscr Z(\lambda_{n_1})$,
but owing to the bijection~\eqref{eq:BijMVCyc} it can also be regarded
as a cycle in $\overline{\Gr_{n_1}^{\bm\lambda_{(1)}}}$.

\begin{theorem}
\label{th:TriangIncl}
Let $(\mathbf Z',\mathbf Z'')\in(\mathscr Z(\bm\lambda)_\mu)^2$.
Consider the expansion
$$\bigl\langle\mathbf Z''_{(1)}\bigr\rangle\otimes
\bigl\langle\mathbf Z''_{(2)}\bigr\rangle=\sum_{\mathbf Z\in\mathscr Z
(\bm\lambda)_\mu}b_{\mathbf Z,\mathbf Z''}\;\langle\mathbf Z\rangle.$$
If $b_{\mathbf Z',\mathbf Z''}\neq0$, then either
$\mathbf Z'=\mathbf Z''$ or
$\mathbf Z'_{(1)}\subsetneq\overline{\mathbf Z''_{(1)}}$
as cycles in $\overline{\Gr_{n_1}^{\bm\lambda_{(1)}}}$.
In addition, $b_{\mathbf Z'',\mathbf Z''}=1$.
\end{theorem}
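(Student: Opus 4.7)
\emph{Proof plan.} By Proposition~\ref{pr:Factor}, the coefficient $b_{\mathbf Z',\mathbf Z''}$ is the multiplicity of $\mathcal Y(\mathbf Z')$ in the intersection product $\mathcal X(\mathbf Z'',\mathbf n)\cdot\bigl(\BDConv_n^{\bm\lambda}\bigr)\bigl|_\Delta$ computed in the ambient space $\BDConv_n^{\bm\lambda}\bigl|_{\Delta_{\mathbf n}}$, with $\mathbf n=(n_1,n_2)$ and $\Delta_{\mathbf n}$ parametrized by $(y_1,y_2)\in\mathbb A^2$. A routine dimension count shows that $\mathcal Y(\mathbf Z')$ has the expected dimension of a component of this intersection, so a nonzero multiplicity forces the set-theoretic inclusion $\mathcal Y(\mathbf Z')\subseteq\mathcal X(\mathbf Z'',\mathbf n)\cap\bigl(\BDConv_n^{\bm\lambda}\bigr)\bigl|_\Delta$; I will analyze this inclusion through the truncation morphism
$$\pi_{(1)}:\BDConv_n\bigl|_{\Delta_{\mathbf n}}\to\BDConv_{n_1}\bigl|_{\Delta_{(n_1)}}\cong\mathbb A^1_{y_1}\times\Gr_{n_1},\qquad(y_1,y_2;[\beta_1,\ldots,\beta_n])\mapsto(y_1;[\beta_1,\ldots,\beta_{n_1}]).$$

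The first cluster only involves $y_1$, so the image of $\mathcal X(\mathbf Z'',\mathbf n)$ sits inside $\mathbb A^1_{y_1}\times\mathbf Z''_{(1)}$, while using $\Gr_n\cong\Gr_{n_1}\widetilde\times\Gr_{n_2}$ and the description $\mathbf Z'=\overline{\Psi(\mathbf Z'_{(1)}\ltimes\mathbf Z'_{(2)})}$ one sees that $\pi_{(1)}(\mathcal Y(\mathbf Z'))$ is dense in $\Delta\times\mathbf Z'_{(1)}$. Passing to closures yields $\mathbf Z'_{(1)}\subseteq\mathbf Z''_{(1)}$. If this inclusion is strict we obtain the second alternative in the statement; so from now on I assume $\mathbf Z'_{(1)}=\mathbf Z''_{(1)}$, which in particular forces $\mathbf Z'_{(2)}$ and $\mathbf Z''_{(2)}$ to lie in the same $\mathscr Z(\bm\lambda_{(2)})_{\mu_{(2)}}$.

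To treat this equality case I will pick a generic point $v_1^\ast\in\mathbf Z''_{(1)}\cap\Gr_{n_1}^{\bm\lambda_{(1)}}$ and study the $\pi_{(1)}$-fiber above $(y_1,v_1^\ast)$. Choosing a local lift of $v_1^\ast$ in $G^\vee(\mathcal K)$ allows one to identify this fiber canonically with $\BDConv_{n_2}^{\bm\lambda_{(2)}}\bigl|_{\Delta_{(n_2)}}\cong\mathbb A^1_{y_2}\times\Gr_{n_2}^{\bm\lambda_{(2)}}$, using the factorization $\BDConv_n\bigl|_U\cong\BDConv_{n_1}\times\BDConv_{n_2}\bigl|_U$ on $\{y_2\neq y_1\}$ and the truncation fiber identification on $\{y_2=y_1\}$. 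In this trivialization, the fiber of $\mathcal X(\mathbf Z'',\mathbf n)$ is the closure of $\{y_2\neq y_1\}\times\mathbf Z''_{(2)}$, namely $\mathbb A^1_{y_2}\times\mathbf Z''_{(2)}$, while the fiber of $\mathcal Y(\mathbf Z')$ is $\{y_1\}\times\mathbf Z'_{(2)}$. The inclusion restricts to $\mathbf Z'_{(2)}\subseteq\mathbf Z''_{(2)}$; since both are irreducible components of $\overline{\Gr_{n_2}^{\bm\lambda_{(2)}}}\cap(m_{n_2})^{-1}(T_{\mu_{(2)}})$, they must coincide, hence $\mathbf Z'=\mathbf Z''$. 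For the last assertion $b_{\mathbf Z'',\mathbf Z''}=1$ I will follow the slice-theorem computation of Proposition~\ref{pr:DiagOne}: choose $\phi_j:U_j\to G^\vee(\mathbb C[z,z^{-1}])$ mapping $U_j$ isomorphically onto a dense open of $\mathbf Z''_{(j)}$ in $\Gr_{n_j}^{\bm\lambda_{(j)}}$, and form the open embedding
$$\upphi:\mathbb A^2\times U_1\times U_2\to\BDConv_n^{\bm\lambda}\bigl|_{\Delta_{\mathbf n}},\qquad(y_1,y_2;u_1,u_2)\mapsto\bigl(y_1,y_2;[\phi_1(u_1)_{|y_1},\phi_2(u_2)_{|y_2}]\bigr);$$
in these coordinates $\mathcal X(\mathbf Z'',\mathbf n)$ is the product $\mathbb A^2\times(U_1\cap\mathbf Z''_{(1)})\times(U_2\cap\mathbf Z''_{(2)})$ and $\bigl(\BDConv_n^{\bm\lambda}\bigr)\bigl|_\Delta$ is cut out by the hyperplane $\{y_1=y_2\}$, so the intersection is transverse with multiplicity one and corresponds to $\mathcal Y(\mathbf Z'')$.

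The main obstacle will be the rigorous verification that the $\pi_{(1)}$-fiber structure, together with the restriction of $\mathcal X(\mathbf Z'',\mathbf n)$ to such a fiber, is really the clean product described above: because the factorization $\BDConv_n\bigl|_U\cong\BDConv_{n_1}\times\BDConv_{n_2}\bigl|_U$ does not extend across the diagonal, one must argue separately that the truncation $[\beta_1,\ldots,\beta_n]\mapsto[\beta_1,\ldots,\beta_{n_1}]$ is nevertheless a locally trivial $\BDConv_{n_2}$-bundle and that the specialization of $\mathbf Z''_{(1)}\times\mathbf Z''_{(2)}$ to the diagonal restricts fiberwise to $\mathbf Z''_{(2)}$. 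Once this relative geometry has been set up, the rest of the argument reduces to a comparison of dimensions and to the fact that distinct MV cycles of the same weight are disjoint irreducible components.
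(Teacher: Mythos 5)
The overall skeleton of your proposal matches the paper: use Proposition~\ref{pr:Factor} to express $b_{\mathbf Z',\mathbf Z''}$ as an intersection multiplicity, observe that nonvanishing of the multiplicity forces the set-theoretic containment $\mathcal Y(\mathbf Z')\subseteq\mathcal X(\mathbf Z'',(n_1,n_2))$, and then extract the two inclusions on cycles. For the first inclusion $\mathbf Z'_{(1)}\subseteq\overline{\mathbf Z''_{(1)}}$, your route through the truncation $\pi_{(1)}$ and closures is a legitimate variant; the paper instead picks an explicit dense parametrization $\upphi$ of $\Uppsi(Z''_1\caltimes\cdots\caltimes Z''_n)$ coming from the gallery model and works with analytic limits of sequences, but both arguments deliver the same inclusion (modulo being careful that the relevant set is $\overline{\mathbf Z''_{(1)}}$ and not $\mathbf Z''_{(1)}$ itself, since an MV cycle is only closed inside $(m_{n_1})^{-1}(T_{\mu_{(1)}})$). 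The step $b_{\mathbf Z'',\mathbf Z''}=1$ via a slice-theorem chart, mirroring Proposition~\ref{pr:DiagOne}, is also in line with the paper.

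The genuine gap is in your treatment of the equality case $\mathbf Z'_{(1)}=\mathbf Z''_{(1)}$. You assert that, over a generic $(y_1,v_1^\ast)$ with $v_1^\ast\in\mathbf Z''_{(1)}\cap\Gr_{n_1}^{\bm\lambda_{(1)}}$, the $\pi_{(1)}$-fiber of $\mathcal X(\mathbf Z'',\mathbf n)$ equals $\mathbb A^1_{y_2}\times\mathbf Z''_{(2)}$, and you flag as ``the main obstacle'' the fact that the factorization $\BDConv_n\bigl|_U\cong\BDConv_{n_1}\times\BDConv_{n_2}\bigl|_U$ does not extend across the diagonal. But flagging the obstacle is not the same as overcoming it: the claim that the fiber of the closure is the closure of the fiber is exactly the nontrivial content of the proof, and local triviality of the bundle $\pi_{(1)}$ alone does not give it, because taking closures need not commute with restricting to a fiber. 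The paper's argument supplies the missing input analytically: it decomposes the ``first-cluster factor'' $B_p=z^{-\mu_{(1)}}\phi_1(\mathbf a_{1,p})z^{\mu_1}\cdots\phi_{n_1}(\mathbf a_{n_1,p})z^{\mu_{n_1}}$ as $B_{-,p}B_{+,p}$ with $B_{-,p}\in K$ and $B_{+,p}\in N^{-,\vee}(\mathbb C[z])$, and uses the equality $\mathbf Z'_{(1)}=\mathbf Z''_{(1)}$ together with the convergence $\bigl[z^{\mu_{(1)}}B_p\bigr]\to L_{\mu_{(1)}}$ to show $B_{-,p}\to 1$; only then can one peel off the second cluster and conclude $\mathbf Z'_{(2)}\subseteq\overline{\mathbf Z''_{(2)}}$. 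That the ``twist'' on the second cluster by $B_{+,p}$ stays in $N^{-,\vee}(\mathbb C[z])$, and hence is harmless, is precisely the geometric content you would need to prove in order to justify that your fiber of the closure is the clean product $\mathbb A^1_{y_2}\times\overline{\mathbf Z''_{(2)}}$. As written, your argument does not supply a substitute, so this step remains unproven.
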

\begin{proof}
Let $\mathbf Z'=(Z'_1,\ldots,Z'_n)$ and $\mathbf Z''=(Z''_1,\ldots,Z''_n)$
in $\mathscr Z(\bm\lambda)_\mu$.

For $j\in\{1,\ldots,n\}$, let $\mu_j$ be the weight such that
$Z''_j\in\mathscr Z(\lambda_j)_{\mu_j}$. Using the gallery models
from~\cite{GaussentLittelmann} (or Theorem~\ref{th:ZIsMVCyc} and
Proposition~\ref{pr:AltDefRingZ} above), we find a nonnegative
integer $d_j$ and construct a map $\phi_j:\mathbb C^{d_j}\to
N^{-,\vee}\bigl(\mathbb C\bigl[z,z^{-1}\bigr]\bigr)$ such that
$\bigl\{\bigl[\phi_j(\mathbf a)\,z^{\mu_j}\bigr]\bigm|
\mathbf a\in\mathbb C^{d_j}\bigr\}$ is a dense subset of~$Z''_j$. Then
$$\upphi:(\mathbf x;\mathbf a_1,\ldots,\mathbf a_n)\mapsto\Bigl(\mathbf x;
\Bigl[\phi_1(\mathbf a_1)_{|x_1}\,(z-x_1)^{\mu_1},\;\ldots,\;
\phi_n(\mathbf a_n)_{|x_n}\,(z-x_n)^{\mu_n}\Bigr]\Bigr)$$
maps $\mathbb C^n\times\mathbb C^{d_1}\times\cdots\times\mathbb C^{d_n}$
onto a dense subset of $\Uppsi(Z''_1\caltimes\cdots\caltimes Z''_n)$,
where $\phi_j(\mathbf a_j)_{|x_j}$ means the result of
substituting $z-x_j$ for $z$ in $\phi_j(\mathbf a_j)$.

Assume that $b_{\mathbf Z',\mathbf Z''}\neq0$. By Proposition
\ref{pr:Factor}, $\mathcal Y(\mathbf Z')$ is contained in
$\mathcal X(\mathbf Z'',(n_1,n_2))$, hence in the closure of
$\Uppsi(Z''_1\caltimes\cdots\caltimes Z''_n)\bigl|_{\Delta_{(n_1,n_2)}}$.

Take a point in $\mathbf Z'_{(1)}\cap\Gr_{n_1}^{\bm\lambda_{(1)}}$,
written as $[g_1,\ldots,g_{n_1}]$ where each $g_j$ is in
$G^\vee\bigl(\mathbb C\bigl[z,z^{-1}\bigr]\bigr)$.
We can complete this datum to get an element
$$\Gamma=\bigl(0,\ldots,0;\bigl[g_1,\ldots,g_n\bigr]\bigr)$$
of $\mathcal Y(\mathbf Z')$. Working in the analytic topology for
expositional simplicity, we see that $\Gamma$ is the limit of a
sequence $(\upphi(\mathbf x_p;\mathbf a_{1,p},\ldots,\mathbf a_{n,p})
)_{p\in\mathbb N}$ with $\mathbf x_p\in\Delta_{(n_1,n_2)}$ and
$(\mathbf a_{1,p},\ldots,\mathbf a_{n,p})\in\mathbb C^{d_1}\times\cdots
\times\mathbb C^{d_n}$. We write
\begin{equation}
\label{eq:PfTriangIncl1}
\mathbf x_p=(\underbrace{x_{1,p},\ldots,x_{1,p}}_{n_1\text{ times}},
\underbrace{x_{2,p},\ldots,x_{2,p}}_{n_2\text{ times}})\quad\text{ with
of course }\quad\lim_{p\to\infty}x_{1,p}=\lim_{p\to\infty}x_{2,p}=0.
\end{equation}
Then
\begin{align*}
[g_1,\ldots,g_{n_1}]
&=\lim_{p\to\infty}\bigl[\phi_1(\mathbf a_{1,p})\,z^{\mu_1},\,\ldots,\,
\phi_{n_1}(\mathbf a_{1,n_1})\,z^{\mu_{n_1}}\bigr]_{|x_{1,p}}\\[4pt]
&=\lim_{p\to\infty}\bigl[\phi_1(\mathbf a_{1,p})\,z^{\mu_1},\,\ldots,\,
\phi_{n_1}(\mathbf a_{1,n_1})\,z^{\mu_{n_1}}\bigr]
\end{align*}
is the limit of a sequence of points in $\mathbf Z''_{(1)}$.
Therefore $\mathbf Z'_{(1)}\cap\Gr_{n_1}^{\bm\lambda_{(1)}}\subseteq
\overline{\mathbf Z''_{(1)}}$, whence the inclusion
$\mathbf Z'_{(1)}\subseteq\overline{\mathbf Z''_{(1)}}$.

In addition to $b_{\mathbf Z',\mathbf Z''}\neq0$, assume that the latter
inclusion is an equality. Then $\mathbf Z'_{(1)}=\mathbf Z''_{(1)}$
because these two MV cycles are irreducible components of the same
$\overline{\Gr_{n_1}^{\bm\lambda_{(1)}}}\cap(m_{n_1})^{-1}(T_{\mu_{(1)}})$,
with indeed $\mu_{(1)}=\mu_1+\cdots+\mu_{n_1}$. We regard
$\mathbf Z'_{(2)}$ and $\mathbf Z''_{(2)}$ as cycles in
$\overline{\Gr_{n_2}^{\bm\lambda_{(2)}}}$. Take a point in
$\mathbf Z'_{(2)}\cap\Gr_{n_2}^{\bm\lambda_{(2)}}$, written as
$[g_{n_1+1},\ldots,g_n]$ where each $g_j$ is in
$G^\vee\bigl(\mathbb C\bigl[z,z^{-1}\bigr]\bigr)$. We can then
look at the element
$$\Gamma=\bigl(0,\ldots,0;\bigl[z^{\mu_1},\ldots,z^{\mu_{n_1}},
g_{n_1+1},\ldots,g_n\bigr]\bigr)$$
of $\mathcal Y(\mathbf Z')$. Again $\Gamma$ is the limit
of a sequence $(\upphi(\mathbf x_p;\mathbf a_{1,p},\ldots,
\mathbf a_{n,p}))_{p\in\mathbb N}$ with $\mathbf x_p\in\Delta_{(n_1,n_2)}$
and $(\mathbf a_{1,p},\ldots,\mathbf a_{n,p})\in\mathbb C^{d_1}\times
\cdots\times\mathbb C^{d_n}$. We set
$$B_p=z^{-\mu_{(1)}}\;\phi_1(\mathbf a_{1,p})\,z^{\mu_1}\;
\cdots\;\phi_{n_1}(\mathbf a_{n_1,p})\,z^{\mu_{n_1}}.$$
Writing again \eqref{eq:PfTriangIncl1}, we have
\begin{equation}
\label{eq:PfTriangIncl2}
L_{\mu_{(1)}}=\lim_{p\to\infty}\bigl[z^{\mu_{(1)}}B_p\bigr]
_{|x_{1,p}}=\lim_{p\to\infty}\bigl[z^{\mu_{(1)}}B_p\bigr]
\end{equation}
and
\begin{equation}
\label{eq:PfTriangIncl3}
\begin{split}
z^{\mu_{(1)}}[g_{n_1+1},\ldots,g_n]=&\\
\lim_{p\to\infty}&\,(z^{\mu_{(1)}}B_p)_{|x_{1,p}}\,
\bigl[\phi_{n_1+1}(\mathbf a_{n_1+1,p})\,z^{\mu_{n_1+1}},\;
\ldots,\;\phi_n(\mathbf a_{n,p})\,z^{\mu_n}\bigr]
_{|x_{2,p}}.
\end{split}
\end{equation}
Let $K$ be the kernel of the evaluation map
$N^{-,\vee}\bigl(\mathbb C\bigl[z^{-1}\bigr]\bigr)
\to N^{-,\vee}(\mathbb C)$ at $z=\infty$.
The multi\-plication induces a bijection
$$K\times N^{-,\vee}(\mathbb C[z])\xrightarrow\simeq
N^{-,\vee}\bigl(\mathbb C\bigl[z,z^{-1}\bigr]\bigr).$$
We decompose $B_p$ as a product $B_{-,p}B_{+,p}$ according to
this bijection. Using~\eqref{eq:PfTriangIncl2} and identifying the
ind-variety $T_0$ with $K$, we obtain that $B_{-,p}$ tends to one
when $p$ goes to infinity. Inserting this information
in~\eqref{eq:PfTriangIncl3}, we obtain
$$[g_{n_1+1},\ldots,g_n]=\lim_{p\to\infty}B_{+,p}\,
\bigl[\phi_{n_1+1}(\mathbf a_{n_1+1,p})\,z^{\mu_{n_1+1}},\;
\ldots,\;\phi_n(\mathbf a_{n,p})\,z^{\mu_n}\bigr],$$
so $[g_{n_1+1},\ldots,g_n]$ is the limit of a sequence
of points in $\mathbf Z''_{(2)}$. We conclude that
$\mathbf Z'_{(2)}\subseteq\overline{\mathbf Z''_{(2)}}$, and
since these two cycles have the same dimension, that actually
$\mathbf Z'_{(2)}=\mathbf Z''_{(2)}$.

To sum up: if $b_{\mathbf Z',\mathbf Z''}\neq0$, then
$\mathbf Z'_{(1)}\subseteq\overline{\mathbf Z''_{(1)}}$, and in
case of equality $\mathbf Z'_{(1)}=\mathbf Z''_{(1)}$, we
additionally have $\mathbf Z'_{(2)}=\mathbf Z''_{(2)}$.
This proves the first statement in the theorem. The second one is
proved in the same manner as Proposition~\ref{pr:DiagOne}.
\end{proof}

\begin{other}{Remark}
\label{rk:TriangStrict}
Using Theorem~\ref{th:TriangIncl}, one easily sharpens
Corollary~\ref{co:TriangWeight}: with the notation of the latter,
if $a_{\mathbf Z',\mathbf Z''}\neq0$, then either
$\mathbf Z'=\mathbf Z''$ or one of the displayed inequalities
is strict. The proof is left to the reader.
\end{other}

\textit{Application to standard monomial theory.}\\[2pt]
Let $\lambda\in\Lambda^+$ and let $\ell\subseteq V(\lambda)^*$ be the line
spanned by the highest weight vectors. The group $G$ acts on the projective
space $\mathbb P(V(\lambda)^*)$; let $Q$ be the stabilizer of $\ell$, a
parabolic subgroup of $G$. The map $g\mapsto g\ell$ induces an embedding
of the partial flag variety $X=G/Q$ in $\mathbb P(V(\lambda)^*)$. We denote
by $\mathscr L$ the pull-back of the line bundle $\mathscr O(1)$ by this
embedding; then the homogeneous coordinate ring of $X$ is
$$R_\lambda=\bigoplus_{m\geq0}H^0\bigl(X,\mathscr L^{\otimes m}\bigr);$$
here $H^0\bigl(X,\mathscr L^{\otimes m}\bigr)$ is isomorphic to
$V(m\lambda)$ and the multiplication in $R_\lambda$ is given by the
projection onto the Cartan component
$$V(m\lambda)\otimes V(n\lambda)\to V((m+n)\lambda).$$
The algebra $R_\lambda$ is endowed with an MV basis, obtained by gathering
the MV bases of the summands $V(m\lambda)$.

Each MV cycle $Z\in\mathscr Z(\lambda)$ defines a basis element
$\langle Z\rangle\in V(\lambda)$. Given an $m$-tuple
$\mathbf Z=(Z_1,\ldots,Z_m)$ of elements of $\mathscr Z(\lambda)$,
the product $\langle Z_1\rangle\cdots \langle Z_m\rangle$ in the
algebra $R_\lambda$ is the image of $\dbllangle\mathbf Z\dblrangle=
\langle Z_1\rangle\otimes\cdots\otimes\langle Z_m\rangle$ under
the projection $V(\lambda)^{\otimes m}\to V(m\lambda)$. This
product is called standard if $\mathbf Z$ lies in the Cartan
component of the crystal $\mathscr Z(\lambda)^{\otimes m}$.

Remark~\ref{rk:StringP2} implies that the MV basis element
$\langle\mathbf Z\rangle\in V(\lambda)^{\otimes m}$ goes, under the
projection $V(\lambda)^{\otimes m}\to V(m\lambda)$, either to an
element in the MV basis of $V(m\lambda)$ or to $0$, depending on
whether $\mathbf Z$ lies or not in the Cartan component of
$\mathscr Z(\lambda)^{\otimes m}$.

Using Corollary~\ref{co:TriangWeight} and Remark~\ref{rk:TriangStrict},
we can then endow, for each degree $m$, the Cartan component of
$\mathscr Z(\lambda)^{\otimes m}$ with an order, so that the transition
matrix expressing the standard monomials in the MV basis of $R_\lambda$
is unitriangular. In particular, the standard monomials form a basis for
the algebra $R_\lambda$ too, and straightening laws can be obtained from
Theorem~\ref{th:TransMat}.

The dual of the MV basis is compatible with the Demazure modules contained
in $V(m\lambda)^*$; this property is recorded as Remark~2.6~(ii) in
\cite{BaumannKamnitzerKnutson} but the crux of the argument is due to
Kashiwara~\cite{Kashiwara93b}. This implies that for any Schubert variety
$Y\subseteq X$, the kernel of the restriction map
$$\bigoplus_{m\geq0}H^0\bigl(X,\mathscr L^{\otimes m}\bigr)\to
\bigoplus_{m\geq0}H^0\bigl(Y,\mathscr L^{\otimes m}\bigr)$$
is spanned by a subset of the MV basis of $R_\lambda$.
Therefore the homogeneous coordinate ring of $Y$ is also endowed with
an MV basis.

These observations suggest that the MV basis could be a relevant tool
for the study of the standard monomial theory.

\subsection{A conjectural symmetry}
\label{ss:ConjSymm}
Recall the notation set up in sects.~\ref{ss:Crystals}--\ref{ss:LPerfBas}.
Given $\lambda\in\Lambda^+$, we set $\lambda^*=-w_0\lambda$, where as usual
$w_0$ denotes the longest element in the Weyl group $W$. As is well known,
there exists a unique bijection $\sigma:B(\lambda)\to B(\lambda^*)$ which
for each $i\in I$ exchanges the actions of $\tilde e_i$ and $\tilde f_i$.
In our context, we will regard $\sigma$ as a bijection
$\mathscr Z(\lambda)\to\mathscr Z(\lambda^*)$ and may define it by means
of Lemma~2.1~(e) in~\cite{Littelmann95} and Theorem~\ref{th:ZMorCrys}.

Now let $n\geq1$ and let $\bm\lambda=(\lambda_1,\ldots,\lambda_n)$ in
$(\Lambda^+)^n$. We set $\bm\lambda^*=(\lambda_n^*,\ldots,\lambda_1^*)$
and define a bijection
$$\sigma:\mathscr Z(\lambda_1)\times\cdots\times\mathscr Z(\lambda_n)
\to\mathscr Z(\lambda_n^*)\times\cdots\times\mathscr Z(\lambda_1^*)$$
by $\sigma(Z_1,\ldots,Z_n)=(\sigma(Z_n),\ldots,\sigma(Z_1))$. (Using
the same symbol $\sigma$ to denote different bijections is certainly
abusive, but adding extra indices to disambiguate would overload the
notation without clear benefit.) The Cartesian products above are in fact
tensor product of crystals, and here again $\sigma$ exchanges the actions
of $\tilde e_i$ and $\tilde f_i$ for each $i\in I$
(\cite{HenriquesKamnitzer}, Theorem~2).

Let $\mu\in\Lambda$ and choose $(\mathbf Z',\mathbf Z'')\in
(\mathscr Z(\bm\lambda)_\mu)^2$; we then obtain $\sigma(\mathbf Z')$ and
$\sigma(\mathbf Z'')$ in $\mathscr Z(\bm\lambda^*)_{-\mu}$. Recall the
notation introduced in Theorem~\ref{th:TransMat} to denote the entries
of the transition matrix between the two bases of $V(\bm\lambda)$ and
adopt a similar notation as regards $V(\bm\lambda^*)$.
\begin{other}{Conjecture}
\label{cj:ConjSymm}
The equality
$a_{\mathbf Z',\mathbf Z''}=a_{\sigma(\mathbf Z'),\sigma(\mathbf Z'')}$
holds.
\end{other}

According to \cite{Demarais}, this conjecture is true in type $A_1$.
Its general validity would have two interesting consequences.

Firstly, one could then strengthen Theorem~\ref{th:TriangIncl};
indeed $b_{\mathbf Z',\mathbf Z''}\neq0$ would imply not only
$\mathbf Z'_{(1)}\subseteq\overline{\mathbf Z''_{(1)}}$, but also
$\sigma(\mathbf Z'_{(2)})\subseteq\overline{\sigma(\mathbf Z''_{(2)})}$,
restoring the symmetry between the two tensor factors.

Secondly, the MV basis of an irreducible representation $V(\lambda)$
would then satisfy the analogue of \cite{Lusztig93}, Proposition~21.1.2.
In fact, one easily verifies that the MV basis enjoys this property
if $\lambda$ is minuscule or quasi-minuscule; our conjecture would allow
to deduce the general case by taking suitable tensor products, mimicking
the strategy of proof from~\cite{NgoPolo}.

\section{The basis on the invariant subspace}
\label{se:InvSubsp}
Let $n\geq1$ and let $\bm\lambda\in(\Lambda^+)^n$. The MV basis of
$V(\bm\lambda)$ is compatible with the isotypical filtration, hence
provides a basis of the invariant subspace $V(\bm\lambda)^G$, called
the Satake basis in \cite{FontaineKamnitzerKuperberg}. In this section
we study two properties of this basis.

\subsection{Cyclic permutations}
\label{ss:CycPerm}
Let us write $\bm\lambda=(\lambda_1,\ldots,\lambda_n)$ and consider the
rotated sequence $\bm\lambda^{[1]}=(\lambda_2,\ldots,\lambda_n,\lambda_1)$.
Thus,
$$V(\bm\lambda)=V(\lambda_1)\otimes\cdots\otimes V(\lambda_n)
\quad\text{and}\quad
V\bigl(\bm\lambda^{[1]}\bigr)=V(\lambda_2)\otimes\cdots\otimes
V(\lambda_n)\otimes V(\lambda_1).$$
The signed cyclic permutation
$$x_1\otimes\cdots\otimes x_n\mapsto(-1)^{2\rho(\lambda_1)}\;
x_2\otimes\cdots\otimes x_n\otimes x_1,$$
defines an isomorphism of $G$-modules
$R:V(\bm\lambda)\to V\bigl(\bm\lambda^{[1]}\bigr)$. In particular,
$R$ induces a linear bijection between the invariant subspaces.

\begin{theorem}
\label{th:CycPerm}
The signed cyclic permutation $R$ maps the Satake basis of\/
$V(\bm\lambda)^G$ to the Satake basis of\/
$V\bigl(\bm\lambda^{[1]}\bigr)^G$.
\end{theorem}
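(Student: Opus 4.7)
My plan is to reduce Theorem~\ref{th:CycPerm} to the commutativity constraint of Beilinson and Drinfeld's fusion product (recalled at the end of sect.~\ref{ss:RecGeomSat}) by writing the cyclic permutation as a product of $n{-}1$ adjacent transpositions, and then by exploiting the $S_n$-equivariance of the Beilinson--Drinfeld Grassmannian $\BDGr{n}$ built into its definition.

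Each adjacent transposition swapping $\lambda_j$ and $\lambda_{j+1}$ is realized at the geometric level by the corresponding transposition of coordinates on $\mathbb A^n$, which acts on $\BDGr{n}$ by its very definition. By Proposition~\ref{pr:FusProd}, the shifted direct image $(\pi\circ m_n\circ\dot t_0)_*\,(\dot t_0)^!\mathscr B(\bm\lambda)$ is a local system on~$\mathbb C^n$; since $\mathbb C^n$ is simply connected, this local system is constant, so the coordinate swap yields a canonical identification of the fibre at the diagonal for~$\bm\lambda$ with the fibre at the diagonal for the transposed sequence. Under the cycle-theoretic description of horizontal sections given by sect.~\ref{ss:GloCyc}, the global cycles $\overline{\Uppsi(Z_1\caltimes\cdots\caltimes Z_n)}\cap\BDConv_n^{\bm\lambda}$ are carried to the corresponding cycles for the transposed tuple, so this identification sends the MV basis to the MV basis. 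Passed through the monoidal equivalence $\overline F$, it coincides with the commutativity constraint of the fusion product between $\mathscr I_{\lambda_j}$ and $\mathscr I_{\lambda_{j+1}}$, which equals $(-1)^{2\rho(\lambda_j)\cdot 2\rho(\lambda_{j+1})}$ times the naive swap of tensor factors; the sign comes from the cohomological shifts $2\rho(\lambda_j)$ built into the IC normalisation and was introduced by Mirković and Vilonen precisely to render $\overline F$ into a symmetric tensor functor on $\Rep(G)$.

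Accumulating signs over the $n{-}1$ transpositions moving $\lambda_1$ to the last position produces the cumulative sign
\begin{equation*}
(-1)^{2\rho(\lambda_1)\cdot\sum_{j=2}^{n}2\rho(\lambda_j)}.
\end{equation*}
On the $G$-invariant subspace, $\sum_j\lambda_j\in\mathbb Z\Phi$; since $2\rho$ maps every simple root to~$2$, we have $\sum_j 2\rho(\lambda_j)\equiv 0\pmod 2$, hence $\sum_{j\geq 2}2\rho(\lambda_j)\equiv -2\rho(\lambda_1)\equiv 2\rho(\lambda_1)\pmod 2$. Combined with $k^2\equiv k\pmod 2$ for $k\in\mathbb Z$, the cumulative sign collapses to $(-1)^{2\rho(\lambda_1)}$, which is precisely the sign in the definition of~$R$. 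Thus $R$ agrees on invariants with the geometric cyclic identification, and therefore maps the Satake basis of $V(\bm\lambda)^G$ bijectively onto that of $V(\bm\lambda^{[1]})^G$.

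The main obstacle lies in the verification, for a single adjacent transposition, that the coordinate swap on $\BDGr{n}$ genuinely carries the global cycle $\overline{\Uppsi(Z_1\caltimes\cdots\caltimes Z_n)}$ onto $\overline{\Uppsi(Z_{\sigma(1)}\caltimes\cdots\caltimes Z_{\sigma(n)})}$ and induces, at the diagonal, the fusion braiding with the prescribed Mirković--Vilonen sign. This requires combining the factorization structure of $\BDGr{n}$, the cycle-level description of sect.~\ref{ss:GloCyc}, and a careful tracking of the IC cohomological shifts through Proposition~\ref{pr:FusProd}; the parity computation of the preceding paragraph can then be seen as the invariant-subspace shadow of the general Koszul sign governing the braiding in $\Perv(\Gr)$.
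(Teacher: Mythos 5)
Your proposal is fundamentally different from the paper's argument, and it has a genuine gap that cannot be repaired.

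The paper proves the theorem by a form of ``Lusztig's trick'': it identifies $(V(\lambda)\otimes M)^G$ with the $\lambda^*$-isotypical component $M^\circ_{\lambda^*}$ via contraction with the linear form $\eta$ dual to the lowest weight vector of $V(\lambda)$, similarly identifies $(M\otimes V(\lambda))^G$ with $M^\circ_{w_0\lambda^*}$, and then shows (Lemma~\ref{le:LusTrick}) that the signed cyclic permutation $P$ fits into a commuting square with the divided-power operator $\theta(w_0,\lambda^*)$. The basis-preserving property is then reduced to the $L$-perfectness of the MV basis and Theorem~\ref{th:TriangIncl}. This argument is tied very tightly to the \emph{first} tensor factor and to its extremal weight vectors; it does not factor through adjacent transpositions.

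Your proposal, by contrast, attempts to write the cyclic permutation as a product of $n-1$ adjacent transpositions and to prove that the fusion braiding for a single transposition carries the MV basis to the MV basis of the transposed sequence. This is exactly the place where the argument breaks. First, you explicitly defer the key verification (``the main obstacle''), so the proof is incomplete as written. Second, the claim you defer is false: the coordinate swap on $\mathbb C^n$ acts on $\BDGr n$ but does \emph{not} act on the convolution ind-scheme $\BDConv_n$, because $\BDConv_n$ carries an ordered iterated-fibration structure with $\Gr^{\lambda_1}$ as base. The global cycles $\mathcal X(\mathbf Z)$ and $\mathcal Y(\mathbf Z)$ that define the MV basis live in $\BDConv_n$, and the transition matrix of Theorem~\ref{th:TransMat} between them and the tensor-product basis is manifestly \emph{not} invariant under permuting the factors — Theorem~\ref{th:TriangIncl} makes this precise: the triangularity it exhibits is with respect to inclusion of cycles in the first block, which is asymmetric in the tensor slots. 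Third, and decisively, if your claim held for one adjacent transposition it would hold for every permutation, whereas the theorem (following Khovanov--Kuperberg's list) asserts only cyclic invariance; the whole point of the paper's argument via Lemma~\ref{le:LusTrick} is that the cyclic permutation is special because it moves the \emph{first} factor past all others and one can contract against the highest/lowest weight vectors of that factor. Your parity computation of the accumulated sign is correct as arithmetic, but it sits on top of a geometric premise that does not hold.
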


Theorem~\ref{th:CycPerm} replicates a similar result for the dual
canonical basis due to Lusztig (\cite{Lusztig93}, 28.2.9), and our
proof below mirrors Lusztig's argument. It has been proved by Fontaine,
Kamnitzer and Kuperberg in the case where all the weights $\lambda_j$
are minuscule (\cite{FontaineKamnitzerKuperberg}, Theorem~4.5).
The bijection induced by $R$ between the two Satake bases has a nice
interpretation, both in terms of crystals (see \cite{FontaineKamnitzer})
and in terms of cluster combinatorics (see \cite{GoncharovShen},
sect.~2.1.6).

The rest of this section is devoted to the proof of Theorem~\ref{th:CycPerm}.

As in sect.~\ref{se:LPerf}, we denote by $\{\alpha_i\mid i\in I\}$ the
set of simple roots and choose simple root vectors $e_i$ and $f_i$ in
the Lie algebra of $G$ of weights $\pm\alpha_i$ such that
$[e_i,f_i]=-\alpha_i^\vee$. The Weyl group $W$ is generated by the
simple reflections $s_i$ and contains a longest element $w_0$.

Given $\lambda\in\Lambda^+$ and $w\in W$, we can pick a reduced word
$(i_1,\ldots,i_\ell)$ of $w$ and form the product of divided powers
$$\theta(w,\lambda)=f_{i_1}^{(n_1)}\cdots f_{i_\ell}^{(n_\ell)},
\quad\text{where}\quad
n_j=\langle\alpha_{i_j}^\vee,s_{i_{j+1}}\cdots s_{i_\ell}\lambda\rangle.$$
This element does not depend on the choice of $(i_1,\ldots,i_\ell)$
(\cite{Lusztig93}, Proposition~28.1.2), which legitimizes the notation.
We note that $\theta(w_0,\lambda)$ acts on $V(\lambda)$ by mapping
highest weight vectors to lowest weight vectors.

We set $\lambda=\lambda_1$, the first element in the sequence
$\bm\lambda$. With the notation of sect.~\ref{ss:InterMult}, the
highest and lowest weight elements in the MV basis of $V(\lambda)$ are
$$v_\lambda=\bigl\langle\{L_\lambda\}\bigr\rangle\quad\text{and}\quad
v_{w_0\lambda}=\Bigl\langle\,\overline{\Gr^\lambda}\,\Bigr\rangle.$$
Under suitable normalizations in the geometric Satake equivalence,
these two elements are related by
$v_{w_0\lambda}=\theta(w_0,\lambda)\cdot v_\lambda$
(see \cite{BaumannKamnitzerKnutson}, Theorem~5.2 and Remark~2.10).
We define $\xi$ to be the linear form on $V(\lambda)$ such that
$\langle\xi,v_\lambda\rangle=1$ and that vanishes on all
weight subspaces of weight different from $\lambda$. Similarly,
we define $\eta$ to be the linear form on $V(\lambda)$ such that
$\langle\eta,v_{w_0\lambda}\rangle=1$ and that vanishes on all weight
subspaces of weight different from $w_0\lambda$.

Let $M$ be a representation of $G$. The assignment
$v\otimes m\mapsto(-1)^{2\rho(\lambda)}\;m\otimes v$ defines
an isomor\-phism $P:V(\lambda)\otimes M\to M\otimes V(\lambda)$.

We set $\lambda^*=-w_0\lambda$. Let $M^\circ$ be the isotypical
component of $M$ corresponding to the highest weight $\lambda^*$,
namely, the sum of all subrepresentations isomorphic to
$V(\lambda^*)$. Given a weight $\mu\in\Lambda$, we denote by
$M_\mu$ the corresponding weight subspace of $M$ and set
$M^\circ_\mu=M^\circ\cap M_\mu$. Then $M^\circ_{\lambda^*}$ is
the set of all vectors in $M_{\lambda^*}$ that are annihilated by all
the root vectors $e_i$ and $M^\circ_{w_0\lambda^*}$ is the set of
all vectors in $M_{w_0\lambda^*}$ that are annihilated by all the
root vectors~$f_i$.

\begin{lemma}
\label{le:LusTrick}
The following diagram commutes and consists of linear bijections.
\begin{equation}
\label{eq:LusTrick}
\begin{split}
\xymatrix{(V(\lambda)\otimes M)^G\ar[r]^P
\ar[d]_{\eta\otimes\id_M}&(M\otimes V(\lambda))^G
\ar[d]^{\id_M\otimes\xi}\\
M^\circ_{\lambda^*}\ar[r]_{\theta(w_0,\lambda^*)}&
M^\circ_{w_0\lambda^*}}
\end{split}
\end{equation}
\end{lemma}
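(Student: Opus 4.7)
The plan is to identify both corners in the top row of the diagram with a common space, namely $\Hom_G(V(\lambda^*),M)$, via the standard pairing $V(\lambda)\otimes V(\lambda^*)\to\mathbb{C}$. Specifically, I would introduce the canonical isomorphisms
\begin{align*}
\iota_1&:(V(\lambda)\otimes M)^G\xrightarrow{\,\simeq\,}\Hom_G(V(\lambda^*),M),\quad v\otimes m\mapsto\bigl(\varphi\mapsto\varphi(v)\,m\bigr),\\
\iota_2&:(M\otimes V(\lambda))^G\xrightarrow{\,\simeq\,}\Hom_G(V(\lambda^*),M),\quad m\otimes v\mapsto\bigl(\varphi\mapsto\varphi(v)\,m\bigr),
\end{align*}
and verify at once that $\iota_2\circ P=(-1)^{2\rho(\lambda)}\iota_1$. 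Let $v^*_{\lambda^*}\in V(\lambda^*)$ and $v^*_{w_0\lambda^*}\in V(\lambda^*)$ be the highest and lowest weight vectors of $V(\lambda^*)$, normalized so as to coincide, through the identification $V(\lambda^*)=V(\lambda)^*$, with $\eta$ and $\xi$ respectively.

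Bijectivity is then nearly immediate. Through $\iota_1$, the left vertical arrow $\eta\otimes\id_M$ becomes the evaluation map $\varphi\mapsto\varphi(v^*_{\lambda^*})$; through $\iota_2$, the right vertical arrow $\id_M\otimes\xi$ becomes $\varphi\mapsto\varphi(v^*_{w_0\lambda^*})$. Because $V(\lambda^*)$ is an irreducible $G$-module generated by either of these weight vectors, each evaluation is a linear bijection onto the corresponding highest, respectively lowest, weight piece of the $V(\lambda^*)$-isotypical component $M^\circ$. The bottom arrow $\theta(w_0,\lambda^*)$ decomposes on each $V(\lambda^*)$-summand of $M^\circ$ as the action of this element on a copy of $V(\lambda^*)$; since it sends the highest weight line to a nonzero multiple of the lowest weight line, it is bijective as well.

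For the commutativity, chase an element $v\in(V(\lambda)\otimes M)^G$ with $\varphi:=\iota_1(v)$. The right-then-down path produces $(-1)^{2\rho(\lambda)}\varphi(v^*_{w_0\lambda^*})$, whereas the down-then-right path produces $\theta(w_0,\lambda^*)\cdot\varphi(v^*_{\lambda^*})=\varphi\bigl(\theta(w_0,\lambda^*)\cdot v^*_{\lambda^*}\bigr)$, using that $\varphi$ is $G$-equivariant. Commutativity thus reduces to a single statement inside $V(\lambda^*)$:
$$
\theta(w_0,\lambda^*)\cdot v^*_{\lambda^*}=(-1)^{2\rho(\lambda)}\,v^*_{w_0\lambda^*}.
$$
This is the main technical point. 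Its verification is a sign computation in $U(\mathfrak g)$: writing $\theta(w_0,\lambda^*)=f_{i_1}^{(n_1)}\cdots f_{i_\ell}^{(n_\ell)}$, the action on $V(\lambda^*)=V(\lambda)^*$ is twisted by the anti-automorphism $x\mapsto -x$ of $U(\mathfrak g)$, so each divided-power factor $f_{i_j}^{(n_j)}$ contributes a sign $(-1)^{n_j}$. The total sign $(-1)^{n_1+\cdots+n_\ell}$ is then computed from the weight drop via
$$
n_1+\cdots+n_\ell=\rho\bigl({\textstyle\sum_j}\,n_j\alpha_{i_j}\bigr)=\rho(\lambda^*-w_0\lambda^*)=\rho(\lambda^*)+\rho(\lambda)=2\rho(\lambda),
$$
using $\rho(\alpha_i)=1$ for every simple root and $\rho\circ w_0=-\rho$. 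The main obstacle is precisely this sign bookkeeping, which is what the factor $(-1)^{2\rho(\lambda)}$ in the definition of $R$ is tailor-made to absorb; once established, the lemma follows, and the proof of Theorem~\ref{th:CycPerm} can be carried out in analogy with Lusztig's argument for the dual canonical basis.
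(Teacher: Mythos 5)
Your proof is correct and is essentially the paper's argument, repackaged: the paper reduces to $M$ simple by additivity and uses the invariant pairing $\Phi:V(\lambda)\times M\to\mathbb C$ to identify $(V(\lambda)\otimes M)^G$ with a line, which amounts to the same thing as your canonical identification of both invariant spaces with $\Hom_G(V(\lambda^*),M)$; the crux in both is the same ``standard computation'' of a sign. One small caveat about your phrasing of that computation: after writing $S(\theta(w_0,\lambda^*))=(-1)^{n_1+\cdots+n_\ell}\,f_{i_\ell}^{(n_\ell)}\cdots f_{i_1}^{(n_1)}$ (antipode reverses the order), the sign analysis alone does not finish the job — you also need that the reversed product $f_{i_\ell}^{(n_\ell)}\cdots f_{i_1}^{(n_1)}$ coincides with $\theta(w_0,\lambda)$, so that it sends $v_\lambda$ to $v_{w_0\lambda}$ on the nose. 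This follows from Lusztig's independence-of-reduced-word property (Proposition~28.1.2 in \cite{Lusztig93}) applied to the reversed reduced word $(i_\ell,\ldots,i_1)$ of $w_0$, together with the identity $s_{i_1}\cdots s_{i_{j-1}}\alpha_{i_j}^\vee=-w_0\,s_{i_\ell}\cdots s_{i_{j+1}}\alpha_{i_j}^\vee$ ensuring the divided-power exponents match; the paper hides the same verification behind the phrase ``standard computation,'' so this is a comparable, not a deeper, level of detail.
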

\begin{proof}
By additivity, we can reduce to the case where $M$ is a simple
representation. If $M$ is not isomorphic to the dual of $V(\lambda)$,
then all four spaces are zero and the statement is banal. We therefore
assume that $M\cong V(\lambda^*)$; in this case, all four spaces are
one dimensional.

Let $m_{\lambda^*}$ be a highest weight vector in $M$ and set
$m_{w_0\lambda^*}=\theta(w_0,\lambda^*)\cdot m_{\lambda^*}$.
There exists a unique $G$-invariant bilinear form
$\Phi:V(\lambda)\times M\to\mathbb C$ such that
$\Phi(v_{w_0\lambda},m_{\lambda^*})=1$. This form $\Phi$ is
non-degenerate and a standard computation gives
$\Phi(v_\lambda,m_{w_0\lambda^*})=(-1)^{2\rho(\lambda)}$.

The assignment $v\otimes m\mapsto\Phi(v,\bm?)\,m$ defines a
$G$-equivariant isomorphism $V(\lambda)\otimes M\to\End(M)$.
The preimage $x$ of $\id_M$ by this bijection spans the
vector space $(V(\lambda)\otimes M)^G$. By construction,
$(\eta\otimes\id_M)(x)=m_{\lambda^*}$ and
$(\xi\otimes\id_M)(x)=(-1)^{2\rho(\lambda)}\,m_{w_0\lambda^*}$.
Thus, both paths around the diagram map $x$ to $m_{w_0\lambda^*}$.
\end{proof}

We take $M=V(\lambda_2)\otimes\cdots\otimes V(\lambda_n)$. We define
$M^\bullet$ to be the step in the isotypical filtration of $M$ where the
component $M^\circ$ is appended to smaller ones. There is a natural
quotient map $p:M^\bullet\to M^\circ$.

We set $\mathscr M=\mathscr Z(\lambda_2)\times\cdots\times
\mathscr Z(\lambda_n)$. Using the notation introduced in
sect.~\ref{ss:InterMult}, the MV basis of $M$ consists of
elements $\langle\mathbf Z\rangle$ for $\mathbf Z\in\mathscr M$.
Let $\mathscr M^\bullet$ be the set of all $Z\in\mathscr M$ such that
$\langle\mathbf Z\rangle\in M^\bullet$; since MV bases are $L$-perfect,
$\{\langle\mathbf Z\rangle\mid\mathbf Z\in\mathscr M^\bullet\}$ is a
basis of $M^\bullet$. Let $\mathscr M^\circ$ be the set of all
$Z\in\mathscr M^\bullet$ such that $\langle\mathbf Z\rangle\notin\ker p$;
then $\{p(\langle\mathbf Z\rangle)\mid\mathbf Z\in \mathscr M^\circ\}$
is a basis of $M^\circ$. In consequence, each weight subspace of
$M^\circ$ is endowed with a basis.

As a crystal, $\mathscr M$ decomposes as the disjoint union (direct
sum) of its connected components, and $\mathscr M^\circ$ is the union
of the connected components of $\mathscr M$ that are isomorphic to
$\mathscr Z(\lambda^*)$. For each connected component
$\mathscr C\subseteq\mathscr M^\circ$, the subspace of $M^\circ$ spanned by
$B_{\mathscr C}=\{p(\langle\mathbf Z\rangle)\mid\mathbf Z\in \mathscr C\}$
is a subrepresentation isomorphic to $V(\lambda^*)$, and by
Remark~\ref{rk:StringP2}, $B_{\mathscr C}$ identifies with the MV basis
of $V(\lambda^*)$. The action of $\theta(w_0,\lambda^*)$ therefore maps
the highest weight element in $B_{\mathscr C}$ to the lowest element in
$B_{\mathscr C}$. We conclude that the bottom horizontal arrow
in~\eqref{eq:LusTrick} maps the basis of $M^\circ_{\lambda^*}$ to the
basis of~$M^\circ_{w_0\lambda^*}$.

Each element in the MV basis of $V\bigl(\bm\lambda^{[1]}\bigr)=
M\otimes V(\lambda)$ is of the form $\langle\mathbf Z\rangle$,
with $\mathbf Z$ in $\mathscr Z\bigl(\bm\lambda^{[1]}\bigr)=
\mathscr M\times\mathscr Z(\lambda)$. Let $V(\lambda)_{\neq\lambda}$
be the sum of all the weight subspaces of $V(\lambda)$ other than the
higher weight subspace. Theorem~\ref{th:TriangIncl} implies that for
each $\mathbf Z_{(1)}\in\mathscr M$,
$$\bigl\langle\mathbf Z_{(1)}\bigr\rangle\otimes
\bigl\langle\{L_\lambda\}\bigr\rangle\equiv
\bigl\langle\bigl(\mathbf Z_{(1)},\{L_\lambda\}\bigr)\bigr\rangle
\quad\bigl(\bmod\ M\otimes V(\lambda)_{\neq\lambda}\bigr).$$
Thus, for $\mathbf Z_{(1)}\in\mathscr M$ and
$\mathbf Z=\bigl(\mathbf Z_{(1)},\{L_\lambda\}\bigr)$, we have
$(\id_M\otimes\xi)(\langle\mathbf Z\rangle)=
\bigl\langle\mathbf Z_{(1)}\bigr\rangle$.

As evidenced by the crystal structure on
$\mathscr M\otimes\mathscr Z(\lambda)$, the Satake basis of
the space $(M\otimes V(\lambda))^G$ consists of the vectors
$\langle\mathbf Z\rangle$ for the pairs
$\mathbf Z=\bigl(\mathbf Z_{(1)},\{L_\lambda\}\bigr)$ such that
$\mathbf Z_{(1)}\in\mathscr M^\circ_{w_0\lambda^*}$. Noting that
$\bigl\langle\mathbf Z_{(1)}\bigr\rangle\in M^\circ_{w_0\lambda^*}$
for those $\mathbf Z_{(1)}$, we conclude that the right vertical arrow
in~\eqref{eq:LusTrick} maps basis elements to basis elements.

Similarly, the left vertical arrow
in~\eqref{eq:LusTrick} maps the Satake basis of
$(V(\lambda)\otimes M)^G$ to the basis of~$M^\circ_{\lambda^*}$.
Lemma~\ref{le:LusTrick} then concludes the proof of
Theorem~\ref{th:CycPerm}.

\subsection{Tensor product with an invariant element}
\label{ss:ProdInvElem}
Let $(n',n'')$ be a composition of $n$ in two parts.
Correspondingly, we write $\bm\lambda\in(\Lambda^+)^n$ as a
concatenation $(\bm\lambda',\bm\lambda'')$ and view each element
in $\mathscr Z(\bm\lambda)$ as a pair $(\mathbf Z',\mathbf Z'')\in
\mathscr Z(\bm\lambda')\times\mathscr Z(\bm\lambda'')$.

The following proposition implies that the Satake basis of the
invariant subspace of $V(\bm\lambda)$ satisfies the second item
in Khovanov and Kuperberg's list of properties for the dual
canonical basis (see the introduction of \cite{KhovanovKuperberg}).

\begin{proposition}
\label{pr:ProdInvElem}
Let $(\mathbf Z',\mathbf Z'')\in\mathscr Z(\bm\lambda)$. If
$\bigl\langle\mathbf Z'\bigr\rangle\in V(\bm\lambda')^G$, then
$\bigl\langle\mathbf Z'\bigr\rangle\otimes
\bigl\langle\mathbf Z''\bigr\rangle=
\bigl\langle(\mathbf Z',\mathbf Z'')\bigr\rangle.$
\end{proposition}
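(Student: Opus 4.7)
The plan is based on the triangular expansion provided by Proposition~\ref{pr:Factor} (applied to the two-part composition $(n', n'')$) and Theorem~\ref{th:TriangIncl}:
$$\bigl\langle \mathbf{Z}' \bigr\rangle \otimes \bigl\langle \mathbf{Z}'' \bigr\rangle = \bigl\langle (\mathbf{Z}', \mathbf{Z}'') \bigr\rangle + \sum_{\mathbf{Z}_{(1)} \subsetneq \overline{\mathbf{Z}'}} b_{(\mathbf{Z}_{(1)}, \mathbf{Z}_{(2)})}\,\bigl\langle (\mathbf{Z}_{(1)}, \mathbf{Z}_{(2)}) \bigr\rangle.$$
The proof reduces to showing that every correction coefficient vanishes, and the core input is the $G$-invariance of $\langle \mathbf{Z}' \rangle$, which I exploit both representation-theoretically and on the crystal.

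Representation-theoretically, the map $\phi \colon V(\bm\lambda'') \to V(\bm\lambda)$, $v \mapsto \langle \mathbf{Z}' \rangle \otimes v$, is a $G$-equivariant injection, hence respects the isotypical filtrations: for every $\mu \in \Lambda^+$, one has $\phi(V(\bm\lambda'')_{I, \leq \mu}) \subseteq V(\bm\lambda)_{I, \leq \mu}$. Applying this with $\mu := \mu_I(\mathbf{Z}'')$ and invoking Theorem~\ref{th:MVBasLPerf} (property (P1)) forces each non-vanishing correction to satisfy $\mu_I((\mathbf{Z}_{(1)}, \mathbf{Z}_{(2)})) \leq \mu$. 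On the crystal side, $G$-invariance of $\langle \mathbf{Z}' \rangle$ amounts to $\varepsilon_i(\mathbf{Z}') = \varphi_i(\mathbf{Z}') = 0$ for all $i \in I$, i.e.~$\mathbf{Z}'$ is isolated in $\mathscr{Z}(\bm\lambda')$. Combining Corollary~\ref{co:ProdCrysMV} (with the opposite tensor-product convention for $\mathscr{Z}$) and Kashiwara's tensor product rule, the assignment $\tilde{\mathbf{Z}}'' \mapsto (\mathbf{Z}', \tilde{\mathbf{Z}}'')$ defines a crystal embedding of $\mathscr{Z}(\bm\lambda'')$ into $\mathscr{Z}(\bm\lambda)$; consequently $\mu_I((\mathbf{Z}', \tilde{\mathbf{Z}}'')) = \mu_I(\tilde{\mathbf{Z}}'')$, so the diagonal term $\langle (\mathbf{Z}', \mathbf{Z}'') \rangle$ sits precisely at the top level of the isotypic filtration allowed by the first constraint.

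The conclusion is then obtained by descending the isotypical filtration. In the quotient $V(\bm\lambda)_{I, \mu}$, Remark~\ref{rk:StringP2} matches the induced MV basis with a direct-sum decomposition into copies of $V_I(\mu)$ indexed by crystal highest-weight elements of $\mathscr{Z}(\bm\lambda)$ of isotype $\mu$. The crystal identification from the previous paragraph pins down which summands receive the image of the induced $G$-equivariant $\bar\phi$: namely, those indexed by the pairs $(\mathbf{Z}', \tilde{\mathbf{Z}}''_{\mathrm{hw}})$. On these summands $\bar\phi$ sends $\langle \mathbf{Z}'' \rangle \bmod V(\bm\lambda'')_{I, < \mu}$ to $\langle (\mathbf{Z}', \mathbf{Z}'') \rangle \bmod V(\bm\lambda)_{I, < \mu}$, the scalar being pinned to $1$ by the diagonal coefficient in Theorem~\ref{th:TriangIncl}. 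The correction terms therefore vanish modulo $V(\bm\lambda)_{I, < \mu}$; iterating the same analysis down the filtration removes them entirely.

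The principal obstacle is this final descent: at each isotypic level one must verify that the image of $\bar\phi$ falls into precisely the summands indexed by $(\mathbf{Z}', \cdot)$ and that the MV bases are matched across the induced $G$-isomorphism of summands, without a stray scalar or an extra component. The rigidity provided by Proposition~\ref{pr:UniqCrys} (uniqueness of the crystal of an $L$-perfect basis) together with Remark~\ref{rk:StringP2} resolves this, but coordinating them with the unit-diagonal normalization of Theorem~\ref{th:TriangIncl} at each step is the substantive content of the argument.
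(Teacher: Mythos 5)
Your opening line is the right place to start—Theorem~\ref{th:TriangIncl} gives you the expansion
$$\langle\mathbf Z'\rangle\otimes\langle\mathbf Z''\rangle=\langle(\mathbf Z',\mathbf Z'')\rangle+\sum b_{(\mathbf Z_{(1)},\mathbf Z_{(2)})}\langle(\mathbf Z_{(1)},\mathbf Z_{(2)})\rangle$$
with the sum over $\mathbf Z_{(1)}\subsetneq\overline{\mathbf Z'}$. But you then miss the observation that closes the argument immediately: the indexing set of that sum is \emph{empty}. There is no MV cycle strictly contained in $\overline{\mathbf Z'}$, so there are no correction coefficients to kill, and the proposition follows from the diagonal coefficient being~$1$.

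Here is why the indexing set is empty. Because $\langle\mathbf Z'\rangle\in V(\bm\lambda')^G$, the weight $\mu_I(\mathbf Z')$ introduced in \S\ref{ss:JDecMVCyc} equals $0$, i.e.\ $m_{n'}(\mathbf Z')\subseteq\overline{\Gr^0}=\{L_0\}\subseteq T_0$, hence $\overline{\mathbf Z'}\subseteq(m_{n'})^{-1}(T_0)$. Any MV cycle strictly contained in $\overline{\mathbf Z'}$ would then also lie in $(m_{n'})^{-1}(T_0)$ and so would be an irreducible component of $\overline{\Gr_{n'}^{\bm\lambda'}}\cap(m_{n'})^{-1}(T_0)$; but all such components have dimension $\rho(|\bm\lambda'|)$, the same as $\dim\mathbf Z'$, which contradicts the strict inclusion. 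That one dimension count is the whole content of the proof, and it replaces everything in your second and third paragraphs.

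The route you chose instead—through the isotypical filtration, the crystal embedding $\tilde{\mathbf Z}''\mapsto(\mathbf Z',\tilde{\mathbf Z}'')$, Remark~\ref{rk:StringP2}, and Proposition~\ref{pr:UniqCrys}—contains a real gap that you yourself flag at the end. You establish that the two sides agree modulo $V(\bm\lambda)_{I,<\mu}$, and then propose to ``iterate the same analysis down the filtration.'' But the analysis does not iterate in the form you've given it: once you pass to a lower isotypic layer $\nu<\mu$, the element $\langle\mathbf Z''\rangle$ contributes nothing (its class there is zero), yet the correction terms with $\mu_I(\cdot)=\nu$ could a priori survive, and you have no mechanism forcing them to vanish. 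The crystal embedding controls $\tilde e_i,\tilde f_i$ on basis labels but does not by itself tell you where the vector $\bar\phi(\langle\mathbf Z''\rangle)$ lands relative to the MV basis of $V(\bm\lambda)$ at each subquotient level; making that precise is what would actually be needed, and that is exactly what the geometric dimension argument renders unnecessary.
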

\begin{proof}
Let $\mathbf Z'\in\mathscr Z(\bm\lambda')$. Recall the map
$m_{n'}:\Gr_{n'}\to\Gr$ defined in sect.~\ref{ss:MVCyc} and the notation
$\mu_I$ from sect.~\ref{ss:JDecMVCyc} and set $\mu=\mu_I(\mathbf Z')$.
Then $m_{n'}(\mathbf Z')\subseteq\overline{\Gr^\mu}$ and
$\bigl\langle\mathbf Z'\bigr\rangle$ appear in the isotypical
filtration of $V(\bm\lambda')$ at the step where the component of
type $V(\mu)$ is appended.

If $\bigl\langle\mathbf Z'\bigr\rangle\in V(\bm\lambda')^G$, then
$\mu=0$, accordingly $\overline{\Gr^\mu}=\{L_0\}$, and as a result
$$\overline{\mathbf Z'}\subseteq(m_{n'})^{-1}(\{L_0\})
\subseteq(m_{n'})^{-1}(T_0).$$

This implies that no MV cycle in $\mathscr Z(\bm\lambda')$ can be
strictly contained in $\overline{\mathbf Z'}$. (Such a cycle would be
contained in $(m_{n'})^{-1}(T_0)$, so would be an irreducible component
of $\overline{\Gr_{n'}^{\bm\lambda'}}\cap(m_{n'})^{-1}(T_0)$, and~would
end up having dimension $\rho(|\bm\lambda'|)$, the same as $\mathbf Z'$.)
The desired result now directly follows from Theorem~\ref{th:TriangIncl}.
\end{proof}

\section{Applications to the MV basis of $\mathbb C[N]$}
\label{se:AppC[N]}
We adopt the notation set up in the preamble of sect.~\ref{se:LPerf}.
Let $N$ be the unipotent radical of the Borel subgroup $B$ and let
$\mathbb C[N]$ be the algebra of regular functions on $N$. At the
expense of an isogeny, which does not alter $N$, we can assume that
$G$ is simply-connected.

For each dominant weight $\lambda\in\Lambda^+$, we can choose a highest
weight vector $v_\lambda$ in the representation $V(\lambda)$ and define
the linear form $v_\lambda^*:V(\lambda)\to\mathbb C$ such that
$\langle v_\lambda^*,v_\lambda\rangle=1$ et $\langle v_\lambda^*,v\rangle=0$
for all weight vectors $v$ of weight other than $\lambda$. This yields
an embedding $\Psi_\lambda:v\mapsto\langle v_\lambda^*,\bm?v\rangle$ of
$V(\lambda)$ into $\mathbb C[N]$, where $\langle v_\lambda^*,\bm?v\rangle$
stands for the function $n\mapsto\langle v_\lambda^*,nv\rangle$.
The MV bases of the representations $V(\lambda)$ can be transported
to $\mathbb C[N]$ through these maps $\Psi_\lambda$, and they glue
together to form a basis of $\mathbb C[N]$, which we call the MV basis
of $\mathbb C[N]$ (see~\cite{BaumannKamnitzerKnutson}).

The algebra $\mathbb C[N]$ comes with several remarkable bases: the
MV basis, subject of our current investigation, but also the dual
canonical basis of Lusztig/upper global basis of Kashiwara, and (in
simply laced type) the dual semicanonical basis. The theory of cluster
algebras was developed in order to compute effectively these bases
(or at least, the dual canonical basis). Concretely, the cluster
structure of $\mathbb C[N]$ allows to define specific elements,
called cluster monomials, which are linearly independent and easily
amenable to calculations. It is known that both the dual canonical
and the dual semicanonical bases contain all the cluster monomials
\cite{GeissLeclercSchroer06,KangKashiwaraKimOh}, but also that these
bases differ (except when cluster monomials span $\mathbb C[N]$).

The methods developed in sect.~\ref{se:CompTPB} allow to effectively
compute products of elements of the MV basis of $\mathbb C[N]$. This
allows us to prove that this basis contains quite a few cluster
monomials (Proposition~\ref{pr:ClusMon}) and that it generally differs
from both the dual canonical and the dual semicanonical bases
(Proposition~\ref{pr:BasesD4}).

\subsection{Cluster monomials}
\label{ss:ClusMon}
As explained in~\cite{GeissLeclercSchroer07}, each reduced word
$(i_1,\ldots,i_\ell)$ of the longest element $w_0$ in the Weyl group
$W$ yields a standard seed of the cluster structure of $\mathbb C[N]$.
The main result of this section, Proposition~\ref{pr:ClusMon}, claims
that sometimes the cluster monomials built from the variables of
a standard seed belong to the MV basis of $\mathbb C[N]$.

Set $\mathfrak t^\vee_{\mathbb R}=\Hom_{\mathbb Z}(\Lambda,\mathbb R)$
and let $C=\{x\in\mathfrak t^\vee_{\mathbb R}\mid
\forall i\in I,\ \langle x,\alpha_i\rangle>0\}$ be the Weyl chamber
in $\mathfrak t^\vee_{\mathbb R}$. We consider the following condition
about a reduced word $(i_1,\ldots,i_\ell)$:
\begin{description}
\item[(A)]
There exist $x_1\in s_{i_1}(C)$, $x_2\in(s_{i_1}s_{i_2})(C)$,
\dots, $x_\ell\in(s_{i_1}\cdots s_{i_\ell})(C)$ such that
$x_k-x_{k+1}\in C$ for each $k\in\{1,\ldots,\ell-1\}$.
\end{description}

For instance, choose $(x,y)\in C^2$ in such a way that the straight line
joining $x$ to $-y$ avoids all the two-codimensional faces of the Weyl
fan in $\mathfrak t^\vee_{\mathbb R}$. List in order the chambers
successively crossed by this line: $C$, $s_{i_1}C$, $(s_{i_1}s_{i_2})(C)$,
\dots The word $(i_1,i_2,\ldots)$ produced in this manner is then reduced
and obviously satisfies condition (A).

Let $Q\subseteq\Lambda$ be the root lattice. We denote by $Q_+$ the
positive cone in $Q$ with respect to the dominance order; in other words,
$Q_+$ is the set of all linear combinations of the simple roots~$\alpha_i$
with non-negative integral coefficients. We set $Q_-=-Q_+$.

\begin{lemma}
\label{le:CondA}
Let $(i_1,\ldots,i_\ell)$ be a reduced word, set
$w_k=s_{i_1}\cdots s_{i_k}$ for $k\in\{1,\ldots,\ell\}$, and let
$(\nu_1,\ldots,\nu_\ell)\in w_1(Q_-)\times\cdots\times w_\ell(Q_-)$.
Assume that $\nu_1+\cdots+\nu_k\in Q_+$ for all $k\in\{1,\ldots,\ell-1\}$,
that $\nu_1+\cdots+\nu_\ell=0$, and that $(i_1,\ldots,i_\ell)$
satisfies condition (A). Then $\nu_1=\cdots=\nu_\ell=0$.
\end{lemma}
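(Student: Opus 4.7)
The plan is to pair the vectors $\nu_k$ against the auxiliary points $x_k$ furnished by condition (A) and to exploit a summation by parts. Concretely, I would first observe that since $\{w_k(\alpha_i)\mid i\in I\}$ is a $\mathbb Z$-basis of $Q$, any $\nu_k\in w_k(Q_-)$ can be written uniquely as $\nu_k=-\sum_{i\in I}c_{k,i}\,w_k(\alpha_i)$ with $c_{k,i}\in\mathbb N$. By the very definition of the chamber $C$, an element $x_k\in w_k(C)$ pairs positively with every $w_k(\alpha_i)$, and therefore
\[
\langle x_k,\nu_k\rangle=-\sum_{i\in I}c_{k,i}\,\langle x_k,w_k(\alpha_i)\rangle\leq 0,
\]
with equality if and only if $\nu_k=0$.

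Next, I would set $S_k=\nu_1+\cdots+\nu_k$ for $k\in\{0,\ldots,\ell\}$ and apply Abel summation:
\[
\sum_{k=1}^{\ell}\langle x_k,\nu_k\rangle
=\sum_{k=1}^{\ell}\langle x_k,S_k-S_{k-1}\rangle
=\langle x_\ell,S_\ell\rangle+\sum_{k=1}^{\ell-1}\langle x_k-x_{k+1},S_k\rangle.
\]
The hypothesis $S_\ell=0$ kills the first summand. For the other terms, condition (A) says $x_k-x_{k+1}\in C$, so $\langle x_k-x_{k+1},\alpha_i\rangle>0$ for every simple root $\alpha_i$; combined with $S_k\in Q_+$ this yields $\langle x_k-x_{k+1},S_k\rangle\geq 0$. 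Thus the right-hand side is nonnegative.

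Putting the two inequalities together gives $\sum_{k=1}^{\ell}\langle x_k,\nu_k\rangle=0$, and since each term is nonpositive, every individual pairing $\langle x_k,\nu_k\rangle$ must vanish. By the first step this forces $\nu_k=0$ for each $k$, which is the desired conclusion. The argument is really just bookkeeping, so there is no serious obstacle beyond being careful with the direction of the inequalities (in particular verifying that $x_k\in w_k(C)$ pairs nonpositively with $w_k(Q_-)$ and that $C$ pairs nonnegatively with $Q_+$); once these signs are nailed down, the Abel summation identity does all the work.
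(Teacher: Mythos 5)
Your proof is correct and follows essentially the same route as the paper: both arguments pair $\nu_k$ with the points $x_k$ from condition (A), perform an Abel summation (using $S_\ell=0$ to discard the boundary term), and then observe that the two resulting expressions have opposite signs, forcing every $\langle x_k,\nu_k\rangle$ to vanish and hence $\nu_k=0$. The only difference is cosmetic: you spell out the positivity of $\langle x_k, w_k(\alpha_i)\rangle$ via the basis $\{w_k(\alpha_i)\}$ of $Q$, whereas the paper states the same sign fact directly.
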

\begin{proof}
We set $\mu_0=0$ and $\mu_k=\nu_1+\cdots+\nu_k$ for
$k\in\{1,\ldots,\ell\}$. We pick $x_1$, \dots, $x_\ell$
as stated in condition (A). Then
$$\sum_{k=1}^\ell\langle x_k,\nu_k\rangle=
\sum_{k=1}^\ell\langle x_k,\mu_k-\mu_{k-1}\rangle=
\sum_{k=1}^{\ell-1}\langle x_k-x_{k+1},\mu_k\rangle.$$
From $x_k\in w_k(C)$ and $\nu_k\in w_k(Q_-)$, we deduce that
$\langle x_k,\nu_k\rangle\leq0$ for each $k\in\{1,\ldots,\ell\}$.
On the other hand, from $x_k-x_{k+1}\in C$ and $\mu_k\in Q_+$,
we deduce that $\langle x_k-x_{k+1},\mu_k\rangle\geq0$
for each $k\in\{1,\ldots,\ell-1\}$. We conclude that each
$\langle x_k,\nu_k\rangle$ is indeed zero, which implies $\nu_k=0$.
\end{proof}

In sect.~\ref{ss:CycPerm}, we defined, for each
$(\lambda,w)\in\Lambda^+\times W$, a product $\theta(\lambda,w)$
of divided powers of the root vectors $f_i$. We can then set
$v_{w\lambda}=\theta(\lambda,w)\cdot v_\lambda$; this is a vector
of weight $w\lambda$ in $V(\lambda)$. We define
$\Delta_{\lambda,w\lambda}=\Psi_\lambda(v_{w\lambda})$, usually
called a flag minor if $\lambda$ is minuscule. We denote by
$\{\varpi_i\mid i\in I\}$ the set of fundamental weights.

\begin{proposition}
\label{pr:ClusMon}
Let $(i_1,\ldots,i_\ell)$ be a reduced word and define
$x_k=\Delta_{\varpi_{i_k},s_{i_1}\cdots s_{i_k}\varpi_{i_k}}$
for each $k\in\{1,\ldots,\ell\}$.
If $(i_1,\ldots,i_\ell)$ satisfies condition (A), then any monomial
in $x_1$, \dots, $x_\ell$ belongs to the MV basis of $\mathbb C[N]$.
\end{proposition}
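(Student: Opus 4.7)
The plan is to identify the cluster monomial with $\Psi_\lambda$ of an MV basis element in a tensor product representation via the Cartan projection. Set $n=\sum_k a_k$, $\lambda=\sum_k a_k\varpi_{i_k}$, and let $\bm\mu\in(\Lambda^+)^n$ be the sequence in which each $\varpi_{i_k}$ appears $a_k$ times, in order. For $j\in\{1,\ldots,n\}$ let $k(j)$ be the unique index such that the $j$-th entry of $\bm\mu$ is $\varpi_{i_{k(j)}}$; thus $j\mapsto k(j)$ is non-decreasing. Let $Z_k\in\mathscr Z(\varpi_{i_k})_{w_k\varpi_{i_k}}$ be the MV cycle at the extremal weight $w_k\varpi_{i_k}$ (unique because the extremal weight space is one-dimensional), and set $\mathbf Z''=(Z_{k(1)},\ldots,Z_{k(n)})\in\mathscr Z(\bm\mu)$. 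Under the normalization of the geometric Satake correspondence recalled in sect.~\ref{ss:CycPerm}, $\langle Z_k\rangle=v_{w_k\varpi_{i_k}}$, so $\dbllangle\mathbf Z''\dblrangle=\bigotimes_k v_{w_k\varpi_{i_k}}^{\otimes a_k}$. Since multiplication in $\mathbb C[N]$ corresponds to the Cartan projection $\pi:V(\bm\mu)\to V(\lambda)$ followed by $\Psi_\lambda$, we have $x_1^{a_1}\cdots x_\ell^{a_\ell}=\Psi_\lambda(\pi(\dbllangle\mathbf Z''\dblrangle))$. The core step is to show that $\dbllangle\mathbf Z''\dblrangle=\langle\mathbf Z''\rangle$ in $V(\bm\mu)$.

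Expand via Theorem~\ref{th:TransMat}: $\dbllangle\mathbf Z''\dblrangle=\sum_{\mathbf Z}a_{\mathbf Z,\mathbf Z''}\langle\mathbf Z\rangle$. Pick a summand with $a_{\mathbf Z,\mathbf Z''}\neq0$, say $\mathbf Z=(Z'_j)_j$ of weight sequence $(\nu'_j)$, and set $\mu_j=w_{k(j)}\varpi_{i_{k(j)}}$. Since $\nu'_j$ is a weight of the irreducible $V(\varpi_{i_{k(j)}})$, the Weyl-translate $w_{k(j)}^{-1}\nu'_j$ is bounded above by $\varpi_{i_{k(j)}}$, i.e.\ $w_{k(j)}^{-1}\nu'_j\in\varpi_{i_{k(j)}}-Q_+$; applying $w_{k(j)}$ this translates to $\nu'_j-\mu_j\in w_{k(j)}(Q_-)$. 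By Corollary~\ref{co:TriangWeight}, the partial sums $\delta_k=\sum_{j\leq k}(\nu'_j-\mu_j)$ lie in $Q_+$ for $k<n$ and vanish at $k=n$. Grouping indices by block, define $\hat\nu_k=\sum_{j:\,k(j)=k}(\nu'_j-\mu_j)\in w_k(Q_-)$. Because $j\mapsto k(j)$ is non-decreasing, the partial sums $\sum_{k'\leq k}\hat\nu_{k'}$ coincide with $\delta_{a_1+\cdots+a_k}$, hence belong to $Q_+$ for $k<\ell$ and vanish at $k=\ell$. Condition~(A) and Lemma~\ref{le:CondA} now force $\hat\nu_k=0$ for each $k$; since $w_k(Q_-)$ is a pointed cone, this forces each individual summand $\nu'_j-\mu_j$ to vanish. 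Thus $\nu'_j=\mu_j$ is extremal, and the one-dimensionality of $V(\varpi_{i_{k(j)}})_{\mu_j}$ gives $Z'_j=Z_{k(j)}$, i.e.\ $\mathbf Z=\mathbf Z''$. Combined with $a_{\mathbf Z'',\mathbf Z''}=1$ (Proposition~\ref{pr:DiagOne}), this yields $\dbllangle\mathbf Z''\dblrangle=\langle\mathbf Z''\rangle$.

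Finally, Remark~\ref{rk:StringP2} applied with $J=I$ and $\mu=\lambda$ ensures that $\pi(\langle\mathbf Z''\rangle)$ is either zero or an MV basis element of $V(\lambda)$. The first alternative is excluded because each flag minor $x_k$ is a nonzero regular function on the affine space $N$ and $\mathbb C[N]$ is an integral domain, so the cluster monomial $x_1^{a_1}\cdots x_\ell^{a_\ell}$ is nonzero. Therefore $\pi(\langle\mathbf Z''\rangle)$ is an MV basis element of $V(\lambda)$, and $x_1^{a_1}\cdots x_\ell^{a_\ell}=\Psi_\lambda(\pi(\langle\mathbf Z''\rangle))$ belongs to the MV basis of $\mathbb C[N]$. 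The main technical obstacle lies in the combinatorial middle step: one must correctly identify the cone condition $\nu'_j-\mu_j\in w_{k(j)}(Q_-)$ at an extremal vertex of the weight polytope, then regroup the terms by block so that the hypotheses of Lemma~\ref{le:CondA} are met exactly.
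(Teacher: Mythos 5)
Your proof is correct and follows the same essential strategy as the paper's: identify the cluster monomial with $\Psi_\lambda\circ p$ applied to $\dbllangle\mathbf Z''\dblrangle$, show that the MV expansion of $\dbllangle\mathbf Z''\dblrangle$ collapses to the single term $\langle\mathbf Z''\rangle$ via Corollary~\ref{co:TriangWeight} and Lemma~\ref{le:CondA}, then invoke Remark~\ref{rk:StringP2}. The main structural difference is the choice of tensor decomposition. The paper works with $\ell$ factors $(\lambda_1,\ldots,\lambda_\ell)$ where $\lambda_k=a_k\varpi_{i_k}$, so that Lemma~\ref{le:CondA} applies directly, with exactly one cone $w_k(Q_-)$ per letter of the reduced word. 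You instead expand out to $n=\sum a_k$ factors, each a bare fundamental weight, and consequently need the extra ``regrouping by block'' step — replacing the $n$ differences $\nu'_j-\mu_j$ by the $\ell$ block sums $\hat\nu_k$, verifying $\hat\nu_k\in w_k(Q_-)$ by cone convexity and checking the partial-sum conditions carry over because $j\mapsto k(j)$ is non-decreasing — before Lemma~\ref{le:CondA} can be used. That regrouping is correct, but it is additional work that the paper's choice of $\bm\lambda$ simply avoids. Two further minor differences: where the paper finishes the collapsing step by invoking Remark~\ref{rk:TriangStrict}, you instead use the pointedness of $w_k(Q_-)$ to split $\hat\nu_k=0$ into vanishing of the individual $\nu'_j-\mu_j$, followed by one-dimensionality of the extremal weight spaces; and you explicitly rule out $p(\langle\mathbf Z''\rangle)=0$ via the integral-domain observation, a point the paper leaves implicit. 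Both are sound. In short, your route is valid but passes through a finer tensor decomposition than necessary; choosing $\lambda_k=a_k\varpi_{i_k}$ at the outset, as the paper does, makes Lemma~\ref{le:CondA} land exactly and eliminates the regrouping lemma.
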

\begin{proof}
We choose $\bm\lambda=(\lambda_1,\ldots,\lambda_\ell)$ in $(\Lambda^+)^\ell$.
For $k\in\{1,\ldots,\ell\}$, we set $w_k=s_{i_1}\cdots s_{i_k}$.
The extremal weight vector $v_{w_k\lambda_k}\in V(\lambda_k)$ belongs
to the MV basis (\cite{BaumannKamnitzerKnutson}, Remark~2.10 and
Theorem~5.2), so $v_{w_k\lambda_k}=\langle Z_k\rangle$ where $Z_k$
is the cycle $\overline{\Gr^{\lambda_k}}\cap T_{w_k\lambda_k}$.
We set $\mu=w_1\lambda_1+\cdots+w_\ell\lambda_\ell$ and
$\mathbf Z=(Z_1,\ldots,Z_\ell)$. We adopt the convention of
sect.~\ref{ss:InterMult} and regard $\mathbf Z$ as an element
of~$\mathscr Z(\bm\lambda)_\mu$; then $\dbllangle\mathbf Z\dblrangle
=v_{w_1\lambda_1}\otimes\cdots\otimes v_{w_\ell\lambda_\ell}$ .

Let us expand this element on the MV basis of $V(\bm\lambda)$. As in
Theorem~\ref{th:TransMat}, we write
\begin{equation}
\label{eq:PfClusMon}
\dbllangle\mathbf Z\dblrangle=\sum_{\mathbf Z'\in
\mathscr Z(\bm\lambda)_\mu}a_{\mathbf Z',\mathbf Z}\;
\bigl\langle\mathbf Z'\bigr\rangle.
\end{equation}
Suppose $\mathbf Z'\in \mathscr Z(\bm\lambda)_\mu$ satisfies
$a_{\mathbf Z',\mathbf Z}\neq0$. Let
$(\nu_1,\ldots,\nu_\ell)\in\Lambda^\ell$ be the tuple
of weights such that $\mathbf Z'\in\mathscr Z(\lambda_1)_{\nu_1}
\times\cdots\times\mathscr Z(\lambda_\ell)_{\nu_\ell}$.
For each $k\in\{1,\ldots,\ell\}$, we have
$\mathscr Z(\lambda_k)_{\nu_k}\neq\varnothing$, so
$w_k^{-1}\nu_k$ is a weight of $V(\lambda_k)$, whence
$(\nu_k-w_k\lambda_k)\in w_k(Q_-)$. From
$\nu_1+\cdots+\nu_\ell=\mu$, we deduce
that~$(\nu_1-w_1\lambda_1)+\cdots+(\nu_\ell-w_\ell\lambda_\ell)=0$.
And by Corollary~\ref{co:TriangWeight}, we get
$$(\nu_1-w_1\lambda_1)+\cdots+(\nu_k-w_k\lambda_k)\in Q_+$$
for each $k\in\{1,\ldots,\ell-1\}$.
Then, assuming that $(i_1,\ldots,i_\ell)$ satisfies condition~(A)
and applying Lemma~\ref{le:CondA}, we obtain $\nu_k=w_k\lambda_k$ for
each $k\in\{1,\ldots,\ell-1\}$. In other words, none of the
inequalities given in Corollary~\ref{co:TriangWeight} is strict.
By Remark~\ref{rk:TriangStrict}, this forces $\mathbf Z'=\mathbf Z$.
Thus, the expansion~\eqref{eq:PfClusMon} contains a single term,
namely $\langle\mathbf Z\rangle$.

Set $\lambda=\lambda_1+\cdots+\lambda_\ell$ and let
$p:V(\bm\lambda)\to V(\lambda)$ be the unique morphism that maps
$v_{\lambda_1}\otimes\cdots\otimes v_{\lambda_n}$ to $v_\lambda$.
Noting that $p$ is the quotient map to the top factor in the isotypical
filtration of $V(\bm\lambda)$ and applying Remark~\ref{rk:StringP2},
we obtain that $p(\langle\mathbf Z\rangle)$ belongs to the MV basis of
$V(\lambda)$. From the equality
$v_{w_1\lambda_1}\otimes\cdots\otimes v_{w_\ell\lambda_\ell}=
\langle\mathbf Z\rangle$, we deduce that
$$\Delta_{\lambda_1,w_1\lambda_1}\cdots
\Delta_{\lambda_\ell,w_\ell\lambda_\ell}=
\bigl\langle v_\lambda^*,
p\bigl(\bm?\langle\mathbf Z\rangle\bigr)\bigr\rangle=
\Psi_\lambda\bigl(p\bigl(\langle\mathbf Z\rangle\bigr)\bigr)$$
belongs to the MV basis of $\mathbb C[N]$. The claim in the
proposition is the particular case where each $\lambda_k$ is a
multiple of $\varpi_{i_k}$.
\end{proof}

\subsection{A computation in type $D_4$}
\label{ss:BasesD4}
In~\cite{KashiwaraSaito}, Kashiwara and Saito found an example in type
$A_5$ where the singular support of a simple perverse sheaf related to
the canonical basis is not irreducible. Looking again at this situation,
Geiß, Leclerc and Schröer \cite{GeissLeclercSchroer05} computed the
dual canonical and dual semicanonical elements and found that they were
different. They also observed that a similar situation occurs in type
$D_4$. In~\cite{BaumannKamnitzerKnutson}, Kamnitzer, Knutson and the
first author observed that in both situations $A_5$ and $D_4$, the MV
basis is a third basis, different from the two other ones.

Let us have a closer look at the $D_4$ case. As usual, we label the
vertices of the Dynkin diagram from $1$ to $4$, with $2$ for the central
node. Our three bases are indexed by the crystal $B(\infty)$: given
$b\in B(\infty)$, we denote the corresponding dual semicanonical basis
element by $C(b)$, the dual canonical basis element by $C'(b)$, and the
MV basis element by $C''(b)$. Calling $b_0$ the highest weight element
in $B(\infty)$, we set
$$b_1=\bigl(\tilde f_2\bigl(\tilde f_1\tilde f_3\tilde f_4\bigr)
\tilde f_2\bigr)^2\;b_0
\quad\text{and}\quad
b_{12}=\bigl(\tilde f_2\bigr)^2\bigl(\tilde f_1\tilde f_3
\tilde f_4\bigr)^2\bigl(\tilde f_2\bigr)^2\;b_0.$$

\begin{proposition}
\label{pr:BasesD4}
The basis elements are related by the equations
$$C(b_{12})=C''(b_{12})+2C(b_1)\quad\text{and}\quad
C'(b_{12})=C''(b_{12})+C(b_1).$$
\end{proposition}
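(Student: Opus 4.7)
The plan is to prove the second identity, $C'(b_{12}) = C''(b_{12}) + C(b_1)$, directly by computing $C''(b_{12})$ through the unitriangular comparison of Section~\ref{se:CompTPB}, and then to derive the first identity by adding the Geiß–Leclerc–Schröer relation $C(b_{12}) = C'(b_{12}) + C(b_1)$ from \cite{GeissLeclercSchroer05} (which also uses $C(b_1) = C'(b_1)$). The two elements $b_{12}$ and $b_1$ have the same weight $-(2\alpha_1+4\alpha_2+2\alpha_3+2\alpha_4)$, so all three equations make sense as linear relations in a single weight space of $\mathbb C[N]$.

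First I would realize $b_{12}$ and $b_1$ as elements in the Cartan component of a tensor product $B(\bm\lambda)$, where $\bm\lambda$ is a tuple built from the fundamental weights $(\varpi_2,\varpi_1,\varpi_3,\varpi_4,\varpi_2)$ dictated by the parenthesization in the formula defining $b_{12}$, doubled to account for the outer square. Via Proposition~\ref{pr:PathModel}\,\ref{it:PrPMb} and Theorem~\ref{th:ZMorCrys}, both elements are represented by MV cycles $\mathbf Z_{12}$ and $\mathbf Z_1$ in $\mathscr Z(\bm\lambda)$. Since each factor $V(\lambda_j)$ is minuscule in $D_4$, its MV basis is fully explicit; in particular $C''(b_1)$ identifies with a product of flag minors, and Proposition~\ref{pr:ClusMon} together with the equality of cluster monomials with their dual canonical and dual semicanonical counterparts yields $C''(b_1) = C'(b_1) = C(b_1)$.

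Second, I would expand the tensor product basis element $\dbllangle\mathbf Z_{12}\dblrangle$ on the MV basis of $V(\bm\lambda)$. By Proposition~\ref{pr:DiagOne} the coefficient of $\langle\mathbf Z_{12}\rangle$ equals one; Theorem~\ref{th:TriangIncl} restricts the remaining contributions to elements $\mathbf Z'$ whose first half $\mathbf Z'_{(1)}$ is strictly contained in $\overline{(\mathbf Z_{12})_{(1)}}$, and weight together with isotypical constraints in the Cartan component leave $\mathbf Z_1$ as essentially the only candidate. The coefficient in front of $\langle\mathbf Z_1\rangle$ is the intersection multiplicity of $\mathcal Y(\mathbf Z_1)$ in $\mathcal X(\mathbf Z_{12})\cdot\BDConv_n^{\bm\lambda}\bigl|_\Delta$ inside $\BDConv_n^{\bm\lambda}$, and I would compute it in explicit affine charts following the template of sect.~\ref{ss:Example}, expecting the answer one. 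Projecting the resulting identity onto the Cartan component of $V(\bm\lambda)$ and identifying this projection with $C'(b_{12})$ through Lusztig's theory of based modules (the projection of a tensor product of dual canonical basis vectors of minuscule modules onto the top isotypical component gives the dual canonical basis element up to triangular corrections, which vanish for the top element $b_{12}$) then yields $C'(b_{12}) = C''(b_{12}) + C(b_1)$, completing the argument.

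The main obstacle will be the explicit computation of the intersection multiplicity: one must isolate a chart on $\BDConv_n^{\bm\lambda}$ where both $\mathcal Y(\mathbf Z_1)$ and $\mathcal X(\mathbf Z_{12})$ have manageable defining ideals, and verify that the local ring of $\mathcal X(\mathbf Z_{12})$ along $\mathcal Y(\mathbf Z_1)$ is a discrete valuation ring with uniformizer coming from the diagonal equation. Rearranging the tuple $\bm\lambda$ via Theorem~\ref{th:CycPerm} and splitting the calculation with Proposition~\ref{pr:Factor} should make the chart accessible, but the bookkeeping in $D_4$ is delicate.
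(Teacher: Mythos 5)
Your proposal takes a genuinely different route from the paper's, but several steps either fail outright or beg the question, so the argument as written does not close the gap.

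\textbf{The minuscule claim is false.} You build $\bm\lambda$ from $(\varpi_2,\varpi_1,\varpi_3,\varpi_4,\varpi_2)$ doubled and assert "each factor $V(\lambda_j)$ is minuscule in $D_4$." In type $D_4$ the minuscule fundamental weights are $\varpi_1,\varpi_3,\varpi_4$; $\varpi_2$ is the highest root (the adjoint representation), which is quasi-minuscule, not minuscule. This vitiates the explicit control you want over the MV bases of the factors, and the paper in fact works entirely with the two-fold tensor square $V(\varpi_2)^{\otimes 2}$, i.e.\ $\bm\lambda=(\varpi_2,\varpi_2)$.

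\textbf{The identification with $C'(b_{12})$ is unjustified.} You want to expand $\dbllangle\mathbf Z_{12}\dblrangle$ on the MV basis, project to the Cartan component, and "identify this projection with $C'(b_{12})$ through Lusztig's theory of based modules." After projection, $\dbllangle\mathbf Z_{12}\dblrangle$ becomes a \emph{product of MV-basis elements} of $\mathbb C[N]$, not a dual canonical basis vector; there is no ready-made mechanism that converts a tensor of MV vectors into a $C'$ element. If such a simple triangularity argument with vanishing corrections at $b_{12}$ were available, the bases $C'$ and $C''$ would coincide there — but the entire content of this $D_4$ example, following Kashiwara--Saito and Geiß--Leclerc--Schröer, is that they do not. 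The paper and \cite{BaumannKamnitzerKnutson} instead compare all three bases by computing a \emph{common} product, namely $C''(b_{13})\cdot C''(b_{14})$ (equivalently $C(b_{13})\cdot C(b_{14})$ and $C'(b_{13})\cdot C'(b_{14})$, since these are cluster monomials), and matching coefficients of the expansions.

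\textbf{The expected multiplicity is wrong.} You anticipate that the key intersection multiplicity is $1$. The paper's computation, via \textsc{Singular} in explicit charts on $\BDConv_2^{(\varpi_2,\varpi_2)}$, shows that the multiplicity of $\mathcal Y\bigl(\begin{smallmatrix}1\\[1pt]2\end{smallmatrix},\begin{smallmatrix}\overline 2\\[1pt]\overline 1\end{smallmatrix}\bigr)$ in the intersection product is \emph{strictly larger} than one. This is precisely what produces the coefficient $2$ in front of $C''(b_1)$ in the product expansion, and is the one new fact the paper adds to the argument of \cite{BaumannKamnitzerKnutson}, sect.~2.7 — the rest of the proof of Proposition~\ref{pr:BasesD4} is cited from that reference.

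In short: your high-level idea (leverage the unitriangular comparison of Section~\ref{se:CompTPB} and the known $C=C'$ equalities on cluster monomials) is in the right spirit, but you would need to (a) use the smaller tuple $(\varpi_2,\varpi_2)$ rather than a $10$-tuple, (b) compute a product of two simpler basis elements rather than attempt to directly relate $C''(b_{12})$ to $C'(b_{12})$, and (c) be prepared for the critical multiplicity to be $2$, not $1$.
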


The proof is given in \cite{BaumannKamnitzerKnutson}, sect.~2.7, except
for one justification left to the present paper. We here fill the gap.

The fundamental weight $\varpi_2$ is the highest root
$\alpha_1+2\alpha_2+\alpha_3+\alpha_4$. The crystal of the representation
$V(\varpi_2)$ (the adjoint representation) is pictured below. Highest
weights are towards the left, vertices are represented as keys
$\begin{smallmatrix}p\\[2pt]q\end{smallmatrix}$ with $p$, $q$ in
$\{1,2,3,4,\overline1,\overline2,\overline3,\overline4\}$, and
operators $\tilde f_1$, $\tilde f_2$, $\tilde f_3$ and $\tilde f_4$
are indicated by dashed, solid, dotted and dash-dotted arrows, respectively.

\begin{center}
\begin{tikzpicture}
\begin{scope}[scale=1.5]
\node (A2) at (0,0) {$\begin{smallmatrix}\vphantom{\overline1}1\\[2pt]
  \vphantom{\overline1}2\end{smallmatrix}$};
\node (B2) at (1,0) {$\begin{smallmatrix}\vphantom{\overline1}1\\[2pt]
  \vphantom{\overline1}3\end{smallmatrix}$};
\node (C4) at (2,1) {$\begin{smallmatrix}\vphantom{\overline1}1\\[2pt]
  \overline4\end{smallmatrix}$};
\node (C3) at (2,0) {$\begin{smallmatrix}\vphantom{\overline1}1\\[2pt]
  \vphantom{\overline1}4\end{smallmatrix}$};
\node (C1) at (2,-1) {$\begin{smallmatrix}\vphantom{\overline1}2\\[2pt]
  \vphantom{\overline1}3\end{smallmatrix}$};
\node (D1) at (3,1) {$\begin{smallmatrix}\vphantom{\overline1}1\\[2pt]
  \overline3\end{smallmatrix}$};
\node (D3) at (3,0) {$\begin{smallmatrix}\vphantom{\overline1}2\\[2pt]
  \overline4\end{smallmatrix}$};
\node (D4) at (3,-1) {$\begin{smallmatrix}\vphantom{\overline1}2\\[2pt]
  \vphantom{\overline1}4\end{smallmatrix}$};
\node (E1) at (4,1.5) {$\begin{smallmatrix}\vphantom{\overline1}1\\[2pt]
  \overline2\end{smallmatrix}$};
\node (E2) at (4,.5) {$\begin{smallmatrix}\vphantom{\overline1}2\\[2pt]
  \overline3\end{smallmatrix}$};
\node (E3) at (4,-.5) {$\begin{smallmatrix}\vphantom{\overline1}3\\[2pt]
  \overline4\end{smallmatrix}$};
\node (E4) at (4,-1.5) {$\begin{smallmatrix}\vphantom{\overline1}3\\[2pt]
  \vphantom{\overline1}4\end{smallmatrix}$};
\node (F1) at (5,1.5) {$\begin{smallmatrix}\vphantom{\overline1}2\\[2pt]
  \overline2\end{smallmatrix}$};
\node (F2) at (5,.5) {$\begin{smallmatrix}\vphantom{\overline1}3\\[2pt]
  \overline3\end{smallmatrix}$};
\node (F3) at (5,-.5) {$\begin{smallmatrix}\vphantom{\overline1}4\\[2pt]
  \overline4\end{smallmatrix}$};
\node (F4) at (5,-1.5) {$\begin{smallmatrix}\overline4\\[2pt]
  \vphantom{\overline1}4\end{smallmatrix}$};
\node (G1) at (6,1.5) {$\begin{smallmatrix}\vphantom{\overline1}2\\[2pt]
  \overline1\end{smallmatrix}$};
\node (G2) at (6,.5) {$\begin{smallmatrix}\vphantom{\overline1}3\\[2pt]
  \overline2\end{smallmatrix}$};
\node (G3) at (6,-.5) {$\begin{smallmatrix}\vphantom{\overline1}4\\[2pt]
  \overline3\end{smallmatrix}$};
\node (G4) at (6,-1.5) {$\begin{smallmatrix}\overline4\\[2pt]
  \overline3\end{smallmatrix}$};
\node (H1) at (7,1) {$\begin{smallmatrix}\vphantom{\overline1}3\\[2pt]
  \overline1\end{smallmatrix}$};
\node (H3) at (7,0) {$\begin{smallmatrix}\vphantom{\overline1}4\\[2pt]
  \overline2\end{smallmatrix}$};
\node (H4) at (7,-1) {$\begin{smallmatrix}\overline4\\[2pt]
  \overline2\end{smallmatrix}$};
\node (I4) at (8,1) {$\begin{smallmatrix}\vphantom{\overline1}4\\[2pt]
  \overline1\end{smallmatrix}$};
\node (I3) at (8,0) {$\begin{smallmatrix}\overline4\\[2pt]
  \overline1\end{smallmatrix}$};
\node (I1) at (8,-1) {$\begin{smallmatrix}\overline3\\[2pt]
  \overline2\end{smallmatrix}$};
\node (J2) at (9,0) {$\begin{smallmatrix}\overline3\\[2pt]
  \overline1\end{smallmatrix}$};
\node (K2) at (10,0) {$\begin{smallmatrix}\overline2\\[2pt]
  \overline1\end{smallmatrix}$};
\draw[->,solid] (A2)--(B2);
\draw[->,dashed] (B2)--(C1);
\draw[->,dotted] (B2)--(C3);
\draw[->,dashdotted] (B2)--(C4);
\draw[->,dashed] (C4)--(D3);
\draw[->,dashed] (C3)--(D4);
\draw[->,dotted] (C1)--(D4);
\draw[->,dotted] (C4)--(D1);
\draw[->,dashdotted] (C3)--(D1);
\draw[->,dashdotted] (C1)--(D3);
\draw[->,dashed] (D1)--(E2);
\draw[->,dotted] (D3)--(E2);
\draw[->,dashdotted] (D4)--(E2);
\draw[->,solid] (D1)--(E1);
\draw[->,solid] (D3)--(E3);
\draw[->,solid] (D4)--(E4);
\draw[->,dashed] (E1)--(F1);
\draw[->,solid] (E2)--(F2);
\draw[->,dotted] (E3)--(F3);
\draw[->,dashdotted] (E4)--(F4);
\draw[->,dashed] (F1)--(G1);
\draw[->,solid] (F2)--(G2);
\draw[->,dotted] (F3)--(G3);
\draw[->,dashdotted] (F4)--(G4);
\draw[->,dashed] (G2)--(H1);
\draw[->,dotted] (G2)--(H3);
\draw[->,dashdotted] (G2)--(H4);
\draw[->,solid] (G1)--(H1);
\draw[->,solid] (G3)--(H3);
\draw[->,solid] (G4)--(H4);
\draw[->,dashed] (H4)--(I3);
\draw[->,dashed] (H3)--(I4);
\draw[->,dotted] (H1)--(I4);
\draw[->,dotted] (H4)--(I1);
\draw[->,dashdotted] (H3)--(I1);
\draw[->,dashdotted] (H1)--(I3);
\draw[->,dashed] (I1)--(J2);
\draw[->,dotted] (I3)--(J2);
\draw[->,dashdotted] (I4)--(J2);
\draw[->,solid] (J2)--(K2);
\end{scope}
\end{tikzpicture}
\end{center}

If we endow the weight lattice $\Lambda$ with its usual basis
$(\varepsilon_1,\varepsilon_2,\varepsilon_3,\varepsilon_4)$, then the
weight of the element $\begin{smallmatrix}p\\[2pt]q\end{smallmatrix}$
is simply $\varepsilon_p+\varepsilon_q$, with the convention that
$\varepsilon_{\overline\imath}=-\varepsilon_i$ for $i\in\{1,2,3,4\}$.
The crystal contains four elements of weight zero, namely
$\begin{smallmatrix}\vphantom{\overline1}2\\[2pt]
\overline2\end{smallmatrix}$,
$\begin{smallmatrix}\vphantom{\overline1}3\\[2pt]
\overline3\end{smallmatrix}$,
$\begin{smallmatrix}\vphantom{\overline1}4\\[2pt]
\overline4\end{smallmatrix}$ and
$\begin{smallmatrix}\overline4\\[2pt]
\vphantom{\overline1}4\end{smallmatrix}$.

We set $\bm\lambda=(\varpi_2,\varpi_2)$ and look at the
tensor square $V(\bm\lambda)=V(\varpi_2)^{\otimes2}$. As in
sect.~\ref{ss:InterMult}, its MV basis consists of symbols
$\langle\mathbf Z\rangle$, where $\mathbf Z=(Z_1,Z_2)$ is a pair
in $\mathscr Z(\varpi_2)\times\mathscr Z(\varpi_2)$. In addition,
$V(\bm\lambda)$ is endowed with the tensor product basis. In the
matter of notation, we simply use the keys
$\begin{smallmatrix}p\\[2pt]q\end{smallmatrix}$ to denote MV cycles,
taking advantage that $\mathscr Z(\varpi_2)$ is isomorphic to the
crystal pictured above.

We claim that
\begin{equation}
\label{eq:CalcD4}
\begin{split}
\Bigl\langle\begin{smallmatrix}\vphantom{\overline1}2\\[2pt]
\overline3\end{smallmatrix}\Bigr\rangle\otimes
\Bigl\langle\begin{smallmatrix}\vphantom{\overline1}3\\[2pt]
\overline2\end{smallmatrix}\Bigr\rangle=
2\,&\Bigl\langle\Bigl(\begin{smallmatrix}\vphantom{\overline1}1\\[2pt]
\vphantom{\overline1}2\end{smallmatrix},
\begin{smallmatrix}\overline2\\[2pt]
\overline1\end{smallmatrix}\Bigr)\Bigr\rangle+
\Bigl\langle\Bigl(\begin{smallmatrix}\vphantom{\overline1}1\\[2pt]
\vphantom{\overline1}3\end{smallmatrix},
\begin{smallmatrix}\overline3\\[2pt]
\overline1\end{smallmatrix}\Bigr)\Bigr\rangle+
\Bigl\langle\Bigl(\begin{smallmatrix}\vphantom{\overline1}2\\[2pt]
\vphantom{\overline1}3\end{smallmatrix},
\begin{smallmatrix}\overline3\\[2pt]
\overline2\end{smallmatrix}\Bigr)\Bigr\rangle+
\Bigl\langle\Bigl(\begin{smallmatrix}\vphantom{\overline1}1\\[2pt]
\vphantom{\overline1}4\end{smallmatrix},
\begin{smallmatrix}\overline4\\[2pt]
\overline1\end{smallmatrix}\Bigr)\Bigr\rangle\\[2pt]
&\qquad
+\Bigl\langle\Bigl(\begin{smallmatrix}\vphantom{\overline1}1\\[2pt]
\overline4\end{smallmatrix},
\begin{smallmatrix}\vphantom{\overline1}4\\[2pt]
\overline1\end{smallmatrix}\Bigr)\Bigr\rangle+
\Bigl\langle\Bigl(\begin{smallmatrix}\vphantom{\overline1}2\\[2pt]
\vphantom{\overline1}4\end{smallmatrix},
\begin{smallmatrix}\overline4\\[2pt]
\overline2\end{smallmatrix}\Bigr)\Bigr\rangle+
\Bigl\langle\Bigl(\begin{smallmatrix}\vphantom{\overline1}2\\[2pt]
\overline4\end{smallmatrix},
\begin{smallmatrix}\vphantom{\overline1}4\\[2pt]
\overline2\end{smallmatrix}\Bigr)\Bigr\rangle+
\Bigl\langle\Bigl(\begin{smallmatrix}\vphantom{\overline1}1\\[2pt]
\overline3\end{smallmatrix},
\begin{smallmatrix}\vphantom{\overline1}3\\[2pt]
\overline1\end{smallmatrix}\Bigr)\Bigr\rangle+
\Bigl\langle\Bigl(\begin{smallmatrix}\vphantom{\overline1}2\\[2pt]
\overline3\end{smallmatrix},
\begin{smallmatrix}\vphantom{\overline1}3\\[2pt]
\overline2\end{smallmatrix}\Bigr)\Bigr\rangle.
\end{split}
\end{equation}

Let $p:V(\varpi_2)^{\otimes2}\to V(2\varpi_2)$ be the unique morphism
that maps $v_{\varpi_2}\otimes v_{\varpi_2}$ to $v_{2\varpi_2}$. Applying
$\Psi_{2\varpi_2}\circ p$ to the equality \eqref{eq:CalcD4}, we obtain
the equation
$$C''(b_{13})\,C''(b_{14})=2\,C''(b_1)+\sum_{i=2}^8C''(b_i)+C''(b_{12})$$
asserted without proof in \cite{BaumannKamnitzerKnutson}. Thus,
establishing \eqref{eq:CalcD4} will complete the proof of
Proposition~\ref{pr:BasesD4}. Actually, an inspection of the proof in
\textit{loc.~cit.} reveals that it is enough to justify that the coefficient
in front of $\Bigl\langle\Bigl(
\begin{smallmatrix}\vphantom{\overline1}1\\[2pt]
\vphantom{\overline1}2\end{smallmatrix},
\begin{smallmatrix}\overline2\\[2pt]
\overline1\end{smallmatrix}
\Bigr)\Bigr\rangle$ is strictly larger than one.

We will use Theorem~\ref{th:TransMat} to prove this fact.
Here the group $G^\vee$ is $\SO_8$. For $(i,j)\in\{1,\ldots,8\}$, we
denote by $E_{i,j}$ the matrix of size $8\times8$ with zeros everywhere
except for a one at position~$(i,j)$. For each coroot
$\alpha^\vee\in\Phi^\vee$, we define a subgroup
$x_{\alpha^\vee}:\mathbb C\to G^\vee$ by the following formulas,
where $I$ is the identity matrix, $a\in\mathbb C$, and $i$, $j$ are
elements in $\{1,2,3,4\}$ such that $i<j$.
\begin{xalignat*}2
x_{(\varepsilon_i-\varepsilon_j)^\vee}(a)&=I+a(E_{i,j}-E_{9-j,9-i})&
x_{(\varepsilon_i+\varepsilon_j)^\vee}(a)&=I+a(E_{i,9-j}-E_{j,9-i})\\[4pt]
x_{(\varepsilon_j-\varepsilon_i)^\vee}(a)&=I+a(E_{9-i,9-j}-E_{j,i})&
x_{(-\varepsilon_i-\varepsilon_j)^\vee}(a)&=I+a(E_{9-i,j}-E_{9-j,i})
\end{xalignat*}
For each root $\alpha$, we define a map
$\chi_\alpha:\mathbb C^{10}\to G^\vee\bigl(\mathbb C\bigl[z,z^{-1}\bigr]\bigr)$
by the formula
$$\chi_\alpha(\mathbf a)(z)=
\Biggl(\prod_{k=1}^8x_{\beta_k^\vee}(a_k)\Biggr)\
x_{\alpha^\vee}(a_9+za_{10})\ z^\alpha$$
where $\mathbf a$ stands for the tuple
$(a_1,\ldots,a_{10})\in\mathbb C^{10}$ and where
$\beta_1^\vee$, \dots, $\beta_8^\vee$ are the coroots $\beta^\vee$
such that $\langle\beta^\vee,\alpha\rangle=1$.
We specify the enumeration in our cases of interest as follows.

\begin{center}
\setlength\tabcolsep{.5em}
\begin{tabular}{>{$\scriptstyle}c<{$}>{$\scriptstyle}c<{$}
>{$\scriptstyle}c<{$}>{$\scriptstyle}c<{$}>{$\scriptstyle}c<{$}
>{$\scriptstyle}c<{$}>{$\scriptstyle}c<{$}>{$\scriptstyle}c<{$}
>{$\scriptstyle}c<{$}}
\toprule
\bm\alpha&\beta_1^\vee&\beta_2^\vee&\beta_3^\vee&\beta_4^\vee&\beta_5^\vee&
\beta_6^\vee&\beta_7^\vee&\beta_8^\vee\\
\midrule
\bm{\varepsilon_1+\varepsilon_2}&
(\varepsilon_1-\varepsilon_3)^\vee&(\varepsilon_1-\varepsilon_4)^\vee&
(\varepsilon_1+\varepsilon_4)^\vee&(\varepsilon_1+\varepsilon_3)^\vee&
(\varepsilon_2-\varepsilon_3)^\vee&(\varepsilon_2-\varepsilon_4)^\vee&
(\varepsilon_2+\varepsilon_4)^\vee&(\varepsilon_2+\varepsilon_3)^\vee\\[3pt]
\bm{-\varepsilon_1-\varepsilon_2}&
(-\varepsilon_2-\varepsilon_3)^\vee&(-\varepsilon_2-\varepsilon_4)^\vee&
(\varepsilon_4-\varepsilon_2)^\vee&(\varepsilon_3-\varepsilon_2)^\vee&
(-\varepsilon_1-\varepsilon_3)^\vee&(-\varepsilon_1-\varepsilon_4)^\vee&
(\varepsilon_4-\varepsilon_1)^\vee&(\varepsilon_3-\varepsilon_1)^\vee\\[3pt]
\bm{\varepsilon_2-\varepsilon_3}&
(\varepsilon_2-\varepsilon_1)^\vee&(\varepsilon_2-\varepsilon_4)^\vee&
(\varepsilon_2+\varepsilon_4)^\vee&(\varepsilon_1+\varepsilon_2)^\vee&
(-\varepsilon_1-\varepsilon_3)^\vee&(-\varepsilon_3-\varepsilon_4)^\vee&
(\varepsilon_4-\varepsilon_3)^\vee&(\varepsilon_1-\varepsilon_3)^\vee\\[3pt]
\bm{\varepsilon_3-\varepsilon_2}&
(\varepsilon_3-\varepsilon_1)^\vee&(\varepsilon_3-\varepsilon_4)^\vee&
(\varepsilon_3+\varepsilon_4)^\vee&(\varepsilon_1+\varepsilon_3)^\vee&
(-\varepsilon_1-\varepsilon_2)^\vee&(-\varepsilon_2-\varepsilon_4)^\vee&
(\varepsilon_4-\varepsilon_2)^\vee&(\varepsilon_1-\varepsilon_2)^\vee\\[3pt]
\bottomrule
\end{tabular}
\end{center}

We now define two charts on $\BDConv_2^{\bm\lambda}$, both with
$\mathbb C^{22}$ as domain:
\begin{align*}
\upphi_1:(x_1,x_2,\mathbf a,\mathbf b)&\mapsto\bigl(x_1,x_2;
\bigl[\chi_{\varepsilon_1+\varepsilon_2}(\mathbf a)(z-x_1),\,
\chi_{-\varepsilon_1-\varepsilon_2}(\mathbf b)(z-x_2)\bigr]\bigr),\\[8pt]
\upphi_2:(x_1,x_2,\mathbf a',\mathbf b')&\mapsto\bigl(x_1,x_2;
\bigl[\chi_{\varepsilon_2-\varepsilon_2}(\mathbf a')(z-x_1),\,
\chi_{\varepsilon_3-\varepsilon_2}(\mathbf b')(z-x_2)\bigr]\bigr).
\end{align*}

One can then compute the transition map between these two charts.
(The calculations were actually carried out with the help of the
computer algebra system \textsc{Singular}~\cite{Singular}.) One finds
the variables $a'_1$, \ldots, $b'_{10}$ as rational functions in
$x_2-x_1$, $a_1$, \ldots, $b_{10}$. We denote by $f$ the l.c.m.\
of the denominators.

Recall the notation used in sect.~\ref{ss:InterMult}.
In the chart $\upphi_1$, the cycle $\mathcal Y\Bigl(
\begin{smallmatrix}\vphantom{\overline1}1\\[2pt]
\vphantom{\overline1}2\end{smallmatrix},
\begin{smallmatrix}\overline2\\[2pt]
\overline1\end{smallmatrix}\Bigr)$ is defined by the equations
$a_1=\cdots=a_{10}=x_2-x_1=0$, so the ideal in
$R=\mathbb C[x_1,x_2,a_1,\ldots,a_{10},b_1,\ldots,b_{10}]$ of
$$V=\upphi_1^{-1}\Bigl(\mathcal Y\Bigl(
\begin{smallmatrix}\vphantom{\overline1}1\\[2pt]
\vphantom{\overline1}2\end{smallmatrix},
\begin{smallmatrix}\overline2\\[2pt]
\overline1\end{smallmatrix}\Bigr)\Bigr)$$
is
$$\mathfrak p=(a_1,\ldots,a_{10},x_2-x_1).$$

In the chart $\upphi_2$, the cycle
$\mathcal X\Bigl(\begin{smallmatrix}\vphantom{\overline1}2\\[2pt]
\overline3\end{smallmatrix},
\begin{smallmatrix}\vphantom{\overline1}3\\[2pt]
\overline2\end{smallmatrix}\Bigr)$ is defined by the equations
$a'_2=a'_3=a'_4=a'_8=a'_9=a'_{10}=b'_2=b'_3=b'_4=b'_8=0$.
Since the zero locus of $f$ contains the locus where the transition
map between the charts is not defined, the ideal $\mathfrak q$ of the
subvariety
$$X=\upphi_1^{-1}\Bigl(\mathcal X\Bigl(
\begin{smallmatrix}\vphantom{\overline1}2\\[2pt]
\overline3\end{smallmatrix},
\begin{smallmatrix}\vphantom{\overline1}3\\[2pt]
\overline2\end{smallmatrix}\Bigr)\Bigr)$$
is the preimage in $R$ of the ideal
$\mathfrak q_f=(a'_2,a'_3,a'_4,a'_8,a'_9,a'_{10},b'_2,b'_3,b'_4,b'_8)$
of the localized ring $R_f$. \textsc{Singular} gives the following
expression:
\begin{multline*}
\mathfrak q=(
a_1a_4+a_2a_3,\;
a_1a_6-a_2a_5,\;
a_3a_6+a_4a_5,\;
a_1a_7-a_3a_5,\;
a_2a_7+a_4a_5,\;
a_1a_8-a_4a_5,\\
a_2a_8-a_4a_6,\;
a_3a_8-a_4a_7,\;
a_5a_8+a_6a_7,\;
a_9,\;
a_{10},\;
a_1b_4+a_2b_3+a_3b_2+a_4b_1,\\
a_5b_4+a_6b_3+a_7b_2+a_8b_1,\;
a_3b_6+a_4b_5+a_7b_2+a_8b_1-(x_2-x_1),\;
a_2b_7-a_3b_6+a_6b_3-a_7b_2,\\
a_1b_8+a_3b_6+a_5b_4+a_7b_2,\;
a_2b_8-a_4b_6+a_6b_4-a_8b_2,\;
a_3b_8-a_4b_7+a_7b_4-a_8b_3
).
\end{multline*}

We observe that $\mathfrak q\subseteq\mathfrak p$, hence $V\subseteq X$.

Let $a$ and $x$ be two indeterminates. Let $B$ be the field
$\mathbb C(x,b_1,\ldots,b_{10})$. Extract the last seven equations
from $\mathfrak q$ and remove the term $x_2-x_1$ in the third one:
we then deal with seven linear equations with coefficients in $B$
in the eight variables $a_1$, \dots, $a_8$. This system has a
non-zero solution $(c_1,\ldots,c_8)\in B^8$. We can then define an
algebra morphism $u:R/\mathfrak q\to B[a]/(a^2)$ by
\begin{alignat*}3
&u(x_1)=u(x_2)=x,&\qquad&u(b_i)=b_i&\quad&
\text{for $i\in\{1,\ldots,10\}$},\\[2pt]
&u(a_9)=u(a_{10})=0,&&u(a_i)=c_ia&&\text{for $i\in\{1,\ldots,8\}$}.
\end{alignat*}
The ring $B[a]/(a^2)$ is local with maximal ideal $(a)$ and the preimage
of this ideal by $u$ is the ideal $\mathfrak p/\mathfrak q$ of
$R/\mathfrak q$.

Let $A$ be the localization of $R/\mathfrak q$ at $\mathfrak p/\mathfrak q$.
Then $u$ extends to an algebra morphism $\overline u:A\to B[a]/(a^2)$.
By construction, the kernel of $\overline u$ contains $x_2-x_1$ but not
all $a_1$, \dots, $a_8$. Therefore $x_2-x_1$ does not generate the maximal
ideal of $A$. Since $A$ is the local ring $\mathscr O_{V,X}$ of $X$ along
$V$, this means that the order of vanishing of $x_2-x_1$ along $V$ is
larger than one. In other words, the multiplicity of
$\mathcal Y\Bigl(
\begin{smallmatrix}\vphantom{\overline1}1\\[2pt]
\vphantom{\overline1}2\end{smallmatrix},
\begin{smallmatrix}\overline2\\[2pt]
\overline1\end{smallmatrix}\Bigr)$ in the
intersection product~$\mathcal X\Bigl(
\begin{smallmatrix}\vphantom{\overline1}2\\[2pt]
\overline3\end{smallmatrix},
\begin{smallmatrix}\vphantom{\overline1}3\\[2pt]
\overline2\end{smallmatrix}\Bigr)\cdot
\BDConv_2^{\bm\lambda}\bigl|_\Delta$ is larger than one. Applying
Theorem~\ref{th:TransMat}, we conclude that in~\eqref{eq:CalcD4}
the coefficient in front of
$\Bigl\langle\Bigl(
\begin{smallmatrix}\vphantom{\overline1}1\\[2pt]
\vphantom{\overline1}2\end{smallmatrix},
\begin{smallmatrix}\overline2\\[2pt]
\overline1\end{smallmatrix}
\Bigr)\Bigr\rangle$ is strictly larger than one.

Pierre Baumann,
Institut de Recherche Mathématique Avancée,
Université de Strasbourg et CNRS UMR 7501,
7 rue René Descartes,
67084 Strasbourg Cedex,
France.\\
\texttt{p.baumann@unistra.fr}
\medskip

Stéphane Gaussent,
Institut Camille Jordan,
Université de Lyon, UJM et CNRS UMR 5208,
23 rue du Docteur Paul Michelon,
42023 Saint-Étienne Cedex 2,
France.\\
\texttt{stephane.gaussent@univ-st-etienne.fr}
\medskip

Peter Littelmann,
Universität zu Köln,
Mathematisches Institut,
Weyertal 86--90,
50931 Köln,
Germany.\\
\texttt{peter.littelmann@math.uni-koeln.de}
\end{document}